\newtheorem{remark}{Remark}
\newtheorem{problem}{Problem}
\newtheorem{definition}{Definition}
\newtheorem{assumption}{Assumption}
\newtheorem{theorem}{Theorem}
\newtheorem{lemma}{Lemma}
\newtheorem{proposition}{Proposition}[section]
\newtheorem{corollary}{Corollary}[section]
\begin{document}

\title{Separation-free exponential fitting with structured noise, with applications to inverse problems in parabolic PDEs \thanks{R. Katz was supported by the Alon Fellowship from the Council of Higher
Education in Israel. G. Giordano acknowledges support from the European Union
through the ERC INSPIRE grant (project number 101076926); views and opinions
expressed are however those of the authors only and do not necessarily reflect
those of the European Union or the European Research Council; neither the
European Union nor the granting authority can be held responsible for them. D.
Batenkov was partially supported by Israel Science Foundation Grant 1793/20.}}

\author[1]{Rami Katz} 
\author[2,3]{Dmitry Batenkov}
\author[4]{Giulia Giordano} 

\affil[1]{School of Electrical and Computer Engineering, Tel Aviv University, Tel Aviv, Israel (e-mail: ramkatsee@tauex.tau.ac.il).}
\affil[2]{Basis Research Institute, New York City, NY, USA (e-mail: dima@basis.ai).}
\affil[3]{Department of Applied Mathematics, School of Mathematical Sciences, Tel Aviv University, Tel Aviv, Israel.}
\affil[4]{{Department of Industrial Engineering, University of Trento, Trento, Italy (e-mail: giulia.giordano@unitn.it).}}

\date{}

\maketitle

\begin{abstract}

    We investigate the recovery of exponents and amplitudes of an exponential sum, where the exponents $\left\{\lambda_n \right\}_{n=1}^{N_1}$ are the first $N_1$ eigenvalues of a Sturm-Liouville operator, from finitely many measurements subject to measurement noise.
    This inverse problem is extremely ill-conditioned when the noise is  arbitrary and unstructured. Surprisingly, however, the extreme ill-conditioning exhibited by this problem disappears when considering a \emph{structured}
    noise term,
    taken as an exponential sum with exponents given by the subsequent eigenvalues $\left\{\lambda_n \right\}_{n=N_1+1}^{N_1+N_2}$ of the Sturm-Liouville operator, multiplied by a noise magnitude parameter $\varepsilon>0$. In this case, we rigorously show that the  
    exponents and amplitudes can be recovered with super-exponential accuracy: we both prove the theoretical result and show that it can be achieved numerically by a specific algorithm. By leveraging recent results on the mathematical theory of super-resolution, we show in this paper that the classical Prony's method attains the analytic optimal error decay also in the ``separation-free'' regime where $\lambda_n \to \infty$ as $n \to \infty$, thereby extending the applicability of Prony's method to new settings. As an application of our theoretical analysis, we show that the approximated  eigenvalues obtained by our method can be used
    to recover an unknown potential in a linear reaction-diffusion equation from discrete solution traces.
\end{abstract}

\section{Introduction}\label{sec:intro}

Exponential fitting is a ubiquitous problem in applied mathematics \cite{istratov1999, pereyra2010}. One of the simplest formulations of the exponential fitting problem amounts to recovering the unknown parameters $\{y_n,\lambda_n\}_{n=1}^{N_1}$, $N_1\in \mathbb{N}$, of the function
\begin{equation}\label{eq:sum-of-exps}
y(t) = \sum_{n=1}^{N_1} y_n e^{-\lambda_n t}
\end{equation}
from $2N_1$ equispaced and noise-free samples taken at times $t_k=k\Delta$, $k=0,1,\dots,2N_1-1$:
\begin{equation*}
y(t_k) = \sum_{n=1}^{N_1} y_n e^{-\lambda_n \Delta k}=: \sum_{n=1}^{N_1}y_n\phi_n^k, \quad \phi_n:=e^{-\lambda_n \Delta},
\end{equation*}
 where $\Delta>0$ is the inter-sampling step size. The famous  method by Prony \cite{prony1795}, illustrated in Section~\ref{Sec:PronyMethod} below, solves the problem by exploiting a very elegant connection between the sequence $y(t_k)$ and the coefficients of the polynomial $p(z)=\prod_{n=1}^{N_1}\left(z-\phi_n\right)$.

Exponential fitting works best when the number of terms $N_1$ is small and fixed, while the minimal separation between the exponentials $\phi_n$ can be controlled and is, in particular, lower bounded by a small and fixed constant $\delta>0$, namely, $\min_{1\leq n,m\leq N_1,m\neq n}\left|\phi_n - \phi_m \right|\geq \delta$. The numerical stability, or noise robustness, deteriorates quickly when $N_1\to \infty$ or $\delta\to 0^+$. To see this, consider the case of known exponents $\{ \lambda_n \}_{n=1}^{N_1}$, and therefore known exponentials $\left\{\phi_n \right\}_{n=1}^{N_1}$. Recovery of the amplitudes $\{y_n \}_{n=1}^{N_1}$ then requires the inversion of the Vandermonde matrix $V:=\left[\phi_n^k\right]_{k=0,\dots,N_1-1}^{n=1,\dots,N_1}$. Given perturbed or noisy measurements $\widetilde{y(t_k)}$, $n=1,\dots, N_1$, with measurement error or noise $e_k=\widetilde{y(t_k)}-y(t_k)$, $n=1,\dots,N_1$, we have the following bound for the error in the recovered amplitudes $\left\{\hat{y}_n\right\}_{n=1}^{N_1}$:
\begin{equation}\label{eq:example-amplitude-error}
\bigl\|\operatorname{col}\{\hat{y}_n-y_n\}_{n=1}^{N_1}\bigr\|_{\infty} \leq \bigl\|V^{-1}\bigr\|_{\infty} \cdot \bigl\|\operatorname{col}\{e_k\}_{k=0}^{N_1-1}\bigr\|_{\infty},
\end{equation}
where $\operatorname{col}\{v_j\}_{j=1}^{N_1}$ denotes the $N_1\times 1$ column vector with entries $v_1,\dots,v_{N_1}$. A well-known result by Gautschi \cite{gautschi1962} (see also \cite{beckermann2000, beckermann2019}) states that
$$
\left\|V^{-1}\right\|_{\infty}\leq \max_{1\leq i\leq N_1} \prod_{j\neq i} \frac{1+\left|\phi_j\right|}{\left|\phi_i -\phi_j\right|},
$$
and this upper bound is \emph{attainable} when all the $\phi_i$'s have the same (complex) sign. In particular, if $N_1\to \infty$ and $\left\{\lambda_n\right\}_{n=1}^{\infty}$ is a real sequence satisfying $\lim_{n\to \infty}\lambda_n=\infty$, as will be the case in the problem we consider in this paper, the (exact) upper bound on
$\left\|V^{-1} \right\|_{\infty}$ grows rapidly as $N_1\to \infty$, thereby implying the ill-posedness of the recovery of $\left\{y_n \right\}_{n=1}^{N_1}$ for arbitrary measurement noise. Robustness of recovering the exponents $\{\lambda_n\}_{n=1}^{N_1}$ when those are unknown exhibits a similar behavior \cite{batenkov2013b}. Note that the well-posedness of the problem has different properties when the real eigenvalues $\left\{\lambda_n \right\}_{n=1}^{\infty}$ satisfying $\lim_{n\to \infty}\lambda_n=\infty$ are replaced by purely imaginary exponents. Such an exponential fitting problem has been studied much more in the inverse problems and applied harmonic analysis communities, especially in the context of mathematical super-resolution and non-harmonic Fourier transform \cite{batenkov2023, batenkov2021b, batenkov2020, batenkov2021, diab2024,katz2024accuracy,decimatedProny}; see also \cite{aubel2019, barnett2022, demanet2015, kunis2020, li2021a,  moitra2015, pan2016} and the references therein.

Motivated by inverse problems in parabolic PDEs and applications to control (cf. Sections~\ref{sec:problem-def-and-motivation} and~\ref{sec:inverse-problems-pde} below), in this work we consider the model \eqref{eq:sum-of-exps} with varying $N_1$ and $\left\{\lambda_n \right\}_{n=1}^{N_1}$ being eigenvalues of a Sturm-Liouville operator. We show that, in this case, a direct parametric inversion approach in the model \eqref{eq:sum-of-exps} can still be viable even when the right-hand side of \eqref{eq:example-amplitude-error} is unbounded, provided the measurement noise takes the structured form of an exponential sum, with exponents given by the subsequent eigenvalues $\left\{\lambda_n \right\}_{n=N_1+1}^{N_1+N_2}$ of the same Sturm-Liouville operator, multiplied by a noise magnitude parameter $\varepsilon>0$ (see \eqref{eq:MeasExpsum} below); in this case, the norm of the error vector $\operatorname{col}\{e_k\}_{k=0}^{N_1-1}$ is on the order of $\varepsilon$, therefore the right-hand side of \eqref{eq:example-amplitude-error} is still unbounded. In addition to showing well-posedness of the parametric inversion, we demonstrate that Prony's method \cite{prony1795}, illustrated in Section~\ref{Sec:PronyMethod} below, achieves first order (in $\varepsilon$) optimal recovery, at least for small noise intensity $\varepsilon$. In the remainder of the introduction, we present the notations used throughout the manuscript, provide a formal problem definition and motivation for its consideration, and summarize the main contributions of this work and its organization.

\subsection{Notation and conventions}

Given $N\in \mathbb{N}$, we denote $[N]=\left\{1,\dots, N \right\}$. For $a\in \mathbb{R}$, we denote $\lfloor a\rfloor = \max \left\{m\in \mathbb{Z} \ ; \ m\leq a \right\}$.
We denote by $I_N$ the identity matrix of size $N$.
For matrices $\left\{A_i\right\}_{i=1}^k$ of appropriate dimension, $\operatorname{diag}\left\{A_i\right\}_{i=1}^k$ is the block-diagonal matrix with the $A_i$'s on the diagonal, whereas $\operatorname{row}\left\{A_i\right\}_{i=1}^k$ (respectively, $\operatorname{col}\left\{A_i\right\}_{i=1}^k$) is the block matrix stacking the $A_i$'s in consecutive columns (respectively, rows). We denote by $L^2(0,1)$ the Hilbert space of square-integrable scalar-valued functions on $(0,1)$ with inner product $\langle \cdot,\cdot \rangle$, while we denote by $\mathrm{H}^2(0,1)$ (respectively, $\mathrm{H}_0^1(0,1)$) the Sobolev space of functions $f$ on $(0,1)$ that are twice (respectively, once) weakly differentiable with $f''\in L^2(0,1)$ (respectively, $f'\in L^2(0,1)$ and $f(0)=f(1)=0$). We denote by $\mathcal{D}'(0,1)$ the space of bounded linear functionals on $\mathrm{H}^2(0,1)\cap \mathrm{H}_0^1(0,1)$.

Furthermore, we introduce the set
\begin{equation}\label{eq:setN}
\mathcal{N}_{\eta} := \left\{ (n,N_1)\in \mathbb{N}^2 \ ; \ 1\leq n \leq \lfloor \eta N_1 \rfloor \right\} \quad \text{for a fixed} \,\, \eta \in (0,1]
\end{equation}
and the function
\begin{equation}\label{eq:CalSdef}
\Psi(n;N_1) := \sum_{j=n+1}^{N_1}(j^2-n^2), \quad \text{for} \,\, n\in [N_1].
\end{equation}

We also adopt the following convention: given any numbers $\left\{b_j\right\}_{j=1}^{\infty}$,  whenever $t<s$, we have $\sum_{j=s}^tb_j = 0$ and $\prod_{j=s}^t b_j=1$. Therefore, we have $\Psi(N_1;N_1)=0$.

\subsection{Problem formulation, standing assumptions and motivation}\label{sec:problem-def-and-motivation}

We consider the following recovery problem.
\begin{problem}[Recovery problem]\label{prob:main-problem}
Given the exponential sum $y(t) = \sum_{n=1}^{N_1}y_ne^{-\lambda_n t}+\varepsilon \sum_{n=N_1+1}^{N_1+N_2}y_ne^{-\lambda_n t}$, where $N_1,N_2\in \mathbb{N} $, $\eta \in (0,1]$ and $0<\varepsilon \ll 1$, recover the exponents $\left\{\lambda_{n}\right\}_{n=1}^{\lfloor \eta N_1 \rfloor}$ and the amplitudes $\left\{y_n\right\}_{n=1}^{\lfloor \eta N_1 \rfloor}$ from its equispaced samples (i.e., discrete-time measurements) taken on a finite set of $2 N_1$ discrete time instants $t_k =k\Delta$, where $k=0,1,\dots,2N_1-1$, with sampling step-size $\Delta>0$:
\begin{equation}\label{eq:MeasExpsum}
    y(t_k) = \sum_{n=1}^{N_1}y_ne^{-\lambda_n t_k}+\varepsilon \sum_{n=N_1+1}^{N_1+N_2}y_ne^{-\lambda_n t_k} := y_{\text{main}}(t_k) + \varepsilon y_{\text{tail}}(t_k), \quad \,\, t_k=k\Delta,\; k=0,1,\dots,2N_1-1.
\end{equation}
Here $\varepsilon$ is a positive \emph{noise intensity} parameter, which  multiplies the \emph{structured noise term} $y_{\text{tail}}$, while $y_{\text{main}}$ is the \emph{noise-free term} that we wish to reconstruct. 
\end{problem}
The additional parameter $\eta \in (0,1]$ is introduced because, as we will show, the quality of recovery of $\lambda_n$ and $y_n$ for $n \in [\lfloor \eta N_1 \rfloor ]$ deteriorates the closer $\eta$ is to the value $1$.

We further make the following standing assumptions.
\begin{assumption}\label{assump:yassump}
There exists $\ell>1$ such that $\ell^{-1}\leq |y_n|\leq \ell$ for all $n\in [N_1]$. Also, there exists $M_y>0$ such that
$\frac{|y_n|}{|y_k|}\leq M_y$ for all $k\in [N_1]$ and $n\in [N_1+N_2]\setminus [N_1]$.
Hence, $\left|y_n\right|\leq \ell M_y=:M_y^{(1)}$ for all $n\in [N_1+N_2]\setminus [N_1]$. 
\end{assumption}
\begin{assumption}\label{assump:eigenvalues}
The exponents $\left\{\lambda_n \right\}_{n=1}^{N_1+N_2}$ in \eqref{eq:MeasExpsum} originate from the monotonically increasing and unbounded real sequence $\left\{\lambda_n \right\}_{n=1}^{\infty}$ of simple eigenvalues of a \emph{fixed} Sturm-Liouville operator $\mathcal{A}$ defined as
\begin{equation}\label{eq:sl-operator}
\left[\mathcal{A}h\right](x) = -\left(p(x)h'(x) \right)'-q(x)h,\quad x\in(0,1); \qquad  \operatorname{Dom}(\mathcal{A}) = \left\{h\in \mathrm{H}^2(0,1) \ ; \ h(0)=h(1)=0 \right\},
\end{equation}
where $p(x)$ and $q(x)$ are \emph{unknown} smooth functions on $[0,1]$ (i.e.,  $p = p_1\vert_{[0,1]}$ and $q = q_1\vert_{[0,1]}$ for some smooth $p_1$ and $q_1$ defined on a neighborhood of $[0,1]$) and $\min_{x\in [0,1]}p(x) = \underline{p}>0$ for some $\underline{p}>0$. The existence of the positive lower bound $\underline{p}$ is a standard assumption, guaranteeing well-posedness and regularity of solutions of \eqref{eq:diffusion-pde} 
\cite{zettl2005,engel2000one}. However, knowledge of $\underline{p}$ is not needed for the considered reconstruction problem.
\end{assumption}

In view of Assumption~\ref{assump:eigenvalues}, the growth of the exponents $\lambda_n$,  $n=1,2,\dots$, is always approximately quadratic in the index $n$, in the precise sense formulated in the following proposition.

    \begin{proposition}\label{prop:EigDiffBound}
    For the eigenvalues $\left\{\lambda_n \right\}_{n=1}^{\infty}$ in Assumption~\ref{assump:eigenvalues}, there exist constants $0<\upsilon \leq \Upsilon$ such that, for any $1\leq n \leq  m$, the following estimate holds:
    \begin{equation}\label{eq:EigDiff}
      \upsilon \left(m^2-n^2 \right) \leq \lambda_m-\lambda_n\leq \Upsilon \left(m^2-n^2 \right).
    \end{equation}
    \end{proposition}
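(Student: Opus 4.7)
I would reduce the two-sided bound \eqref{eq:EigDiff} to the classical second-order Weyl asymptotics for the eigenvalues of $\mathcal{A}$, and then handle the high-index asymptotic regime and a finite low-index regime separately.

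First, I would invoke the Liouville transformation $t = \int_0^x 1/\sqrt{p(s)}\,ds$, $u(t) = p(x)^{1/4} h(x)$, which is well-defined and smooth thanks to $p \geq \underline{p}>0$ and to the smoothness of $p$ and $q$. This transformation conjugates $\mathcal{A}$ to a Schr\"odinger operator $-d^2/dt^2 + Q(t)$ on $[0, L]$, with $L := \int_0^1 1/\sqrt{p(s)}\,ds$, Dirichlet boundary conditions, and a smooth bounded potential $Q$. The classical second-order asymptotics for Dirichlet Schr\"odinger eigenvalues with smooth potentials (see, e.g., \cite{zettl2005}) then yield
\begin{equation*}
\lambda_n = \kappa n^2 + R_n, \qquad \kappa := (\pi/L)^2, \qquad \sup_{n\in\mathbb{N}} |R_n| \leq C,
\end{equation*}
for some constant $C$ depending only on $p$ and $q$.

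Next, I would split pairs $1\leq n < m$ according to the magnitude of $m^2-n^2$. Fix an integer $K \geq 4C/\kappa$. If $m^2-n^2 \geq K$, then $|R_m - R_n|\leq 2C \leq \tfrac{\kappa}{2}(m^2-n^2)$, which gives
$\tfrac{\kappa}{2}(m^2-n^2) \leq \lambda_m-\lambda_n \leq \tfrac{3\kappa}{2}(m^2-n^2)$, providing both bounds with the constants $\kappa/2$ and $3\kappa/2$ in this regime. If instead $m^2-n^2 < K$, then $(m-n)(m+n)<K$ together with $m-n\geq 1$ forces $n+m < K$, so only finitely many pairs $(n,m)$ fall into this regime. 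By the simplicity and strict monotonicity of the eigenvalues stated in Assumption~\ref{assump:eigenvalues}, the ratios $(\lambda_m-\lambda_n)/(m^2-n^2)$ on this finite set are all strictly positive and finite. Taking $\upsilon$ to be the minimum of $\kappa/2$ and of these finitely many ratios, and $\Upsilon$ the maximum of $3\kappa/2$ and the same ratios, establishes \eqref{eq:EigDiff} for all $1\leq n<m$; the case $n=m$ is trivial since both sides vanish.

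The main (but mild) obstacle is ensuring that the Weyl asymptotics hold with a remainder $R_n$ that is \emph{bounded uniformly in $n$}. This is precisely what the smoothness of $p$ and $q$, together with $p\geq \underline{p}>0$, is designed to guarantee: under these assumptions one has the stronger expansion $\sqrt{\lambda_n} = n\pi/L + O(1/n)$, which immediately yields $R_n = O(1)$. Once this asymptotic input is available, the rest of the argument is elementary, since the low-index regime involves only finitely many pairs and is handled by a minimum/maximum over a finite set.
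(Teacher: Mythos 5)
Your proposal is correct, and it rests on the same key analytic input as the paper's proof: the second-order eigenvalue asymptotics $\lambda_n = \kappa n^2 + O(1)$ (the paper quotes this directly from Fulton--Pruess in the form $\lambda_j = \tfrac{\pi^2}{B^2}j^2 + a_0 + \alpha(j)$ with $\alpha(j)=O(j^{-2})$, rather than deriving it via the Liouville transformation as you sketch, but the content is identical). Where you genuinely diverge is in the case decomposition. The paper splits according to the size of the \emph{indices}: it first proves the bound for $L\leq n\leq m$ with $L$ large, then adjusts constants on the finite square $1\leq n\leq m\leq L$, and must then separately treat the mixed regime $1\leq n\leq L\leq m$, for which it resorts to a contradiction argument using the pigeonhole principle and extraction of a subsequence. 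You instead split according to the size of $m^2-n^2$, and your observation that $m^2-n^2<K$ together with $m-n\geq 1$ forces $m+n<K$ collapses the exceptional regime to a finite set of pairs in one stroke, eliminating the mixed case and the compactness argument entirely. The arithmetic in your main regime is also sound: with $K\geq 4C/\kappa$ one has $|R_m-R_n|\leq 2C\leq \tfrac{\kappa}{2}(m^2-n^2)$, yielding the constants $\kappa/2$ and $3\kappa/2$. Your version is shorter and, in my view, cleaner; the only thing the paper's route buys is that it works directly from the cited expansion without needing to justify uniform boundedness of the remainder through the Liouville transformation, though that justification is standard and you flag it correctly as the one point requiring the smoothness hypotheses on $p$ and $q$.
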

    \begin{proof}
        See Section~\ref{sec:proof:prop:EigDiffBound}.
    \end{proof}
The quadratic growth estimate \eqref{eq:EigDiff} is an essential feature of the considered problem, and will
play a central role throughout our results. For the real eigenvalues $\left\{\lambda_n \right\}_{n=1}^{\infty}$ in Assumption~\ref{assump:eigenvalues}, we denote
\begin{equation}\label{eq:phinDef}
\phi_n : = e^{-\lambda_n \Delta},\ n\in \mathbb{N},
\end{equation}
whence \eqref{eq:MeasExpsum} can be presented as 
\begin{equation}\label{eq:MeasExpsum1}
    y(t_k) = \sum_{n=1}^{N_1}y_n\phi_n^k+\varepsilon \sum_{n=N_1+1}^{N_1+N_2}y_n\phi_n^k, \quad \,\, t_k=k\Delta,\; k=0,1,\dots,2N_1-1.
\end{equation}

Our main motivation stems from the initial-boundary value problem for the linear reaction-diffusion equation
\begin{equation}\label{eq:diffusion-pde}
\partial_t z(x,t) = \left[\mathcal{A}z\right](x,t), \quad (x,t) \in(0,1) \times \mathbb{R}^+,\;\; z(0,t)=z(1,t)=0,\; z(x,0)=f(x),
\end{equation}
where $\mathcal{A}$ is given in Assumption~\ref{assump:eigenvalues}. By the standard separation of variables method, under appropriate assumptions on the initial condition $f(x)$, the solution of \eqref{eq:diffusion-pde} can be represented as a series \cite{evans2010}:
\begin{equation}\label{eq:diffusion-solution-series}
    z(x,t)=\sum_{n=1}^\infty \left\langle f, \psi_n\right\rangle_{L^2(0,1)} \exp(-\lambda_n t)\psi_n(x),
\end{equation}
where $\left\{\psi_n\right\}_{n=1}^{\infty}$ are the eigenfunctions of $\mathcal{A}$ corresponding to the eigenvalues $\left\{\lambda_n\right\}_{n=1}^{\infty}$ and $\left\{\left<f,\psi_n\right>_{L^2(0,1)} \right\}_{n=1}^{\infty}$ are the $L^2$ projections (modes) of the initial condition $f$ on the eigenfunctions. Recovering the eigenvalues $\lambda_n$, $n=1,2,\dots$, of $\mathcal{A}$ and the projections $\left<f,\psi_n\right>_{L^2(0,1)}$, $n=1,2,\dots$, from partial measurements of  $z(x,t)$ is a well-studied inverse problem \cite{isakov2017,pierce1979a,rundell2024}. As we demonstrate in Section~\ref{sec:related-work} below, under appropriate assumptions, discrete-time measurements of $z(x,t)$ at instants $t_k=k\Delta$, $k=0,1,\dots$, can be represented by an expression similar to the one given in \eqref{eq:MeasExpsum}; for
example, by substituting a fixed $x=x_0\in (0,1)$, one obtains the trace
\begin{equation}\label{eq:PDESamplesControl}
y(t_k) = z(x_0,t_k) = \sum_{n=1}^\infty \underbrace{\left[\left\langle f, \psi_n\right\rangle_{L^2(0,1)} \psi_n(x_0)\right]}_{y_n} \phi_n^k,\quad t_k=k\Delta, \ k=0,\dots,2N_1-1.
\end{equation}
Under suitable assumptions, a similar presentation can be obtained for a more general trace $y(t)=\varphi(z(\cdot,t))$, where $\varphi$ is a linear functional applied to $z(\cdot,t)$. The inverse problem of recovering the eigenvalues $\lambda_n$ of $\mathcal{A}$ and the projections $\left<f,\psi_n\right>_{L^2(0,1)}$ from partial measurements of  $z(x,t)$ such as \eqref{eq:PDESamplesControl} also has important applications for the control of PDEs such as \eqref{eq:diffusion-pde}. For example, stabilization of \eqref{eq:diffusion-pde} by the modal decomposition technique \cite{katz2020constructive,katz2020boundary,katz2021delayed,katz2021finite,katz2021regional,katz2022global,katz2021finite,lhachemi2019control,balas1986finite,christofides2002nonlinear,harkort2011finite} employs the presentation of $z(x,t)$ as in \eqref{eq:diffusion-solution-series}. Controller synthesis is then based on the assumption of complete knowledge of the eigenvalues and eigenfunctions of $\mathcal{A}$. Unfortunately, in many control-related applications exact knowledge of $\mathcal{A}$ cannot be assumed, thereby making the implementation of such control strategies difficult in practice. Thus, we expect the recovery approach proposed in this manuscript to have significant impact on data-driven control of systems such as \eqref{eq:diffusion-pde}, under conditions where the operator $\mathcal{A}$ is only partially known, or fully unknown, and only finitely many measurements of $z(x,t)$ are available.

\subsection{Manuscript contributions and organization}
We study \prettyref{prob:main-problem} in three challenging regimes, with a fixed and finite $N_2 \in \mathbb{N}$:
\begin{equation}\label{Eq:Regimes}
\begin{array}{lll}
\underline{\text{Regime 1:}} &  \Delta>0 \text{ fixed},\quad  N_1\gg 1. \\
\underline{\text{Regime 2:}} & N_1 \text{ large and fixed},\quad  \Delta\gg 0. \\
\underline{\text{Regime 3:}} & N_1\Delta \equiv T>0 \text{ fixed},\quad  N_1\gg 1.
\end{array}
\end{equation}

In Section~\ref{Sec:InvProbPres}, we rigorously formulate a recovery criterion and show its well-posedness for sufficiently small $\varepsilon>0$. We obtain explicit expressions for the first-order condition numbers $\mathcal{K}_{\lambda}(n)$ and $\mathcal{K}_{y}(n)$ in $\varepsilon$ associated with the recovery of $\left\{\lambda_n \right\}_{n=1}^{\lfloor \eta N_1\rfloor}$ and $\left\{y_n \right\}_{n=1}^{\lfloor \eta N_1\rfloor}$ respectively, with $\eta \in (0,1]$, which are valid for \prettyref{prob:main-problem} regardless of the considered values of $N_1 \geq 1$ and $\Delta>0$. In Section~\ref{Sec:AsympAnalysis}, we subsequently study the asymptotic behavior of $\mathcal{K}_{\lambda}(n)$ and $\mathcal{K}_{y}(n)$, when the noise intensity $\varepsilon>0$ is small, in the three regimes in \eqref{Eq:Regimes}. In more detail, in Theorem~\ref{Thm:FirstOrdCond} we show that the condition numbers decay exponentially with $\Delta$ (Regime 2) and \emph{super-exponentially} with $N_1$ (Regimes 1 and 3), implying the extreme stability exhibited by the inverse problem in the considered regimes. Although we also consider the case of $\eta=1$, corresponding to the recovery of the full sets $\left\{\lambda_{n}\right\}_{n=1}^{N_1}$ and $\left\{y_n\right\}_{n=1}^{N_1}$, we are able to show decay of the first-order condition numbers only if $\eta < 1$; this is further confirmed by the numerical simulations in Section~\ref{Sec:NumericalExamples}. 

In Section~\ref{Sec:PronyProb}, we consider the well-known Prony's method \cite{prony1795}, described in Section~\ref{Sec:PronyMethod}, and analyze its performance for the recovery of $\left\{\lambda_n\right\}_{n=1}^{\lfloor\eta N_1 \rfloor}$ and $\left\{y_n\right\}_{n=1}^{\lfloor\eta N_1 \rfloor}$ from \eqref{eq:MeasExpsum} when $N_2=1$ (i.e., when the structured noise term $y_{\text{tail}}$ in \eqref{eq:MeasExpsum} contains a single mode). In Section~\ref{sec:pronystab}, we show that, in all the regimes listed in \eqref{Eq:Regimes}, the condition numbers of Prony's method exhibit the same asymptotic behavior as the analytic condition numbers in Theorem~\ref{Thm:FirstOrdCond}. This result, together with the numerical simulations presented in Section~\ref{sec:numerics-condition-numbers}, provides strong evidence for the first-order optimality of Prony's method for recovery of $\left\{\lambda_{n}\right\}_{n=1}^{N_1}$ and $\left\{y_n\right\}_{n=1}^{N_1}$ from the measurements \eqref{eq:MeasExpsum}.  In Sections~\ref{sec:numerics-integral-measurements} and~\ref{sec:potential-recovery-point-measurements} we also demonstrate how our solution to recovery of $\left\{\lambda_n \right\}_{n=1}^{\lfloor \eta N_1\rfloor}$ and $\left\{y_n \right\}_{n=1}^{\lfloor \eta N_1\rfloor}$ from  \eqref{eq:MeasExpsum} can be used to reconstruct the spatial differential operator $\mathcal{A}$ in \eqref{eq:diffusion-pde}. 

In Section~\ref{sec:related-work} we discuss some related works and put our results into a broader context.

\section{Exponential fitting as an inverse problem}\label{Sec:WellPosed}

Recall that we consider Problem~\ref{prob:main-problem} of recovering the exponents $\left\{\lambda_{n}\right\}_{n=1}^{\lfloor \eta N_1\rfloor}$ and amplitudes $\left\{y_n\right\}_{n=1}^{\lfloor \eta N_1\rfloor}$, with $\eta\in (0,1]$, from discrete measurements \eqref{eq:MeasExpsum} of an exponential sum, under Assumptions~\ref{assump:yassump} and~\ref{assump:eigenvalues}. In this section, we first rigorously formulate a recovery criterion and show its well-posedness for sufficiently small noise intensity $\varepsilon>0$.
Then, when the noise intensity $\varepsilon>0$ is small, we study the asymptotic behavior of the first-order condition numbers $\mathcal{K}_{\lambda}(n)$ and $\mathcal{K}_{y}(n)$ in the regimes defined in \eqref{Eq:Regimes}.

\subsection{Well-posedness of the inverse problem and first-order condition numbers}\label{Sec:InvProbPres}

Given a finite set of discrete-time measurements $y(t_k)$ as in \eqref{eq:MeasExpsum}, where $t_k = k\Delta$, with step-size $\Delta>0$ and $k=0,1,\dots, 2N_1-1$, define the nonlinear function $\mathcal{F} \colon \mathbb{R}^{2N_1+1}\to \mathbb{R}^{2N_1}$ as
\begin{equation}\label{eq:ImplicitFunc}
\begin{array}{lll}
&\mathcal{F}\left(\left\{\hat{y}_n\right\}_{n=1}^{N_1},\left\{\hat{\lambda}_n \right\}_{n=1}^{N_1},\varepsilon  \right) = \operatorname{col}\left\{\sum_{n=1}^{N_1}\hat{y}_ne^{-\hat{\lambda}_n k \Delta}- y(t_k)\right\}_{k=0}^{2N_1-1}\\
&\hspace{30mm}= \operatorname{col}\left\{\sum_{n=1}^{N_1}\hat{y}_ne^{-\hat{\lambda}_n k \Delta}- \sum_{n=1}^{N_1}y_ne^{-\lambda_nt}-\varepsilon \sum_{n=N_1+1}^{N_1+N_2}y_ne^{-\lambda_nt}\right\}_{k=0}^{2N_1-1}.
\end{array}
\end{equation}
Function $\mathcal{F}$ depends on the noise $\varepsilon$ through the measurements $\left\{ y(t_k)\right\}_{k=0}^{2N_1-1}$ and, intuitively, quantifies the discrepancy between such measurements and their noise-free approximations of the form $\left\{\sum_{n=1}^{N_1}\hat{y}_ne^{-\hat{\lambda}_n k \Delta} \right\}_{k=0}^{2N_1-1} $
obtained from a candidate set of reconstruction parameters $\hat{P}:=\left(\left\{\hat{y}_n\right\}_{n=1}^{N_1},\left\{\hat{\lambda}_n \right\}_{n=1}^{N_1}\right)$, which we refer to as \emph{approximation candidate}.  We further denote $P:=\left(\left\{y_n\right\}_{n=1}^{N_1},\left\{\lambda_n \right\}_{n=1}^{N_1}\right)$.
\begin{definition}\label{def:EpsApprox}
Given $\varepsilon\in \mathbb{R}$, the approximation candidate $\hat{P}$ is an \emph{$\varepsilon$-approximation} of $P$ if $\mathcal{F}(\hat{P},\varepsilon) =0$.
\end{definition}
Definition~\ref{def:EpsApprox} is motivated by the fact that $\mathcal{F}\left(P,0  \right) =0$, meaning that in the absence of noise (i.e., when $\varepsilon=0$), the choice $\hat{P}=P$ is a zero of $\mathcal{F}$. In the absence of any additional datum beyond $\left\{y(t_k) \right\}_{k=0}^{2N_1-1}$, we propose that also for the case $\varepsilon> 0$ (i.e., in the presence of structured noise), a suitable approximation $\hat{P}$ is sought to guarantee $\mathcal{F}(\hat{P},\varepsilon)=0$. Moreover, for $\varepsilon> 0$, we expect the $\varepsilon$-approximation $\hat{P}$ to satisfy 
\begin{equation}\label{eq:EpsExpans}
\hat{\lambda}_n = \hat{\lambda}_n(\varepsilon) = \lambda_n +  \mathcal{K}_{\lambda}(n) \varepsilon+ o(\varepsilon) \,\, \text{and} \,\,  \hat{y}_n = \hat{y}_n(\varepsilon) = y_n +  \mathcal{K}_{y}(n) \varepsilon+ o(\varepsilon),\quad n\in [N_1],
\end{equation}
where the condition numbers $\mathcal{K}_{\lambda}(n)$ and $\mathcal{K}_{y}(n)$ describe how the measurement error due to the presence of structured noise with intensity $\varepsilon\neq 0$ is amplified in the recovery of an $\varepsilon$-approximation $\hat{P}$, and are formally defined as follows.
\begin{definition}\label{def:FstOrdCond}
Consider \eqref{eq:ImplicitFunc} and \eqref{eq:EpsExpans}. The first-order condition numbers (in $\varepsilon$) for the inverse problem of obtaining an $\varepsilon$-approximation $\hat{P}$ are defined by
\begin{equation*}
\begin{array}{lll}
&\mathcal{K}_{\lambda}(n) = \left. \left(\frac{\mathrm{d}}{\mathrm{d}\varepsilon}\hat{\lambda}_n(\varepsilon)\right)\right|_{\varepsilon =0}  \,\,\quad \mbox{and} \,\,\quad \mathcal{K}_{y}(n) = \left. \left(\frac{\mathrm{d}}{\mathrm{d}\varepsilon}\hat{y}_n(\varepsilon)\right)\right|_{\varepsilon =0},\,\,\quad n\in [N_1].
\end{array}
\end{equation*}
\end{definition}

To formalize the previous discussion, we first recall the definitions of Lagrange and Hermite interpolation polynomials: the Lagrange interpolation basis on the distinct nodes $\left\{\chi_n \right\}_{n=1}^{N_1}\subset \mathbb{C}$ is defined as
\begin{equation}\label{eq:Lagrange}
    \left\{ L_{n}(z) \right\}_{{n}=1}^{N_1} = \left\{\prod_{j\neq n}\frac{z-\chi_{j}}{\chi_n-\chi_{j}} \right\}_{{n}=1}^{N_1},
\end{equation}
and the Hermite interpolation basis is then given by
\begin{equation}\label{eq:Hermite}
\left\{H_{n}(z), \,\, \widetilde{H}_{n}(z)\right\}_{{n}=1}^{N_1} = \left\{ \left[ 1-2(z-\chi_{n})L_{n}'(\chi_{n})\right] L_{n}^2(z), \,\,\, (z-\chi_{n})L_{n}^2(z) \right\}_{{n}=1}^{N_1}.
\end{equation}

We are now ready to show that, when the noise intensity $\varepsilon>0$ is small, the stated inverse problem is well-posed, namely:
\begin{itemize}[noitemsep]
\item The estimates $\{\hat{\lambda}_n\}_{n=1}^{N_1}$ of $\{{\lambda}_n\}_{n=1}^{N_1}$ and $\{\hat{y}_n\}_{n=1}^{N_1}$ of $\{{y}_n\}_{n=1}^{N_1}$ exist and are unique, at least for small noise intensity;
\item The associated first-order condition numbers are well-defined and finite (i.e., the solution to $\mathcal{F}(\hat{P},\varepsilon)=0$  depends continuously on the data given by the measurements $\left\{ y(t_k)\right\}_{k=0}^{2N_1-1}$).
\end{itemize}

\begin{theorem}\label{thm:FirstOrdCondRep}
There exist $\varepsilon_*>0$ and functions 
$\left\{\hat{y}_n(\varepsilon)\right\}_{n=1}^{N_1}$ and $\left\{\hat{\lambda}_n(\varepsilon) \right\}_{n=1}^{N_1}$, defined and continuously differentiable on $(-\varepsilon_*,\varepsilon_*)$, 
such that the approximation candidate $\hat{P}(\varepsilon):=\left(\left\{\hat{y}_n(\varepsilon)\right\}_{n=1}^{N_1},\left\{\hat{\lambda}_n(\varepsilon) \right\}_{n=1}^{N_1}\right)$ satisfies $\hat{P}(0) = P$ and
\begin{equation}\label{eq:IFT}
\forall \varepsilon\in  (-\varepsilon_*,\varepsilon_*) \qquad \mathcal{F}(\hat{P},\varepsilon)=0 \iff \hat{P}=\hat{P}(\varepsilon).   
\end{equation}
Furthermore, the first-order condition numbers $\mathcal{K}_{y}(n)$ and $\mathcal{K}_{\lambda}(n)$ have the explicit expression
\begin{equation}\label{eq:Deriv}
\operatorname{col}\left\{
\begin{bmatrix}
\mathcal{K}_{y}(n) \\ \mathcal{K}_{\lambda}(n)  
\end{bmatrix}
\right\}_{n=1}^{N_1} = \operatorname{col}\left\{
\begin{bmatrix}
\frac{\mathrm{d}\hat{y}_{n} }{\mathrm{d} \varepsilon}(0) \\ \frac{\mathrm{d}\hat{\lambda}_{n} }{\mathrm{d} \varepsilon}(0)    
\end{bmatrix}
\right\}_{n=1}^{N_1} =\sum_{m=N_1+1}^{N_1+N_2}y_m \operatorname{col}\left\{ 
\begin{bmatrix}
 H_{n}(\phi_m)\\
-\frac{1}{\Delta y_{n} \phi_n } \tilde{H}_{n}(\phi_m)
\end{bmatrix}
\right\}_{n=1}^{N_1},
\end{equation}   
where $\left\{H_n,\tilde{H}_n \right\}_{n=1}^{N_1}$ are the Hermite interpolation polynomials given in \eqref{eq:Hermite}.
\end{theorem}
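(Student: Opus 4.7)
The natural approach is the Implicit Function Theorem (IFT) applied to $\mathcal{F}$ at the point $(P,0)$, followed by a Hermite-interpolation identity to identify the linear system that yields the first-order condition numbers.

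The plan is as follows. First, observe that $\mathcal{F}(P,0)=0$ holds by construction, since when $\varepsilon=0$ the measurements reduce to $y_{\text{main}}(t_k)$, which is matched by $\hat{P}=P$. Next, I would compute the Jacobian $J:=\partial_{\hat{P}}\mathcal{F}(P,0)\in\mathbb{R}^{2N_1\times 2N_1}$. Pairing the variables as $(\hat{y}_n,\hat{\lambda}_n)$, its rows (indexed by $k=0,\dots,2N_1-1$) are
\begin{equation*}
\bigl[\phi_1^k,\; -y_1\,k\Delta\,\phi_1^k,\; \phi_2^k,\; -y_2\,k\Delta\,\phi_2^k,\;\dots,\;\phi_{N_1}^k,\; -y_{N_1}\,k\Delta\,\phi_{N_1}^k\bigr].
\end{equation*}
This is a confluent Vandermonde matrix (with double nodes at $\phi_1,\dots,\phi_{N_1}$) times a nonzero diagonal scaling $\operatorname{diag}\{1,-\Delta y_n\}_n$. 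Because the eigenvalues $\lambda_n$ are simple and real by Assumption~\ref{assump:eigenvalues}, the nodes $\phi_n=e^{-\lambda_n\Delta}$ are pairwise distinct; by Assumption~\ref{assump:yassump} every $y_n\neq 0$. Hence $J$ is invertible. The IFT then furnishes $\varepsilon_*>0$ and a unique $C^1$ curve $\hat{P}(\varepsilon)=(\{\hat{y}_n(\varepsilon)\},\{\hat{\lambda}_n(\varepsilon)\})$ on $(-\varepsilon_*,\varepsilon_*)$ with $\hat{P}(0)=P$ satisfying the equivalence \eqref{eq:IFT}, and $\hat{P}'(0)=-J^{-1}\partial_\varepsilon\mathcal{F}(P,0)$.

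To obtain the explicit formula \eqref{eq:Deriv}, I would set $\alpha_n:=\mathcal{K}_y(n)$ and $\beta_n:=-\Delta y_n\mathcal{K}_\lambda(n)$, so that the linear system $J\cdot\hat{P}'(0)=-\partial_\varepsilon\mathcal{F}(P,0)$ reads
\begin{equation*}
\sum_{n=1}^{N_1}\bigl[\alpha_n\phi_n^k+\beta_n\,k\,\phi_n^k\bigr]=\sum_{m=N_1+1}^{N_1+N_2}y_m\phi_m^k,\qquad k=0,1,\dots,2N_1-1.
\end{equation*}
The key observation is that Hermite interpolation at the nodes $\{\phi_n\}_{n=1}^{N_1}$, each with multiplicity $2$, is exact on polynomials of degree $<2N_1$. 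Applying it to the monomial $q(z)=z^k$ for every $k\in\{0,\dots,2N_1-1\}$ and evaluating at $z=\phi_m$ yields
\begin{equation*}
\phi_m^k=\sum_{n=1}^{N_1}\bigl[\phi_n^k H_n(\phi_m)+k\phi_n^{k-1}\tilde{H}_n(\phi_m)\bigr].
\end{equation*}
Multiplying by $y_m$, summing over $m=N_1+1,\dots,N_1+N_2$, and matching the resulting expression with the left-hand side above gives, uniquely (by invertibility of $J$),
\begin{equation*}
\alpha_n=\sum_{m=N_1+1}^{N_1+N_2}y_m H_n(\phi_m),\qquad \beta_n=\phi_n^{-1}\sum_{m=N_1+1}^{N_1+N_2}y_m\tilde{H}_n(\phi_m),
\end{equation*}
which upon substitution back into the definitions of $\alpha_n,\beta_n$ yields precisely \eqref{eq:Deriv}.

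The main obstacle I anticipate is the bookkeeping in the last step: translating the confluent Vandermonde structure of $J$ into the Hermite-basis expansion, and tracking the scaling factor $-1/(\Delta y_n\phi_n)$ that separates the raw right-hand side from the actual $\mathcal{K}_\lambda(n)$. Invertibility of $J$ is standard once distinctness of the $\phi_n$ is invoked, and the IFT step is routine; the nontrivial content is recognising that the unique solution of the linear system is supplied exactly by the Hermite interpolants $H_n,\tilde{H}_n$ evaluated at the out-of-range nodes $\phi_m$, $m>N_1$.
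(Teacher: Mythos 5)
Your proposal is correct and follows essentially the same route as the paper: verify $\mathcal{F}(P,0)=0$, factor the Jacobian at $(P,0)$ as a confluent Vandermonde (the paper's $\mathcal{H}^{\top}_{\{\phi_n\}}$) times an invertible diagonal scaling, apply the implicit function theorem, and identify the solution of the resulting linear system via the Hermite interpolation basis evaluated at $\phi_m$, $m>N_1$. The only cosmetic slip is writing the scaling as $\operatorname{diag}\{1,-\Delta y_n\}$ rather than $\operatorname{diag}\{1,-\Delta y_n\phi_n\}$ (the paper's $D_{\hat{P}}(P,0)$), but your final bookkeeping with $k\phi_n^{k-1}\tilde H_n(\phi_m)$ absorbs the $\phi_n$ correctly and recovers \eqref{eq:Deriv} exactly.
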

\begin{proof}
See Appendix~\ref{Append:Pf5}.
\end{proof}

\subsection{Asymptotic analysis of the first-order condition numbers}\label{Sec:AsympAnalysis}
We now analyze the asymptotic behavior of the first-order condition numbers $\mathcal{K}_{\lambda}(n)$ and $\mathcal{K}_y(n)$, given in Definition~\ref{def:FstOrdCond}, in the regimes listed in \eqref{Eq:Regimes}. According to Theorem~\ref{thm:FirstOrdCondRep}, this can be done by studying the asymptotic behavior of $H_n(\phi_m)$ and $\frac{1}{\Delta y_n \phi_n}\tilde{H}_n(\phi_m)$ for fixed $n\in [N_1]$  and $m\in [N_1+N_2]\setminus[N_1]$. 



\begin{theorem}\label{Thm:FirstOrdCond}
Let $\eta \in (0,1)$ and $n,N_1\in \mathbb{N}$ with $ 
(n,N_1)\in \mathcal{N}_{\eta}$ and consider the analytic first order condition numbers $\mathcal{K}_{\lambda}(n)$ and $\mathcal{K}_{y}(n)$ given in Definition~\ref{def:FstOrdCond}, whose explicit expression is in \eqref{eq:Deriv}.
Then, in the regimes listed in \eqref{Eq:Regimes}, we can state the following. 
\begin{itemize}
    \item \underline{Regime 1:} There exist $\gamma_y(\eta,\Delta),  \ \gamma_{\lambda}(\eta,\Delta), \ \Gamma(\eta) >0$ such that, when $N_1\gg 1$,
    \begin{equation*}
    \begin{array}{lll}
    \max_{n \in [\lfloor \eta N_1\rfloor]}\left|\mathcal{K}_{y}(n)\right|\leq \gamma_y(\eta, \Delta)  e^{ -\Gamma(\eta)\Delta N_1^3},\\  \max_{ n \in [\lfloor \eta N_1\rfloor]}\left|\mathcal{K}_{\lambda}(n)\right| \leq \gamma_{\lambda}(\eta, \Delta)e^{ -\Gamma(\eta)\Delta N_1^3}.
    \end{array}
    \end{equation*}
    \item \underline{Regime 2:} Given any $\vartheta>0$, there exists $N_1(\vartheta)\in \mathbb{N}$  and $\gamma_y(\eta,\vartheta), \ \gamma_{\lambda}(\eta,\vartheta)>0$ such that, for all $N_1\geq N_1(\vartheta)$, when $\Delta \gg 0$,
    \begin{equation*}
    \begin{array}{lll}
    \max_{ n \in [\lfloor \eta N_1 \rfloor]}\left|\mathcal{K}_{y}(n)\right|\leq \gamma_y(\eta, \vartheta)  e^{ -\vartheta \Delta },\\ \max_{ n \in [\lfloor \eta N_1 \rfloor]}\left|\mathcal{K}_{\lambda}(n)\right| \leq \frac{\gamma_{\lambda}(\eta, \vartheta)}{\Delta}e^{ -\vartheta \Delta}.
    \end{array}
    \end{equation*}
    \item \underline{Regime 3:} There exist $\gamma_y(\eta,T), \ \gamma_{\lambda}(\eta,T), \ \Gamma(\eta,T)>0$ such that, when $N_1 \gg 1$, 
    \begin{equation*}
    \begin{array}{lll}
    \max_{n \in [\lfloor \eta N_1\rfloor]}\left|\mathcal{K}_{y}(n)\right|\leq \gamma_y(\eta, T)e^{-\Gamma(\eta,T)N_1^2},\\  \max_{ n \in [\lfloor \eta N_1\rfloor]}\left|\mathcal{K}_{\lambda}(n)\right| \leq \gamma_{\lambda}\left(\eta,T \right)e^{-\Gamma(\eta,T)N_1^2}.
    \end{array}
    \end{equation*}
\end{itemize}
\end{theorem}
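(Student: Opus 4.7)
The starting point is the explicit representation \eqref{eq:Deriv} from Theorem~\ref{thm:FirstOrdCondRep}: since $N_2$ is fixed and Assumption~\ref{assump:yassump} bounds the amplitudes $y_m$, the problem reduces to uniform bounds, over $n \in [\lfloor \eta N_1 \rfloor]$ and $m \in \{N_1+1,\dots,N_1+N_2\}$, on $|H_n(\phi_m)|$ and $|\tilde H_n(\phi_m)|/(\Delta|y_n|\phi_n)$. Both Hermite polynomials factor through $L_n^2(\phi_m)$, so everything hinges on estimating $|L_n(\phi_m)|$ together with two prefactor computations: the ratio $|(\phi_m-\phi_n)/(\Delta y_n \phi_n)|$ is bounded by $\ell/\Delta$ (this is the source of the extra $1/\Delta$ that appears for $\mathcal{K}_\lambda$ in Regime 2), while $|1 - 2(\phi_m-\phi_n)L_n'(\phi_n)| \leq 1 + C N_1/\Delta$ by splitting the sum defining $L_n'(\phi_n) = \sum_{j\neq n}(\phi_n-\phi_j)^{-1}$ into $j<n$ and $j>n$ and applying Proposition~\ref{prop:EigDiffBound}. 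The resulting polynomial growth in $N_1$ is negligible against the super-exponential decay of $|L_n(\phi_m)|^2$ derived below.

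For the key estimate I would split $L_n(\phi_m) = \Pi_n^{-}(\phi_m)\cdot\Pi_n^{+}(\phi_m)$, where $\Pi_n^{\pm}$ are the restrictions of the Lagrange product to $j<n$ and $n<j\leq N_1$. For $\Pi_n^{-}$, since $\phi_j>\phi_n>\phi_m$, each factor equals $1+(\phi_n-\phi_m)/(\phi_j-\phi_n)$; writing $\phi_j-\phi_n = \phi_j(1-e^{-(\lambda_n-\lambda_j)\Delta})$ turns the corrections into geometric ratios summable via Proposition~\ref{prop:EigDiffBound}, giving $|\Pi_n^{-}(\phi_m)| = O(1)$. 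For the decisive factor $\Pi_n^{+}$, the bounds $\phi_j-\phi_m \leq \phi_j$ and $\phi_n-\phi_j = \phi_n(1-e^{-(\lambda_j-\lambda_n)\Delta})$ yield $|\Pi_n^{+}(\phi_m)| \leq \prod_{n<j\leq N_1}(e^{(\lambda_j-\lambda_n)\Delta}-1)^{-1}$. Splitting according to whether $(\lambda_j-\lambda_n)\Delta\geq 1$ (using $(e^x-1)^{-1}\leq 2e^{-x}$) or $<1$ (using $(e^x-1)^{-1}\leq x^{-1}$) and inserting the quadratic lower bound $\lambda_j-\lambda_n \geq \upsilon(j^2-n^2)$ produces
\begin{equation*}
|L_n(\phi_m)|^2 \leq \widetilde{C}(\eta,N_1,\Delta)\,\exp\bigl(-2\upsilon\Delta\,\Psi(n;N_1)\bigr),
\end{equation*}
with $\widetilde{C}$ at most polynomial in $N_1$ in Regimes 1 and 3, and bounded in Regime 2.

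The three regime statements now follow by estimating $\Psi(n;N_1)$ on $\mathcal{N}_\eta$. A direct computation from \eqref{eq:CalSdef} gives $\Psi(n;N_1) \geq c(\eta) N_1^3$ for some $c(\eta)>0$ whenever $n\leq\lfloor\eta N_1\rfloor$ with $\eta<1$; this is exactly where the restriction $\eta<1$ is indispensable, since $\Psi(N_1;N_1) = 0$ by the convention in Section~1. Substituting: Regime 1 ($\Delta$ fixed, $N_1\gg 1$) yields $e^{-2\upsilon c(\eta)\Delta N_1^3}$, with polynomial prefactors absorbed into $\gamma_y(\eta,\Delta)$ and $\gamma_\lambda(\eta,\Delta)$; Regime 3 ($\Delta N_1 = T$) converts $\Delta N_1^3 = T N_1^2$ into the claimed $e^{-\Gamma(\eta,T)N_1^2}$; and Regime 2 ($N_1$ large fixed, $\Delta\to\infty$) observes that the exponent is linear in $\Delta$ with slope $2\upsilon c(\eta) N_1^3$, which can be made to exceed any prescribed $\vartheta$ by taking $N_1\geq N_1(\vartheta)$.

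The main obstacle is the careful bookkeeping required for $\Pi_n^{-}$ and for $L_n'(\phi_n)$: both contain ratios that individually can appear to blow up when $\phi_j$ is close to $\phi_n$, and only the multiplicative structure $\phi_j-\phi_n = \phi_j(1-e^{-(\lambda_n-\lambda_j)\Delta})$ combined with Proposition~\ref{prop:EigDiffBound} turns them into summable, geometric-type bounds. A secondary subtlety appears in Regime 3, where $\Delta = T/N_1$ is small and the ``small-exponent'' case of the splitting of $\Pi_n^{+}$ covers many indices near $n$; tracking the resulting polynomial prefactor $\widetilde{C}$ requires some elementary manipulation of logarithmic sums but does not affect the leading exponential rate $N_1^2$.
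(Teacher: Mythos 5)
Your proposal follows essentially the same route as the paper's proof in Appendix~\ref{Sec:ProofsAsymptoticAnalysis}: reduce via \eqref{eq:Deriv} to bounding $L_n^2(\phi_m)$ times the Hermite prefactors, control $(\phi_m-\phi_n)L_n'(\phi_n)$ by splitting the sum into $j<n$ and $j>n$ and using Proposition~\ref{prop:EigDiffBound}, extract the dominant factor $e^{-2\upsilon\Delta\Psi(n;N_1)}$ from the Lagrange product (your $\Pi_n^{\pm}$ split is the paper's $\mathcal{Z}^1/(\mathcal{Z}^2\mathcal{Z}^3)$ decomposition with the $\theta^i$ corrections), and then invoke $\Psi(\lfloor\eta N_1\rfloor;N_1)\geq a(\eta)N_1^3$ from Proposition~\ref{Prop:VFrakAsymp} in each regime. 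Two quantitative slips are harmless but worth noting: the prefactor $\left|1-2(\phi_m-\phi_n)L_n'(\phi_n)\right|$ is $O(N_1)$ uniformly in $\Delta\geq\Delta_*$ rather than $1+O(N_1/\Delta)$, since the $j>n$ portion of $L_n'(\phi_n)$ contributes terms of order one that do not decay as $\Delta\to\infty$ (cf.\ \eqref{eq:AuxtermDeriv}), and in Regime 3 the correction $\widetilde C$ is of size $e^{O(N_1)}$ rather than polynomial in $N_1$ (cf.\ the $O(N_1)$ term in the exponent of \eqref{eq:LSqrdReg3}) — both are still absorbed by the $e^{-cN_1^3}$ and $e^{-cN_1^2}$ decays, so the conclusions stand.
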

\begin{proof}
See Appendix~\ref{Sec:ProofsAsymptoticAnalysis}.
\end{proof}
Theorem~\ref{Thm:FirstOrdCond} provides upper bounds on the asymptotic behavior of the condition numbers $\mathcal{K}_{y}(n)$ and $\mathcal{K}_{\lambda}
(n)$ in the regimes listed in \eqref{Eq:Regimes}, uniformly in $n\in [\lfloor \eta N_1 \rfloor]$ for a fixed $\eta\in (0,1)$.
We consider $\eta\in(0,1)$, instead of $\eta\in(0,1]$, because, when $\eta =1$, the proof of Theorem~\ref{Thm:FirstOrdCond} only yields $\mathcal{K}_{\lambda}(N_1) = O(1)$ and $\mathcal{K}_y(N_1) = O(1)$, uniformly in $N_1$ and in $\Delta$, as is also demonstrated by the numerical simulations in Section~\ref{Sec:NumericalExamples}.

In the next section, we provide theoretical guarantees showing that the condition numbers of Prony's method for exponential fitting exhibit the same asymptotic behavior as the analytic condition numbers in \eqref{eq:Deriv}, thereby implying that Prony's method is optimal for the considered recovery problem.  

\section{Prony's method and its stability}\label{Sec:PronyProb}
We employ Prony's method for exponential fitting to solve the problem of recovering  the eigenvalues $\left\{\lambda_n \right\}_{n=1}^{N_1}$ and the amplitudes $\left\{y_n \right\}_{n=1}^{N_1}$ from the set of discrete measurements of an exponential sum
\begin{equation}\label{eq:MeasurementsProny}
\begin{array}{lll}
y(t_k) = \sum_{n=1}^{N_1}y_ne^{-\lambda_n k \Delta}+\varepsilon y_{N_1+1}e^{-\lambda_{N_1+1} k \Delta} \overset{\eqref{eq:phinDef}}{=}\sum_{n=1}^{N_1}y_n\phi_n^k+\varepsilon y_{N_1+1}\phi_{N_1+1}^k,
\end{array}
\end{equation}
where $t_k=k \Delta$, $k=0,1,\dots ,2N_1-1$, which amounts to considering \eqref{eq:MeasExpsum} with $N_2=1$. We begin by describing the method in Section~\ref{Sec:PronyMethod}. We subsequently show that given the measurements \eqref{eq:MeasurementsProny}, for the regimes introduced in \eqref{Eq:Regimes}, Prony's method achieves first-order optimal recovery, meaning that the asymptotic behavior of the condition numbers associated with Prony's method is the same as that of the analytic first-order condition numbers, which we have obtained in Theorem~\ref{Thm:FirstOrdCond}. 


\begin{remark}
 Differently from the case of the analytic condition numbers, here we will also consider the case $\eta=1$ at different stages of the proofs. Specifically, deriving an upper bound on $|\phi_n-\hat{\phi}_n|$, $n\in [N_1]$, where $\hat{\phi}_n$ is the Prony approximation obtained for $\phi_n$ from the perturbed Prony polynomial $\bar{q}(z)$ (see Section~\ref{Sec:PronyMethod}), will be crucial to bound the condition numbers $\mathcal{K}_{\lambda}(n)$ and $\mathcal{K}_{y}(n)$ with $n\in [\lfloor \eta N_1 \rfloor ]$ for a fixed $\eta \in (0,1)$.  
\end{remark}

\begin{remark}\label{rem:combExpl}
In \eqref{eq:MeasurementsProny} we consider a single term in the structured noise term $y_{\text{tail}}$, i.e., $N_2=1$.
The analysis of Prony's method for $N_2>1$ would exhibit a combinatorial explosion of terms, which \emph{essentially} complicates the derivation of the condition numbers. Therefore, the consideration of measurements \eqref{eq:MeasExpsum}
with arbitrary $N_2\in \mathbb{N}$ is left for future research.
\end{remark}

\subsection{Prony's method}\label{Sec:PronyMethod}

Prony's method reduces the exponential fitting problem to a three-step procedure, which involves solving two linear systems, in combination with a root-finding step, as described in \prettyref{alg:classical-prony}.

Let us denote the \emph{unperturbed} measurements, obtained by setting $\varepsilon=0$ in \eqref{eq:MeasurementsProny}, as
$$
m_k := \sum_{n=1}^{N_1} y_n \phi_n^k, \quad k=0,1,\dots, 2N_1-1.
$$

Unless stated otherwise, all polynomials are considered to be of complex variable.
The \emph{unperturbed} monic Prony polynomial is  given by
\begin{equation}\label{eq:prony-monic-def}
    p\left(z\right):=\prod_{n=1}^{N_1}\left(z-\phi_{n}\right) = z^{N_1}+\sum_{j=0}^{N_1-1}p_{j}z^{j}.
\end{equation}
    
Prony's main insight is that the unperturbed measurements $\{m_k\}_{k=0}^{2N_1-1}$ satisfy a linear recurrence relation with the coefficients $\{p_j\}_{j=0}^{N_1-1}$, which can therefore be obtained by solving a Hankel-type linear system:
\begin{align}
    &H_{N_1} \cdot \operatorname{col}\left\{ p_j\right\}_{j=0}^{N_1-1}  = -   \operatorname{col}\left\{m_k \right\}_{k=N_1}^{2N_1-1}, \quad
    H_{N_1}: = \begin{bmatrix}
m_0 & m_1 & \dots &m_{N_1-1}\\
\vdots & \vdots & \ddots & \vdots\\
m_{N_1-1} & m_{N_1} & \dots & m_{2N_1-2}
 \end{bmatrix}.\label{eq:UnpertHanekl}
\end{align}
The fact that the actual measurements are $\tilde{m}_k=y(t_k)$  in \eqref{eq:MeasurementsProny} gives rise to a perturbed Hankel matrix 
\begin{align}
    &\tilde{H}_{N_1}: = \begin{bmatrix}
        \tilde{m}_{0} & \tilde{m}_{1} & \tilde{m}_{2} & \dots & \tilde{m}_{N_1-1}\\
        \vdots & \vdots & \vdots & \ddots & \vdots\\
        \tilde{m}_{N_1-1} & \tilde{m}_{N_1} & \tilde{m}_{N_1+1} & \dots & \tilde{m}_{2N_1-2}
        \end{bmatrix}, \qquad \tilde{m}_k=y(t_k).\label{eq:PertHankel}
\end{align}
The solution to the linear system 
\begin{align}
    &\tilde{H}_{N_1}\cdot \operatorname{col}\left\{q_j \right\}_{j=0}^{N_1-1} = -   \operatorname{col}\left\{\tilde{m}_k\right\}_{k=N_1}^{2N_1-1} \label{eq:PertHankelSol}
\end{align}
provides the coefficients of the \emph{perturbed} monic Prony polynomial
\begin{equation}\label{eq:q-monic-def}
q\left(z\right):=\prod_{n=1}^{N_1}\left(z-\tilde{\phi}_{n}\right) = z^{N_1}+\sum_{j=0}^{n-1}q_{j}z^{j}.
\end{equation}
The roots $\{\tilde{\phi}_n\}_{n=1}^{N_1}$ of $q\left(z\right)$ provide the node estimates and are then used to obtain the amplitude estimates $\left\{\tilde{y}_n \right\}_{n=1}^{N_1}$  by solving the Vandermonde linear system
\begin{equation}\label{eq:PertVandSol}
\tilde{V} \cdot \operatorname{col}\{\tilde{y}_n\}_{n=1}^{N_1} = \operatorname{col}\{\tilde{m}_n\}_{n=0}^{N_1-1},\quad \tilde{V} := \begin{bmatrix}
    1&  \dots &1 &1\\
    \tilde{\phi}_1 &  \dots & \tilde{\phi}_{N_1-1}& {\tilde{\phi}}_{N_1}\\
    \vdots  & \ddots & \vdots&\vdots\\
    \tilde{\phi}_1^{N_1-1} &  \dots & \tilde{\phi}_{N_1-1}^{N_1-1}& {\tilde{\phi}}_{N_1}^{N_1-1}
    \end{bmatrix}.
\end{equation}

\begin{algorithm2e}[!htb]
    \SetKwInOut{Input}{Input} \SetKwInOut{Output}{Output}
    \Input{Sequence $\{\tilde{m}_k\}_{k=0}^{2N_1-1}$}
    \Output{Estimates for the nodes $\{\phi_n\}_{n=1}^{N_1}$ and amplitudes
      $\{y_n\}_{n=1}^{N_1}$}
      From $\{\tilde{m}_k\}$, construct the Hankel matrix $\tilde{H}_{N_1}$ given in \eqref{eq:PertHankel} \;

      Assuming $\operatorname{det}\tilde{H}_{N_1} \neq 0$, solve the linear system \eqref{eq:PertHankelSol} to obtain ${q_j}$ \;
  
      Compute the roots $\{\tilde \phi_n\}_{n=1}^{N_1}$ of the (perturbed) Prony polynomial $q(z)$ with coefficients ${q_j}$ as in \eqref{eq:q-monic-def} \;

      Construct $\tilde{V}:= \big[\tilde{\phi}_j^k\big]_{k=0,\dots,n-1}^{j=1,\dots,n}$  and solve the linear system \eqref{eq:PertVandSol} to obtain $\{\tilde y_n\}_{n=1}^{N_1}$
 
      \Return the estimates $\{\tilde \phi_n\}_{n=1}^{N_1}$ and $\{\tilde y_n\}_{n=1}^{N_1}$.
    \caption{Classical Prony's method}
    \label{alg:classical-prony}
  \end{algorithm2e}

Following \cite{katz2024accuracy}, we consider the  ``homogeneous'' version of Prony's method given in Algorithm~\ref{alg:homo-prony}, which is computationally equivalent to \prettyref{alg:classical-prony}; in this case, $\bar{p}(z)$ denotes the corresponding homogenized version of $p(z)$,
\begin{align}
\bar{p}\left(z\right)=\operatorname{det}\begin{bmatrix}1 & z & z^{2} & \dots & z^{N_1-1} & z^{N_1}\\
m_{0} & m_{1} & m_{2} & \dots & m_{N_1-1} & m_{N_1}\\
\vdots & \vdots & \vdots & \ddots & \vdots & \vdots\\
m_{N_1-1} & m_{N_1} & m_{N_1+1} & \dots & m_{2N_1-2} & m_{2N_1-1}
\end{bmatrix}=: \det \mathfrak{P}(z)\label{eq:HomogPronyPol},
\end{align}
while $\bar{q}(z)$ denotes the homogenized version of $q(z)$,
	\begin{equation}\label{eq:qbar-def}
	\bar{q}(z)=\operatorname{det}\begin{bmatrix}1 & z & z^{2} & \dots & z^{N_1-1} & z^{N_1}\\
\tilde{m}_{0} & \tilde{m}_{1} & \tilde{m}_{2} & \dots & \tilde{m}_{N_1-1} & \tilde{m}_{N_1}\\
\vdots & \vdots & \vdots & \ddots & \vdots & \vdots\\
\tilde{m}_{N_1-1} & \tilde{m}_{N_1} & \tilde{m}_{N_1+1} & \dots & \tilde{m}_{2N_1-2} & \tilde{m}_{2N_1-1}
\end{bmatrix}=: \det \mathfrak{Q}(z).
\end{equation}

Polynomials $p(z)$ and $\bar{p}(z)$, as well as $q(z)$ and $\bar{q}(z)$, have the same roots, although $p(z)$ and $q(z)$ are monic while $\bar{p}(z)$ and $\bar{q}(z)$ are not. In the following, we will consider the Homogeneous Prony's method in Algorithm~\ref{alg:homo-prony} and hence focus on $\bar{p}$ and $\bar{q}$.

\begin{algorithm2e}[!htb]
  \SetKwInOut{Input}{Input} \SetKwInOut{Output}{Output}
  \Input{Sequence $\{\tilde{m}_k\}_{k=0}^{2N_1-1} $}
  \Output{Estimates for the nodes $\{\phi_n\}_{n=1}^{N_1}$ and amplitudes
    $\{y_n\}_{n=1}^{N_1}$}
	From $\{\tilde{m}_k\}$, compute the roots $\{\tilde{\phi}_n\}_{n=1}^{N_1}$ of the \emph{perturbed} Prony polynomial $\bar{q}(z)$ in \eqref{eq:qbar-def} \;
Construct $\tilde{V}:= \big[\tilde{\phi}_j^k\big]_{k=0,\dots,n-1}^{j=1,\dots,n}$  and solve the linear system \eqref{eq:PertVandSol}  to obtain $\{\tilde y_n\}_{n=1}^{N_1}$ 

\Return the estimates $\{\tilde \phi_n\}_{n=1}^{N_1}$ and $\{\tilde y_n\}_{n=1}^{N_1}$.
\caption{Homogeneous Prony's method}
 \label{alg:homo-prony}
\end{algorithm2e}

\subsection{Bounds on the condition numbers for Prony's method}\label{sec:pronystab}

Recalling the definition of $\phi_n$ in \eqref{eq:phinDef}, we start by presenting a simple estimate that is fundamental to the subsequent analysis and will be used multiple times throughout the paper.

\begin{proposition}\label{Prop:relSepUnif} Let $\Delta_*>0$. There exists $0<\tau<\frac{1}{2}$, which depends only on  $\Delta_*$, such that, uniformly in $n\in \mathbb{N}$ and $\Delta \geq \Delta_*$,
\begin{equation}\label{eq:TauSeparationDelta}
j\in \mathbb{N}\setminus \left\{n \right\} \Longrightarrow \frac{\left|\phi_j-\phi_n \right|}{\phi_n}\geq 2\tau.  
\end{equation}
\end{proposition}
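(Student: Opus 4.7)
The plan is to explicitly evaluate the ratio $|\phi_j-\phi_n|/\phi_n$ using $\phi_n=e^{-\lambda_n\Delta}$, identify the worst case over $j \in \mathbb{N}\setminus\{n\}$, and then apply the lower bound on eigenvalue gaps from Proposition~\ref{prop:EigDiffBound} together with $\Delta \geq \Delta_*$.

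First I would rewrite
\begin{equation*}
\frac{|\phi_j - \phi_n|}{\phi_n} \;=\; \bigl|e^{-(\lambda_j-\lambda_n)\Delta}-1\bigr|
\;=\;
\begin{cases}
1-e^{-(\lambda_j-\lambda_n)\Delta}, & j>n,\\
e^{(\lambda_n-\lambda_j)\Delta}-1, & j<n.
\end{cases}
\end{equation*}
The elementary inequality $e^x-1 > 1-e^{-x}$ for all $x>0$ shows that, for any fixed positive gap $g=|\lambda_j-\lambda_n|$, the ``$j>n$'' branch gives a smaller value than the ``$j<n$'' branch. Both branches are strictly increasing in $g$, so the minimum of $|\phi_j-\phi_n|/\phi_n$ over $j \in \mathbb{N}\setminus\{n\}$ is attained at $j=n+1$ and equals $1 - e^{-(\lambda_{n+1}-\lambda_n)\Delta}$.

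Next, I invoke Proposition~\ref{prop:EigDiffBound} with $m=n+1$ to get the uniform lower bound $\lambda_{n+1}-\lambda_n \geq \upsilon\bigl((n+1)^2-n^2\bigr) = \upsilon(2n+1) \geq 3\upsilon$ for every $n\in\mathbb{N}$. Together with the hypothesis $\Delta \geq \Delta_*$, this yields
\begin{equation*}
\frac{|\phi_j-\phi_n|}{\phi_n} \;\geq\; 1 - e^{-(\lambda_{n+1}-\lambda_n)\Delta} \;\geq\; 1 - e^{-3\upsilon\Delta_*} \qquad \text{for all } n\in\mathbb{N},\ j\neq n.
\end{equation*}

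Setting $2\tau := 1 - e^{-3\upsilon\Delta_*}$, we have $\tau \in (0,\tfrac{1}{2})$ because $e^{-3\upsilon\Delta_*}\in(0,1)$, and $\tau$ depends only on $\upsilon$ (a fixed constant attached to the operator $\mathcal{A}$) and on $\Delta_*$, as required. There is no genuine obstacle here: the only substantive input is the quadratic growth estimate from Proposition~\ref{prop:EigDiffBound}, and the proof amounts to checking that the nearest-neighbor gap in the $\lambda$-sequence is bounded below by a positive constant independent of $n$, which then, combined with $\Delta \geq \Delta_*$, gives a uniform relative separation for the $\phi_n$'s.
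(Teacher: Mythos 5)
Your overall strategy is the same as the paper's: reduce to the nearest neighbours of $\phi_n$, express the ratios as $1-e^{-(\lambda_{n+1}-\lambda_n)\Delta}$ and $e^{(\lambda_n-\lambda_{n-1})\Delta}-1$, and feed in the gap lower bound $\lambda_{m}-\lambda_{n}\geq \upsilon(m^2-n^2)$ from Proposition~\ref{prop:EigDiffBound} together with $\Delta\geq\Delta_*$. The final constant $1-e^{-3\upsilon\Delta_*}$ and the resulting $\tau$ are valid. However, one intermediate step is wrong as stated: the claim that the minimum of $|\phi_j-\phi_n|/\phi_n$ over $j\neq n$ is attained at $j=n+1$. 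The inequality $e^x-1>1-e^{-x}$ compares the two branches \emph{at equal gaps}, but the relevant gaps are different: the left neighbour contributes $e^{(\lambda_n-\lambda_{n-1})\Delta}-1$ with gap $\lambda_n-\lambda_{n-1}$, while the right neighbour contributes $1-e^{-(\lambda_{n+1}-\lambda_n)\Delta}$ with the (typically larger) gap $\lambda_{n+1}-\lambda_n$. For small $\upsilon\Delta$ the left-neighbour term can be strictly smaller: e.g.\ with $\lambda_m=\upsilon m^2$, $n=2$ and $\upsilon\Delta=0.01$ one gets $e^{3\upsilon\Delta}-1\approx 0.0305 < 0.0488\approx 1-e^{-5\upsilon\Delta}$, so the minimum sits at $j=n-1$. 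Your displayed chain $\frac{|\phi_j-\phi_n|}{\phi_n}\geq 1-e^{-(\lambda_{n+1}-\lambda_n)\Delta}$ is therefore false for some $j<n$.

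The gap is easily repaired, and the repair is exactly what the paper does: bound the two neighbouring ratios \emph{separately} rather than locating the minimum. For $j>n$ monotonicity gives $1-e^{-(\lambda_j-\lambda_n)\Delta}\geq 1-e^{-\upsilon(2n+1)\Delta}\geq 1-e^{-3\upsilon\Delta_*}$, and for $j<n$ (so $n\geq 2$) one has
\begin{equation*}
e^{(\lambda_n-\lambda_{j})\Delta}-1\;\geq\; e^{(\lambda_n-\lambda_{n-1})\Delta}-1\;>\;1-e^{-(\lambda_n-\lambda_{n-1})\Delta}\;\geq\;1-e^{-\upsilon(2n-1)\Delta}\;\geq\;1-e^{-3\upsilon\Delta_*},
\end{equation*}
using $2n-1\geq 3$ for $n\geq 2$. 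With this substitution your conclusion and your choice $2\tau=1-e^{-3\upsilon\Delta_*}$ (the paper takes the slightly smaller $\tau=\tfrac13(1-e^{-3\upsilon\Delta_*})$, which changes nothing) are both correct.
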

\begin{proof}
See Appendix~\ref{Sec:ProofPropositionPhi}.
\end{proof}

We can now show the optimality of Prony's method, whose first-order condition numbers have the same asymptotic behavior as the analytic ones.

{\bfseries \noindent Proof strategy outline.}  
Before stating our main theorems, we provide an overview of the proof strategy behind them. To obtain bounds on the first-order condition numbers for Prony's method in the regimes listed in \eqref{Eq:Regimes}, we employ an argument consisting of the following steps:
\begin{enumerate}
    \item We consider the \emph{unperturbed} non-monic Prony polynomial $\bar{p}(z)$, associated with noise-free measurements \eqref{eq:MeasurementsProny} with $\varepsilon=0$, and the corresponding \emph{perturbed} non-monic Prony polynomial $\bar{q}(z)$, associated with noisy measurements \eqref{eq:MeasurementsProny} with $\varepsilon>0$, as defined in Section~\ref{Sec:PronyMethod}. Building upon previous results obtained in \cite{katz2024accuracy}, we derive an explicit representation for the discrepancy $\bar{q}(z)-\bar{p}(z)$: see Lemma~\ref{lem:PolyDiscrep} in Appendix~\ref{Append:Pf8} below.

    \item We consider the regimes listed in \eqref{Eq:Regimes} and we show that, for $\tau>0$ chosen as in Proposition~\ref{Prop:relSepUnif}, there exists $\varepsilon_0>0$ such that, for all $\varepsilon\in [0,\varepsilon_0]$, the perturbed Prony polynomial $\bar{q}(z)$ has roots $\hat{\phi}_n^{Pr}$, $n\in [N_1]$, satisfying the relative error estimate $\left|\hat{\phi}_n^{Pr} - \phi_n \right|<\tau \phi_n$ for all $n\in [N_1]$, as stated in Theorem~\ref{thm:unifbdd} below.  The roots $\hat{\phi}_n^{Pr} $ are then employed to generate approximations $\hat{\lambda}_n^{Pr}$ of the eigenvalues, obtained as $\hat{\lambda}_n^{Pr}:=-\Delta^{-1} \ln \left(\hat{\phi}_n^{Pr} \right)$.

    \item Then, we fix $\eta\in (0,1)$ and we show that $\left|\hat{\lambda}_n^{Pr}-\lambda_n \right| = \mathcal{K}_{\lambda}^{Pr}(n)\cdot \varepsilon$ for appropriately defined condition numbers $\mathcal{K}_{\lambda}^{Pr}(n)$ associated with Prony's method. Moreover, for $\varepsilon\in (0,\varepsilon_1]$ with some $0<\varepsilon_1\leq \varepsilon_0$, $\max_{n\in [\lfloor \eta N_1 \rfloor]}\left| \mathcal{K}_{\lambda}^{Pr}(n)\right|$ has an asymptotic behavior that matches the analytic asymptotic behavior obtained in  Theorem~\ref{Thm:FirstOrdCond}, for all regimes \eqref{Eq:Regimes}. This result is shown in Theorem~\ref{thm:unifbdd11}.

    \item Finally, we show that Prony's method yields approximations $\hat{y}_n^{Pr}$ satisfying $\left|\hat{y}_n^{Pr}-y_n \right|=\mathcal{K}_y^{Pr}(n)\cdot \varepsilon$. We derive an asymptotic bound  on $\max_{n\in [N_1]}\left| \phi_n -\hat{\phi}^{Pr}_n\right|$, which is used to bound $\max_{n\in [\lfloor \eta N_1 \rfloor]}\left|\mathcal{K}_y^{Pr}(n)\right|$. We then show that $\max_{n\in [\lfloor \eta N_1 \rfloor]}\left|\mathcal{K}_y^{Pr}(n)\right|$ also exhibits the same asymptotic behavior as the analytic one obtained  in  Theorem~\ref{Thm:FirstOrdCond} for all regimes \eqref{Eq:Regimes}, provided $\varepsilon \in (0,\varepsilon_1]$. This result is proved in Theorem~\ref{thm:PronyAmplitudeCondition}.

\end{enumerate}

\begin{remark}\label{rem:prony-sequence}
For each regime, we analyze a sequence of Prony problems with different characteristic parameters ($N_1$ or $\Delta$). For example, for Regime 1 in \eqref{Eq:Regimes}, the degree of both polynomials $\bar{p}(z)$ and $\bar{q}(z)$ grows as $N_1$ when $N_1 \to \infty$. This problem is different from the classical super-resolution problem (see, e.g., \cite{katz2024accuracy}), where the number of nodes $\left\{\phi_n \right\}_{n=1}^{N_1}$ is usually fixed. Furthermore, the classical super-resolution problem assumes that the nodes $\left\{\phi_j\right\}_{j=1}^{N_1}$ satisfy a minimal separation bound of the form  $\min_{i\neq j}\left|\phi_i - \phi_j \right|\geq \delta $ for some fixed $\delta>0$. Since the eigenvalues $\left\{\lambda_n \right\}_{n=1}^{\infty}$ are such that $\lim_{n\to \infty}\lambda_n = +\infty$, in our case a minimal separation parameter $\delta>0$ does not exist for $\left\{\phi_n\right\}_{n=1}^{N_1}$ of the form $\phi_n = e^{-\Delta \lambda_n}$ if $N_1$ is large enough. Compared to the classical super-resolution problem, these differences make the analysis of Prony's method in the considered context significantly more challenging.
\end{remark}

\begin{theorem}\label{thm:unifbdd}
    Let $\Delta_* >0$ be fixed and consider Regimes 1 and 2 in \eqref{Eq:Regimes}. There exist $\varepsilon_{\Delta_*}>0$ such that, for all $(n,N_1)\in \mathcal{N}_{1}$ and all $\varepsilon\in [0,\varepsilon_{\Delta_*}]$, the perturbed Prony polynomial $\bar{q}\left(z\right)$ in \eqref{eq:qbar-def} corresponding to $\varepsilon$ has a root $\hat{\phi}_n^{Pr}$ satisfying the relative error bound 
    \begin{equation}\label{eq:ZetaBound}
    \left|\phi_n-\hat{\phi}_n^{Pr} \right|\leq \tau\phi_n.
    \end{equation}
    Analogously, let $T>0$ be fixed and consider Regime 3 in \eqref{Eq:Regimes}. There exist $\varepsilon_{T}>0$ such that, for all $(n,N_1)\in \mathcal{N}_{1}$ and all $\varepsilon\in [0,\varepsilon_{T}]$, the perturbed Prony polynomial $\bar{q}\left(z\right)$ corresponding to $\varepsilon$ has a root $\hat{\phi}_n^{Pr}$ satisfying the relative error bound in \eqref{eq:ZetaBound}.
\end{theorem}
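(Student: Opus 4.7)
The plan is a Rouché argument on circles $C_n := \{z\in\mathbb{C} : |z-\phi_n| = \tau\phi_n\}$ centered at each unperturbed node $\phi_n$, where $\tau$ is supplied by Proposition~\ref{Prop:relSepUnif}. Since that proposition gives relative separation $\geq 2\tau$ uniformly in $n$ and $\Delta\geq\Delta_*$, the disk bounded by $C_n$ contains $\phi_n$ as the unique zero of $\bar p(z)=(-1)^{N_1}\det(H_{N_1})\cdot\prod_j(z-\phi_j)$. So it suffices to prove
\[
|\bar q(z)-\bar p(z)| < |\bar p(z)|, \qquad z\in C_n,
\]
for every $\varepsilon\in[0,\varepsilon_{\Delta_*}]$; Rouché then delivers a simple zero $\hat\phi_n^{Pr}$ of $\bar q$ inside $C_n$, which is exactly~\eqref{eq:ZetaBound}.

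For the lower bound on $|\bar p|$, the triangle inequality combined with Proposition~\ref{Prop:relSepUnif} yields $|z-\phi_j|\geq\tfrac{1}{2}|\phi_j-\phi_n|$ for every $j\neq n$ on $C_n$, whence
\[
|\bar p(z)| \geq |\det H_{N_1}|\cdot\tau\phi_n\cdot 2^{-(N_1-1)}\prod_{j\neq n}|\phi_j-\phi_n|.
\]
Factoring $\det H_{N_1}=\bigl(\prod_{j}y_j\bigr)\bigl(\prod_{i<j}(\phi_j-\phi_i)\bigr)^2$ and invoking Assumption~\ref{assump:yassump} together with Proposition~\ref{prop:EigDiffBound} turns this into an explicit lower bound in terms of the eigenvalue differences.

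For the upper bound, I would use Lemma~\ref{lem:PolyDiscrep}. Since $\tilde m_k-m_k=\varepsilon y_{N_1+1}\phi_{N_1+1}^k$, multilinearity of the determinant expands $\bar q-\bar p$ into a sum indexed by nonempty subsets $S\subseteq[N_1]$ of perturbed data rows, with each term of order $\varepsilon^{|S|}$. The leading terms ($|S|=1$) produce $(\bar q-\bar p)(z)=\varepsilon R_n(z)+O(\varepsilon^2)$ uniformly on $C_n$, where $R_n$ is expressible through explicit Vandermonde and Hankel subminors that can be bounded via Assumption~\ref{assump:yassump} and Proposition~\ref{prop:EigDiffBound}.

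The hard part will be verifying the Rouché inequality uniformly in $n\in[N_1]$ and across the three regimes in~\eqref{Eq:Regimes}. Both sides involve huge products that can grow or decay super-exponentially in $N_1$ and $\Delta$; only the ratio $|\bar q-\bar p|/|\bar p|$ should be tame. I expect that, after cancellation, this ratio reduces to a Lagrange-interpolation-type sum closely related to the expression $\sum_m y_m H_n(\phi_m)$ appearing in~\eqref{eq:Deriv}, and is therefore governed by the same mechanisms that drive the super-exponential bounds of Theorem~\ref{Thm:FirstOrdCond}. This yields the threshold $\varepsilon_{\Delta_*}$ in Regimes~1 and~2, where $\Delta\geq\Delta_*$ is automatic. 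For Regime~3, where $\Delta=T/N_1\to 0$ invalidates the hypothesis of Proposition~\ref{Prop:relSepUnif}, I would derive a dedicated separation estimate directly from the quadratic growth in Proposition~\ref{prop:EigDiffBound}, producing a $T$-dependent $\tau=\tau(T)$, and then rerun the same Rouché argument to obtain $\varepsilon_T$.
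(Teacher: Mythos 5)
Your overall strategy coincides with the paper's: a Rouch\'e argument on the circles $\partial B(\phi_n,\tau\phi_n)$, with $|\bar p|$ bounded below through its explicit factorization and the separation of Proposition~\ref{Prop:relSepUnif}, and $|\bar q-\bar p|$ bounded above through Lemma~\ref{lem:PolyDiscrep}. Two small corrections to your setup. First, because $N_2=1$ the perturbation matrix $D$ has rank one, so all adjugates $\operatorname{adj}_{N_1+1-s}(D)$ with $s\geq 2$ vanish and $\bar q-\bar p$ is \emph{exactly} linear in $\varepsilon$; there is no $O(\varepsilon^2)$ remainder to control, and this exact linearity is precisely what Lemma~\ref{lem:PolyDiscrep} delivers. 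Second, what the Rouch\'e step requires is not that the ratio $|\bar q-\bar p|/|\bar p|$ be small in the sense of Theorem~\ref{Thm:FirstOrdCond} (super-exponential decay), but only that it be of the form $\varepsilon\cdot\Phi(\Delta,N_1)$ with $\Phi$ \emph{uniformly bounded} in $n$, $N_1$ and $\Delta$ within each regime; one then takes $\varepsilon_{\Delta_*}$ with $\varepsilon_{\Delta_*}\sup\Phi<1$. The decay rates enter only later, in Theorem~\ref{thm:unifbdd11}.

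The genuine gap is that this uniform boundedness of $\Phi(\Delta,N_1)$ is the entire technical content of the theorem, and your proposal only conjectures it (``I expect that, after cancellation, this ratio reduces to\dots''). After cancelling $\prod_k y_k$ and the squared Vandermonde factors, the ratio becomes the triple sum $\sum_{\gamma}\sum_{\beta}\mathfrak{s}(z,\gamma^c,\beta^c)$ of \eqref{eq:Ssum}, whose dominant piece $\mathfrak{s}(z,1,2)$ is a sum over $\omega^c\in[N_1]$ of terms $\frac{|y_{N_1+1}|}{|y_{\omega^c}|}\frac{|z|}{|z-\phi_{\omega^c}|}\frac{\prod_{s\neq\omega^c}\phi_s^2}{\prod_{s\neq\omega^c}|\phi_s-\phi_{\omega^c}|^2}$. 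Showing that this sum of $N_1$ terms stays bounded as $N_1\to\infty$ (it is not termwise negligible: the $\omega^c=N_1$ term is $O(1)$) requires the quantitative control of the correction factors $\theta^1,\theta^2,\theta^3$ from Proposition~\ref{prop:ComponentBounds0} and Corollary~\ref{Cor:ThetaAssymp}, and the remaining $O(N_1^2)$ pairs $(\gamma^c,\beta^c)$ must be resummed via the MacLaurin inequality and a binomial identity (Proposition~\ref{prop:frakxfraky} and Part~1 of Lemma~\ref{lem:UppBdlimN1}); in Regime~3 one additionally needs the imaginary-error-function estimates to handle the Gaussian-type sums. None of this is sketched, so the proposal as written does not establish the threshold $\varepsilon_{\Delta_*}$. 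Your observation about Regime~3 is well taken --- the relative separation of Proposition~\ref{Prop:relSepUnif} degrades as $\Delta=T/N_1\to 0$, and the paper's one-line remark that ``the proof is similar for all regimes'' leans on the same $\tau$ --- but your proposed fix (a fixed $\tau(T)$) cannot work either, since $(\phi_1-\phi_2)/\phi_1=1-e^{-T(\lambda_2-\lambda_1)/N_1}\to 0$; any repair must let the radius shrink with $N_1$ and then reabsorb the resulting $(1+\tau)/\tau\sim N_1$ blow-up into the decaying products, which is additional work neither you nor the paper's Regime~3 discussion carries out explicitly.
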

\begin{proof}
    See Appendix~\ref{Append:ProofUnifBdd}.
\end{proof}

\begin{corollary}\label{cor:phinPrreal}
Under the assumptions of Theorem~\ref{thm:unifbdd}, all the roots $\hat{\phi}_n^{Pr}$, $n\in [N_1]$, of the perturbed Prony polynomial $\bar{q}\left(z\right)$ in \eqref{eq:qbar-def} corresponding to $\varepsilon$ are real.
\end{corollary}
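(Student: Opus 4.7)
The plan is to combine the localization provided by Theorem~\ref{thm:unifbdd} with the symmetry of the perturbed Prony polynomial under complex conjugation. Concretely, for each $n\in[N_1]$ introduce the closed disk
\[
D_n := \bigl\{z\in\mathbb{C} \ ; \ |z-\phi_n|\leq \tau \phi_n\bigr\}.
\]
Theorem~\ref{thm:unifbdd} guarantees that, for $\varepsilon$ in the stated range, each $D_n$ contains at least one root of $\bar{q}(z)$. The first key claim is that these disks are pairwise disjoint: for $n\neq m$, using that the $\phi_k$ are positive reals and combining the triangle inequality with Proposition~\ref{Prop:relSepUnif}, one obtains $|\phi_n-\phi_m|\geq 2\tau\max(\phi_n,\phi_m)> \tau\phi_n+\tau\phi_m$ whenever $\phi_n\neq \phi_m$, so any $z$ lying in both $D_n$ and $D_m$ would yield $|\phi_n-\phi_m|\leq \tau\phi_n+\tau\phi_m$, a contradiction.

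The second step is a degree/count argument. The coefficient of $z^{N_1}$ in $\bar{q}(z)$ equals $\det \tilde{H}_{N_1}$, which, by continuity in $\varepsilon$, is nonzero for $\varepsilon$ sufficiently small (since $\det H_{N_1}\neq 0$ in the unperturbed case, as $H_{N_1}$ factors through the Vandermonde on the distinct $\phi_n$). Hence $\bar{q}$ has degree exactly $N_1$ and therefore exactly $N_1$ roots counted with multiplicity. Combined with the previous step, each disk $D_n$ contains exactly one root $\hat{\phi}_n^{Pr}$, and these roots exhaust all roots of $\bar{q}$.

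The conclusion then follows from a conjugation argument. Since the measurements $\tilde{m}_k = y(t_k)$ are real and $\mathfrak{Q}(z)$ has real entries except for the monomial row, $\bar{q}(z)$ has real coefficients. If some $\hat{\phi}_n^{Pr}$ were not real, then its complex conjugate $\overline{\hat{\phi}_n^{Pr}}$ would also be a root of $\bar{q}$. But the disk $D_n$ is centered at the real point $\phi_n$, hence is invariant under complex conjugation, so $\overline{\hat{\phi}_n^{Pr}}\in D_n$ as well. Uniqueness of the root in $D_n$ then forces $\hat{\phi}_n^{Pr} = \overline{\hat{\phi}_n^{Pr}}$, i.e., $\hat{\phi}_n^{Pr}\in \mathbb{R}$.

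The only delicate point I foresee is justifying that $\bar{q}$ really has the full degree $N_1$ under the hypotheses of Theorem~\ref{thm:unifbdd}; this is a continuity-in-$\varepsilon$ argument but it needs the unperturbed Hankel determinant to be nonzero, which follows from the standard factorization $H_{N_1} = V\cdot \operatorname{diag}\{y_n\}\cdot V^{\top}$ on the distinct positive nodes $\phi_n$ together with Assumption~\ref{assump:yassump}. Once that is in hand, the disjointness estimate and the conjugation trick finish the argument without further computation.
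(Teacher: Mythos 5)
Your proof is correct and follows essentially the same route as the paper's: a unique root of $\bar q(z)$ in each conjugation-symmetric disk $B(\phi_n,\tau\phi_n)$, combined with the realness of the coefficients of $\bar q(z)$, forces each root to coincide with its conjugate. The only difference is how uniqueness is obtained --- the paper re-invokes Rouch\'e's theorem (so $\bar q$ has exactly as many roots in each disk as $\bar p$, namely one), whereas you use disjointness of the disks together with a total count of $N_1$ roots; both work, and your counting step actually renders the Hankel-determinant digression unnecessary, since $N_1$ pairwise disjoint disks each containing at least one root already force $\deg \bar q = N_1$ and one root per disk.
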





Theorem~\ref{thm:unifbdd} shows that  there exists a sufficiently small noise intensity such that the perturbed Prony polynomial $\bar{q}(z)$ has all of its roots $\left\{ \hat \phi_n^{Pr}\right\}_{n=1}^{N_1}$ in a controlled relative neighborhood of the nodes $\left\{ \phi_n\right\}_{n=1}^{N_1}$ to be estimated, meaning that the roots of $\bar{q}(z)$ satisfy  the relative error estimate $\frac{|\phi_n-\hat{\phi}_n^{Pr}|}{\phi_n}\leq \tau$, \emph{uniformly in the parameters specified by each regime}. The resulting inequality
\begin{equation*}    
(1-\tau)\phi_n\leq |\hat{\phi}_n^{Pr}|\leq  (1+\tau)\phi_n
\end{equation*}
will be used to control the magnitude of the approximations $\hat{\phi}_n^{Pr}$ in the next theorem, which is one of our main results.

\begin{theorem}\label{thm:unifbdd11}
    Fix $\Delta_*>0$. There exists $\varepsilon_*>0$ such that, for all $\varepsilon \in [0,\varepsilon_*]$, the perturbed Prony polynomial $\bar{q}(z)$ in \eqref{eq:qbar-def} obtained for noise intensity $\varepsilon$ yields roots $\hat{\phi}_n^{Pr}$ and corresponding estimates $\hat{\lambda}_n^{Pr} = -\frac{\ln\left(\hat{\phi}_n^{Pr}\right)}{\Delta}$ of the exponents $\lambda_n$ that satisfy
    \begin{equation}\label{eq:Basic estimates}
    \begin{array}{lll}
     &\left|\hat{\phi}_n^{Pr}-\phi_n \right|\leq \varepsilon \mathcal{D}_{\phi} \,\, \frac{\phi_n \left( \prod_{s\neq n}\phi_s^2\right)}{\prod_{s\neq n}|\phi_{s}-\phi_{n}|^2 },\qquad \left|\hat{\lambda}_n^{Pr}-\lambda_n \right|\leq \varepsilon \mathcal{D}_{\lambda} \,\, \frac{1}{\Delta} \frac{\left( \prod_{s\neq n}\phi_s^2\right)}{\prod_{s\neq n}|\phi_{s}-\phi_{n}|^2 },
    \end{array}
    \end{equation}
    for some constants $\mathcal{D}_{\phi}>0$ and $\mathcal{D}_{\lambda}>0$ that depend on the considered regime.
    In particular, given $\eta\in (0,1)$, the following asymptotic bounds are obtained:
    
    \smallskip
    \underline{Regime 1:} For $N_1\gg 1$, 
    \begin{equation}\label{eq:Asymptoticphi1}
    \begin{array}{lll}
        &\left|\hat{\phi}_n^{Pr}-\phi_n \right|\leq \varepsilon \mathcal{D}^{(1)}_{\phi}e^{-\Delta \Gamma^{Pr}(\eta)N_1^3},\quad (n,N_1)\in \mathcal{N}_{\eta} ,\\
        &\left|\hat{\phi}_n^{Pr}-\phi_n \right|\leq \varepsilon \mathcal{D}_{\phi}^{(2)}e^{-\Delta \Gamma^{Pr}_1(\eta)N_1^2},\quad (n,N_1)\in \mathcal{N}_1 \setminus \mathcal{N}_{\eta},\\
        &\left|\hat{\lambda}_n^{Pr}-\lambda_n \right|\leq \varepsilon \frac{\mathcal{D}^{(1)}_{\lambda}}{\Delta}e^{-\Delta \Gamma^{Pr}(\eta)N_1^3},\quad (n,N_1)\in \mathcal{N}_{\eta},
    \end{array}
    \end{equation}
    for some $\Gamma^{Pr}(\eta),\Gamma_1^{Pr}(\eta),\mathcal{D}^{(1)}_{\phi},\mathcal{D}^{(2)}_{\phi},\mathcal{D}^{(1)}_{\lambda}>0$.
    
    \medskip
    \underline{Regime 2:} Let $\vartheta>0$. There exists $N_1(\vartheta)\in \mathbb{N}$ such that, for $\Delta\gg 0$,
    \begin{equation}\label{eq:Asymptoticphi11}
    \begin{array}{lll}
        &\left|\hat{\phi}_n^{Pr}-\phi_n \right|\leq \varepsilon \mathcal{D}^{(3)}_{\phi}e^{-\vartheta \Delta}, \quad (n,N_1)\in \mathcal{N}_{1},\\
        &\left|\hat{\lambda}_n^{Pr}-\lambda_n \right|\leq \varepsilon \frac{\mathcal{D}^{(1)}_{\lambda}}{\Delta}e^{-\vartheta \Delta},\quad (n,N_1)\in \mathcal{N}_{\eta},
    \end{array}
    \end{equation}
    uniformly in $N_1\geq N_1 (\vartheta)$ for some constant $\mathcal{D}^{(3)}_{\phi}>0$.

    \medskip
    \underline{Regime 3:} For $T>0$, $\Delta N_1 =T$ and  $N_1\gg 1$,
    \begin{equation}\label{eq:Asymptoticphi111}
    \begin{array}{lll}
        &\left|\hat{\phi}_n^{Pr}-\phi_n \right|\leq \varepsilon \mathcal{D}_{\phi}^{(4)} e^{-\Gamma^{Pr}_2(T,\eta)N_1^2}, \quad (n,N_1) \in \mathcal{N}_{\eta},\\
        &\left|\hat{\phi}_n^{Pr}-\phi_n \right|\leq \varepsilon \mathcal{D}_{\phi}^{(5)} e^{-\Gamma^{Pr}_3(T,\eta)N_1}, \quad (n,N_1) \in \mathcal{N}_{1}\setminus \mathcal{N}_{\eta},\\
        &\left|\hat{\lambda}_n^{Pr}-\lambda_n \right|\leq \varepsilon \frac{\mathcal{D}_{\lambda}^{(2)}}{\Delta}e^{-\Gamma^{Pr}_2(T,\eta)N_1^2},\quad (n,N_1)\in \mathcal{N}_{\eta}, 
    \end{array}
    \end{equation}
    for some $\Gamma^{Pr}_2(T,\eta),\Gamma^{Pr}_3(T,\eta),\mathcal{D}_{\phi}^{(4)},\mathcal{D}_{\phi}^{(5)},\mathcal{D}_{\lambda}^{(2)}>0$.
    
    \medskip
    In particular, in all the regimes with $(n,N_1)\in \mathcal{N}_{\eta}$, the bounds on the condition numbers $\mathcal{K}_{\lambda}^{Pr}(n)$, associated with Prony's method, exhibit the same asymptotic behavior as the condition numbers obtained in Theorem~\ref{Thm:FirstOrdCond}. 
    \end{theorem}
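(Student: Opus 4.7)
The plan is to derive the two basic estimates in \eqref{eq:Basic estimates} by combining a polynomial perturbation argument with the explicit discrepancy representation for $\bar{q}(z)-\bar{p}(z)$ (Lemma~\ref{lem:PolyDiscrep}), and then to convert those estimates into the regime-dependent asymptotic bounds by carefully tracking the product $\prod_{s\neq n}\phi_s^2/\prod_{s\neq n}|\phi_s-\phi_n|^2$ using the quadratic growth estimate of Proposition~\ref{prop:EigDiffBound}. The key point is that the numerator decays super-exponentially in $N_1$ while the denominator can be lower-bounded uniformly on $\mathcal{N}_\eta$; combined, these produce a decay that matches the analytic rate of Theorem~\ref{Thm:FirstOrdCond}.

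First I would localize each root $\hat{\phi}_n^{Pr}$. By Theorem~\ref{thm:unifbdd}, for $\varepsilon \in [0,\varepsilon_{\Delta_*}]$ (or $[0,\varepsilon_T]$ in Regime~3) the perturbed root sits in a relative $\tau$-neighborhood of $\phi_n$, so $(1-\tau)\phi_n \leq \hat{\phi}_n^{Pr} \leq (1+\tau)\phi_n$ and Corollary~\ref{cor:phinPrreal} ensures it is real. Applying the mean value theorem to $\bar{q}$ on the segment joining $\phi_n$ and $\hat{\phi}_n^{Pr}$, and using $\bar{q}(\hat{\phi}_n^{Pr})=0$ with $\bar{p}(\phi_n)=0$, I get
\[
\hat{\phi}_n^{Pr}-\phi_n \;=\; -\,\frac{\bar{q}(\phi_n)-\bar{p}(\phi_n)}{\bar{q}'(\xi_n)} \;=\; -\,\frac{(\bar{q}-\bar{p})(\phi_n)}{\bar{p}'(\phi_n)}\bigl(1+o(1)\bigr),
\]
where the denominator satisfies $\bar{p}'(\phi_n)=C_p\prod_{s\neq n}(\phi_n-\phi_s)$ with $C_p$ the leading coefficient of $\bar{p}$, while the $o(1)$ absorbs the difference $\bar{q}'(\xi_n)-\bar{p}'(\phi_n)$, which is itself $O(\varepsilon)$.

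Next I would evaluate the discrepancy at $\phi_n$. Lemma~\ref{lem:PolyDiscrep} expresses $(\bar{q}-\bar{p})(z)$ explicitly as a linear combination (over the tail index, here just $N_1+1$) of determinantal terms of size $N_1+1$ built from the $\phi_j$'s and $\phi_{N_1+1}$, multiplied by $\varepsilon y_{N_1+1}$. Evaluating at $z=\phi_n$ collapses these determinants to a product of the form $\varepsilon y_{N_1+1} C_p \prod_{s\neq n}(\phi_n-\phi_s) \cdot R_n$, where $R_n$ is a rational factor containing $\phi_{N_1+1}$ and the Lagrange-type weights $\prod_{s\neq n}\phi_s^2/\prod_{s\neq n}(\phi_s-\phi_n)$. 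Dividing by $\bar{p}'(\phi_n)=C_p\prod_{s\neq n}(\phi_n-\phi_s)$ cancels one factor and leaves exactly the structure appearing in \eqref{eq:Basic estimates}, i.e.
\[
|\hat{\phi}_n^{Pr}-\phi_n| \;\leq\; \varepsilon \,\mathcal{D}_\phi\, \phi_n \,\frac{\prod_{s\neq n}\phi_s^2}{\prod_{s\neq n}|\phi_s-\phi_n|^2},
\]
with $\mathcal{D}_\phi$ controlled via Assumption~\ref{assump:yassump} and the uniform separation of Proposition~\ref{Prop:relSepUnif}. For the eigenvalue estimate, Taylor expansion of $\ln(\cdot)$ around $\phi_n$ yields $\hat{\lambda}_n^{Pr}-\lambda_n = -\Delta^{-1}\ln(\hat{\phi}_n^{Pr}/\phi_n) = \Delta^{-1}\phi_n^{-1}(\phi_n-\hat{\phi}_n^{Pr})(1+O(\tau))$, which cancels the extra $\phi_n$ factor and produces the claimed $\Delta^{-1}$ prefactor.

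For the regime-specific bounds, I would substitute $\phi_s = e^{-\Delta\lambda_s}$ and apply Proposition~\ref{prop:EigDiffBound}. The numerator satisfies $\prod_{s\neq n}\phi_s^2 = \exp\bigl(-2\Delta\sum_{s\neq n}\lambda_s\bigr)$, with $\sum_{s\neq n}\lambda_s$ of order $N_1^3$ via $\sum_{s=1}^{N_1}s^2$. For the denominator, I write $|\phi_s-\phi_n| = \phi_{\max(n,s)}\bigl|e^{\Delta|\lambda_s-\lambda_n|}-1\bigr|$ and split the product into $s<n$ and $s>n$ ranges. After cancelling the $\phi_s$'s appearing in the numerator against the $\phi_{\max(n,s)}$'s from the denominator, the exponent of the remaining exponential is controlled by $\Psi(n;N_1)$ defined in \eqref{eq:CalSdef}, which grows like $\Delta\, (N_1^3 - 3nN_1^2 + \ldots)$. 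On $\mathcal{N}_\eta$ with $\eta<1$, this quantity is bounded below by a positive multiple of $N_1^3$, giving the Regime~1 rate $e^{-\Gamma(\eta)\Delta N_1^3}$. In Regime~3, substituting $\Delta=T/N_1$ drops one power of $N_1$, yielding $e^{-\Gamma(\eta,T)N_1^2}$. In Regime~2 with bounded $N_1$, the dominant exponential factor comes not from $\Psi$ but from the separation $\phi_{N_1}-\phi_{N_1+1}$ via $e^{-\Delta(\lambda_{N_1+1}-\lambda_{N_1})}$, and by choosing $N_1(\vartheta)$ large enough, $\lambda_{N_1+1}-\lambda_{N_1}\geq \vartheta$, giving $e^{-\vartheta\Delta}$.

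The main obstacle is the bookkeeping in the third paragraph: extracting the precise exponent $\Psi(n;N_1)$ from the cancellation between numerator and denominator, and showing it is uniform over $(n,N_1)\in\mathcal{N}_\eta$. This is exactly where the restriction $\eta<1$ becomes essential, since $\Psi(N_1;N_1)=0$ and the cancellation degenerates as $n\to N_1$. For $(n,N_1)\in\mathcal{N}_1\setminus\mathcal{N}_\eta$ one recovers only the weaker rates stated in \eqref{eq:Asymptoticphi1} and \eqref{eq:Asymptoticphi111} for $|\hat{\phi}_n^{Pr}-\phi_n|$, not for $|\hat{\lambda}_n^{Pr}-\lambda_n|$, consistently with the analytic limitation observed after Theorem~\ref{Thm:FirstOrdCond}. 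Once the product analysis is complete, the matching with the analytic rates of Theorem~\ref{Thm:FirstOrdCond} is immediate, proving the final claim.
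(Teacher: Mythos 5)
Your overall strategy mirrors the paper's: localize $\hat{\phi}_n^{Pr}$ via Theorem~\ref{thm:unifbdd}, use Lemma~\ref{lem:PolyDiscrep} to express the discrepancy, extract $|\hat{\phi}_n^{Pr}-\phi_n|$ as $\varepsilon$ times the ratio $\phi_n\prod_{s\neq n}\phi_s^2/\prod_{s\neq n}|\phi_s-\phi_n|^2$, and then run the asymptotics through Proposition~\ref{prop:EigDiffBound}, $\Psi(n;N_1)$ and Corollary~\ref{Cor:ThetaAssymp}. Your observation that evaluating the discrepancy at $z=\phi_n$ kills all terms with $n\in\omega$ and leaves exactly the $\omega^c=n$ contribution is correct and is the same mechanism as the paper's $\mathcal{P}^{(2)}_{n,N_1}$.

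The genuine gap is in your denominator. You write $\hat{\phi}_n^{Pr}-\phi_n=-(\bar{q}-\bar{p})(\phi_n)/\bar{q}'(\xi_n)$ and claim $\bar{q}'(\xi_n)=\bar{p}'(\phi_n)(1+o(1))$ because ``$\bar{q}'(\xi_n)-\bar{p}'(\phi_n)$ is itself $O(\varepsilon)$.'' That difference splits as $[\bar{q}'(\xi_n)-\bar{p}'(\xi_n)]+[\bar{p}'(\xi_n)-\bar{p}'(\phi_n)]$; only the first bracket is $O(\varepsilon)$. The second is controlled by $|\xi_n-\phi_n|\leq|\hat{\phi}_n^{Pr}-\phi_n|$, which at this stage of the argument is only known (from Theorem~\ref{thm:unifbdd}) to be $\leq\tau\phi_n$ --- i.e.\ comparable to the node separation $2\tau\phi_n$ itself. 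Writing $\bar{p}'(\xi)=-C\sum_j\prod_{s\neq j}(\phi_s-\xi)$, the off-diagonal terms $j\neq n$ carry the factor $(\phi_n-\xi_n)/(\phi_j-\xi_n)$, which need not be small when $|\phi_n-\xi_n|\sim\tau\phi_n$, and there are $N_1-1$ of them; so $\bar{p}'(\xi_n)/\bar{p}'(\phi_n)$ is not $1+o(1)$ without first knowing that $|\hat{\phi}_n^{Pr}-\phi_n|\ll\phi_n/N_1$ --- which is what you are trying to prove. This circularity must be broken either by a bootstrap in $\varepsilon$ or, as the paper does, by substituting $z=\hat{\phi}_n^{Pr}$ (not $\phi_n$) into the discrepancy identity: there the factor $\prod_s(\phi_s-\hat{\phi}_n^{Pr})$ on the left contains $(\phi_n-\hat{\phi}_n^{Pr})$ exactly, the right-hand terms also containing this factor are moved to the left to form $\mathcal{P}^{(1)}_{n,N_1}=1-\varepsilon\overline{\mathcal{P}}^{(1)}_{n,N_1}$, and the denominator is then manifestly bounded below by $\kappa\in(0,1)$ for small $\varepsilon$, uniformly in $n$, $N_1$ and $\Delta$ (Lemma~\ref{eq:CAlP12}). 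Establishing that uniform lower bound is the technical heart of the theorem, and your sketch does not supply it. A secondary imprecision: in Regime 2 the $e^{-\vartheta\Delta}$ rate on $\mathcal{N}_\eta$ comes from choosing $N_1(\vartheta)$ so that $\upsilon\Psi(\lfloor\eta N_1\rfloor;N_1)\geq\vartheta$ (and $\tfrac12\lambda_{\lfloor\eta N_1\rfloor}\geq\vartheta$ off $\mathcal{N}_\eta$), not from the gap $\lambda_{N_1+1}-\lambda_{N_1}$; this is easily repaired.
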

    
    \begin{proof}
    See Appendix~\ref{Append:Pf13}.
    \end{proof}

    Theorem~\ref{thm:unifbdd11} shows that the first-order condition numbers of Prony's method associated with the recovery of the eigenvalues $\lambda_n$, $n\in [\lfloor \eta N_1 \rfloor]$, exhibit an asymptotic behavior that is identical to that of the corresponding analytic first-order condition numbers, obtained in Theorem~\ref{Thm:FirstOrdCond}, for all regimes \eqref{Eq:Regimes}. 
    
    We can also prove that the same holds for the first-order condition numbers of Prony's method associated with the recovery of the amplitudes $y_n$, $n\in [\lfloor \eta N_1 \rfloor]$. As shown, e.g., in Eqn. (45) in \cite{katz2024accuracy}, in Prony's method, the approximations $\left\{\hat{y}_n^{Pr} \right\}_{n=1}^{N_1}$ satisfy the relation
    \begin{equation}\label{eq:PronyYdiff}
    \begin{array}{lll}
    \operatorname{col}\left\{ \hat{y}_n^{Pr} - y_n\right\}_{n=1}^{N_1}  = \hat{V}^{-1}\cdot \operatorname{col} \left\{\varepsilon y_{N_1+1}e^{-\Delta \lambda_{N_1+1} n} \right\}_{n=0}^{N_1-1} - \left(I_{N_1}-\hat{V}^{-1}V \right)\cdot \operatorname{col}\left\{y_n \right\}_{n=1}^{N_1},
    \end{array}
    \end{equation}
    where
    \begin{equation}\label{eq:VDMmatDef}
    \begin{array}{lll}
    &\hat{V}^{-1}:=\left[\begin{array}{ccc}
    \hat{l}_{1,0} & \ldots & \hat{l}_{1, N_1-1} \\
    \vdots & \ddots & \vdots \\
    \hat{l}_{N_1, 0} & \ldots & \hat{l}_{N_1, N_1-1}
    \end{array}\right],\quad V^{-1}:=\left[\begin{array}{ccc}
    l_{1,0} & \ldots & l_{1, N_1-1} \\
    \vdots & \ddots & \vdots \\
    l_{N_1, 0} & \ldots & l_{N_1, N_1-1}
    \end{array}\right],\vspace{0.2cm}\\
    &\hat{L}_i(z):=\prod_{k \neq i} \frac{z-\hat{\phi}_k}{\hat{\phi}_i-\hat{\phi}_k}=\sum_{s=0}^{N_1-1} \hat{l}_{i, s} z^s, \quad L_i(z):=\prod_{k \neq i} \frac{z-\phi_k}{\phi_i-\phi_k}=\sum_{s=0}^{N_1-1} l_{i, s} z^s,
    \end{array}
    \end{equation}
   and $\hat{L}_i(z)$ and $L_i(z)$, $i\in [N_1]$, are the Lagrange polynomials corresponding to $\hat{\phi}_n^{Pr}$ and $\phi_n$, $n\in [N_1]$, respectively. Employing the presentation \eqref{eq:PronyYdiff} enables us to prove the other main result of this section.
    \begin{theorem}\label{thm:PronyAmplitudeCondition}
    Let $\Delta_*>0$ and $\eta\in (0,1)$ be fixed. There exists $\varepsilon_*>0$ such that for all $\varepsilon \in (0,\varepsilon_*]$, the following asymptotic bounds hold:

    \smallskip
    \underline{Regime 1:} Uniformly in $\Delta\in [\Delta_*,\infty)$, 
    \begin{equation}\label{eq:Asymptoticy1}
    \begin{array}{lll}
        &\left|\hat{y}_n^{Pr}-y_n \right|\leq \varepsilon \mathcal{D}^{(1)}_{y}e^{-\Delta \Gamma^{Pr}_y(\eta)N_1^3},\quad (n,N_1)\in \mathcal{N}_{\eta},\ N_1\gg 1,
    \end{array}
    \end{equation}
    for some $\Gamma^{Pr}_y(\eta),\mathcal{D}^{(1)}_{y}>0$.

    \medskip
    \underline{Regime 2:} Let $\vartheta>0$. There exists $N_1(\vartheta)\in \mathbb{N}$ such that, uniformly in $N_1 \geq N_1(\vartheta)$, 
    \begin{equation}\label{eq:Asymptoticy11}
    \begin{array}{lll}
        &\left|\hat{y}_n^{Pr}-y_n \right|\leq \varepsilon \mathcal{D}^{(2)}_{y}e^{-\vartheta \Delta}, \quad (n,N_1)\in \mathcal{N}_{\eta},\ \Delta \gg0,
    \end{array}
    \end{equation}
    for some constant $\mathcal{D}^{(2)}_{y}>0$.

     \medskip
    \underline{Regime 3:} Given $T>0$, $\Delta N_1 =T$, 
    \begin{equation}\label{eq:Asymptoticy111}
    \begin{array}{lll}
        &\left|\hat{y}_n^{Pr}-y_n \right|\leq \varepsilon \mathcal{D}_{y}^{(3)} e^{-\Gamma^{Pr,1}_{y}(T,\eta)N_1^2}, \quad (n,N_1) \in \mathcal{N}_{\eta},\ N_1 \gg 1,
    \end{array}
    \end{equation}
    for some $\Gamma^{Pr,1}_y(T,\eta),\mathcal{D}_{y}^{(3)}>0$.
    \end{theorem}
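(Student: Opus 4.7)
The plan is to work from the decomposition \eqref{eq:PronyYdiff}, whose $n$-th component splits as
\begin{equation*}
\hat{y}_n^{Pr}-y_n \;=\; \underbrace{\varepsilon\, y_{N_1+1}\,\hat{L}_n(\phi_{N_1+1})}_{(\mathrm{I})} \;-\; \underbrace{\sum_{k=1}^{N_1}\bigl(\delta_{nk}-\hat{L}_n(\phi_k)\bigr)y_k}_{(\mathrm{II})}.
\end{equation*}
Term (I) is the direct injection of structured noise through the perturbed Lagrange basis, while term (II) is the indirect contribution arising from the displacement of the Prony roots away from the true nodes. The goal is to show that each of (I) and (II) is, in each regime, bounded by $\varepsilon$ times the same exponentially (or super-exponentially) decaying factor that appears in Theorem~\ref{Thm:FirstOrdCond}.

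For (I), I would first compare $\hat{L}_n(\phi_{N_1+1})$ to $L_n(\phi_{N_1+1})$ through the telescoping identity
\begin{equation*}
\frac{\hat{L}_n(\phi_{N_1+1})}{L_n(\phi_{N_1+1})} \;=\; \prod_{k\neq n}\frac{\phi_{N_1+1}-\hat{\phi}_k}{\phi_{N_1+1}-\phi_k}\cdot\frac{\phi_n-\phi_k}{\hat{\phi}_n-\hat{\phi}_k}.
\end{equation*}
Using Proposition~\ref{Prop:relSepUnif} to bound each denominator from below and the explicit relative-error estimate \eqref{eq:ZetaBound} from Theorem~\ref{thm:unifbdd} to bound each numerator perturbation, I would take logarithms and reduce the problem to a summability check for $\sum_{k\neq n}|\phi_k-\hat{\phi}_k|/\phi_k$; this sum is controlled by the sharper estimates of Theorem~\ref{thm:unifbdd11}, which give super-exponential decay in Regimes~1 and~3 and exponential decay in Regime~2. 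Thus $|\hat{L}_n(\phi_{N_1+1})|$ is dominated by $|L_n(\phi_{N_1+1})|$ uniformly, and a direct bound on the latter, using the quadratic eigenvalue growth of Proposition~\ref{prop:EigDiffBound}, recovers exactly the asymptotic rates established in Theorem~\ref{Thm:FirstOrdCond}.

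For (II), the key idea is to exploit $\hat{L}_n(\hat{\phi}_k)=\delta_{nk}$ to rewrite each summand as a finite difference $\hat{L}_n(\phi_k)-\hat{L}_n(\hat{\phi}_k) = \hat{L}_n'(\xi_{n,k})(\phi_k-\hat{\phi}_k)$ for an intermediate point $\xi_{n,k}$. Theorem~\ref{thm:unifbdd11} directly supplies the desired decay of $|\phi_k-\hat{\phi}_k|$, while $|\hat{L}_n'(\xi_{n,k})|$ is controlled by Proposition~\ref{Prop:relSepUnif} together with the proximity of $\hat{\phi}_j$ to $\phi_j$ and, again, Proposition~\ref{prop:EigDiffBound}. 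Multiplying by $|y_k|\leq\ell$ (Assumption~\ref{assump:yassump}) and summing over $k\in[N_1]$ introduces at most a polynomial-in-$N_1$ prefactor, which is absorbed into the exponential to yield \eqref{eq:Asymptoticy1}--\eqref{eq:Asymptoticy111}.

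The principal obstacle lies in the ``boundary'' indices $k\in[N_1]\setminus[\lfloor\eta N_1\rfloor]$ appearing in the sum (II). For these indices, Theorem~\ref{thm:unifbdd11} only provides the weaker estimates \eqref{eq:Asymptoticphi1}/\eqref{eq:Asymptoticphi111}, with $N_1^2$ in place of $N_1^3$ (Regime~1) or $N_1$ in place of $N_1^2$ (Regime~3). I must verify that, once multiplied by $|\hat{L}_n'(\xi_{n,k})|$ and weighted by $y_k$, these contributions remain subdominant to the target decay rate for $n\in[\lfloor\eta N_1\rfloor]$; this parallels the corresponding step in the proof of Theorem~\ref{thm:unifbdd11} itself, and crucially relies on the quadratic separation $\lambda_m-\lambda_n\geq \upsilon(m^2-n^2)$ of Proposition~\ref{prop:EigDiffBound} providing the additional ``margin'' that is present precisely when $\eta<1$.
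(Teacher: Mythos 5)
Your proposal is correct and follows essentially the same route as the paper: the same decomposition \eqref{eq:PronyYdiff} into the direct noise term $\varepsilon y_{N_1+1}\hat{L}_n(\phi_{N_1+1})$ and the root-displacement term $(I_{N_1}-\hat{V}^{-1}V)\operatorname{col}\{y_k\}$, the same comparison of $\hat{L}_n$ with $L_n$ via Proposition~\ref{Prop:relSepUnif} and Theorem~\ref{thm:unifbdd11}, and the same resolution of the ``boundary index'' issue, namely that one must use the non-asymptotic bound \eqref{eq:Basic estimates} for $|\phi_k-\hat{\phi}_k|$ jointly with the Lagrange factor so that the decaying quantity $\prod_{s\neq n}\phi_s/\prod_{s\neq n}|\phi_s-\phi_n|$ indexed by $n\in[\lfloor\eta N_1\rfloor]$ (rather than the merely bounded $k$-dependent factor) supplies the target rate. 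The only deviation is local: you treat all entries of $I_{N_1}-\hat{V}^{-1}V$ uniformly via the mean value theorem applied to $\hat{L}_n(\hat{\phi}_k)-\hat{L}_n(\phi_k)$, whereas the paper extracts the factor $\phi_k-\hat{\phi}_k$ directly from the product for $k\neq n$ and uses a MacLaurin/symmetric-polynomial expansion for the diagonal entry; both lead to the same final estimates.
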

    \begin{proof}
    See Appendix~\ref{Sec:PronyAmplitProof}.
    \end{proof}
    The results in Theorems~\ref{thm:unifbdd11} and~\ref{thm:PronyAmplitudeCondition}, further demonstrated by the simulations shown in Section~\ref{Sec:NumericalExamples}, 
    prove that Prony's method is first-order optimal for recovering the eigenvalues $\left\{\lambda_n\right\}_{n\in [\lfloor \eta N_1 \rfloor]}$ and amplitudes  $\left\{y_n\right\}_{n\in [\lfloor \eta N_1 \rfloor]}$ from the measurements \eqref{eq:MeasurementsProny} in the small noisy intensity regime. The analysis of Prony's method in the case of general measurements \eqref{eq:MeasExpsum} is essentially more complex, as mentioned in Remark~\ref{rem:combExpl}, and we leave it for future research.

\section{Related work and discussion}\label{sec:related-work}

\subsection{Algebraic methods of multi-exponential analysis}

Prony's method is a primary example of the so-called algebraic methods \cite{istratov1999}, building a system of algebraic equations that relate the measurements to the parameters (in our case, exponents and amplitudes), and subsequently solving these equations.  Other  methods from this category include the Pisarenko method, the Approximate Prony's method, the Matrix Pencil method, the MUSIC/ESPRIT methods from signal processing, and many others \cite{batenkov2013b}. The different methods have their relative strengths and weaknesses; however, all methods deteriorate when the number of nodes becomes very large, and even worse, the minimal separation between the exponents shrinks to zero. Such ill-conditioning is inherent in the problem without any additional assumptions, namely, when the perturbation/noise is only assumed to be of bounded magnitude/variance, as we have discussed in the paragraph following equation \eqref{eq:example-amplitude-error} in Section~\ref{sec:intro}. One of the main contributions of this paper is to show that such ill-conditioning completely disappears  when the noise is itself of a multi-exponential form, causing error cancellations. The cancellation phenomenon was previously observed in the Fourier setting \cite{batenkov2012a, batenkov2015, levinov2025} but only with fixed $N_1$; here we demonstrate it also in the Laplace transform setting (cf. \eqref{eq:LaplaceTransform} below) and $N_1\to\infty$. Finally, the algebraic methods were recently shown to be optimal for the problem of super-resolution of sparse measures \cite{batenkov2021b, katz2024accuracy}; we utilize the algebraic framework developed there and extend it significantly to the new setting given by the measurements \eqref{eq:MeasExpsum}.


\subsection{Inverse problems for parabolic PDEs} \label{sec:inverse-problems-pde} 

Considering \eqref{eq:diffusion-pde} above, where $\mathcal{A}$ satisfies Assumption~\ref{assump:eigenvalues}, the measurement trace $y(t)$ in \prettyref{prob:main-problem} with samples of the form \eqref{eq:MeasExpsum} can be obtained from the series representation \eqref{eq:diffusion-solution-series} in different ways:
\begin{enumerate}
    \item Sampling the solution $z(x,t)$ at a fixed point $x=x_0\in (0,1)$ (which is not in the zero locus of $\{\psi_n(x)\}$, a countable set) to obtain $y(t)=z(x_0,t)$; see \eqref{eq:PDESamplesControl}. Then, $y_n=\left\langle f,\psi_n\right\rangle \psi_n(x_0)$.
    \item More generally, by considering a (distributional) measurement kernel $c\in \mathcal{D'}(0,1)$, see e.g. \cite{strichartz2003}, and setting $y(t)=\left\langle z(\cdot, t), c\right\rangle$. As a special case, the point measurement $y(t)=z(x_0,t)$ is obtained by taking $c=\delta(x-x_0)$, where $\delta(\cdot)$ is the standard Dirac delta. Then, $y_n = \left\langle f,\psi_n\right\rangle \left\langle c,\psi_n\right\rangle$.
\end{enumerate}
The spectral cutoff at $n=N_2$ and the sharp drop in the relative magnitude between the signal $y_{\text{main}}$ and the noise $\varepsilon y_{\text{tail}}$ in Assumption~\ref{assump:yassump} are sensible approximations in practice. For example, such noise control can be guaranteed by the smoothness of the solution in case 1, or the approximate band-limitedness of the measurement kernel in case 2 above. See \cite{katz2024,katz2024c} for more details.

Let us set $p(x)\equiv 1$ in \eqref{eq:sl-operator} and consider the problem of recovering the potential $q(x)$ from the measurement trace $y(t)$. This class of problems is widely studied in the literature, cf. \cite{avdonin1997, isakov2017, hasanovhasanoglu2021a, pierce1979a,zhang2022a, rundell2024} and references therein. In this context, solving Problem~\ref{prob:main-problem} is frequently considered as a first step in the following two-step procedure \cite{pierce1979a}:
\begin{enumerate}
    \item[I.] Recover the exponents and amplitudes  $\{\lambda_n,y_n\}_{n=1}^{N_1}$ by inverting the Dirichlet series $\sum_{n=1}^\infty y_n e^{-\lambda_n t}$. This problem is equivalent to analytic continuation and thus uniquely solvable, as the series converges in the right half-plane; see e.g. \cite[Lemma~3.1]{rundell2024}.
    \item[II.] Recover the potential $q(x)$ from the spectral data  by solving the so-called inverse spectral problem, referred to in this context as the inverse Sturm-Liouville problem. A widely used class of approaches utilizes Gel'fand-Levitan integral equations, cf. \cite{chadan1997a,GelLev51,kravchenko2020, pierce1979a, rundell1992, lowe1992, fabiano1995,jiang2022, kravchenko2021} and references therein.
\end{enumerate}

While  step II above is well-studied, step I is less so, in particular relating to its numerical stability. Existing approaches include: Laplace transform methods, the method of limits, the ``hot-spot'' method, and classical spectral estimation \cite{boumenir2010}. Additional conditions are usually required to ensure well-posedness of step 2, such as multiple spectra for different initial/boundary conditions. For an example of the latter, see recent work by Rundell \cite{rundell2024} where the special case of $n=1,2$ is utilized in a multi-impedance recovery scheme. 

In the Numerical Experiments Section~\ref{Sec:NumericalExamples}, we implement the two-step approach above to recover the unknown potential $q(x)$. Specifically, in step I, approximations of the exponents and amplitudes are obtained from the measurements \eqref{eq:MeasExpsum} by employing Prony's method, whereas step II employs a
special case of the method described by Rundell in \cite[Section~3.14]{chadan1997a}, which is valid for symmetric potentials $q(x)$ with $\|q\|\ll 1$.

\subsection{Our earlier work}
The results reported in this paper significantly generalize and  extend our preliminary works \cite{katz2024accuracy,katz2024, katz2024c} on the subject. In \cite{katz2024c} we considered only Regime 2, with application to delay estimation in reaction-diffusion systems, while in \cite{katz2024} we investigated only Regime 1. Both \cite{katz2024} and \cite{katz2024c} only provided estimates on the analytic (first-order) condition numbers, without demonstrating rigorously that a specific numerical method for exponent and amplitude recovery actually attains the bounds on the analytic condition numbers. The study of the first order optimality of Prony's method is a central topic of the present work. Our techniques for the analysis of Prony's method build upon the earlier work \cite{katz2024accuracy} where we considered non-structured noise in the case of super-resolution, with  the exponents being purely imaginary numbers (cf. \prettyref{rem:prony-sequence}). As mentioned in the Introduction, the super-resolution setting is an essentially different exponential fitting problem from the one we consider in this work.

\subsection{Resolution theory of the Laplace transform}

As discussed in the Introduction, the parametric inversion of the model \eqref{eq:MeasExpsum} in Problem~\ref{prob:main-problem} is expected to exhibit extreme ill-posedness if the measurement errors/noise $y_{tail}(t_k)$, $k=0,\dots ,2N_1-1$, therein are replaced with arbitrary non-structured errors/noise. The aim of this section is to provide an intuitive explanation why for the structured noise $y_{tail}(t_k)$, $k=0,\dots ,2N_1-1$, our results show that Problem~\ref{prob:main-problem} is well-posed and, actually, extremely well-conditioned.

\prettyref{prob:main-problem} can be considered a special case of the so-called spectroscopic method of exponential analysis \cite{istratov1999}, where one needs to reconstruct a ``distribution function''  $g(\lambda)$ from its Laplace transform
\begin{equation}\label{eq:LaplaceTransform}
y(t) = \int_0^\infty e^{-\lambda t} g(\lambda) d\lambda \equiv (\mathcal{L}g)(t).
\end{equation}

In our case, we will subsequently be interested in a specific $g$, given by $g_{SL}(\lambda) = \sum_{n=1}^{\infty}y_n \delta(\lambda-\lambda_n)$, with $\left\{\lambda_n \right\}_{n=1}^{\infty}$ being the eigenvalues of the Sturm-Liouvile operator $\mathcal{A}$ in \eqref{eq:sl-operator} and corresponding Laplace measurements $y_{SL}(t)$. In what follows, we intuitively explain how existing results on the famous inverse problem \eqref{eq:LaplaceTransform} in the continuous domain can shed light on our discrete setting  \eqref{eq:MeasExpsum}. In particular, we will be interested in the question: under what conditions can we guarantee that among the $\left\{\lambda_n \right\}_{n=1}^{\infty}$ defining $g_{SL}(\lambda)$, the values $\left\{\lambda_n \right\}_{n=1}^{N_1}$ can be recovered from $y_{SL}(t)$ and, furthermore, this holds for \emph{all} $N_1\gg 1$ (Regime 1 above)?

A classical work on the subject,  \cite{ostrowsky1981}, provides a recipe for stable inversion of the Fredholm integral equation \eqref{eq:LaplaceTransform} according to the principles of information theory, as follows. First, noise in $y(t)$ limits the reconstruction accuracy of $g$, and this tradeoff is quantified using the spectral theory of the operator $\mathcal{L}$ \cite{mcwhirter1978, bertero1982a, bertero1984}. Namely, a stable, or bandlimited, reconstruction $\tilde{g}$ will only include those spectral components of $g$ that are above the noise floor. In more detail, the point spectrum $\operatorname{spec}_p\left(\mathcal{L} \right)=  \{\sigma_{\omega}\}$, which consists of all eigenvalues of $\mathcal{L}$, can be indexed by a ``Laplace frequency'' $\omega$ with the asymptotic relation $\sigma_{\omega} \sim \exp\left(-c|\omega|\right)$ with some $c>0$. Therefore, given noisy data with signal-to-noise ratio (SNR) $\eta$, the maximal frequency above the noise floor (namely, the maximal frequency content of $g(\lambda)$ which is expected to be recoverable from $y(t)$ in \eqref{eq:LaplaceTransform}) is given asymptotically by $\exp(-c\omega_{\max})\sim \eta^{-1}$, i.e. $\omega_{\max} \sim \ln\left(\eta\right)$.

To recover the bandlimited $\tilde{g}$ from $y(t)$ in practice, \cite{ostrowsky1981} proceeds by  choosing a sampling set $\left\{\underline{\lambda}_n \right\}_{n=1}^{\infty}$ (in the $\lambda$ variable) and solving a linear least squares problem to find the values $\tilde{g}(\underline{\lambda}_n)$ for $n=1,2,\dots,N_1$. Here comes the crucial observation that to ensure unique recovery of $\tilde{g}$, the spacing of $\left\{\underline{\lambda}_n \right\}_{n=1}^{\infty}$ must satisfy a resolution criterion (or a sampling theorem): assuming the $\underline{\lambda}_n$'s are arranged in ascending order, \cite[Equation (14)]{ostrowsky1981} imposes the following asymptotic lower bound on consecutive sampling points:
\begin{equation}\label{eq:LaplaceSampling}
\ln \frac{\underline{\lambda}_n}{\underline{\lambda}_{n-1}} \gtrsim \omega_{\max}^{-1}.
\end{equation}
Sampling with higher density will be unstable because the frequency components above $\omega_{\max}$ are unreliable, as they are beyond the noise floor.
There is no actual sampling of $\tilde{g}$ taking place; rather, $\tilde{g}$ is parametrized by the values $\tilde{g}(\underline{\lambda}_n)$, which are ultimately recovered from (the samples of) $y(t)$ in \eqref{eq:LaplaceTransform}.

Going back to our setting, the true signal is the discrete spectral measure $g=g_{SL}$. In Regime 1, as $N_1\to\infty$, the noise goes to zero, and consequently $\omega_{\max}\to\infty$. Therefore $\tilde{g}\to g_{SL}$ in an appropriate sense, and if  \eqref{eq:LaplaceSampling} is satisfied, $g$ is asymptotically recovered. However, since $\underline{\lambda}_n$ are sampling points of $\tilde{g}$, one must  choose them to be asymptotically equal to the true support $\{\lambda_n\}$ of $g$. The problem is therefore reduced to checking when 
\begin{equation}\label{eq:LaplaceSampling2}
\ln \frac{{\lambda}_n}{{\lambda}_{n-1}} \gtrsim \frac{1}{\ln \eta}.
\end{equation}





 To estimate $\eta$, suppose that $y_{SL}(t)$ is measured at a fixed large time $t=T$ and that we regard the  tail 
 \begin{equation*}
   y_{SL}^{\text{tail}}(t)=\int_0^{\infty}e^{-\lambda t}g_{SL}^{\text{tail}}(\lambda)d\lambda=\sum_{n=N_1+1}^{\infty}y_ne^{-\lambda_{n}t}, \quad \text{with} \quad  g_{SL}^{\text{tail}}(\lambda)=\sum_{n=N_1+1}^{\infty} y_n \delta(\lambda-\lambda_n),  
 \end{equation*}
as noise and assume further that $\left\{|y_n|\right\}_{n=1}^{\infty}$ is a rapidly decaying sequence of positive reals such that $\frac{|y_n|}{|y_{n+1}|}$ is bounded. Then the threshold signal-to-noise ratio is equivalent to the ratio of the terms $y_{N_1+1} e^{-\lambda_{N_1+1} t}$ and $y_{N_1} e^{-\lambda_{N} t}$ at time $t=T$. Indeed, these terms are the \emph{largest} component in the noise $y_{SL}^{\text{tail}}(t)$
and the \emph{smallest} component in the Laplace transform of the main signal $\sum_{n=1}^{N_1}y_n\delta(\lambda-\lambda_n)$, respectively. Hence, the threshold SNR $\eta$ is equivalent to
\begin{align}\label{eq:LaplaceSNR}
\begin{split}
\eta &\sim \frac{|y_{N_1}| e^{-\lambda_{N_1} T}}{|y_{N_1+1}| e^{-\lambda_{N_1+1} T}} = \left|\frac{y_{N_1}}{y_{N_1+1}}\right|\exp\left((\lambda_{N_1+1}-\lambda_{N_1})T\right)\\
\implies \ln \eta &\sim \ln \left(\left|\frac{y_{N_1}}{y_{N_1+1}}\right|\exp\left((\lambda_{N_1+1}-\lambda_{N_1})T\right) \right)\sim \left(\lambda_{N_1+1}-\lambda_{N_1} \right),
\end{split}
\end{align}
where we used the boundedness assumptions on the $|y_n|$'s and the fact that $T$ is fixed.
Substituting the latter into \eqref{eq:LaplaceSampling2}, we obtain the following condition on minimal spacing for stable recovery
\begin{equation}\label{eq:Relation}
\ln\lambda_{N_1}-\ln\lambda_{N_1-1} \gtrsim \frac{1}{\lambda_{N_1+1}-\lambda_{N_1}}.
\end{equation}

By comparing \eqref{eq:LaplaceSampling} and \eqref{eq:Relation}, we have that $\omega_{max}\sim \lambda_{N_1+1}-\lambda_{N_1}$. Since we allow for arbitrary $N_1\gg1$, to ensure recovery of the entire pulse train $g_{SL}(\lambda)$, the growth of the maximally recoverable frequency $\omega_{max}\sim \lambda_{N_1+1}-\lambda_{N_1}$ needs to satisfy the relation \eqref{eq:Relation}, where the increase in the logarithmic spacing of the $\lambda_n$'s is shown on the left-hand side.
Suppose that $\lambda_n = cn^p+O(n^{p-1})$ for some $p>0$. Then,  
\begin{equation*}
\ln(\lambda_{N_1})\sim p\ln(N_1)\sim p\sum_{j=1}^{N_1}j^{-1},
\end{equation*}
where the last step follows from the formula for the Euler-Mascheroni constant. Substituting this into \eqref{eq:Relation}, and using the fact that, by Lagrange's theorem,
\begin{equation*}
\lambda_{N_1+1}-
\lambda_{N_1}\sim c\left(N_1+1\right)^{p}-cN_1^{p}\sim N_1^{p-1},
\end{equation*}
we get $\frac{1}{N_1} \gtrsim \frac{1}{N_1^{p-1}}$, which is satisfied for $p\geq 2$. Thus the quadratic growth of the $\lambda_n$'s is a natural edge case for which the Laplace transform inversion is well-conditioned and all spectral components can in principle be recovered. Recalling Proposition~\ref{prop:EigDiffBound}, we see that this is precisely the setting of our paper. 

Making all of the above arguments fully rigorous is out of the scope of this paper.
Moreover, the presented arguments do not extend to Regimes 2 and 3. Finally, although various  numerical schemes for Laplace inversion have been developed \cite{dingfelder2015, lederman2016, horvath2023}, none of them are directly applicable to the fully discrete problem \eqref{eq:MeasExpsum}. Thus, one contribution of this manuscript is to show that the intuitive result provided in this section can, in fact, be justified rigorously. This is the implication of our Theorems~\ref{thm:FirstOrdCondRep}, \ref{Thm:FirstOrdCond}, \ref{thm:unifbdd11} and \ref{thm:PronyAmplitudeCondition}.

\section{Numerical experiments}\label{Sec:NumericalExamples}

\subsection{Asymptotic decay of the condition numbers and Prony's method errors}\label{sec:numerics-condition-numbers}

In this section we report on numerical experiments validating the theoretical predictions in Theorems~\ref{Thm:FirstOrdCond},~\ref{thm:unifbdd11}, and~\ref{thm:PronyAmplitudeCondition} for condition number decay across all the three asymptotic regimes in \eqref{Eq:Regimes}. Our implementation uses the \texttt{Julia} language \cite{bezanson2017} and high-precision arithmetic (\texttt{BigFloat} type  with 9000 bits, approximately 2700 decimal digits) to ensure accurate computation of condition numbers for both the analytic estimates obtained in Section~\ref{Sec:WellPosed} and Prony's method in Section~\ref{Sec:PronyProb}.

We implement the regimes \eqref{Eq:Regimes} as follows:
\begin{itemize}
\item \textbf{Regime 1}: Fixed $\Delta = 0.1$, $N_1 \in [25, 65]$ (expanding interval with increasing sampling density)
\item \textbf{Regime 2}: Fixed $N_1 = 10$, $\Delta \in [0.1, 2.5]$ (expanding interval with increasing step size)  
\item \textbf{Regime 3}: Fixed $T = 2.0$, $N_1 \in [10, 65]$ with $\Delta = T/N_1$ (fixed sampling interval with increasing density)
\end{itemize}

We generate eigenvalues $\lambda_n = -n^2$ and constant amplitudes $y_n = 1.0$ for all $n$. The number of recovered modes is set to $M = \lfloor \eta N_1 \rfloor$ with $\eta = 0.5$, and noise intensities $\varepsilon$ vary by regime: $\varepsilon = 10^{-6}$ for Regimes 1 and 3, $\varepsilon = 10^{-1}$ for Regime 2.

For each regime, we compute condition numbers $\kappa$ for both eigenvalues $\lambda$ and amplitudes $y$ using both analytic estimates and the implemented Prony algorithm. The data is fitted to exponential relationships $\kappa \sim \exp(- c \cdot N_1^\alpha)$ (Regimes 1 and 3) or $\kappa \sim \exp(- c \cdot \Delta^\alpha)$ (Regime 2) for some positive $c$, by plotting $\ln(-\ln(\kappa))$ versus $\ln(N_1)$ or $\ln(\Delta)$ respectively to extract decay exponents. The results are shown in Figure~\ref{fig:condition_number_analysis_3x2}. Each row corresponds to a different regime, with the left column showing log-log vs log plots for asymptotic analysis using $\ln(-\ln(\kappa))$ (yielding negative slopes for decay) and the right column displaying semi-log plots of the condition numbers. The computed slopes closely match the analytic decay rates, confirming that Prony's method achieves the predicted asymptotic performance in Theorems~\ref{Thm:FirstOrdCond},~\ref{thm:unifbdd11}, and~\ref{thm:PronyAmplitudeCondition}. The plots practically coincide, demonstrating excellent agreement with predictions and optimality of the asymptotic dependence of the condition numbers on $N_1$ or $\Delta$. The $\eta=1$ curves (shown only on the right column) confirm the $O(1)$ behavior of condition numbers for complete spectral recovery, validating the remark after Theorem~\ref{Thm:FirstOrdCond}.

\begin{figure}[htbp]
    \centering
    \includegraphics[width=\textwidth]{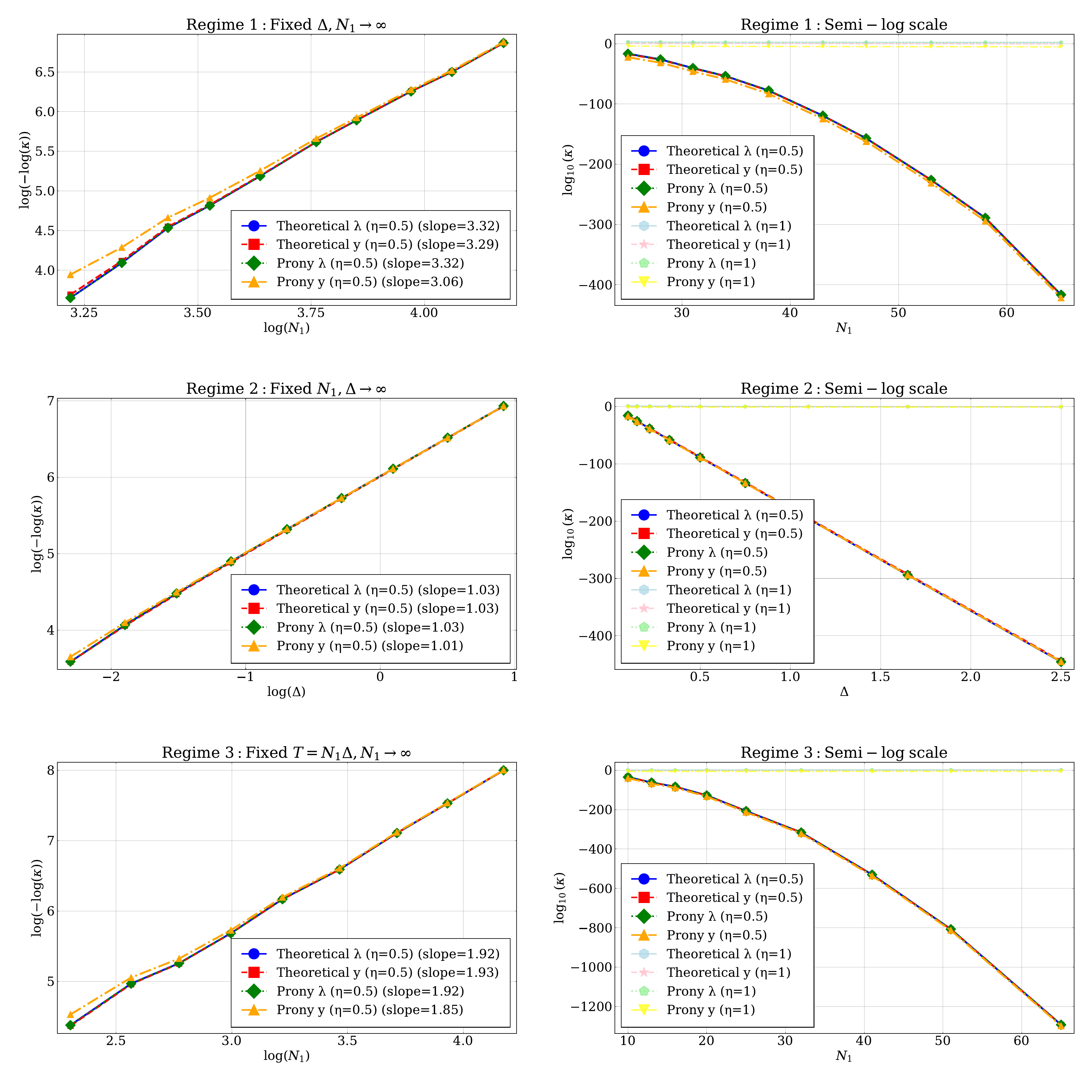}
    \caption{Comprehensive condition number analysis across the three considered asymptotic regimes. Each row represents a different regime: (1) Fixed $\Delta$, $N_1 \to \infty$; (2) Fixed $N_1$, $\Delta \to \infty$; (3) Fixed $T = N_1\Delta$, $N_1 \to \infty$. Left column shows log-log vs log plots using $\ln(-\ln(\kappa))$ for asymptotic analysis (with slopes shown in legends), right column shows semi-log plots of condition numbers. The fitted slopes validate that Prony's method achieves the analytic decay rates predicted by Theorems~\ref{Thm:FirstOrdCond},~\ref{thm:unifbdd11}, and~\ref{thm:PronyAmplitudeCondition}. Additionally, we include results for $\eta=1$ (full mode recovery) shown only on the right column, confirming the $O(1)$ scaling of condition numbers for eigenvalues $\lambda_n$ and amplitudes $y_n$ as discussed in the paragraph following Theorem~\ref{Thm:FirstOrdCond}.}
    \label{fig:condition_number_analysis_3x2} 
\end{figure}

We further investigated the influence of the eigenvalue decay rate on the condition numbers. We consider the linear ($\lambda_n \sim -n$) and the cubic ($\lambda_n \sim -n^3$) decay rates, and compare the resulting condition numbers to the ones obtained with the quadratic decay rate, as shown in Figure~\ref{fig:condition_number_analysis_decay_rate}. As expected from the ``back-of-the-envelope'' analysis in \prettyref{sec:related-work}, the slower-than-quadratic rates cannot guarantee recovery, while increasing the rate beyond quadratic leads to faster condition number decays.

\begin{figure}[htbp]
    \centering
    \includegraphics[width=\textwidth]{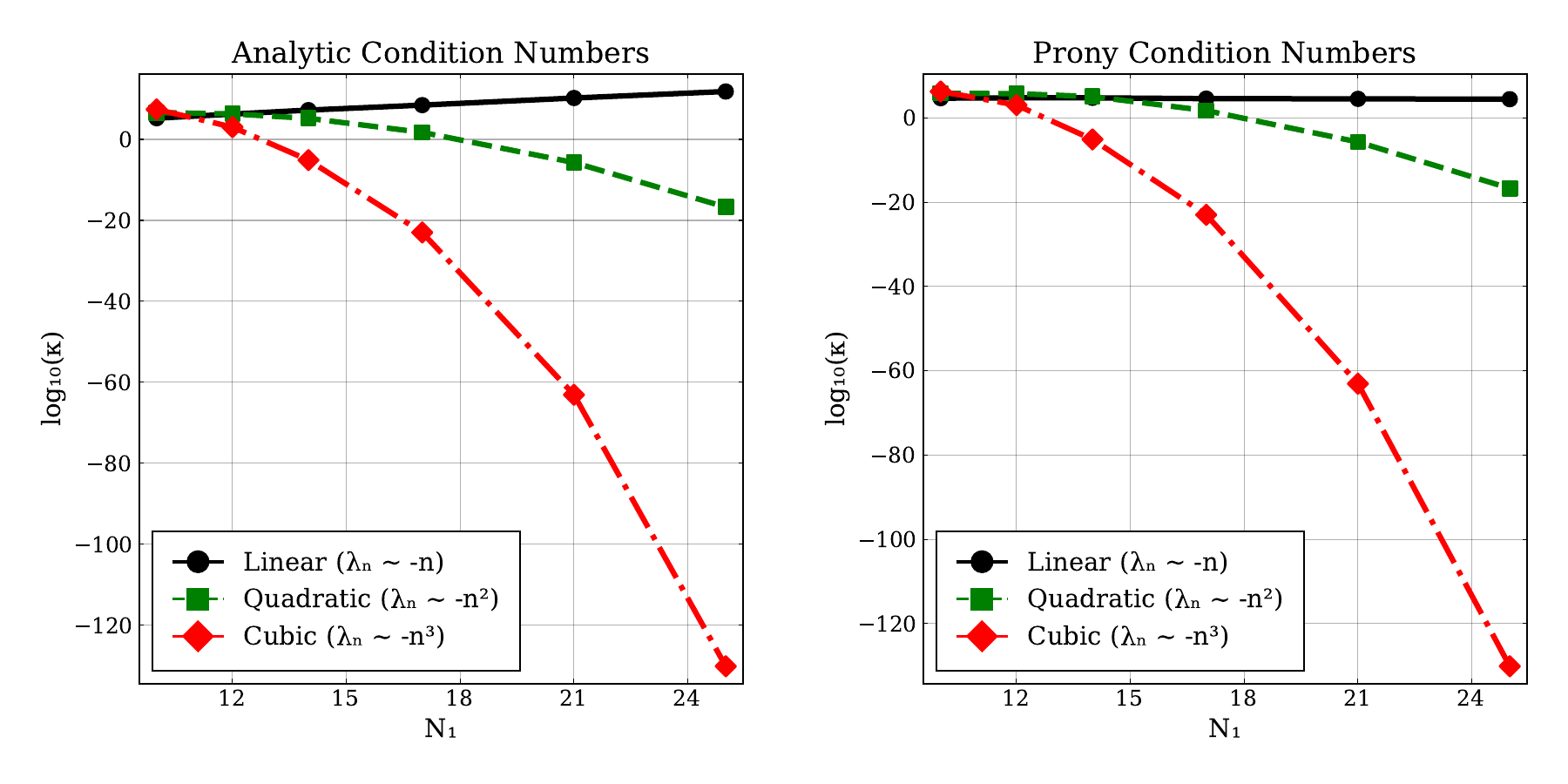}
    \caption{Comparison of condition number decay for different eigenvalue decay rates.}
    \label{fig:condition_number_analysis_decay_rate}
\end{figure}

\subsection{Potential recovery from integral measurements}\label{sec:numerics-integral-measurements}

Here, we demonstrate the application of our Prony-based estimation framework to recover an unknown potential in a linear reaction-diffusion equation from sampled integral measurements. These simulations extend our earlier reported results in \cite{katz2024}. In these experiments, we used the \texttt{mpmath} library in Python for high-precision arithmetic, with 100 digits of precision.

\subsubsection{Problem setup and data generation}
We consider the initial-boundary value problem for the linear reaction-diffusion equation \eqref{eq:diffusion-pde} with $\mathcal{A}$ as in \eqref{eq:sl-operator}, now given by $\mathcal{A} = -\partial_{xx} - q(x)$:
\begin{equation}\label{eq:PDE_potential}
z_t(x,t) = z_{xx}(x,t) + q(x)z(x,t), \quad z(0,t) = z(1,t) = 0, \quad (x,t) \in (0,1) \times \mathbb{R}^+, \;z(x,0)=f(x),
\end{equation}
where  $q(x)$ is an unknown potential to be recovered. We consider integral measurements of the form
\begin{equation}\label{eq:integral_measurement}
y(t) = \int_0^1 c(x)z(x,t)\,dx,
\end{equation}
sampled at times $t_k = k\Delta$. The measurement kernel $c(x)\in L^2(0,1)$ is constructed as a finite sum of sine functions $c(x) = \sum_{k=1}^{M} c_k \sin(\pi k x)$ with random coefficients in the range $[1, 2]$. Notice that since the eigenfunctions of $\mathcal{A}$ are different from $\sin(\pi k x)$ unless $q=\text{const}$, in general $c(x)$ may not be bandlimited (i.e. $c(x)$ may not have a finite expansion in the eigenfunctions of $\mathcal{A}$). Still, due to the smoothness of $c(x)$, there is a fast decay of $\left|\left\langle c, \psi_n\right\rangle\right|$, and therefore the model \eqref{eq:MeasExpsum} approximately holds (recall the discussion in \prettyref{sec:related-work}). We note that the special case of constant $q(x)$ was treated in \cite{katz2024}.

The initial condition is set to $f(x) = \sum_{k=1}^{60} \frac{(-1)^{k+1}}{k^3} \sin(\pi k x)$, which provides a smooth initial state with rapidly decaying Fourier coefficients.


To generate the data, first the PDE solution is computed via the method of lines using a high-order finite difference scheme with Chebyshev collocation for spatial discretization. The samples of $y(t)$ are subsequently computed using adaptive quadrature via \texttt{mpmath.quad}, with linear  interpolation of the solution to enable adaptive integration. The data generation is summarized in \prettyref{alg:data-generation} below.

\begin{algorithm2e}[H]
\KwIn{Potential $q(x)$, initial condition $f(x)$, measurement kernel $c(x)$, time span $[0, T]$, spatial resolution $N_x$, temporal resolution $N_t$, number of samples $N_s$}
\KwOut{Sampled measurements $\{y(t_k)\}_{k=0}^{2N_1-1}$}

Discretize spatial domain $[0,1]$ with $N_x$ Chebyshev collocation points\;
Construct differentiation matrices for spatial derivatives\;
Form the spatial operator matrix $A = -D_{xx} - \text{diag}(q(x_i))$\;
Apply Dirichlet boundary conditions: $A_{1,:} = A_{N_x,:} = 0$, $A_{1,1} = A_{N_x,N_x} = 1$\;
Discretize time domain $[0, T]$ with $N_t$ equispaced points\;
Compute $z(x_i,t_j)$ for each $t_j$ using the  matrix exponential $z(\cdot,t_j)=e^{At_j} f(\cdot)$\;
\textbf{Compute measurements:}\\
\For{each time point $t_j$}{
    Approximate the integral $y(t_j)$ with adaptive quadrature in \texttt{mpmath.quad} and linear interpolation to compute $z(x,t_j)$ at any given $x$ using the previously computed $z$ on a grid\;
}
\textbf{Return} sampled measurements $\{y(t_k)\}_{k=0}^{2N_s-1}$.
\caption{PDE Solution and Measurement Generation}
\label{alg:data-generation}
\end{algorithm2e}

\subsubsection{Potential recovery algorithm}

Our approach consists of two main steps: (I) recovery of eigenvalues of $\mathcal{A}$ using Prony's method, and (II) recovery of the potential via the inverse Sturm-Liouville problem.

\noindent\textbf{Step I: Eigenvalue Recovery via Prony's Method} Given the sampled measurements $\{y(t_k)\}_{k=0}^{2N_s-1}$, we apply \prettyref{alg:prony_eigenvalues} to recover the leading eigenvalues $\{\lambda_n\}_{n=1}^{N_0}$. In this numerical implementation of the vanilla Prony's method \prettyref{alg:classical-prony}, we use SVD to solve the linear system, filter roots to be real, and discard roots with small amplitudes. We use $2n_{\text{prony}}$ measurements to obtain enough initial roots.

\begin{algorithm2e}[H]
\KwIn{Sampled measurements $\{y(t_k)\}_{k=0}^{2N_s-1}$, $n_{\text{prony}} \leq N_s$, threshold $\tau > 0$}
\KwOut{Recovered eigenvalues $\{\lambda_n\}_{n=1}^{N_0}$}

Construct square Hankel matrix $H \in \mathbb{R}^{n_{\text{prony}} \times n_{\text{prony}}}$: $H_{i,j} = y(t_{i+j-1})$, $i, j = 1, \ldots, n_{\text{prony}}$\;
Solve system $Hp = -y_{n_{\text{prony}}:2n_{\text{prony}}-1}$ using SVD-based least squares\;
Compute roots $\{z_j\}_{j=1}^{n_{\text{prony}}}$ of characteristic polynomial $P(z) = z^{n_{\text{prony}}} + \sum_{k=0}^{n_{\text{prony}}-1} p_k z^k$\;
\textbf{Filter roots:}\\
\quad Keep only real roots (within numerical tolerance)\;
\quad Compute the amplitudes $\{y_n\}$ and retain only roots with amplitudes above threshold $\tau$\;
Extract eigenvalues: $\lambda_j = -\ln(z_j)/\Delta$ for filtered roots\;
\textbf{Return} $\{\lambda_n\}_{n=1}^{N_{0}}$.
\caption{High-Precision Filtered Prony Algorithm for Eigenvalue Recovery}
\label{alg:prony_eigenvalues}
\end{algorithm2e}

\paragraph{Step II: Potential Recovery via Inverse Sturm-Liouville Problem}

Given the recovered eigenvalues $\{\lambda_n\}_{n=1}^{N_0}$, we solve the inverse Sturm-Liouville problem to recover the potential $q(x)$. The algorithm is based on the approach described in \cite[Section~3.14]{chadan1997a}, which has been shown to converge for symmetric potentials when the potential is small enough. We represent $q(x)$ as a finite Fourier cosine series:
\begin{equation}\label{eq:potential_series}
q(x) = \sum_{k=0}^{M-1} a_k \cos(2\pi k x),
\end{equation}
where $M$ is the number of coefficients to be determined.

The recovery algorithm uses a differentiable ODE solver to optimize the coefficients $\{a_k\}_{k=0}^{M-1}$:

\begin{algorithm2e}[H]
\caption{Potential Recovery}\label{alg:inverse_potential}
\KwIn{Recovered eigenvalues $\{\lambda_n\}_{n=1}^{N_0}$, number of coefficients $M$}
\KwOut{Potential coefficients $\{a_k\}_{k=0}^{M-1}$}

Set $N_{\text{opt}} = \min(N_0, M)$ where $M$ is the number of Fourier coefficients in the potential\;
Initialize potential coefficients $\{a_k\}_{k=0}^{N_{\text{opt}}-1}$ (random or zero)\;
\For{each eigenvalue $\lambda_j$}{
    Solve initial value problem  $y''(x) = (q(x) - \lambda_j) y(x)$, $y(0) = 0$, $y'(0) = 1$ using differentiable ODE solver \texttt{torchdiffeq}\;
    Compute $y_j(1)$ (solution value at $x = 1$)\;
}
Compute loss function: $\mathcal{L}(\{a_k\}) = \sum_{j=1}^{N_{\text{opt}}} y_j(1)^2$\;
Optimize coefficients using L-BFGS with multiple random restarts\;
Pad result with zeros if $N_{\text{opt}} < M$ to match original coefficient count\;
\textbf{Return} optimized coefficients $\{a_k\}_{k=0}^{M-1}$.
\end{algorithm2e}

The optimization is implemented using PyTorch's automatic differentiation, allowing efficient computation of gradients with respect to the potential coefficients. The algorithm leverages the theoretical convergence guarantees from \cite{chadan1997a} for symmetric potentials, ensuring reliable recovery when the potential magnitude is sufficiently small.

\subsubsection{Numerical results}

We present two experimental scenarios to demonstrate the effectiveness of our approach.

\begin{figure}[htbp]
    \centering
    \begin{subfigure}{\textwidth}
    \includegraphics[width=\textwidth]{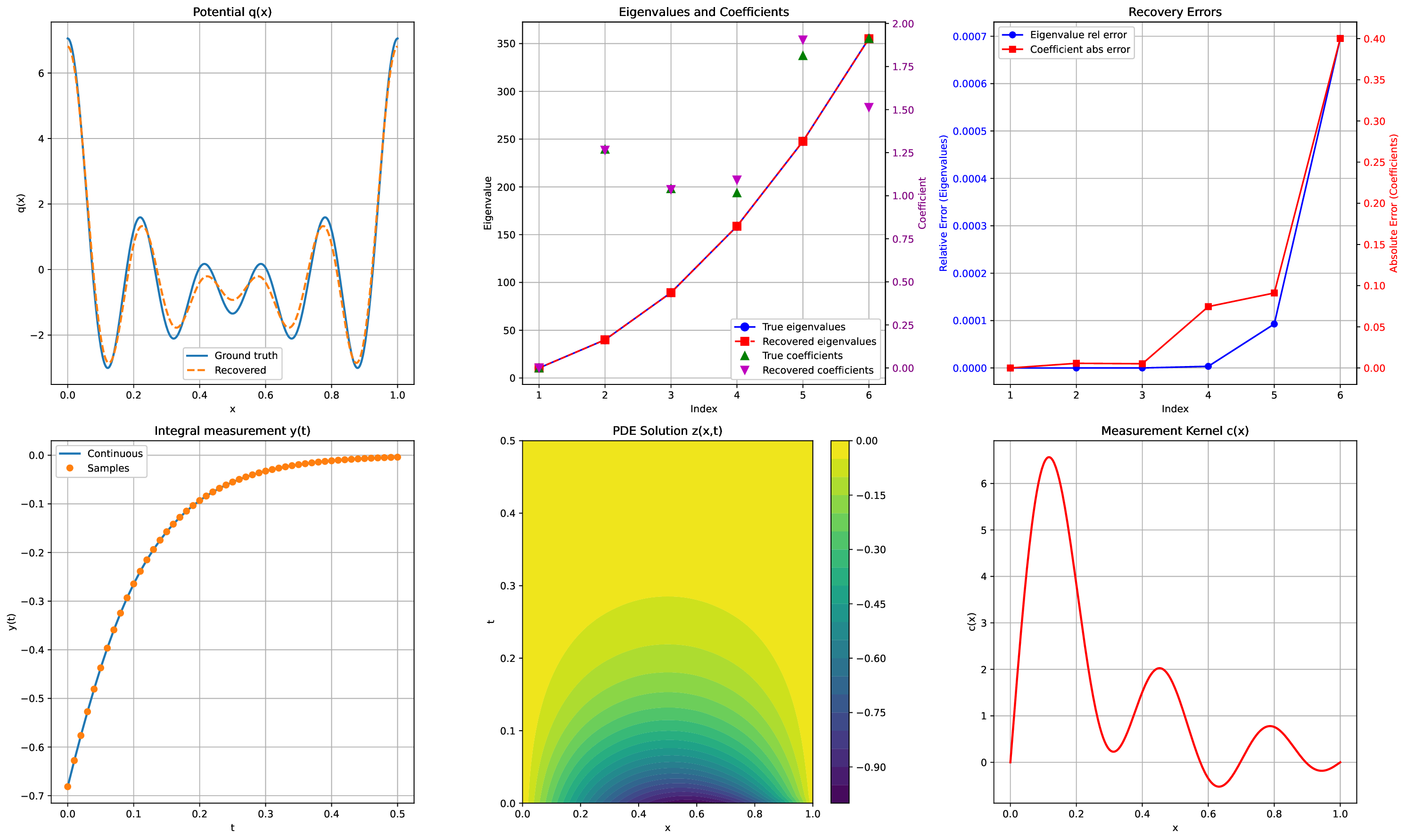}
    \caption{Random potential recovery. Top row: (left) the original and the reconstructed potential; (middle) the original and reconstructed eigenvalues and potential coefficients; (right) the corresponding recovery errors are relatively small. Bottom row: the measurement function, the computed solution and the measurement kernel.}
    \label{fig:potential_random}
    \end{subfigure}
    \hfill
    \begin{subfigure}{\textwidth}
    \includegraphics[width=\textwidth]{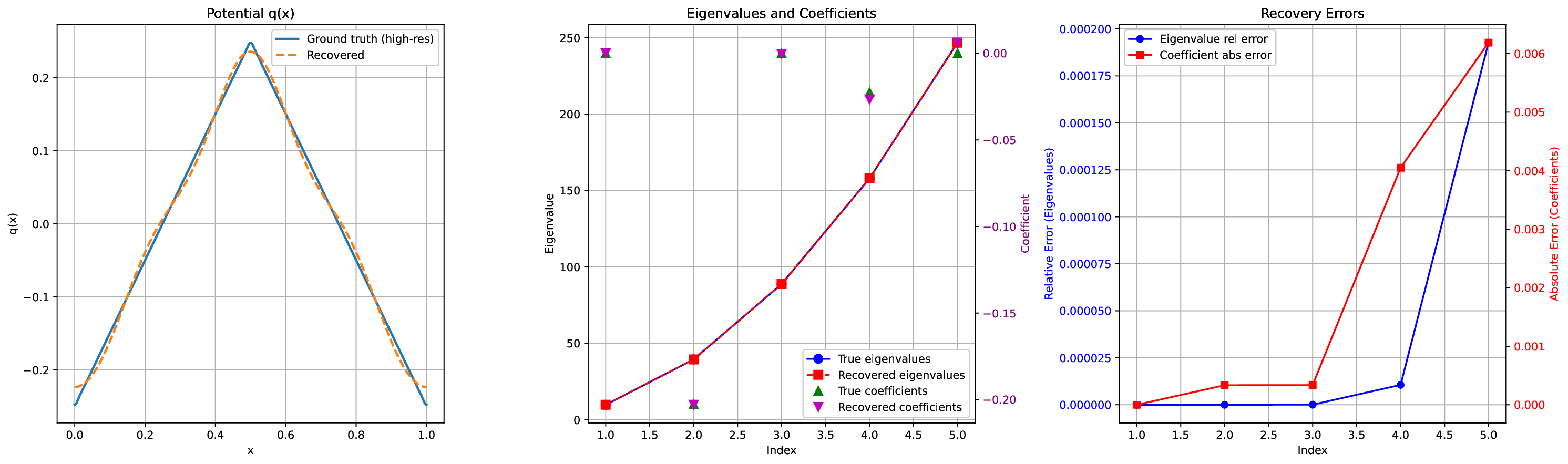}
    \caption{Triangular potential recovery. Descriptions are the same as in \prettyref{fig:potential_random}, top row. }
    \label{fig:potential_triangle}
    \end{subfigure}
    \caption{Potential recovery using Algorithms~\ref{alg:prony_eigenvalues} and~\ref{alg:inverse_potential}. Both experiments demonstrate accurate recovery of eigenvalues and potential coefficients, with the triangular potential case showing robustness to longer Fourier tails. }
\end{figure}

\noindent\textbf{Experiment 1: Random Potential}

We consider a random potential $q(x)$ with $M = 6$ Fourier coefficients:
\begin{equation}
q(x) = a_0 + \sum_{k=1}^5 a_k \cos(2\pi k x),
\end{equation}
where $a_0 = 0.0$ and $\{a_k\}_{k=1}^5$ are randomly generated coefficients in the range $[1, 2]$. The other parameters were set to $\Delta = 0.001$, $T = 0.5$, $N_x = 80$, $N_t=51$ and $n_{\text{prony}} = 35$. The threshold parameter $\tau$ was set to $\tau=10^{-6}$.

The algorithm successfully recovers both the eigenvalues and the potential coefficients with high accuracy, as shown in Figure~\ref{fig:potential_random}. The relative errors in the recovered eigenvalues are below $10^{-6}$, and the potential reconstruction closely matches the ground truth.

\paragraph{Experiment 2: Triangular Potential}

We consider a triangular potential defined by:
\begin{equation}
q(x) = 1 - |x - 0.5| - \bar{q},
\end{equation}
where $\bar{q} = \int_0^1 (1 - |x - 0.5|) \, dx = 0.75$ is the mean value, ensuring that the potential has zero mean. This function has a longer tail in its Fourier expansion compared to the random potential case. The other parameters were set to  $\Delta = 0.001$, $T=0.5$, $N_x = 60$ spatial points, $n_{\text{prony}} = 25$ and $\tau=10^{-6}$. We aim to recover $M=5$ coefficients of $q(x)$.

The algorithm performs well, accurately recovering both the eigenvalues and the triangular potential shape, as illustrated in Figure~\ref{fig:potential_triangle}. This experiment demonstrates the robustness of our approach when dealing with potentials that have significant higher-order Fourier components.

These experiments validate our theoretical framework and demonstrate the practical applicability of Prony-based methods for potential recovery in linear reaction-diffusion equations. The enhanced visualizations provide detailed comparisons between true and recovered quantities, highlighting the accuracy of our approach.

\newpage

\subsection{Point measurements and convergence study}\label{sec:potential-recovery-point-measurements}

In the next set of experiments, we replace the integral measurement \eqref{eq:integral_measurement} with a point measurement
$$
y(t) = z(x_0,t),\quad x_0=0.45.
$$
We repeat the experiments in \prettyref{sec:numerics-integral-measurements} with the obvious modification to the data generation \prettyref{alg:data-generation}, which now samples the solution $z(x,t)$ directly instead of computing quadrature. Here we use 150 decimal points for high-precision computations. The experimental parameters were $\Delta=0.01$, $T=0.5$, $N_t=51$, $N_x=60$ and $\tau=10^{-6}$.

Results of an example run with $n_{\text{prony}}=25$ and $M=4$ are shown in \prettyref{fig:potential_triangle_point}. The accuracy is comparable to that in \prettyref{fig:potential_triangle}.

\begin{figure}[htbp]
    \centering
    \includegraphics[width=\textwidth]{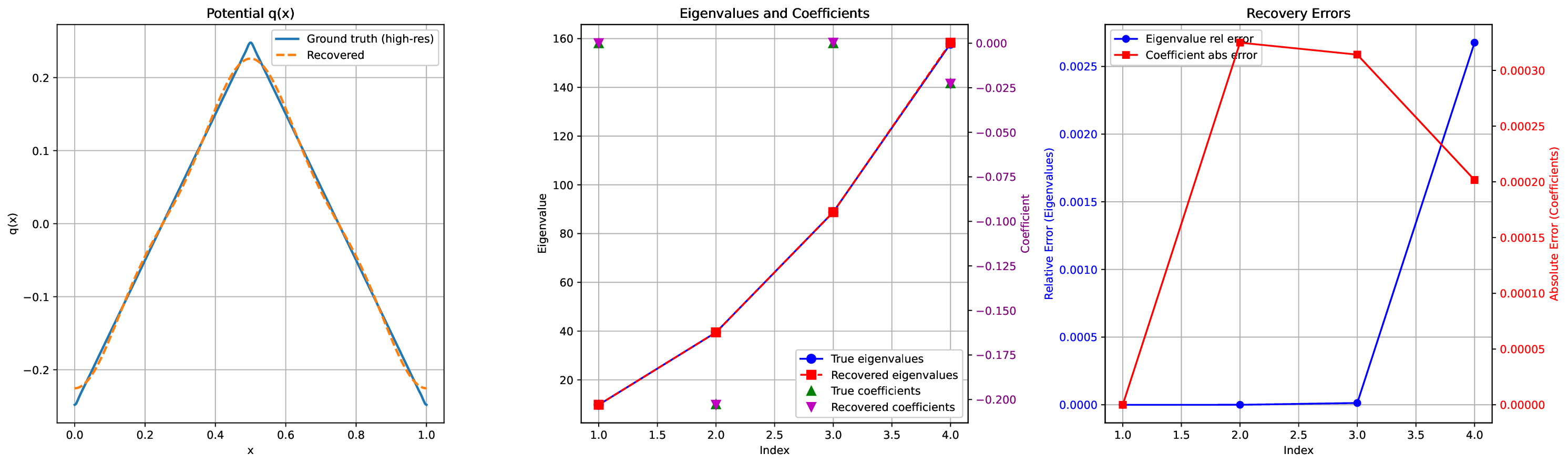}
    \caption{Triangle potential recovery using point measurements at $x_0 = 0.45$. The descriptions are the same as in \prettyref{fig:potential_random}, top row.}
    \label{fig:potential_triangle_point}
\end{figure}

\begin{figure}[htbp]
    \centering
    \includegraphics[width=\textwidth]{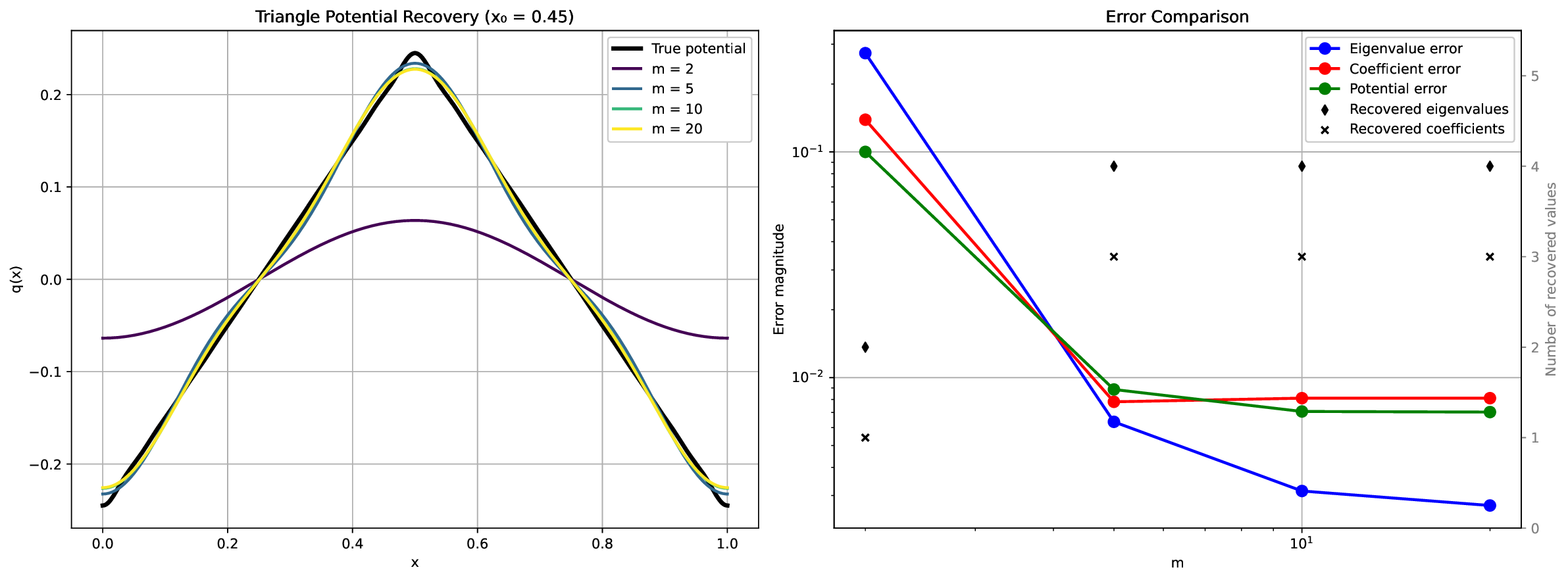}
    \caption{Convergence study for triangle potential recovery using point measurements. The left panel shows the true triangle potential (black solid line) alongside the recovered potentials for different values of $m$ (colored dashed lines). The right panel displays the convergence of three error metrics: relative error in the recovered eigenvalues (blue circles), absolute error in the recovered Fourier coefficients (red squares), $L^2$ error in the reconstructed potential function (green triangles).}
    \label{fig:triangle_convergence}
\end{figure}

To study convergence, we vary the parameter $m\equiv n_{\text{prony}}=N_s$ that controls the Prony recovery process in \prettyref{alg:prony_eigenvalues}. The experiment tests $m \in \{2, 5, 10, 20\}$ to examine how the accuracy of eigenvalue and coefficient recovery scales with the Prony parameter. Note that the actual number of recovered eigenvalues and coefficients is given by the output value $N_0$ in \prettyref{alg:prony_eigenvalues}, which is typically smaller than the input parameter $m$. The number of recovered coefficients equals the number of recovered eigenvalues, where coefficients include the constant term $q_0$ of the potential.

Figure~\ref{fig:triangle_convergence} presents the results of this convergence study. The method achieves excellent accuracy even with relatively few recovered modes ($N_0 = 4$). The algorithm maintains numerical stability across all tested values of $m$. The eigenvalue errors show a clear downward trend with increasing $m$, indicating that Prony's method successfully recovers the dominant eigenvalues even in the presence of the high-frequency components characteristic of the triangle potential. The convergence plateaus as the number of recoverable modes is reached. The $L^2$ error in potential reconstruction demonstrates that the recovered coefficients effectively capture the essential features of the triangle potential, with errors decreasing systematically as the Prony parameter $m$ increases, until the limit of recoverable modes is reached.

\newpage

\appendix
\section{Preliminaries, auxiliary results and proofs}

We provide in the appendix the proofs of our results, along with preliminaries and auxiliary results.

\subsection{Proof of Proposition~\ref{prop:EigDiffBound}}\label{sec:proof:prop:EigDiffBound}

We first prove \eqref{eq:EigDiff} for $L\leq n\leq m$ for some (large) $L\in \mathbb{N}$.  By \cite[Equation 4.21]{fulton1994eigenvalue}, the eigenvalues have the asymptotic behavior:
\begin{equation}\label{eq:LambdaAsymBehav}
\lambda_j = \frac{\pi^2}{B^2}j^2+a_0+\alpha(j),\quad j\geq 1,
\end{equation}
where $B$ and $a_0$ are \emph{positive} constants and $\alpha(j) = O\left(j^{-2} \right)$ as $j\to \infty$ (i.e., $\alpha \colon \mathbb{N} \to \mathbb{R}$ satisfies $|\alpha(j)| \leq C j^{-2}$ for some $C>0$). Hence, for $L\leq n\leq m$, we have $\lambda_m-\lambda_n= \frac{\pi^2}{B^2} \left(m^2-n^2 \right)+ \alpha(m)-\alpha(n)$. 
Taking $L\in \mathbb{N}$ large so that $L\leq n$ implies $\left|\alpha(n)\right|\leq \min \left(\frac{1}{2},\frac{\pi^2}{4B^2} \right)$ we obtain that, for $L\leq n\leq m$,
\begin{equation}\label{eq:EigDiffTemp}
\upsilon_1(m^2-n^2):=\frac{\pi^2}{2B^2}(m^2-n^2)\leq \lambda_m-\lambda_n\leq \left(\frac{\pi^2}{B^2}+1 \right)(m^2-n^2)=:\Upsilon_1 (m^2-n^2),
\end{equation}
because $\lambda_m-\lambda_n\leq \tfrac{\pi^2}{B^2}\left(m^2-n^2 \right) + 1 \leq \Upsilon_1 \left(m^2-n^2 \right)$ and $\lambda_m-\lambda_n\geq \tfrac{\pi^2}{B^2}\left(m^2-n^2 \right)-\tfrac{\pi^2}{2B^2}\geq \upsilon_1\left(m^2- n^2 \right)$.

By decreasing $\upsilon_1$ and increasing $\Upsilon_1$, since $\left\{\lambda_j\right\}_{j=1}^{\infty}$ are distinct, we can guarantee that \eqref{eq:EigDiffTemp} holds also in the finite set $1\leq n \leq m \leq L$. We also show that there exist $0<\upsilon_2<\Upsilon_2$ such that \eqref{eq:EigDiff} holds with $\upsilon,\Upsilon$ replaced with $\upsilon_2,\Upsilon_2$, respectively, and $1\leq n\leq L\leq m$. Assume by contradiction that the left inequality in \eqref{eq:EigDiff} fails. Then, for any $j\in \mathbb{N}$, there exists $1\leq n_j\leq L$ and $m_j\geq L$
such that  
\begin{equation*}
\frac{1}{j}> \frac{\lambda_{m_j}-\lambda_{n_j}}{m_j^2-n_j^2} \overset{\eqref{eq:LambdaAsymBehav}}{=} \frac{\left(\pi/B \right)^2m_j^2+a_0+\alpha(m_j)-\lambda_{n_j}}{m_j^2-n_j^2}.
\end{equation*}
By the pigeonhole principle and selection of a subsequence, we can assume, without loss of generality, that $n_j \equiv n \in [L]$ and $\lim_{j\to \infty}m_j = \infty$. Then, taking the limit for $j\to \infty$ yields the contradiction $\left(\pi/B \right)^2\leq 0$. Similar arguments yields the right inequality in \eqref{eq:EigDiff} for $1\leq n \leq L\leq m$. Letting $\upsilon=\min \left(\upsilon_1,\upsilon_2 \right)$ and $\Upsilon = \max \left(\Upsilon_1,\Upsilon_2 \right)$ gives \eqref{eq:EigDiff} for all cases of $1\leq n \leq m$.

\subsection{Auxiliary results and preliminaries for Theorems~\ref{thm:FirstOrdCondRep} and~\ref{Thm:FirstOrdCond}}\label{Sec:Preliminaries}

We begin by defining a function $\mathcal{G}_{w_1,w_2}(\zeta)$ and showing its properties, which are fundamental to derive bounds on the first-order condition numbers.

\begin{proposition}\label{prop:Ical}
    For $\zeta \in (0,\infty)$, the integral    \begin{equation}\label{eq:CalInteg}
    \begin{array}{lll}
    &\mathcal{G}_{w_1,w_2}(\zeta) := \int_{w_1}^{w_2}\mathfrak{g}(\zeta x) \mathrm{d}x>0, \quad 0\leq w_1< w_2\leq \infty, \ &\mathfrak{g}(x):=-\ln \left(1-e^{-x} \right)>0,
    \end{array}
    \end{equation}
    is finite; for $w_1=0$ and/or $w_2=\infty$, $\mathcal{G}_{w_1,w_2}(\zeta)$ is an improper integral. Furthermore, 
    $\mathcal{G}_{w_1,w_2}(\zeta)$ is decreasing in $\zeta$ and
    \begin{equation*}
    \lim_{\zeta \to \infty} \mathcal{G}_{w_1,w_2}(\zeta)=0.
    \end{equation*}
    Also, for any $x>0$, the function $\mathfrak{g}(x)$ is strictly monotonically decreasing.
    \end{proposition}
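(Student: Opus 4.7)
The plan is to handle the stated properties in a natural order: first positivity and strict monotonic decrease of $\mathfrak{g}$, then finiteness and positivity of $\mathcal{G}_{w_1,w_2}(\zeta)$, and finally monotonicity in $\zeta$ together with the limit at infinity.

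For the properties of $\mathfrak{g}$, the observation is that for $x>0$ we have $0<1-e^{-x}<1$, so $\mathfrak{g}(x)=-\ln(1-e^{-x})>0$. Direct differentiation yields $\mathfrak{g}'(x)=-e^{-x}/(1-e^{-x})=-1/(e^x-1)<0$, establishing strict monotonic decrease on $(0,\infty)$.

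For finiteness of $\mathcal{G}_{w_1,w_2}(\zeta)$, only the endpoint behaviors matter: as $x\to 0^+$, we have $\mathfrak{g}(\zeta x)\sim -\ln(\zeta x)$, which is Lebesgue-integrable near $0$; as $x\to\infty$, the expansion $-\ln(1-t)\sim t$ at $t=0$ gives $\mathfrak{g}(\zeta x)\sim e^{-\zeta x}$, which is integrable on any $[R,\infty)$. Hence $C:=\int_0^\infty \mathfrak{g}(u)\,du$ is a finite positive constant, and $\mathcal{G}_{w_1,w_2}(\zeta)<\infty$ for every admissible pair $(w_1,w_2)$ (including the improper cases $w_1=0$ and/or $w_2=\infty$). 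Positivity of $\mathcal{G}_{w_1,w_2}(\zeta)$ then follows immediately from the positivity of the integrand.

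Monotonicity of $\mathcal{G}_{w_1,w_2}(\zeta)$ in $\zeta$ follows pointwise from the monotonicity of $\mathfrak{g}$: for $0<\zeta_1<\zeta_2$ and each $x>0$ one has $\mathfrak{g}(\zeta_1 x)>\mathfrak{g}(\zeta_2 x)$, and integrating over $(w_1,w_2)$ preserves the strict inequality. For the limit, the change of variables $u=\zeta x$ gives
\[
\mathcal{G}_{w_1,w_2}(\zeta)=\frac{1}{\zeta}\int_{\zeta w_1}^{\zeta w_2}\mathfrak{g}(u)\,du\leq \frac{C}{\zeta}\xrightarrow[\zeta\to\infty]{}0.
\]
The only place that requires any care is the justification of finiteness of $C$ when both endpoints are improper, but the endpoint asymptotics (logarithmic at $0$, exponentially decaying at $\infty$) dispatch this without difficulty; I do not anticipate a substantial obstacle in the argument.
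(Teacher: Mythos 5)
Your proof is correct, and it takes a genuinely different route from the paper's. The paper restricts attention to the representative case $(w_1,w_2)=(1,\infty)$ (deferring $(0,1)$ to "similar arguments") and integrates by parts to obtain the explicit representation $\mathcal{G}_{1,\infty}(\zeta)=\ln(1-e^{-\zeta})+\int_1^\infty \frac{\zeta x}{e^{\zeta x}-1}\,\mathrm{d}x$, from which finiteness and the limit $\zeta\to\infty$ are read off term by term (the limit of the integral term being handled by the same substitution $y=\zeta x$ you use). You instead argue directly from the endpoint asymptotics of the integrand (logarithmic singularity at $0$, exponential decay at $\infty$) and the scaling substitution $u=\zeta x$, arriving at the uniform bound $\mathcal{G}_{w_1,w_2}(\zeta)\le \frac{1}{\zeta}\int_0^\infty\mathfrak{g}(u)\,\mathrm{d}u$. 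This buys you three things at once: finiteness for \emph{every} admissible pair $(w_1,w_2)$ without case-splitting, the limit at infinity, and an explicit $O(1/\zeta)$ decay rate (one can even evaluate $\int_0^\infty\mathfrak{g}=\sum_{k\ge1}k^{-2}=\pi^2/6$ by expanding the logarithm). The monotonicity arguments — both for $\mathfrak{g}$ via $\mathfrak{g}'(x)=-1/(e^x-1)$ and for $\zeta\mapsto\mathcal{G}_{w_1,w_2}(\zeta)$ via pointwise monotonicity of the integrand — coincide with the paper's. Overall your argument is shorter, more uniform in the endpoints, and slightly more informative than the one in the paper.
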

    \begin{proof}
        We consider the case $w_1=1$ and $w_2=\infty$, as the complementary case $w_1=0$ and $w_2=1$ can be handled similarly. Integrating by parts, we have
        \begin{equation*}
        \begin{array}{ll}
        & \mathcal{G}_{1,\infty}(\zeta) = \ln\left(1-e^{-
        \zeta} \right)-\lim_{x\to \infty}x\ln\left(1-e^{-\zeta x} \right)+ \int_1^{\infty}\frac{\zeta x}{e^{\zeta x}-1}\mathrm{d}x= \ln \left(1-e^{-\zeta} \right)+ \int_1^{\infty}\frac{\zeta x}{e^{\zeta x}-1}\mathrm{d}x<\infty,
        \end{array}
        \end{equation*}
        where we employed
        \begin{equation*}
        \begin{array}{lll}
        &\lim_{x\to \infty}x\ln\left(1-e^{-\zeta x} \right) = -\zeta \lim_{x\to \infty}\frac{x^2}{1-e^{-\zeta x}}e^{-\zeta x} = 0.
        \end{array}
        \end{equation*}
        Since $\mathfrak{g}'(x) = -\frac{1}{e^{x}-1}<0$ for all $x>0$, the function $\mathfrak{g}(x)$ is strictly monotonically decreasing whenever $x>0$, whence so is $\zeta \mapsto \mathcal{G}_{1,\infty}(\zeta)$, due to continuity of $\mathfrak{g}(x)$. Finally, 
        \begin{equation}\label{eq:Intlim}
        \begin{array}{lll}
        &\lim_{\zeta \to \infty}\ln \left(1-e^{-\zeta} \right)=0,\quad
        \lim_{\zeta \to \infty} \int_1^{\infty}\frac{\zeta x}{e^{\zeta x}-1} \mathrm{d}x=\lim_{\zeta\to \infty} \frac{1}{\zeta}\int_{\zeta }^{\infty}\frac{y}{e^y-1}\mathrm{d}y=0.
        \end{array}
        \end{equation}
        Hence, the limits in \eqref{eq:Intlim} yield $\lim_{\zeta \to \infty}\mathcal{G}_{1,\infty}(\zeta) = 0$.
    \end{proof}

The following result, which builds upon Proposition~\ref{prop:EigDiffBound} and Proposition~\ref{prop:Ical}, is instrumental to our asymptotic analysis.        
    \begin{proposition}\label{prop:ComponentBounds0}
    Let $N_1\in \mathbb{N}$ and consider $\left\{\phi_n\right\}_{n=1}^{N_1}$ given in \eqref{eq:phinDef}. Define
    \begin{equation*}
\mathcal{Z}^1_{n,\Delta,N_1} := \prod_{j\in [N_1]\setminus \left\{n \right\}}  (\phi_{N_1+1}-\phi_j)^2, \qquad \mathcal{Z}^2_{n,\Delta} := \prod_{j=1}^{n-1}(\phi_n-\phi_j)^2,\qquad
    \mathcal{Z}^3_{n,\Delta,N_1} := \prod_{j=n+1}^{N_1}(\phi_n-\phi_j)^2.
    \end{equation*}
    Then, we have 
    \begin{equation*}
    \begin{array}{lll}
    &\mathcal{Z}^1_{n,\Delta,N_1} = e^{-2\Delta \sum_{j\in [N_1]\setminus \left\{n \right\}}\lambda_j -2\theta^1_{n,\Delta,N_1}},\\
    & \mathcal{Z}^2_{n,\Delta} = e^{-2\Delta \sum_{j=1}^{n-1}\lambda_j - 2\theta^2_{n,\Delta} },\quad n> 1, \\
    & \mathcal{Z}^3_{n,\Delta,N_1} = e^{-2\Delta (N_1-n)\lambda_n - 2\theta^3_{n,\Delta,N_1} },\quad n< N_1,
    \end{array}
    \end{equation*}
    where, using function $\mathfrak{g}(x)$ defined in Proposition~\ref{prop:Ical} and adopting the notation that, whenever $k<l$, $\prod_{j=l}^k b_j=1$ and $\sum_{j=l}^k b_j=0$, we define 
    \begin{equation}\label{eq:ThetasSumrep}
    \begin{array}{lll}
    &\theta^1_{n,\Delta,N_1} := \sum_{j\in [N_1]\setminus \left\{ n\right\}} \mathfrak{g}(\Delta(\lambda_{N_1+1}-\lambda_j)) ,\vspace{0.1cm}\\
    & \theta^2_{n,\Delta}:=\sum_{j=1}^{n-1}\mathfrak{g}(\Delta(\lambda_n-\lambda_j)),\vspace{0.1cm}\\
    & \theta^3_{n,\Delta,N_1}:=\sum_{j=n+1}^{N_1} \mathfrak{g}(\Delta(\lambda_j-\lambda_n)),
    \end{array}
    \end{equation}
    which satisfy
\begin{equation}\label{eq:thetabounds}
    \begin{array}{lll}
    & \mathcal{G}_{1,2}(\Delta \Upsilon (2N_1+1))+ \mathfrak{g}_{n,\Delta,N_1,\Upsilon} \leq \theta^1_{n,\Delta,N_1}     \leq  \mathcal{G}_{0,\infty}(\Delta \upsilon N_1)+\mathfrak{g}_{n,\Delta,N_1,\upsilon}, \vspace{0.2cm}\\
    &\mathcal{G}_{1,2}(\Delta \Upsilon (2n-1)) \leq \theta^2_{n,\Delta}   \leq  \mathcal{G}_{0,\infty}(\Delta \upsilon (n+1))  ,\vspace{0.2cm}\\
    &\mathcal{G}_{1,2}(\Delta \Upsilon(N_1+n+1)) \leq \theta^3_{n,\Delta,N_1}  \leq \mathcal{G}_{0,\infty}(\Delta \upsilon(2n+1)),
    \end{array}
    \end{equation}
    with 
    \begin{equation*}
    \begin{array}{lll}
    &\mathfrak{g}_{n,\Delta,N_1,\Upsilon}:=-\mathfrak{g}\left(\Delta \Upsilon (N_1+1-n)(2N_1+1) \right)  \quad \mbox{and} \quad \mathfrak{g}_{n,\Delta,N_1,\upsilon}:= -\mathfrak{g}\left(\Delta  \upsilon (N_1+1-n)(N_1+1)  \right).
    \end{array}
    \end{equation*}
    \end{proposition}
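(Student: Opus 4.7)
The strategy is a two-stage reduction: first rewrite each $\mathcal{Z}^i_{n,\Delta,N_1}$ in closed form by pulling out the appropriate $\phi_{\min}$ factor, and then, for the remaining term, apply the quadratic eigenvalue gap bounds of Proposition~\ref{prop:EigDiffBound} combined with sum-versus-integral comparisons to obtain the stated bounds on $\theta^1, \theta^2, \theta^3$.

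For the representation step, I would use the key factorization: whenever $a < b$, writing $\phi_a = e^{-\lambda_a \Delta}$ and $\phi_b = e^{-\lambda_b \Delta}$ gives $\phi_a - \phi_b = \phi_a\bigl(1 - e^{-\Delta(\lambda_b-\lambda_a)}\bigr)$, so $\ln(\phi_a - \phi_b)^2 = -2\lambda_a\Delta - 2\,\mathfrak{g}(\Delta(\lambda_b-\lambda_a))$ with $\mathfrak{g}$ as in \eqref{eq:CalInteg}. Applied respectively to each factor of $\mathcal{Z}^1,\mathcal{Z}^2,\mathcal{Z}^3$, taking logarithms of the products and summing yields immediately the claimed exponential formulas with the $\theta^i$'s given by the sum representations \eqref{eq:ThetasSumrep}.

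For the bounds \eqref{eq:thetabounds} on the $\theta^i$'s, I would treat each one the same way, using monotonicity of $\mathfrak{g}$ and Proposition~\ref{prop:EigDiffBound} to sandwich each summand: $\mathfrak{g}(\Delta\Upsilon(m^2-n^2)) \leq \mathfrak{g}(\Delta(\lambda_m-\lambda_n)) \leq \mathfrak{g}(\Delta\upsilon(m^2-n^2))$. Next, I would factor $m^2-n^2 = (m-n)(m+n)$ and replace $m+n$ with a constant (independent of the summation index) using the crude but useful bounds $m+n \in [\,\text{const}_{\min},\text{const}_{\max}\,]$ tailored to the index range; for instance, in the bound on $\theta^3_{n,\Delta,N_1}$, the sum is over $j\in\{n+1,\dots,N_1\}$, so $j+n \geq 2n+1$ (giving the upper bound) and $j+n \leq N_1+n$ (giving the lower bound). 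After this reduction, each $\theta^i$ becomes bounded above/below by a sum of the form $\sum_k \mathfrak{g}(\Delta\upsilon c\, k)$ or $\sum_k \mathfrak{g}(\Delta\Upsilon c\, k)$ over a range of $k$. Finally, using that $\mathfrak{g}$ is strictly decreasing (Proposition~\ref{prop:Ical}), I apply the standard Riemann-sum comparisons $\int_1^{N+1} f(x)\,dx \leq \sum_{k=1}^{N} f(k) \leq \int_0^{N} f(x)\,dx$ for a decreasing function $f$, and extend the upper integral to $\infty$ and restrict the lower integral to $[1,2]$ to produce exactly $\mathcal{G}_{0,\infty}$ and $\mathcal{G}_{1,2}$ as in the statement.

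The only subtlety — and the main source of bookkeeping difficulty — is the upper bound on $\theta^1$: because the sum runs over $j\in[N_1]\setminus\{n\}$ rather than over a block of consecutive integers, I would isolate the missing term by first summing over the full range $j\in[N_1]$ and then subtracting the contribution at $j=n$. This gives the $-\mathfrak{g}(\Delta\upsilon(N_1+1-n)(N_1+1))$ term, i.e.\ exactly $\mathfrak{g}_{n,\Delta,N_1,\upsilon}$; for the lower bound, the analogous extracted term produces $\mathfrak{g}_{n,\Delta,N_1,\Upsilon}$. Matching the precise pre-factors $\Delta\upsilon N_1$, $\Delta\Upsilon(2N_1+1)$, etc., requires keeping careful track of which side of each inequality $N_1+1\pm j$, $N_1\pm n$, and $2n+1$ sit; this is routine but must be done carefully to obtain exactly the constants displayed in \eqref{eq:thetabounds}.
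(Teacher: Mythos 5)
Your proposal is correct and follows essentially the same route as the paper's proof: the identity $\ln(\phi_a-\phi_b)^2=-2\Delta\lambda_{\min(a,b)}-2\,\mathfrak{g}(\Delta|\lambda_b-\lambda_a|)$ for the representations, then the sandwich via Proposition~\ref{prop:EigDiffBound} and monotonicity of $\mathfrak{g}$, the replacement of $m+n$ by index-range constants, the decreasing-function Riemann-sum comparison yielding $\mathcal{G}_{1,2}$ and $\mathcal{G}_{0,\infty}$, and the subtraction of the missing $j=n$ term to produce $\mathfrak{g}_{n,\Delta,N_1,\Upsilon}$ and $\mathfrak{g}_{n,\Delta,N_1,\upsilon}$. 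No gaps.
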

    \begin{proof}

        We start with $\mathcal{Z}^1_{n,\Delta,N_1}$ and write 
        \begin{equation*}
        \begin{array}{lll}
        &\ln\left(\mathcal{Z}^1_{n,\Delta,N_1}\right) =  2\sum_{j\in [N_1]\setminus \left\{ n\right\}}\ln \left(e^{-\Delta \lambda_j} \left(1-e^{-\Delta (\lambda_{N_1+1}-\lambda_j)} \right) \right) \overset{\eqref{eq:CalInteg}}{=}
        - 2\Delta \sum_{j\in [N_1]\setminus \left\{ n\right\}}\lambda_j-2\theta^1_{n,\Delta,N_1}. 
        \end{array}
        \end{equation*}
        By Proposition~\ref{prop:Ical}, for any $\Delta>0$, the function $ \mathfrak{g}$ is strictly monotonically decreasing. Combining this with \eqref{eq:EigDiff} in Proposition~\ref{prop:EigDiffBound}, we have 
        \begin{equation}\label{eq:Theta1LowUp}
        \begin{array}{lll}
        \sum_{j\in [N_1]\setminus \left\{ n\right\}} \mathfrak{g}\left(\Delta\Upsilon ((N_1+1)^2-j^2) \right)  \leq \theta^1_{n,\Delta,N_1}\leq \sum_{j\in [N_1]\setminus \left\{ n\right\}}\mathfrak{g}\left(\Delta \upsilon ((N_1+1)^2-j^2) \right).
        \end{array}
        \end{equation}
        In order to convert the lower and upper bounds in \eqref{eq:Theta1LowUp} into Riemann sums, we note that 
        \begin{equation*}
        (N_1+1-j)(N_1+1)\leq (N_1+1)^2-j^2 = (N_1+1-j)(N_1+1+j)\leq (N_1+1-j)(2N_1+1),
        \end{equation*}
        whence 
        \begin{equation*}
        \begin{array}{lll}
        \sum_{j\in [N_1]\setminus \left\{ n\right\}}
        \mathfrak{g}\left(\Delta \Upsilon ((N_1+1)^2-j^2) \right) &\geq \sum_{j\in [N_1]\setminus \left\{ n\right\}}\mathfrak{g}\left(\Delta \Upsilon (N_1+1-j)(2N_1+1) \right)\vspace{0.1cm} \\
        &\hspace{-15mm} = \sum_{j\in [N_1]}\mathfrak{g}\left(\Delta \Upsilon (N_1+1-j)(2N_1+1) \right) - \mathfrak{g}\left(\Delta \Upsilon (N_1+1-n)(2N_1+1) \right)\vspace{0.1cm}\\
        &\hspace{-20mm} \overset{N_1+1-j=:m}{=} \sum_{m\in [N_1]}\mathfrak{g}\left(\Delta \Upsilon(2N_1+1) m \right) + \mathfrak{g}_{n,\Delta,N_1,\Upsilon}\vspace{0.1cm}\\
        &\hspace{-15mm}\geq \int_1^{N_1+1}\mathfrak{g}\left( \Delta \Upsilon (2N_1+1)x
        \right)\mathrm{d}x + \mathfrak{g}_{n,\Delta,N_1,\Upsilon}\vspace{0.1cm}\\
        &\hspace{-15mm} = \mathcal{G}_{1,N_1+1}(\Delta \Upsilon (2N_1+1)) + \mathfrak{g}_{n,\Delta,N_1,\Upsilon}\vspace{0.1cm}\\
        &\hspace{-15mm} \geq \mathcal{G}_{1,2}(\Delta \Upsilon (2N_1+1)) + \mathfrak{g}_{n,\Delta,N_1,\Upsilon}.
        \end{array}
        \end{equation*}
        Here, we have used the properties of $\mathcal{G}_{w_1,w_2}$ and $\mathfrak{g}$ in Proposition~\ref{prop:Ical}: since $\mathfrak{g}(x)$ is positive for $x>0$ and monotonically decreasing, we can lower bound the sum  $\sum_{j\in [N_1]}\mathfrak{g}\left(\Delta \Upsilon (N_1+1-j)(2N_1+1) \right)$ by an integral. Similarly,
        \begin{equation*}
        \begin{array}{lll}
        \sum_{j\in [N_1]\setminus \left\{ n\right\}}\mathfrak{g}\left(\Delta \upsilon ((N_1+1)^2-j^2) \right) &\leq \sum_{j\in [N_1]\setminus \left\{ n\right\}}\mathfrak{g}\left(\Delta \upsilon (N_1+1-j)(N_1+1) \right)\vspace{0.1cm} \\
        & \overset{N_1+1-j=:m}{=} \sum_{m\in [N_1]} \mathfrak{g}\left(\Delta  \upsilon (N_1+1) m\right)-\mathfrak{g}\left(\Delta  \upsilon (N_1+1-n)(N_1+1)\right)\vspace{0.1cm}  \\
        &\leq \int_0^{N_1}\mathfrak{g}\left(\Delta\upsilon (N_1+1) x \right)\mathrm{d}x +\mathfrak{g}_{n,\Delta,N_1,\upsilon}\vspace{0.1cm}\\
        & =  \mathcal{G}_{0,N_1}(\Delta \upsilon (N_1+1)) +\mathfrak{g}_{n,\Delta,N_1,\upsilon}\vspace{0.1cm}\\
        & \leq  \mathcal{G}_{0,\infty}(\Delta \upsilon (N_1+1))+\mathfrak{g}_{n,\Delta,N_1,\upsilon}.
        \end{array}
        \end{equation*}
The above estimates yield the bounds on $\mathcal{Z}^1_{n,\Delta,N_1}$. The bounds on $\mathcal{Z}^2_{n,\Delta}$ and $\mathcal{Z}^3_{n,\Delta,N_1}$ follow from similar arguments.    
\end{proof}
    
    \begin{corollary}\label{Cor:ThetaAssymp}
    Consider $\theta^1_{n,\Delta,N_1}$, $\theta^2_{n,\Delta}$ and $\theta^3_{n,\Delta,N_1}$ in Proposition~\ref{prop:ComponentBounds0}.
    \begin{enumerate}
        \item[\emph{(i)}] Let $\Delta_*>0$ be fixed and $N_1\to \infty$. Then, $\theta^1_{n,\Delta,N_1}$ converges to zero, whereas $\theta^2_{n,\Delta}$ and $\theta^3_{n,\Delta,N_1}$ are $O(1)$, uniformly in $(n,N_1)\in \mathcal{N}_1$ and $\Delta \in [\Delta_*,\infty)$.
        \item[\emph{(ii)}] Let $\Delta \geq \Delta_*>0$. Then, $\theta^1_{n,\Delta,N_1}$, $\theta^2_{n,\Delta}$ and $\theta^3_{n,\Delta,N_1}$ are $O(1)$, uniformly in $(n,N_1)\in \mathcal{N}_1$. If $\Delta \to \infty$, then $\theta^1_{n,\Delta,N_1}$, $\theta^2_{n,\Delta}$ and $\theta^3_{n,\Delta,N_1}$ are $o(1)$, uniformly in $(n,N_1)\in \mathcal{N}_1$.
        \item[\emph{(iii)}] Let $\Delta N_1 \equiv T$ for $T>0$. Then, $\theta^1_{n,\Delta,N_1}$ is bounded, uniformly in $(n,N_1)\in \mathcal{N}_1$.
        Moreover, given $\varphi\in (0,1)$, we have that $\theta_{n,\Delta}^2$, $n\in [\lfloor \varphi N_1 \rfloor]\setminus \left\{1 \right\}$, and $\theta^3_{n,\Delta,N_1}$, $n\in [\lfloor \varphi N_1 \rfloor]$, are $O(N_1)$, uniformly in $N_1\in \mathbb{N}$ (while $\theta_{1,\Delta}^2=0$ by definition), whereas $\theta_{n,\Delta}^2$, $n\in [N_1] \setminus [\lfloor \varphi N_1 \rfloor]$, and $\theta^3_{n,\Delta,N_1}$, $n\in [N_1] \setminus [\lfloor \varphi N_1 \rfloor]$, are $O(1)$, uniformly in $N_1\in \mathbb{N}$.

        
        
    
    \end{enumerate}
    \end{corollary}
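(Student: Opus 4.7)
The plan is to deduce all three assertions directly from the upper and lower bounds in \eqref{eq:thetabounds} established in Proposition~\ref{prop:ComponentBounds0}, combined with the qualitative properties of $\mathcal{G}_{w_1,w_2}(\zeta)$ and $\mathfrak{g}(x)$ from Proposition~\ref{prop:Ical}. The argument reduces to tracking how the arguments appearing inside $\mathcal{G}_{0,\infty}$ and $\mathfrak{g}$ in those bounds scale under each regime, and then invoking monotonicity, the vanishing at infinity, and -- for part (iii) only -- a quantitative $\zeta^{-1}$ asymptotic for $\mathcal{G}_{0,\infty}$.

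For part (i), fix $\Delta\geq \Delta_*$ and let $N_1\to\infty$. The upper bound on $\theta^1_{n,\Delta,N_1}$ is $\mathcal{G}_{0,\infty}(\Delta\upsilon(N_1+1))-\mathfrak{g}(\Delta\upsilon(N_1+1-n)(N_1+1))$, and since $(N_1+1-n)(N_1+1)\geq N_1+1$ for $(n,N_1)\in\mathcal{N}_1$, both arguments diverge at rate at least $\Delta_*\upsilon N_1$, so Proposition~\ref{prop:Ical} yields uniform convergence to $0$. For $\theta^2_{n,\Delta}$ and $\theta^3_{n,\Delta,N_1}$, the respective upper bounds $\mathcal{G}_{0,\infty}(\Delta\upsilon(n+1))$ and $\mathcal{G}_{0,\infty}(\Delta\upsilon(2n+1))$ are attained in the worst case at $n=1$, giving the fixed constants $\mathcal{G}_{0,\infty}(2\Delta_*\upsilon)$ and $\mathcal{G}_{0,\infty}(3\Delta_*\upsilon)$, so both are $O(1)$ uniformly. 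Part (ii) is essentially the same: as long as $\Delta\geq \Delta_*$, each upper bound is majorised by $\mathcal{G}_{0,\infty}(c\Delta_*\upsilon)$ for a suitable $c\in\{2,3\}$; and as $\Delta\to\infty$, every argument entering $\mathcal{G}_{0,\infty}$ and $\mathfrak{g}$ diverges uniformly, so the bounds tend to $0$, yielding the claimed $o(1)$.

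Part (iii) is where a sharper ingredient is needed, and I would prove first the scaling identity
\begin{equation*}
\mathcal{G}_{0,\infty}(\zeta)=\frac{1}{\zeta}\int_0^\infty -\ln\bigl(1-e^{-u}\bigr)\,\mathrm{d}u=\frac{\pi^2}{6\zeta},
\end{equation*}
obtained by substituting $u=\zeta x$ in the definition and using the expansion $-\ln(1-e^{-u})=\sum_{k\geq 1} e^{-ku}/k$ with term-by-term integration. Setting $\Delta=T/N_1$, the upper bound on $\theta^1_{n,\Delta,N_1}$ becomes $\pi^2 N_1/[6T\upsilon(N_1+1)] - \mathfrak{g}(\Delta\upsilon(N_1+1-n)(N_1+1))$. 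The first term is $O(1)$, while the second term is bounded in modulus by $\mathfrak{g}(T\upsilon)$ because $\Delta\upsilon(N_1+1-n)(N_1+1)\geq T\upsilon$ for $n\leq N_1$; this proves $\theta^1_{n,\Delta,N_1}=O(1)$. For $\theta^2_{n,\Delta}$ with $n\in[\lfloor \varphi N_1\rfloor]\setminus\{1\}$, the bound evaluates to $\pi^2 N_1/[6T\upsilon(n+1)]\leq \pi^2 N_1/(18 T\upsilon)=O(N_1)$ uniformly in $n\geq 2$, whereas for $n>\lfloor \varphi N_1\rfloor$ one has $n+1>\varphi N_1$ and the bound collapses to $\pi^2/(6T\upsilon\varphi)=O(1)$. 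The splitting for $\theta^3_{n,\Delta,N_1}$ is identical, using $\mathcal{G}_{0,\infty}(\Delta\upsilon(2n+1))$ in place of $\mathcal{G}_{0,\infty}(\Delta\upsilon(n+1))$.

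The only non-routine step is the closed-form identity $\mathcal{G}_{0,\infty}(\zeta)=\pi^2/(6\zeta)$; everything else is a direct case analysis on \eqref{eq:thetabounds}. Without the explicit $\zeta^{-1}$ rate, the quantitative $O(N_1)$ claim in (iii) could not be extracted from the purely qualitative vanishing at infinity used in parts (i) and (ii).
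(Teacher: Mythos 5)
Your argument is correct, and parts (i) and (ii) coincide with the paper's own proof: both read off the claims directly from the two-sided bounds in \eqref{eq:thetabounds}, using the monotonicity and decay of $\mathcal{G}_{w_1,w_2}$ and $\mathfrak{g}$ from Proposition~\ref{prop:Ical} and the worst-case evaluation at $n=1$ and $\Delta=\Delta_*$.

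Part (iii) is where you genuinely diverge. The paper does \emph{not} use \eqref{eq:thetabounds} for $\theta^2_{n,\Delta}$ and $\theta^3_{n,\Delta,N_1}$ in Regime 3; instead it returns to the raw sum representation \eqref{eq:ThetasSumrep}, bounds each summand via Proposition~\ref{prop:EigDiffBound}, and recognizes the result as $N_1$ times a Riemann sum converging to $\mathcal{G}_{0,\varphi}(3\upsilon T)$, which yields the uniform $O(N_1)$ bound; the $O(1)$ tail estimate for $n>\lfloor\varphi N_1\rfloor$ is then obtained from \eqref{eq:thetabounds} exactly as you do. Your substitute is the closed-form identity $\mathcal{G}_{0,\infty}(\zeta)=\pi^2/(6\zeta)$, which is correct (the interchange of sum and integral in $\int_0^\infty\sum_{k\ge1}e^{-ku}/k\,\mathrm{d}u=\sum_{k\ge1}k^{-2}$ is justified by monotone convergence), and which lets you read the $O(N_1)$ and $O(1)$ rates directly off \eqref{eq:thetabounds} after substituting $\Delta=T/N_1$. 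This is a legitimately more economical route: it avoids re-running the Riemann-sum machinery and even gives the sharper pointwise bound $\theta^2_{n,\Delta}\le \pi^2N_1/(6T\upsilon(n+1))$ interpolating between the two cases. What the paper's version buys in exchange is that it never needs an exact evaluation of $\mathcal{G}_{0,\infty}$ and stays entirely within the qualitative framework of Proposition~\ref{prop:Ical}. One cosmetic remark: for $\theta^1_{n,\Delta,N_1}$ you quote the upper bound with argument $\Delta\upsilon(N_1+1)$, whereas \eqref{eq:thetabounds} as stated uses $\Delta\upsilon N_1$; since $\mathcal{G}_{0,\infty}$ is decreasing this only makes your bound smaller and does not affect the conclusion.
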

    \begin{proof}
        \underline{Statement (i):} By \eqref{eq:thetabounds} and Proposition~\ref{prop:Ical}, we have 
        \begin{equation}\label{eq:Theta23Aux}
        \begin{array}{lll}
        &0 \leq \theta^2_{n,\Delta}   \leq  \mathcal{G}_{0,\infty}(\Delta \upsilon (n+1))\leq \mathcal{G}_{0,\infty}(2\upsilon \Delta)\leq \mathcal{G}_{0,\infty}(2\upsilon \Delta_*)  ,\vspace{0.2cm}\\
        &0 \leq \theta^3_{n,\Delta,N_1}  \leq \mathcal{G}_{0,\infty}(\Delta \upsilon(2n+1))\leq \mathcal{G}_{0,\infty}(3\upsilon \Delta)\leq \mathcal{G}_{0,\infty}(3\upsilon \Delta_*).
        \end{array}
        \end{equation}
        and hence, being bounded, $\theta^2_{n,\Delta}$ and $\theta^3_{n,\Delta,N_1}$ are $O(1)$, uniformly in $(n,N_1)\in \mathcal{N}_1$ and $\Delta \in [\Delta_*,\infty)$.
        
        For $\theta^{1}_{n,\Delta, N_1} $, we have, by \eqref{eq:thetabounds}, 
        \begin{equation}\label{eq:Theta1Aux}
        \begin{array}{lll}
        \mathcal{G}_{1,2}(\Delta \Upsilon (2N_1+1))+ \mathfrak{g}_{n,\Delta,N_1,\Upsilon} \leq \theta^1_{n,\Delta,N_1}     \leq  \mathcal{G}_{0,\infty}(\Delta \upsilon N_1)+\mathfrak{g}_{n,\Delta,N_1,\upsilon},\vspace{0.1cm}\\
        \ln\left(1-e^{-\Delta \Upsilon (2N_1+1)} \right) \leq \mathfrak{g}_{n,\Delta,N_1,\Upsilon} = \ln \left(1-e^{-\Delta \Upsilon (N_1+1-n)(2N_1+1) } \right)\leq \ln\left(1-e^{-\Delta \Upsilon N_1(2N_1+1)} \right),\\
        \ln \left(1-e^{-\Delta \upsilon (N_1+1) } \right) \leq \mathfrak{g}_{n,\Delta,N_1,\upsilon} =  \ln \left(1-e^{-\Delta \upsilon (N_1+1-n)(N_1+1) } \right)\leq \ln \left(1-e^{-\Delta \upsilon N_1(N_1+1) } \right).
        \end{array}
        \end{equation}
        Taking the limit as $N_1\to \infty$, both the upper bound and the lower bound on $\theta^1_{n,\Delta,N_1}$ converge to zero, uniformly in $n\in [N_1]$ and $\Delta \in [\Delta_*,\infty)$, and hence the same holds for $\theta^1_{n,\Delta,N_1}$.

        \underline{Statement (ii):} For $\Delta \geq \Delta_*>0$, \eqref{eq:Theta23Aux} shows that $\theta^2_{n,\Delta}$ and $\theta^3_{n,\Delta}$ are $O(1)$, uniformly in $(n,N_1)\in \mathcal{N}_1$,
        by replacing $\Delta$ with $\Delta_*$, and using the monotonicity properties obtained in Proposition~\ref{prop:Ical}. Furthermore, when $\Delta \to \infty$, $\theta^2_{n,\Delta}$ and $\theta^3_{n,\Delta}$ are $o(1)$, uniformly in $(n,N_1)\in \mathcal{N}_1$.

        For $\theta^1_{n,\Delta,N_1}$, consider \eqref{eq:Theta1Aux} and observe that
        \begin{equation*}
        \begin{array}{lll}
        0\leq \mathcal{G}_{1,2}(\Delta \Upsilon (2N_1+1)) \leq \mathcal{G}_{1,2}\left(3 \Upsilon \Delta  \right),\qquad 0\leq \mathcal{G}_{0,\infty}(\Delta \upsilon N_1) \leq \mathcal{G}_{0,\infty}\left( \upsilon \Delta  \right),
        \end{array}
        \end{equation*}
        where both terms are bounded for $\Delta\geq \Delta_*>0$ and tend to zero as $\Delta \to \infty$, uniformly in $(n,N_1)\in \mathcal{N}_1$. Similarly,
        \begin{equation*}
        \begin{array}{lll}
        &\ln\left(1-e^{-3\Delta \Upsilon } \right)\leq  \mathfrak{g}_{n,\Delta,N_1,\Upsilon} \leq 0,\qquad \ln\left(1-e^{-2\Delta \upsilon } \right)\leq  \mathfrak{g}_{n,\Delta,N_1,\upsilon} \leq 0
        \end{array}
        \end{equation*}
        shows boundedness of $\mathfrak{g}_{n,\Delta,N_1,\Upsilon}$ and $\mathfrak{g}_{n,\Delta,N_1,\upsilon}$, uniformly in $(n,N_1)\in \mathcal{N}_1$, for $\Delta\geq \Delta_*$, as well as their uniform convergence to zero as $\Delta \to \infty$. 
        
        \underline{Statement (iii):} 
        We begin with $\theta_{n,\Delta,N_1}^1$. For $\Delta N_1 \equiv T>0$, we consider \eqref{eq:thetabounds} and we observe that
        \begin{equation*}
        \begin{array}{lll}
        \mathcal{G}_{1,2}(\Delta \Upsilon (2N_1+1)) = \mathcal{G}_{1,2}\left(\Upsilon T\left(2+\frac{1}{N_1} \right) \right)\leq \mathcal{G}_{1,2}(2\Upsilon T),\quad \mathcal{G}_{0,\infty}(\Delta \upsilon N_1) = \mathcal{G}_{0,\infty}(\upsilon T),
        \end{array}
        \end{equation*}
        while
        \begin{equation*}
        \begin{array}{lll}
        &\left|\mathfrak{g}_{n,\Delta,N_1,\Upsilon} \right| = -\ln\left(1-e^{-\Upsilon\left(N_1+1-n \right)\left(2+\frac{1}{N_1} \right)} \right)\leq -\ln \left(1-e^{-2\Upsilon } \right)\\
        &\left|\mathfrak{g}_{n,\Delta,N_1,\upsilon} \right|=-\ln\left(1-e^{-\upsilon\left(N_1+1-n \right)\left(1+\frac{1}{N_1} \right)} \right)\leq -\ln \left(1-e^{2\upsilon } \right).
        \end{array}
        \end{equation*}
        Thus, $\theta^1_{n,\Delta,N_1}$ is bounded, uniformly in $(n,N_1)\in \mathcal{N}_1$. 

        For $\theta^2_{n,\Delta}$ and $\theta^3_{n,\Delta,N_1}$ we modify the argument of Proposition~\ref{prop:ComponentBounds0}. 
        Let $\varphi \in (0,1)$. Then, by \eqref{eq:thetabounds}, 
        \begin{equation}\label{eq:theta23tail}
        \begin{array}{lll}
        &n\in [N_1] \setminus [\lfloor \varphi N_1 \rfloor]\Longrightarrow 0\leq \theta^2_{n,\Delta}\leq  \mathcal{G}_{0,\infty}\left( \upsilon T \frac{\lfloor \varphi N_1 \rfloor +2}{N_1}\right) \leq  \mathcal{G}_{0,\infty}\left( \upsilon T \varphi\right),\\
        &n\in [N_1]\setminus [\lfloor \varphi N_1 \rfloor] \Longrightarrow 0 \leq \theta^3_{n,\Delta,N_1}\leq \mathcal{G}_{0,\infty} \left(\upsilon T \frac{2 \lfloor \varphi N_1 \rfloor +1  }{N_1}\right) \leq  \mathcal{G}_{0,\infty}\left( \upsilon T \varphi\right).
        \end{array}
        \end{equation}
        Since the arguments in both upper bounds in \eqref{eq:theta23tail} tend to a constant as $N_1\to \infty$, we obtain that $\theta^2_{n,\Delta}$ and $\theta^3_{n,\Delta,N_1}$ are uniformly bounded in $N_1\in \mathbb{N}$.

        For $\theta^2_{n,\Delta}$, $n\in [\lfloor  \varphi N_1 \rfloor ]\setminus \left\{1 \right\}$, we employ \eqref{eq:ThetasSumrep} to obtain
        \begin{equation}\label{eq:theta2Temp}
        \begin{array}{lll}
        0\leq \theta^2_{n,\Delta}&\leq \sum_{j=1}^{n-1}-\ln \left(1-e^{-\upsilon T\frac{(n-j)(n+j)}{N_1}} \right) \leq \sum_{j=1}^{n-1}-\ln \left(1-e^{-\upsilon T\frac{j(2n-j)}{N_1}} \right)\\
        &\leq \sum_{j=1}^{n-1}-\ln \left(1-e^{-3\upsilon T\frac{j}{N_1}} \right) \leq \sum_{j=1}^{\lfloor \varphi N_1 \rfloor-1}-\ln \left(1-e^{-3\upsilon T\frac{j}{N_1}} \right)\\
        &= N_1 \sum_{j=1}^{\lfloor \varphi N_1 \rfloor-1}-\ln \left(1-e^{-3\upsilon T\frac{j}{N_1}} \right)\frac{1}{N_1},
        \end{array}
        \end{equation}
        where the first inequality follows from Proposition~\ref{prop:EigDiffBound}, the second follows from changing the summation index (from $j$ to $n-j$), the third follows from the fact that $2n-j\geq n+1\geq 3$ and the last follows from positivity of function $\mathfrak{g}$ in \eqref{eq:CalInteg}. Now, the sum $\sum_{j=1}^{\lfloor \varphi N_1 \rfloor-1}-\ln \left(1-e^{-3\upsilon T\frac{j}{N_1}} \right)\frac{1}{N_1}$ in \eqref{eq:theta2Temp} is upper bounded by $\mathcal{G}_{0,\varphi}(3\upsilon T)<\infty$, in view of monotonicity of function $\mathfrak{g}$, which shows that $\theta^2_{n,\Delta}=O(N_1)$, uniformly in $N_1\in \mathbb{N}$. Similar arguments apply to $\theta^3_{n,\Delta,N_1}$, $n\in [\lfloor \varphi N_1 \rfloor ]$.
    \end{proof}

    The following result will also be crucial to obtain bounds on the first-order condition numbers.
    
    \begin{proposition}\label{Prop:VFrakAsymp}
        The function $\Psi(n;N_1) = \sum_{j=n+1}^{N_1}(j^2-n^2)$, $n\in [N_1]$, defined  in \eqref{eq:CalSdef}, satisfies the following properties. 
        \begin{enumerate}
            \item $\Psi(n;N_1)\geq 0$ for all $(n,N_1)\in \mathcal{N}_{\eta}$ with $\eta \in (0,1]$.
            \item For fixed $N_1$, $\Psi(n;N_1)$ is monotonically decreasing in $n$ and $\Psi(N_1;N_1)=0$.
            \item If $\eta \in (0,1)$, there exits $a(\eta)>0$ such that $\Psi(\lfloor \eta N_1 \rfloor;N_1)\geq a(\eta)N_1^3$ for $N_1\gg 1$. 
        \end{enumerate}
        \end{proposition}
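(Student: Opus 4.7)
My plan is to dispatch the three properties in the order listed, since each is essentially a direct computation using elementary identities for sums of integers and squares.

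For property~1, I would note that whenever $(n,N_1)\in \mathcal{N}_\eta$ with $\eta\in(0,1]$ we have $1\le n\le N_1$; in each summand $j\ge n+1>n\ge 0$, so $j^2-n^2\ge 0$. The case $n=N_1$ (empty sum) gives $\Psi(N_1;N_1)=0$ by the convention adopted in the introduction, which simultaneously handles the boundary value mentioned in property~2.

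For the monotonicity part of property~2, I would compute the telescoping difference directly:
\begin{equation*}
\Psi(n;N_1)-\Psi(n+1;N_1) = \bigl[(n+1)^2-n^2\bigr] + \sum_{j=n+2}^{N_1}\bigl[(n+1)^2-n^2\bigr] = (N_1-n)(2n+1),
\end{equation*}
which is strictly positive for $n<N_1$. This confirms monotonic decrease in $n$ and recovers $\Psi(N_1;N_1)=0$ as the final value.

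For property~3, the natural idea is to lower-bound the Riemann sum by an integral. Since $x\mapsto x^2-n^2$ is nondecreasing on $[n,\infty)$, we get
\begin{equation*}
\Psi(n;N_1) \ge \int_n^{N_1}(x^2-n^2)\,\mathrm{d}x = \frac{N_1^3-n^3}{3} - n^2(N_1-n) = \frac{N_1^3}{3}\,g\!\left(\frac{n}{N_1}\right),
\end{equation*}
where $g(u):=1-3u^2+2u^3=(1-u)^2(1+2u)$. A short derivative calculation gives $g'(u)=-6u(1-u)\le 0$ on $[0,1]$, so $g$ is decreasing there. Setting $n=\lfloor \eta N_1 \rfloor$ and noting $n/N_1\le \eta<1$, one obtains $g(n/N_1)\ge g(\eta)=(1-\eta)^2(1+2\eta)>0$. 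Therefore any constant $0<a(\eta)<\tfrac{1}{3}(1-\eta)^2(1+2\eta)$ works for $N_1\gg 1$ (and in fact for all $N_1\ge 1$, up to the trivial verification that the lower bound is non-trivial once $\lfloor \eta N_1\rfloor<N_1$, which holds for $N_1\ge 1/(1-\eta)$).

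The argument is entirely routine; I do not anticipate any substantive obstacle. The only minor subtlety is being a little careful about the floor function in property~3, which is handled cleanly by the monotonicity of $g$ on $[0,1]$.
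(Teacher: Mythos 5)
Your proof is correct. Properties 1 and 2 are handled as directly as possible (the paper simply states that they ``follow from the definition''; your telescoping identity $\Psi(n;N_1)-\Psi(n+1;N_1)=(N_1-n)(2n+1)$ makes the monotonicity explicit, which is a nice touch). For property 3 you take a mildly different route from the paper: the paper expands the sum asymptotically, obtaining $\Psi(\lfloor\eta N_1\rfloor;N_1)\ge \tfrac{N_1^3}{3}(1+2\eta^3-3\eta^2)+O(N_1^2)$, defines $a(\eta)$ as half the leading coefficient, and invokes $N_1\gg1$ to absorb the $O(N_1^2)$ remainder; you instead bound the sum below by the integral $\int_n^{N_1}(x^2-n^2)\,\mathrm{d}x=\tfrac{N_1^3}{3}\,g(n/N_1)$ with $g(u)=(1-u)^2(1+2u)$ and use the monotonicity of $g$ on $[0,1]$ to pass from $n=\lfloor\eta N_1\rfloor$ to $\eta$. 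Both arguments isolate the same cubic constant, since $\tfrac13(1-\eta)^2(1+2\eta)=\tfrac13(1+2\eta^3-3\eta^2)$, but your integral comparison yields a clean non-asymptotic bound valid for every $N_1$ (so the ``$N_1\gg1$'' qualifier is not actually needed), and the factorization of $g$ makes positivity of the constant immediate, whereas the paper needs the separate observation that $x\mapsto\tfrac16+\tfrac13 x^3-\tfrac12 x^2$ is decreasing on $[0,1]$ and vanishes at $x=1$.
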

        \begin{proof}
        The first two properties follow from the definition.
        For the third property, let  $N_1\gg 1$. Then, 
        \begin{equation*}
         \Psi(\lfloor \eta N_1 \rfloor; N_1) \geq \frac{N_1^3}{3}\left(1+2\eta^3 -3\eta^2 \right)+ O(N_1^2).
        \end{equation*}
        The function $a(x) =  \frac{1}{6}+\frac{1}{3}x^3-\frac{1}{2}x^2$ is strictly decreasing on $x\in [0,1]$ with $a(1) = 0$, whence $a(\eta)>0$. Therefore,
        \begin{equation*}
            \Psi(\lfloor \eta N_1 \rfloor; N_1)\geq 2a(\eta)N_1^3 + O(N_1^2)\geq a(\eta)N_1^3
        \end{equation*}
for $N_1 \gg 1$.
\end{proof}

Consider the Lagrange polynomials $L_n(z)$ in \eqref{eq:Lagrange} with nodes $\left\{\phi_n \right\}_{n=1}^{N_1}$.
We begin by estimating the asymptotic behavior of $L^2_n(\phi_{N_1+1})$.
\begin{lemma}\label{lem:Lsquared}
Consider the Lagrange polynomials $L_n$ defined in \eqref{eq:Lagrange}, with nodes $\left\{\phi_n \right\}_{n=1}^{N_1}$. In particular,
\begin{equation}\label{eq:Lsquare}
\begin{array}{lll}
L^2_n\left(\phi_{N_1+1} \right)= \begin{cases}
e^{-2\Delta \sum_{j=2}^{N_1}\left( \lambda_j-\lambda_1\right)-2\theta^1_{1,\Delta,N_1}+2\theta^3_{1,\Delta,N_1}},& n=1,\\
e^{-2\Delta \sum_{j=n+1}^{N_1}\left( \lambda_j-\lambda_n\right)-2\theta^1_{n,\Delta,N_1}+2\theta^2_{n,\Delta}+2\theta^3_{n,\Delta,N_1}},& n\in[N_1-1]\setminus\left\{1\right\},\\
e^{-2\theta^1_{N_1,\Delta,N_1}+2\theta^2_{N_1,\Delta}},& n=N_1.
\end{cases}
\end{array}
\end{equation}  
Recalling \eqref{eq:CalSdef}, for any fixed $\eta\in(0,1]$, we have the following asymptotic behaviour in the different regimes.
\begin{itemize}
    \item \underline{Regime 1:} Let $\Delta_*>0$ be fixed and $\Delta\geq \Delta_*$. 
    Then,
    \begin{equation}\label{eq:LSqrdReg1}
    \begin{array}{lll}
    \max_{n\in [\lfloor\eta N_1\rfloor]} L^2_n\left(\phi_{N_1+1} \right)\leq e^{-2\upsilon \Delta \Psi(\lfloor\eta N_1\rfloor; N_1)+O(1)},\quad N_1\gg 1
    \end{array}
    \end{equation}
    \item \underline{Regime 2:} Let $N_1$ be fixed and $\Delta\geq \Delta_*>0$.
    Then,
    \begin{equation}\label{eq:LSqrdReg2}
    \begin{array}{lll}
    \max_{ n\in [\lfloor\eta N_1\rfloor]} L^2_n\left(\phi_{N_1+1} \right)\leq e^{-2\upsilon \Delta \Psi(\lfloor\eta N_1\rfloor; N_1)+o(1)},\quad \Delta\gg 0.
    \end{array}
    \end{equation}
    
    \item \underline{Regime 3:} Let $N_1\Delta=T>0$. 
    Then, 
    \begin{equation}\label{eq:LSqrdReg3}
    \begin{array}{lll}
    \max_{ n\in [\lfloor\eta N_1\rfloor]} L^2_n\left(\phi_{N_1+1} \right)\leq e^{-2\upsilon T \frac{\Psi(\lfloor\eta N_1\rfloor; N_1) }{N_1}+O(N_1)},\quad N_1\gg 1.
    \end{array}
    \end{equation}
\end{itemize}
In all cases, for $\eta = 1$, the upper bounds imply $\max_{ n\in [ N_1]} L^2_n\left(\phi_{N_1+1} \right) = O(1)$.
\end{lemma}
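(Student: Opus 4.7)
My starting point is the definition of the Lagrange polynomial: $L_n(\phi_{N_1+1})=\prod_{j\in [N_1]\setminus\{n\}}\frac{\phi_{N_1+1}-\phi_j}{\phi_n-\phi_j}$, so squaring yields
\[
L_n^2(\phi_{N_1+1})=\frac{\mathcal{Z}^1_{n,\Delta,N_1}}{\mathcal{Z}^2_{n,\Delta}\,\mathcal{Z}^3_{n,\Delta,N_1}},
\]
with the three factors as defined in Proposition~\ref{prop:ComponentBounds0} (with the usual convention that $\mathcal{Z}^2_{1,\Delta}=1$ and $\mathcal{Z}^3_{N_1,\Delta,N_1}=1$). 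I then substitute the exponential representations from that proposition and simplify the ``linear in $\lambda_j$'' piece of the exponent using $\sum_{j\in[N_1]\setminus\{n\}}\lambda_j-\sum_{j=1}^{n-1}\lambda_j-(N_1-n)\lambda_n=\sum_{j=n+1}^{N_1}(\lambda_j-\lambda_n)$. This immediately gives the three-case formula \eqref{eq:Lsquare}, and isolates the main decaying exponent $-2\Delta\sum_{j=n+1}^{N_1}(\lambda_j-\lambda_n)$ together with a residual $R(n):=-2\theta^1_{n,\Delta,N_1}+2\theta^2_{n,\Delta}+2\theta^3_{n,\Delta,N_1}$ (with the obvious empty-sum conventions at $n=1$ and $n=N_1$).

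The next step is to lower-bound the main exponent. By Proposition~\ref{prop:EigDiffBound}, $\sum_{j=n+1}^{N_1}(\lambda_j-\lambda_n)\geq \upsilon\sum_{j=n+1}^{N_1}(j^2-n^2)=\upsilon\,\Psi(n;N_1)$, and by part 2 of Proposition~\ref{Prop:VFrakAsymp} the function $n\mapsto \Psi(n;N_1)$ is monotonically decreasing, so $\Psi(n;N_1)\geq \Psi(\lfloor\eta N_1\rfloor;N_1)$ uniformly for $n\in[\lfloor\eta N_1\rfloor]$. Hence
\[
\max_{n\in[\lfloor\eta N_1\rfloor]}L_n^2(\phi_{N_1+1})\leq e^{-2\upsilon\Delta\,\Psi(\lfloor\eta N_1\rfloor;N_1)+\max_{n}R(n)}.
\]
It only remains to bound $R(n)$ uniformly in each regime via Corollary~\ref{Cor:ThetaAssymp}: in Regime 1, part (i) yields $\theta^1\to 0$ and $\theta^2,\theta^3=O(1)$ uniformly in $(n,N_1)\in\mathcal{N}_1$, giving $R(n)=O(1)$ and thus \eqref{eq:LSqrdReg1}; in Regime 2, part (ii) yields $\theta^1,\theta^2,\theta^3=o(1)$ as $\Delta\to\infty$, giving $R(n)=o(1)$ and thus \eqref{eq:LSqrdReg2}; in Regime 3, choosing $\varphi=\eta$ in part (iii) yields $\theta^2_{n,\Delta},\theta^3_{n,\Delta,N_1}=O(N_1)$ uniformly for $n\in[\lfloor\eta N_1\rfloor]$ and $\theta^1_{n,\Delta,N_1}=O(1)$, so $R(n)=O(N_1)$, and combined with $\Delta=T/N_1$ this produces \eqref{eq:LSqrdReg3}.

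For the final claim at $\eta=1$, in Regimes 1 and 2 I use $\Psi(N_1;N_1)=0$ from Proposition~\ref{Prop:VFrakAsymp}: the bounds \eqref{eq:LSqrdReg1}--\eqref{eq:LSqrdReg2} specialized to $\eta=1$ read $e^{O(1)}$ and $e^{o(1)}$, hence $O(1)$. In Regime 3 the same naive specialization gives only $e^{O(N_1)}$, so here I split $[N_1]=[\lfloor\varphi N_1\rfloor]\cup([N_1]\setminus[\lfloor\varphi N_1\rfloor])$ for any fixed $\varphi\in(0,1)$: on the bulk $n\in[\lfloor\varphi N_1\rfloor]$, part 3 of Proposition~\ref{Prop:VFrakAsymp} gives $\Psi(n;N_1)\geq a(\varphi)N_1^3$, so the main exponent $-2\upsilon T\,\Psi(n;N_1)/N_1$ is of order $-N_1^2$ and absorbs the $O(N_1)$ residual, while on the tail $n\in[N_1]\setminus[\lfloor\varphi N_1\rfloor]$ the second statement of Corollary~\ref{Cor:ThetaAssymp}(iii) gives $\theta^2,\theta^3=O(1)$ (and $\theta^1=O(1)$), so $R(n)=O(1)$ and the main exponent is nonpositive. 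Both contributions to the maximum are $O(1)$.

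The main obstacle is that for $\eta=1$ in Regime 3 the naive bound \eqref{eq:LSqrdReg3} is vacuous; the splitting argument above, exploiting the different bounds on $\theta^2,\theta^3$ on bulk versus tail from Corollary~\ref{Cor:ThetaAssymp}(iii), is the essential content. Everything else is bookkeeping: the algebraic simplification of the linear-in-$\lambda_j$ exponent to $\sum_{j>n}(\lambda_j-\lambda_n)$, and the routine invocation of Propositions~\ref{prop:EigDiffBound} and~\ref{Prop:VFrakAsymp} together with the appropriate part of Corollary~\ref{Cor:ThetaAssymp} for each regime.
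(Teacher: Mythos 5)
Your proposal is correct and follows essentially the same route as the paper's proof: the identity $L_n^2(\phi_{N_1+1})=\mathcal{Z}^1_{n,\Delta,N_1}/(\mathcal{Z}^2_{n,\Delta}\mathcal{Z}^3_{n,\Delta,N_1})$ with Proposition~\ref{prop:ComponentBounds0}, the lower bound $-2\Delta\sum_{j>n}(\lambda_j-\lambda_n)\leq -2\upsilon\Delta\Psi(\lfloor\eta N_1\rfloor;N_1)$ via Propositions~\ref{prop:EigDiffBound} and~\ref{Prop:VFrakAsymp}, and Corollary~\ref{Cor:ThetaAssymp} for the residual $\theta$-terms in each regime. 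Your bulk/tail splitting for $\eta=1$ in Regime 3 is exactly the paper's argument as well.
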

\begin{proof}
We begin by deriving \eqref{eq:Lsquare}. Employing Proposition~\ref{prop:ComponentBounds0}, we have $L^2_n\left(\phi_{N_1+1} \right) = \frac{\mathcal{Z}^1_{n,\Delta,N_1}}{\mathcal{Z}^2_{n,\Delta}\mathcal{Z}^3_{n,\Delta N_1}}$, whence for $n\in [N_1-1]\setminus \left\{1\right\}$
\begin{equation}\label{eq:LegPol1Useful}
    L^2_n\left(\phi_{N_1+1} \right) = e^{-2\Delta \sum_{j=n+1}^{N_1}\left( \lambda_j-\lambda_n\right)-2\theta^1_{n,\Delta,N_1}+2\theta^2_{n,\Delta}+2\theta^3_{n,\Delta,N_1}}.
\end{equation}
For $n=1$, we obtain the same expression, but with $2\theta^2_{n,\Delta}$ omitted; for $n=N_1$, 
$L^2_n\left(\phi_{N_1+1} \right) = e^{-2\theta^1_{N_1,\Delta,N_1}+2\theta^2_{N_1,\Delta}}$.
To prove the second part of the lemma, note that for $n\in [\lfloor\eta N_1\rfloor]$, $\eta \in (0,1]$, we have 
\begin{equation}\label{eq:LegPol2Useful}
\begin{array}{lll}
&-2\Delta \sum_{j=n+1}^{N_1}\left( \lambda_j-\lambda_n\right) \overset{\text{Prop.~\ref{prop:EigDiffBound}}}{\leq } -2\upsilon \Delta  \sum_{j=n+1}^{N_1}\left(j^2-n^2 \right) \overset{\eqref{eq:CalSdef}}{=} -2\upsilon \Delta \Psi(n; N_1)  \leq  -2\upsilon \Delta \Psi(\lfloor \eta N_1 \rfloor; N_1 ),
\end{array}
\end{equation}
since $\Psi(n;N_1)$ is monotonically decreasing in $n$ as per Proposition~\ref{Prop:VFrakAsymp} and we recall that for $n=N_1$, the summation is equal to zero by convention, as stated below \eqref{eq:CalSdef}.
The inequality in \eqref{eq:LegPol2Useful}, combined with Corollary~\ref{Cor:ThetaAssymp}, yields the desired result for Regimes 1 and 2 in \eqref{Eq:Regimes}. 

For Regime 3 in \eqref{Eq:Regimes}, with $\eta\in (0,1)$, we have $\upsilon \Delta \Psi(\lfloor \eta N_1 \rfloor; N_1 ) =\upsilon T \frac{\Psi(\lfloor \eta N_1 \rfloor; N_1 )}{N_1}$.
On the other hand, in the worst-case scenario according to Corollary~\ref{Cor:ThetaAssymp}, we can estimate $\theta^1_{n,\Delta,N_1}$, $\theta^2_{n,\Delta}$ and $\theta^3_{n,\Delta,N_1}$ as $O(N_1)$, which yields \eqref{eq:LSqrdReg3}.  When $\eta=1$, we have $\Psi(N_1; N_1)=0$ by Proposition~\ref{Prop:VFrakAsymp}.
For an arbitrary fixed $\varphi \in (0,1)$, employing Corollary~\ref{Cor:ThetaAssymp} yields 
\begin{equation*}
\begin{array}{lll}
L^2_n\left(\phi_{N_1+1} \right)\leq \begin{cases}
e^{-2\upsilon T \frac{\Psi(\lfloor \varphi N_1 \rfloor; N_1)}{N_1}+O(N_1)}, & n\in [\lfloor \varphi N_1 \rfloor  ],\\
e^{O(1)}, & n\in [N_1]\setminus [\lfloor \varphi N_1 \rfloor ].
\end{cases}
\end{array}
\end{equation*}
Since $\upsilon T \frac{\Psi(\lfloor \eta N_1 \rfloor; N_1 )}{N_1} \geq a(\eta)\upsilon T N_1^2 $ for $N_1\gg 1$ by Proposition~\ref{Prop:VFrakAsymp}, in both cases $\max_{ n\in [ N_1]} L^2_n\left(\phi_{N_1+1} \right) = O(1)$, $N_1\to \infty$.
\end{proof}

Given the Lagrange polynomial $L_n(z)$ in \eqref{eq:Lagrange} with nodes $\left\{\phi_n \right\}_{n=1}^{N_1}$, taking the derivative of $L_n(z)$ and substituting $z=\phi_n$ yields 
    \begin{equation}\label{eq:phinPrime}
     L_n'(\phi_n) = \sum_{k\in [N_1]\setminus \left\{n \right\}}\left( \phi_n-\phi_k\right)^{-1}.   
    \end{equation}

\subsection{Proof of Theorem~\ref{thm:FirstOrdCondRep}}\label{Append:Pf5}

Given a polynomial $q(z) = a_{2N_1-1}z^{2N_1-1}+\dots+a_1z+a_0$, we introduce the coordinate map 
\begin{equation}\label{eq:MathfrakC}
\begin{array}{lll}
& \mathcal{C}:q\mapsto \mathcal{C}_q,:= \operatorname{col}\left\{ a_j\right\}_{j=0}^{2N_1-1},
\end{array}
\end{equation}
where $\mathcal{C}_q$ stacks the polynomial coefficients.

Let $\left\{\nu_n,\tilde{\nu}_n \right\}_{n=1}^{N_1} \subset \mathbb{C}$. If the nodes $\left\{\chi_n \right\}_{n=1}^{N_1} \subset \mathbb{C}$ are distinct, then there exists a unique polynomial $q(z)$ of degree at most $2N_1-1$ such that $q(\chi_n)=\nu_n$ and $q'(\chi_n)=\tilde{\nu}_n$ for all $n\in [N_1]$. In fact, the coefficients of $q$ satisfy the Hermite interpolation condition 
\begin{equation}\label{eq:HermiteMatrix}
\mathcal{H}_{\left\{\chi_n\right\}_{n=1}^{N_1}}\mathcal{C}_q = \operatorname{col}\left\{\begin{bmatrix}
  \nu_n\\
  \tilde{\nu}_n
\end{bmatrix}\right\}_{n=1}^{N_1},\ \mathcal{H}^{\top}_{\left\{\chi_n\right\}_{n=1}^{N_1}} := \operatorname{row} \left\{\begin{bmatrix}
     1 & 0 \\
     \chi_n & 1 \\
     \vdots & \vdots \\
     \chi_n^{2N_1-1} &  (2N_1-1)\chi_n^{2N_1-2} & 
 \end{bmatrix}\right\}_{n=1}^{N_1},
\end{equation}
where $\mathcal{H}_{\left\{\chi_n\right\}_{n=1}^{N_1}}$ maps the coefficients of a polynomial to its values and the values of its derivative on the nodes $\left\{\chi_n\right\}_{n=1}^{N_1}$.
The existence and uniqueness of the solution to the Hermite interpolation problem imply the invertibility of $\mathcal{H}_{\left\{ \chi_n\right\}_{n=1}^{N_1}}$, whence $\mathcal{H}_{\left\{\chi_n \right\}_{n=1}^{N_1}}^{-1}$ maps the values $\left\{ \nu_n,\tilde \nu_n\right\}_{n=1}^{N_1}$ to the coefficients of the corresponding Hermite interpolation polynomial $q(z)$. 

We can write $q(z) = \sum_{n=1}^{N}\left(\nu_nH_n(z)+\tilde \nu_n\tilde{H}_n(z)  \right)$ as the linear combination of the Hermite interpolation basis polynomials in \eqref{eq:Hermite}, namely $H_{n}(z): = \left[ 1-2(z-\chi_{n})L_{n}'(\chi_{n})\right] L_{n}^2(z)$ and $\widetilde{H}_{n}(z):=(z-\chi_{n})L_{n}^2(z)$,
where $L_n(z)$ in \eqref{eq:Lagrange} is a polynomial of the corresponding Lagrange interpolation basis,
$L_{n}(z) :=\prod_{j\neq n}\frac{z-\chi_{j}}{\chi_n-\chi_{j}}$, $n\in [N_1]$.
The polynomials \eqref{eq:Hermite} allow one to explicitly express $\mathcal{H}_{\left\{\chi_n \right\}_{n=1}^{N_1}}^{-1}$: by choosing $\nu_j=1$, $\tilde \nu_j=0$ and $\nu_n=\tilde \nu_n=0$, $n\neq j$, we have that 
\begin{equation*}
 \operatorname{col}\left\{\begin{bmatrix}
  \nu_n\\
  \tilde{\nu}_n
\end{bmatrix}\right\}_{n=1}^{N_1} = \mathrm{e}_{2j-1}  
\end{equation*}
is a standard basis vector in $\mathbb{R}^{2N_1}$. Thus, $\mathcal{H}_{\left\{\chi_n \right\}_{n=1}^{N_1}}^{-1}\mathrm{e}_{2j-1}$ is the $(2j-1)$-th column of $\mathcal{H}_{\left\{\chi_n \right\}_{n=1}^{N_1}}^{-1}$. On the other hand, $\mathcal{H}_{\left\{\chi_n \right\}_{n=1}^{N_1}}^{-1}\mathrm{e}_{2j-1}$ is the vector of coefficients of the polynomial $q(z)$ for which $q(\chi_j)=1$, $q'(\chi_j)=0$ and $q(\chi_n)=q'(\chi_n)=0$, $n\neq j$. Thus, we have $q(z) = H_n(z)$ and
\begin{equation*}
\operatorname{col}_{2j-1}\left( \mathcal{H}_{\left\{\chi_n \right\}_{n=1}^{N_1}}^{-1}\right) = \mathcal{C}_{H_n}.
\end{equation*}
Similar arguments with $\vartheta_j=0$, $\vartheta'_j=1$ and $\vartheta_n=\vartheta'_n=0$ yield
\begin{equation*}
\operatorname{col}_{2j}\left( \mathcal{H}_{\left\{\chi_n \right\}_{n=1}^{N_1}}^{-1}\right) = \mathcal{C}_{\tilde{H}_n}.
\end{equation*}

The proof of Theorem~\ref{thm:FirstOrdCondRep}, then, relies on two key steps. First, we employ the implicit function theorem to show the existence and continuous differentiability of an approximation candidate $\hat{P}(\varepsilon)$. Then, by taking a total derivative of $\mathcal{F}\left(\hat{P}(\varepsilon),\varepsilon \right)=0$ in $\varepsilon$ and substituting $\varepsilon=0$, we relate the condition numbers $\mathcal{K}_{\lambda}(n)$ and $\mathcal{K}_y(n)$ with an appropriate interpolation problem, allowing us to deduce \eqref{eq:Deriv}.

Function $\mathcal{F}$ in \eqref{eq:ImplicitFunc} is differentiable in all the variables $(\hat{P},\varepsilon)$. Considering the Jacobian matrix $J_{\hat{P}}(P,0):=\frac{\partial \mathcal{F}}{\partial \hat{P}}(P,0)$, a straightforward calculation yields the decomposition $J_{\hat{P}}(P,0) = \mathcal{H}^{\top}_{\left\{ \phi_n\right\}_{n=1}^{N_1}} D_{\hat{P}}(P,0)$, where $\mathcal{H}^{\top}_{\left\{ \phi_n\right\}_{n=1}^{N_1}}$ is defined in \eqref{eq:HermiteMatrix} and 
\begin{equation}\label{eq:J1Def}
\begin{array}{lll}
 &D_{\hat{P}}(P,0)  := \operatorname{diag}\left\{\begin{bmatrix}
    1 & 0\\
    0 & -\Delta y_n\phi_n
 \end{bmatrix} \right\}_{n=1}^{N_1}.
\end{array}
\end{equation}
Since the nodes $\left\{ \phi_n\right\}_{n=1}^{N_1}$ are distinct, due to the strict monotonicity of the eigenvalues $\left\{\lambda_n \right\}_{n=1}^{\infty}$ (which are real, since the Sturm-Liouville operator $\mathcal{A}$ is self-adjoint) and since $\Delta y_n \phi_n \neq 0$, in view of Assumption~\ref{assump:yassump} and of \eqref{eq:phinDef}, existence and uniqueness of the solution to the Hermite interpolation problem in \eqref{eq:HermiteMatrix} guarantee that both $\mathcal{H}^{\top}_{\left\{ \phi_n\right\}_{n=1}^{N_1}}$ and $ D_{\hat{P}}(P,0)$ are invertible.
By employing the implicit function theorem \cite{tao2006analysis}, we obtain the existence of $\varepsilon_*>0$ and of the continuously differentiable functions $\hat{P}(\varepsilon)$ such that $\hat{P}(0) = P$ and  \eqref{eq:IFT} holds for all $|\varepsilon| <\varepsilon_*$.

To compute the condition numbers, we differentiate the equality $\mathcal{F}(\hat{P}(\varepsilon), \varepsilon)=0$ for $|\varepsilon|<\varepsilon_*$ with respect to $\varepsilon$ and we substitute $\varepsilon=0$, thus obtaining 
\begin{equation}\label{eq:Deriv1}
\begin{array}{lll}
&J_{\hat{P}}(P,0) \cdot \operatorname{col}\left\{
\begin{bmatrix}
\frac{\mathrm{d}\hat{y}_{n} }{\mathrm{d} \varepsilon}(0) \\ \frac{\mathrm{d}\hat{\lambda}_{n} }{\mathrm{d} \varepsilon}(0)    
\end{bmatrix}
\right\}_{n=1}^{N_1} - \operatorname{col}\left\{\sum_{m=N_1+1}^{N_1+N_2}y_m\phi_m^k\right\}_{k=0}^{2N_1-1}=0\vspace{0.1cm} \\
&\Longrightarrow \operatorname{col}\left\{
\begin{bmatrix}
\frac{\mathrm{d}\hat{y}_{n} }{\mathrm{d} \varepsilon}(0) \\ \frac{\mathrm{d}\hat{\lambda}_{n} }{\mathrm{d} \varepsilon}(0)    
\end{bmatrix}
\right\}_{n=1}^{N_1} = D_{\hat{P}}^{-1}(P,0)\left( \mathcal{H}^{\top}_{\left\{ \phi_n\right\}_{n=1}^{N_1}}\right)^{-1}\sum_{m=N_1+1}^{N_1+N_2}y_m\operatorname{col}\left\{\phi_m^k\right\}_{k=0}^{2N_1-1}
\end{array}
\end{equation}
By the properties of the Hermite interpolation in \eqref{eq:HermiteMatrix}, and specifically in view of the construction of the column vectors $\mathcal{C}_{H_n}$ and $\mathcal{C}_{\tilde{H}_n}$, we have 
\begin{equation}\label{eq:inverseH}
\left( \mathcal{H}^{\top}_{\left\{ \phi_n\right\}_{n=1}^{N_1}}\right)^{-1} = \begin{bmatrix}
\mathcal{C}_{H_{1}} & \vert & \mathcal{C}_{\widetilde{H}_{1}} & \vert & \dots & \vert &\mathcal{C}_{H_{N_1}} & \vert & \mathcal{C}_{\widetilde{H}_{N_1}}
\end{bmatrix}^{\top},
\end{equation}
where we used the notation \eqref{eq:MathfrakC}.

For the row vectors $\mathcal{C}_{H_n}^{\top}$ and $\mathcal{C}_{\tilde{H}_n}^{\top}$, we have
\begin{equation*}
\begin{array}{lll}
&\begin{bmatrix}
\mathcal{C}_{H_n}^{\top}  \\[-4pt]  \rule{10mm}{0.4pt} \\[-1pt] \mathcal{C}_{\tilde{H}_n}^{\top}
\end{bmatrix}\operatorname{col}\left\{\phi_m^k\right\}_{k=0}^{2N_1-1} \overset{\eqref{eq:MathfrakC}}{=} \begin{bmatrix} H_n(\phi_m) \\ 
\tilde{H}_{n}(\phi_m)
\end{bmatrix}
\end{array}
\end{equation*}
and $D_{\hat{P}}^{-1}(P,0)  := \operatorname{diag}\left\{\begin{bmatrix}
    1 & 0\\
    0 & -(\Delta y_n\phi_n)^{-1}
 \end{bmatrix} \right\}_{n=1}^{N_1}$.
Combining \eqref{eq:inverseH} with \eqref{eq:Deriv1}, we conclude
\begin{equation*}
\begin{array}{lll}
\operatorname{col}\left\{
\begin{bmatrix}
\mathcal{K}_{y}(n) \\ \mathcal{K}_{\lambda}(n)  
\end{bmatrix}
\right\}_{n=1}^{N_1}& = \operatorname{col}\left\{
\begin{bmatrix}
\frac{\mathrm{d}\hat{y}_{n} }{\mathrm{d} \varepsilon}(0) \\ \frac{\mathrm{d}\hat{\lambda}_{n} }{\mathrm{d} \varepsilon}(0)    
\end{bmatrix}
\right\}_{n=1}^{N_1} \\
&= D_{\hat{P}}^{-1}(P,0)\sum_{m=N_1+1}^{N_1+N_2}y_m\left( \mathcal{H}^{\top}_{\left\{ \phi_n\right\}_{n=1}^{N_1}}\right)^{-1}\operatorname{col}\left\{\phi_m^k\right\}_{k=0}^{2N_1-1}\vspace{0.1cm}\\
& =\sum_{m=N_1+1}^{N_1+N_2}y_m \operatorname{col}\left\{ 
\begin{bmatrix}
 H_{n}(\phi_m)\\
-\frac{1}{\Delta y_{n}\phi_n } \widetilde{H}_{n}(\phi_m)
\end{bmatrix}
\right\}_{n=1}^{N_1}.
\end{array}
\end{equation*}

\subsection{Proof of Theorem~\ref{Thm:FirstOrdCond}}\label{Sec:ProofsAsymptoticAnalysis}

The proof relies on the explicit representation of the condition numbers $\mathcal{K}_{\lambda}(n)$ and $\mathcal{K}_y(n)$ in \eqref{eq:Deriv}.

Just for simplicity of the presentation, we consider $N_2=1$, meaning that the structured noise term $y_{\text{tail}}$ in \eqref{eq:MeasExpsum} contains a single mode. The result also holds in the case of an arbitrary $N_2\in \mathbb{N}$, as it follows from identical, but more tedious, arguments, which involve estimating each term of the summation in \eqref{eq:Deriv} separately.

By Theorem~\ref{thm:FirstOrdCondRep},
\begin{equation}\label{eq:Kyfirst}
\begin{array}{lll}
\left|\mathcal{K}_y(n) \right| = \left|y_{N_1+1} \right| \left|H_n(\phi_{N_1+1}) \right|\overset{\eqref{eq:Hermite}}{=}\left|y_{N_1+1} \right| \left|\left[ 1-2(\phi_{N_1+1}-\phi_{n})L_{n}'(\phi_{n})\right]  \right|L_{n}^2(\phi_{N_1+1}).
\end{array}
\end{equation}
Consider the product $\left(\phi_{N_1+1}-\phi_n \right)L'_n(\phi_n)$. Substituting \eqref{eq:phinPrime}, we obtain 
\begin{equation*}
\begin{array}{lll}
\left|\left(\phi_{N_1+1}-\phi_n \right)L'_n(\phi_n)\right| \leq  \sum_{k\in [N_1]\setminus \left\{n \right\}}\left|\frac{\phi_{N_1+1}-\phi_n}{\phi_n-\phi_k}\right| = \sum_{k=1}^{n-1}\frac{\phi_n-\phi_{N_1+1}}{\phi_k-\phi_n} + \sum_{k=n+1}^{N_1}\frac{\phi_n-\phi_{N_1+1}}{\phi_n-\phi_k},
\end{array}
\end{equation*}
where if the upper summation index exceeds the lower one in one of the sums on the right-hand side, we equate the sum to zero, following the convention stated below \eqref{eq:CalSdef}. Now,
\begin{equation*}
\begin{array}{lll}
\sum_{k=1}^{n-1}\frac{\phi_n-\phi_{N_1+1}}{\phi_k-\phi_n} = \sum_{k=1}^{n-1}\frac{e^{-\Delta \lambda_n}-e^{-\Delta \lambda_{N_1+1}}}{e^{-\Delta \lambda_k}-e^{-\Delta \lambda_n}} = \sum_{k=1}^{n-1} e^{-\Delta \left(\lambda_n -\lambda_k\right)} \frac{1-e^{-\Delta \left( \lambda_{N_1+1}-\lambda_n\right)}}{1-e^{-\Delta \left(\lambda_n-\lambda_k\right)}}.
\end{array}
\end{equation*}
Since for $k\in [n-1]$ we have
\begin{equation*}
\begin{array}{lll}
&e^{-\Delta (\lambda_n-\lambda_k)}\overset{\text{Prop.~\ref{prop:EigDiffBound}}}{\leq} e^{-\Delta \upsilon (n^2-k^2)}\leq  e^{-\Delta\upsilon },\quad 
1-e^{-\Delta \left(\lambda_{N_1+1}-\lambda_n \right) }\leq 1,\quad  1-e^{-\Delta (\lambda_n-\lambda_k)}\geq 1-e^{-\Delta \upsilon}, 
\end{array}
\end{equation*}
we conclude that
\begin{equation}\label{eq:FirstSumDeriv}
\begin{array}{lll}
\sum_{k=1}^{n-1}\frac{\phi_n-\phi_{N_1+1}}{\phi_k-\phi_n} \leq (n-1)\frac{e^{-\Delta \upsilon }}{1-e^{-\Delta \upsilon}}= (n-1)\frac{1}{e^{\Delta \upsilon}-1}.
\end{array}
\end{equation}
Similarly, 
\begin{equation*}
\begin{array}{lll}
\sum_{k=n+1}^{N_1}\frac{\phi_n-\phi_{N_1+1}}{\phi_n-\phi_k} = \sum_{k=n+1}^{N_1}\frac{e^{-\Delta \lambda_n}-e^{-\Delta \lambda_{N_1+1}}}{e^{-\Delta \lambda_n}-e^{-\Delta \lambda_k}} = \sum_{k=n+1}^{N_1}\frac{1-e^{-\Delta \left(\lambda_{N_1+1}-\lambda_n\right)}}{1-e^{-\Delta \left(\lambda_k-\lambda_n\right)}}
\end{array}
\end{equation*}
and, for $k \in [N_1]\setminus[n]$, we have 
\begin{equation*}
\begin{array}{lll}
&1-e^{-\Delta \left(\lambda_{N_1+1}-\lambda_n  \right)}\leq 1,\quad 1-e^{-\Delta \left(\lambda_k-\lambda_n \right)}\overset{\text{Prop.~\ref{prop:EigDiffBound}}}{\geq} 1-e^{-\Delta \upsilon \left(k^2-n^2 \right) }\geq 1-e^{-\Delta \upsilon (2n+1)}\geq 1-e^{-3\Delta \upsilon}.
\end{array}
\end{equation*}
Therefore,
\begin{equation}\label{eq:SecondSumDeriv}
\begin{array}{lll}
\sum_{k=n+1}^{N_1}\frac{\phi_n-\phi_{N_1+1}}{\phi_n-\phi_k} \leq (N_1-n)\frac{1}{1-e^{-3\Delta \upsilon}} = (N_1-n)\frac{e^{3\Delta \upsilon}}{e^{3\Delta \upsilon}-1}.
\end{array}
\end{equation}
Combining \eqref{eq:FirstSumDeriv} and \eqref{eq:SecondSumDeriv}, we conclude that
\begin{equation}\label{eq:AuxtermDeriv}
\begin{array}{llll}
\left|\left(\phi_{N_1+1}-\phi_n \right)L'_n(\phi_n)\right| &\leq (N_1-1)\left(\frac{1}{e^{\Delta \upsilon}-1}+\frac{e^{3\Delta \upsilon}}{e^{3\Delta \upsilon}-1} \right)=:(N_1-1)\Xi(\Delta).
\end{array}
\end{equation}
Then, we consider the regimes \eqref{Eq:Regimes} separately.

\underline{Regime 1:} 
Employing \eqref{eq:AuxtermDeriv} in \eqref{eq:Kyfirst} along with Lemma ~\ref{lem:Lsquared}, we have that, for $N_1\to \infty $,
\begin{equation}\label{eq:Kyfirst1}
\begin{array}{lll}
\max_{ n \in [\lfloor\eta N_1\rfloor]}\left|\mathcal{K}_y(n) \right| &= \max_{ n \in [\lfloor\eta N_1\rfloor]}\left|y_{N_1+1} \right| \left|\left[ 1-2(\phi_{N_1+1}-\phi_{n})L_{n}'(\phi_{n})\right]  \right|L_{n}^2(\phi_{N_1+1})\vspace{0.1cm}\\
&\leq \left|y_{N_1+1} \right|\left(1+2\Xi(\Delta) \right)(N_1-1)e^{-2\upsilon \Delta \Psi(\lfloor\eta N_1\rfloor; N_1) +O(1)}.
\end{array}
\end{equation}
Employing Proposition~\ref{Prop:VFrakAsymp}, $\upsilon\Psi(\lfloor\eta N_1\rfloor; N_1)\geq \Gamma(\eta)N_1^3$ for $N_1\gg1 $ and some $\Gamma(\eta)>0$.

Then, $\max_{n \in [\lfloor \eta N_1\rfloor]}\left|\mathcal{K}_{y}(n)\right|\leq \gamma_y(\eta, \Delta)  e^{ -\Gamma(\eta)\Delta N_1^3}$ follows from \eqref{eq:Kyfirst1} with 
\begin{equation*}
\begin{array}{lll}
\gamma_y(\eta,\Delta):= M_y^{(1)}\left( 1+2\Xi(\Delta)\right) \max_{N_1\in \mathbb{N}}\left[(N_1-1)e^{-\upsilon \Delta \Psi(\lfloor\eta N_1\rfloor; N_1) +O(1)} \right]<\infty,
\end{array}
\end{equation*}
where $\left|y_{N_1+1} \right| \leq M_y^{(1)}$ in view of Assumption~\ref{assump:yassump} and the exponential in the maximum decays super exponentially as $N_1\to \infty$, due to Proposition~\ref{Prop:VFrakAsymp} and the fact that we restrict to $\eta\in (0,1)$. Similarly, by Theorem~\ref{thm:FirstOrdCondRep} and Assumption~\ref{assump:yassump}, for $N_1\gg 1$ we obtain
\begin{equation}\label{eq:Klambdafirst}
\begin{array}{lll}
\max_{ n \in [\lfloor\eta N_1\rfloor]}\left|\mathcal{K}_{\lambda}(n) \right| &= \max_{ n \in [\lfloor\eta N_1\rfloor]}\frac{\left|y_{N_1+1} \right|}{\Delta |y_{n}| \phi_n }  \left|\tilde{H}_{n}(\phi_{N_1+1})\right|\\
&\overset{\eqref{eq:Hermite}}{\leq}\max_{n \in [\lfloor\eta N_1\rfloor]}\frac{M_y}{\Delta \phi_n} \left|\phi_{N_1+1}-\phi_{n} \right|L_{n}^2(\phi_{N_1+1})\\
&\leq \frac{M_y}{\Delta}e^{-2\upsilon \Delta\Psi(\lfloor\eta N_1\rfloor; N_1)+O(1)}\leq \gamma_{\lambda}(\eta, \Delta)e^{- \Gamma(\eta)\Delta N_1^3},
\end{array}
\end{equation}
where 
\begin{equation*}
\begin{array}{lll}
&\gamma_{\lambda}\left(\eta, \Delta \right) = \frac{M_y}{\Delta}\max_{N_1\in \mathbb{N}}e^{-\upsilon \Delta \Psi(\lfloor\eta N_1\rfloor; N_1) +O(1)}
\end{array}
\end{equation*}
and we used the facts that $\frac{\left|y_{N_1+1} \right|}{|y_{n}|} \leq M_y$ by Assumption~\ref{assump:yassump}  and that $\frac{\left| \phi_{N_1+1}-\phi_n\right|}{\phi_n} = 1-e^{-\Delta (\lambda_{N_1+1}-\lambda_n)}\leq 1$.

\underline{Regime 2:} When $\Delta \geq \Delta_*>0$, \eqref{eq:AuxtermDeriv} yields
\begin{equation}\label{eq:AuxtermDeriv2}
\begin{array}{lll}
\left|\left(\phi_{N_1+1}-\phi_n \right)L'_n(\phi_n)\right| &\leq (N_1-1)\max_{\Delta \geq \Delta_*}\Xi(\Delta)=(N_1-1)\Xi(\Delta_*),
\end{array}
\end{equation}
because $\Xi(\Delta)$ is monotonically decreasing. Then, by Lemma~\ref{lem:Lsquared} and \eqref{eq:Kyfirst}, for $\Delta \gg 0 $, we have
\begin{equation}\label{eq:Kysecond1}
\begin{array}{lll}
&\max_{ n \in [\lfloor\eta N_1\rfloor]}\left|\mathcal{K}_y(n) \right| \leq \left|y_{N_1+1} \right| 2N_1\left( 1+\Xi(\Delta_*)\right) e^{-2\upsilon \Delta \Psi(\lfloor\eta N_1\rfloor; N_1) +o(1)}.
\end{array}
\end{equation}
Now, given $\vartheta>0$, let $N_1(\vartheta)\in \mathbb{N}$ such that for all $N_1\geq N_1(\vartheta)$, we have $\upsilon \Psi\left(\lfloor \eta N_1 \rfloor; N_1  \right) \geq \vartheta$, whence $e^{-\upsilon\Delta  \Psi\left(\lfloor \eta N_1 \rfloor; N_1  \right) }\leq e^{-\vartheta \Delta }$. Such $N_1(\vartheta)$ exists in view of claim 3 in Proposition~\ref{Prop:VFrakAsymp}. We can define 
\begin{equation*}
\begin{array}{lll}
\gamma_y(\eta, \vartheta) =  2M_y^{(1)}(1+\Xi(\Delta_*))\max_{N_1 \geq N_1(\vartheta)}\left(N_1e^{-\upsilon \Delta_* \Psi\left(\lfloor \eta N_1 \rfloor; N_1  \right) +O(1)} \right)<\infty,
\end{array}
\end{equation*}
where we have replaced $o(1)$ with $O(1)$, whenever $N_1\geq N_1(\vartheta)$ and $\Delta \geq \Delta_*$, in view of Corollary~\ref{Cor:ThetaAssymp}, and we have employed claim 3 in Proposition~\ref{Prop:VFrakAsymp}, which guarantees that the maximum in $\gamma_y(\eta,\vartheta)$ is finite. Therefore, from \eqref{eq:Kysecond1}, we conclude that 
\begin{equation*}
\begin{array}{lll}
&\max_{ n \in [\lfloor\eta N_1\rfloor]}\left|\mathcal{K}_y(n) \right| \leq\gamma_y(\eta,\vartheta)e^{-\vartheta \Delta},\quad \Delta \gg 0.
\end{array}
\end{equation*}
Similarly,
\begin{equation}\label{eq:Klambdafirst111}
\begin{array}{lll}
\max_{n \in [\lfloor\eta N_1\rfloor]}\left|\mathcal{K}_{\lambda}(n) \right| &= \max_{n \in [\lfloor\eta N_1\rfloor]}\frac{\left|y_{N_1+1} \right|}{\Delta |y_{n}| \phi_n }  \left|\tilde{H}_{n}(\phi_{N_1+1})\right|\\
&\overset{\eqref{eq:Hermite}}{\leq}\max_{n \in [\lfloor\eta N_1\rfloor]}\frac{M_y}{\Delta \phi_n} \left|\phi_{N_1+1}-\phi_{n} \right|L_{n}^2(\phi_{N_1+1})\\
&\leq \frac{M_y}{\Delta}e^{-2\upsilon \Delta \Psi(\lfloor\eta N_1\rfloor; N_1)+O(1)}\leq \frac{\gamma_{\lambda}(\eta, \vartheta)}{\Delta}e^{- \vartheta \Delta},
\end{array}
\end{equation}
where 
\begin{equation*}
\begin{array}{lll}
\gamma_{\lambda}(\eta,\vartheta) = M_y \max_{N_1\geq N_1(\vartheta)}e^{-\upsilon \Delta_* \Psi\left(\lfloor \eta N_1 \rfloor; N_1  \right)+O(1)}.
\end{array}
\end{equation*}

\underline{Regime 3:} From \eqref{eq:AuxtermDeriv}, with $N_1 \Delta = T$, we can obtain
\begin{equation}\label{eq:AuxtermDeriv3}
\begin{array}{lll}
\left|\left(\phi_{N_1+1}-\phi_n \right)L'_n(\phi_n)\right| & \leq (N_1-1)\left(\frac{1}{e^{\Delta \upsilon}-1}+\frac{e^{3\Delta \upsilon}}{e^{3\Delta \upsilon}-1} \right)\\
&\leq \frac{N_1(N_1-1)}{\upsilon T}\left( \max_{N_1\in \mathbb{N}}\frac{\upsilon T}{N_1\left(e^{\upsilon T N_1^{-1}}-1 \right)}+ \frac{1}{3} \max_{N_1\in \mathbb{N}} \frac{3\upsilon Te^{3\upsilon T N_1^{-1}}}{N_1\left(e^{3\upsilon T N_1^{-1}}-1 \right)} \right)\\
&=: \frac{N_1(N_1-1)A_{\upsilon}}{\upsilon T}.
\end{array}
\end{equation}
Both maxima in \eqref{eq:AuxtermDeriv3} exist, because the limits as $N_1\to \infty$ of both terms appearing under the maxima exist and are finite (precisely, they are related to the derivative of an appropriate exponential function at zero). For the first term, setting $x:= \upsilon T N_1^{-1}$ yields $\lim_{x\to 0} \frac{x}{e^x-1}=1$, while for the second term, setting $x:= 3\upsilon T N_1^{-1}$ yields $\lim_{x\to 0} \frac{x e^x}{e^x-1}=1$. Hence, $A_{\upsilon}=1+\frac{1}{3} \cdot 1=\frac{4}{3}$.

Taking into account Lemma~\ref{lem:Lsquared} and \eqref{eq:AuxtermDeriv3}, we have
\begin{equation}\label{eq:Kysecond2}
\begin{array}{lll}
\max_{n \in [\lfloor\eta N_1\rfloor]}\left|\mathcal{K}_y(n) \right| &\leq \left|y_{N_1+1} \right|
\left(1+\frac{2}{\upsilon T}A_{\upsilon} \right)N_1(N_1-1)e^{-2\upsilon T\frac{\Psi(\lfloor \eta N_1\rfloor; N_1 )}{N_1}+O(N_1)}.
\end{array}
\end{equation}
Defining
\begin{equation*}
\begin{array}{lll}
\gamma_y(\eta, T): =  M_y^{(1)}\left(1+\frac{2}{\upsilon T}A_{\upsilon} \right) \max_{N_1\in \mathbb{N}}\left(N_1(N_1-1) e^{-\upsilon T\frac{\Psi(\lfloor \eta N_1 \rfloor; N_1 )}{N_1}+O(N_1)}\right),
\end{array}
\end{equation*}
which is finite in light of claim 3 in Proposition~\ref{Prop:VFrakAsymp} and the fact that $\eta \in (0,1)$,  
we conclude that 
\begin{equation*}
\begin{array}{lll}
\max_{n \in [\lfloor\eta N_1\rfloor]}\left|\mathcal{K}_y(n) \right| &\leq \gamma_y(\eta, T) e^{-\Gamma(\eta,T)N_1^2},\quad N_1\gg 1
\end{array}
\end{equation*}
for some $\Gamma(\eta,T)>0$, since $\Psi(\lfloor \eta N_1 \rfloor;N_1)\geq a(\eta)N_1^3$ for $N_1\gg 1$ in light of claim 3 in Proposition~\ref{Prop:VFrakAsymp}. Similar arguments yield
\begin{equation*}
\begin{array}{lll}
\max_{n \in [\lfloor\eta N_1\rfloor]}\left|\mathcal{K}_{\lambda}(n) \right| &= \max_{n \in [\lfloor\eta N_1\rfloor]}\frac{\left|y_{N_1+1} \right|}{\Delta |y_{n}| \phi_n }  \left|\tilde{H}_{n}(\phi_{N_1+1})\right|\\
&\overset{\eqref{eq:Hermite}}{\leq}\max_{n \in [\lfloor\eta N_1\rfloor]}\frac{M_y}{\Delta \phi_n} \left|\phi_{N_1+1}-\phi_{n} \right|L_{n}^2(\phi_{N_1+1})\\&\leq \frac{M_y}{\Delta}e^{-2\upsilon \Delta  \Psi(\lfloor \eta N_1 \rfloor; N_1 )+O(N_1)} \leq \gamma_{\lambda}\left(\eta,T \right)e^{-\Gamma(\eta,T)N_1^2},\quad N_1\gg 1,
\end{array}
\end{equation*}
where $\gamma_{\lambda}\left(\eta, T \right):=\frac{M_y}{T}\max_{N_1\in \mathbb{N}}\left(N_1e^{-\upsilon T\frac{\Psi(\lfloor \eta N_1\rfloor; N_1)}{N_1}+O(N_1)}\right)$. \qedhere

\subsection{Proof of Proposition~\ref{Prop:relSepUnif}}\label{Sec:ProofPropositionPhi}

The real sequence $\left\{\phi_n\right\}_{n=1}^{\infty}$ is strictly monotonically decreasing. Hence, given $n$, the smallest difference must be either $\phi_n-\phi_{n+1}$ or $\phi_{n-1}-\phi_n$ (where for $n=1$, only the former is needed). We have 
\begin{equation*}
\begin{array}{lll}
&\frac{\phi_n-\phi_{n+1}}{\phi_n} = 1- e^{-\Delta\left(\lambda_{n+1}-\lambda_n \right)}\geq 1-e^{-\Delta \upsilon (2n+1)}\geq 1 - e^{-3\Delta_* \upsilon},\quad n\geq 1,\\
&\frac{\phi_{n-1}-\phi_n}{\phi_n} = e^{\Delta \left(\lambda_{n}-\lambda_{n-1}  \right)}- 1\geq e^{\Delta\upsilon \left(2n-1 \right) }-1\geq e^{3\Delta_* \upsilon}-1,\quad n\geq 2,
\end{array}
\end{equation*}
where we rely on the lower bound $\lambda_m-\lambda_n \geq \upsilon (m^2-n^2)$ in \eqref{eq:EigDiff}, shown in Proposition~\ref{prop:EigDiffBound}.

The claim follows by choosing, for instance, $\tau = \frac{1}{3}\left(1-e^{-3\Delta_* \upsilon} \right)$.

\subsection{The explicit discrepancy representation - Lemma~\ref{lem:PolyDiscrep}}\label{Append:Pf8}

Given $N_1\in \mathbb{N}$ and $s=0,\dots,N_1+1$, we denote by $\mathcal{Q}_{N_1+1-s}^{N_1+1}$ the set of all increasing integer sequences (tuples) in $[N_1+1]$ of length $N_1+1-s$. For $s=N_1+1$, we use the convention that $\mathcal{Q}_{0}^{N_1+1} = \emptyset$. Given any $s=0,\dots, N_1$, we adopt the following notations:
\begin{equation}\label{eq:GammaParam1}
\begin{array}{lll}
& \gamma = \left(i_1,\dots,i_{N_1+1-s} \right)\in \mathcal{Q}_{N_1+1-s}^{N_1+1}, \\
&\beta = \left(1,j_1,\dots,j_{N_1-s} \right)\in \mathcal{Q}_{N_1+1-s}^{N_1+1},\\
& \mathrm{a} = i_1-1, \qquad \mathrm{b} = (j_1 -2) + (i_1-1),\\
   &\mathrm{k}_1 = i_2-i_1, \qquad \dots \qquad \mathrm{k}_{N_1-s}=i_{N_1+1-s}-i_1,\\
   &\mathrm{l}_1 = j_2-j_1, \qquad \dots \qquad \mathrm{l}_{N_1-s-1} = j_{N_1-s}-j_1.
\end{array}
\end{equation}
We denote by $\xi^c\in \mathcal{Q}^{N_1+1}_{s}$ the complementary sequence to $\xi = (i_1,\dots,i_{N_1+1-s})\in \mathcal{Q}^{N_1+1}_{N_1+1-s}$. With a slight abuse of notation, we treat the elements in $\mathcal{Q}^{N_1+1}_{N_1+1-s}$ as both ordered tuples and sets; hence, given a tuple $\xi \in \mathcal{Q}^{N_1+1}_{N_1+1-s}$, the expression $q\in \xi$  means that the integer $q$ appears in the tuple $\xi$.

Given a matrix $L\in \mathbb{R}^{(N_1+1)\times (N_1+1)}$, we denote by $\operatorname{adj}_{N_1+1-s}(L)$ its adjugate of order $N_1+1-s$; see \cite[Section 0.8.12]{horn2012matrix}. The adjugate $\operatorname{adj}_{N_1+1-s}(L)$ is indexed (in lexicographic order) by row-column elements $(\zeta,\xi)\in \mathcal{Q}^{N_1+1}_{N_1+1-s} \times \mathcal{Q}^{N_1+1}_{N_1+1-s}$ with 
\begin{equation}\label{eq:HighOrdAdj1}
\left[\operatorname{adj}_{N_1+1-s}(L) \right]_{\zeta, \xi} = (-1)^{\sum_{q\in \zeta} q+\sum_{q\in \xi}q}\cdot \operatorname{det}\left( L(\xi^c|\zeta^c) \right),
\end{equation}
where $L(\xi^c|\zeta^c)$ is the matrix obtained from $L$ by deleting rows indexed by $\xi$ and columns indexed by $\zeta$; by convention, when $s=N_1+1$, we have $\operatorname{adj}_{N_1+1-s}(L)=\operatorname{adj}_{0}(L)= \det(L)$.

Let $\left\{ \chi_n\right\}_{n=1}^{N_1}\subset \mathbb{C}$.
For a given $k \in \{0,\dots, N_1-1\}$, we denote by $\mathfrak{S}_{N_1-k}(\chi_1,\dots,\chi_{N_1})$ the \emph{elementary symmetric polynomial} of degree $N_1-k$, given by
\begin{equation}\label{eq:SymmPolDef}
    \mathfrak{S}_{N_1-k}(\chi_1,\dots,\chi_{N_1}) = \sum_{1\leq i_1<\dots < i_{N_1-k}\leq N_1}\chi_{i_1}\dots \chi_{i_{N_1-k}}.
    \end{equation}
We require several auxiliary  propositions.
\begin{proposition}\label{Prop:Symfuncdet1}
Let $m\in \mathbb{N}$ and $\left\{\chi_j \right\}_{j=1}^m\subset \mathbb{C}$. Then, for any $k\in [m]$,
\begin{equation}
\operatorname{det}\begin{bmatrix}
1 & \chi_1 & \dots & \chi_1^{k-1} & \chi_1^{k+1} & \dots & \chi_1^m \\
\vdots & \vdots  & \ddots & \vdots & \vdots & \ddots & \vdots \\
1 & \chi_m & \ldots & \chi_m^{k-1} & \chi_m^{k+1} & \dots & \chi_m^m
\end{bmatrix} = \mathfrak{S}_{m-k}(\chi_1,\dots,\chi_m) \left(\prod_{1\leq i<j\leq m}(\chi_j-\chi_i) \right),
\end{equation}
where $\mathfrak{S}_{m-k}(\chi_1,\dots,\chi_m)$ is defined in \eqref{eq:SymmPolDef}.
\end{proposition}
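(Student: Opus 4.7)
The plan is to embed the $m \times m$ determinant into a full Vandermonde determinant of size $(m+1) \times (m+1)$ by prepending an auxiliary row of powers of an indeterminate $z$, and then evaluate the resulting polynomial in $z$ in two different ways. Specifically, I would consider
\[
V(z) := \det \begin{bmatrix} 1 & z & z^2 & \dots & z^m \\ 1 & \chi_1 & \chi_1^2 & \dots & \chi_1^m \\ \vdots & \vdots & \vdots & \ddots & \vdots \\ 1 & \chi_m & \chi_m^2 & \dots & \chi_m^m \end{bmatrix}.
\]
On one hand, this is a classical $(m+1) \times (m+1)$ Vandermonde determinant, so $V(z) = \prod_{j=1}^{m}(\chi_j - z) \cdot \prod_{1\leq i<j\leq m}(\chi_j-\chi_i)$.

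On the other hand, expanding $V(z)$ along its first row writes it as a polynomial $\sum_{k=0}^{m} (-1)^{k}\, z^{k}\, D_{k}$ in $z$, where the cofactor $D_k$ is, up to a sign that I have already extracted, the minor obtained by deleting the first row and the column carrying the power $z^k$. This minor is exactly the $m \times m$ determinant in the statement of the proposition for each $k \in \{0,1,\dots,m\}$ (in particular for the stated range $k \in [m]$).

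To finish, I would expand $\prod_{j=1}^m (\chi_j - z) = (-1)^m \prod_{j=1}^m (z-\chi_j)$ using the defining identity $\prod_{j=1}^m (z-\chi_j) = \sum_{k=0}^m (-1)^{m-k}\,\mathfrak{S}_{m-k}(\chi_1,\dots,\chi_m)\, z^k$ for the elementary symmetric polynomials, absorb the overall sign $(-1)^m$, and then match the coefficient of $z^{k}$ on the two expressions for $V(z)$. This match is immediate and yields $D_k = \mathfrak{S}_{m-k}(\chi_1,\dots,\chi_m) \prod_{1\leq i<j\leq m}(\chi_j-\chi_i)$, which is precisely the claim.

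The only delicate point worth flagging is the sign bookkeeping: one must verify that the $(-1)^{1+(k+1)}$ factor produced by the first-row cofactor expansion combines with the $(-1)^m$ coming from $\prod(\chi_j-z)=(-1)^m\prod(z-\chi_j)$ and with the $(-1)^{m-k}$ appearing in the symmetric-function expansion so that all parities cancel, leaving $\mathfrak{S}_{m-k}$ with the correct positive sign. Apart from this purely mechanical check, the argument is completely routine, and it makes explicit why the ``missing power'' $k$ selects precisely the elementary symmetric polynomial of complementary degree $m-k$.
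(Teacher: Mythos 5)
Your proof is correct and is essentially identical to the paper's: the paper likewise bordered the $m\times m$ minor with a row of powers of an auxiliary variable $w$ to form an $(m+1)\times(m+1)$ Vandermonde determinant, expanded along that row, and equated coefficients of $w^k$ with the factored form $\bigl(\prod_{j=1}^m(\chi_j-w)\bigr)\prod_{1\leq i<j\leq m}(\chi_j-\chi_i)$. Your sign bookkeeping is also right: the cofactor sign $(-1)^k$ matches the $(-1)^k$ arising from $(-1)^m\cdot(-1)^{m-k}$ in the elementary-symmetric expansion, so $\mathfrak{S}_{m-k}$ appears with positive sign as claimed.
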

\begin{proof}
 Consider the Vandermonde determinant 
 \begin{equation}\label{eq:AuxVDMExp}
 \operatorname{det}\begin{bmatrix}
 1 & w & \ldots & w^{m-1} & w^{m}  \\
1 & \chi_1 & \ldots & \chi_1^{m-1} & \chi_1^{m}  \\
\vdots & \vdots  & \ddots & \vdots & \vdots  \\
1 & \chi_m & \ldots & \chi_m^{m-1} & \chi_m^{m} 
\end{bmatrix} = \left(\prod_{j=1}^m (\chi_j-w) \right)\left(\prod_{1\leq i<j\leq m}(\chi_j-\chi_i) \right).   
 \end{equation} 
Expanding the determinant via the first row, the result follows from equating the coefficient of $w^{k}$ on both sides of \eqref{eq:AuxVDMExp}.
\end{proof}
Recall that we denote $\phi_n = e^{-\Delta \lambda_n }$, $n\in [N_1]$. We now define the matrix
\begin{equation}\label{eq:Ddef1}
    D = y_{N_1+1} \begin{bmatrix}
0 & 0 & 0 & \dots & 0 \\
1 & \phi_{N_1+1} & \phi_{N_1+1}^2 & \dots & \phi_{N_1+1}^{N_1} \\
\phi_{N_1+1} & \phi_{N_1+1}^{2} & \phi_{N_1+1}^{3} & \dots & \phi_{N_1+1}^{N_1+1} \\
\phi_{N_1+1}^{2} & \phi_{N_1+1}^{3} & \phi_{N_1+1}^{4} & \dots & \phi_{N_1+1}^{N_1+2} \\
\vdots & \vdots & \vdots & \ddots & \vdots\\
\phi_{N_1+1}^{N_1-1} & \phi_{N_1+1}^{N_1} & \phi_{N_1+1}^{N_1+1} & \dots & \phi_{N_1+1}^{2N_1-1}
\end{bmatrix},
\end{equation}
such that matrix $\mathfrak{Q}(z)$ in \eqref{eq:qbar-def} can be written as $\mathfrak{Q}(z) = \mathfrak{P}(z) + \varepsilon D$, where $\mathfrak{P}(z)$ is the matrix in \eqref{eq:HomogPronyPol}.

\begin{proposition}\label{prop:Phimat1}
Consider the matrix $D$ in \eqref{eq:Ddef1}. When $s\in [N_1+1]\setminus \left\{1 \right\}$, we have $\operatorname{adj}_{N_1+1-s}(D)=0$. Furthermore, when $s=1$, let $\gamma,\beta \in \mathcal{Q}_{N_1}^{N_1+1}$, where we assume that $1\in \beta$, as in \eqref{eq:GammaParam1}. Then,
\begin{equation}\label{eq:AdjEntryS1}
\left[\operatorname{adj}_{N_1}(D) \right]_{\gamma, \beta} = (-1)^{\sum_{q\in \gamma}q+\sum_{q\in \beta}q}y_{N_1+1}\phi_{N_1+1}^{\beta^c+\gamma^c-3}.
\end{equation}
Conversely, when $s=1$ and $1 \not\in \beta$, then $\left[\operatorname{adj}_{N_1}(D) \right]_{\gamma, \beta} =0$.
\end{proposition}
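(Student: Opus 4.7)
The approach is to exploit the rank-one structure of $D$. Observing the definition \eqref{eq:Ddef1}, the first row of $D$ vanishes, while for every $i \in \{2, \ldots, N_1+1\}$ the $i$-th row equals the scalar $y_{N_1+1}\phi_{N_1+1}^{i-2}$ times the common vector $v = (1, \phi_{N_1+1}, \phi_{N_1+1}^2, \ldots, \phi_{N_1+1}^{N_1})$. Hence all nonzero rows of $D$ lie in the one-dimensional span of $v$, so $\operatorname{rank}(D) \leq 1$, and every $s \times s$ submatrix of $D$ with $s \geq 2$ is singular.

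By definition \eqref{eq:HighOrdAdj1}, each entry of $\operatorname{adj}_{N_1+1-s}(D)$ is, up to sign, the determinant of an $s \times s$ submatrix of $D$ (indeed, the submatrix obtained by deleting the rows indexed by $\xi$ and columns indexed by $\zeta$, which retains $s$ rows and $s$ columns). Combining this with the rank-one observation above, I conclude that $\operatorname{adj}_{N_1+1-s}(D) = 0$ for every $s \in [N_1+1] \setminus \{1\}$, which is the first assertion.

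For the remaining case $s = 1$, the tuples $\gamma, \beta \in \mathcal{Q}^{N_1+1}_{N_1}$ each omit a single element, so $\gamma^c, \beta^c \in [N_1+1]$ are well-defined single integers, and the ``submatrix'' $D(\beta^c \mid \gamma^c)$ appearing in \eqref{eq:HighOrdAdj1} reduces to the scalar entry $D_{\beta^c, \gamma^c}$. If $1 \notin \beta$, then necessarily $\beta^c = 1$, and the relevant entry lies in the zero first row of $D$, giving $[\operatorname{adj}_{N_1}(D)]_{\gamma, \beta} = 0$. If instead $1 \in \beta$, then $\beta^c \geq 2$, and reading the entry directly from \eqref{eq:Ddef1} yields
\[
D_{\beta^c, \gamma^c} = y_{N_1+1}\phi_{N_1+1}^{(\beta^c - 2) + (\gamma^c - 1)} = y_{N_1+1}\phi_{N_1+1}^{\beta^c + \gamma^c - 3}.
\]
Substituting into \eqref{eq:HighOrdAdj1}, with the sign $(-1)^{\sum_{q \in \gamma} q + \sum_{q \in \beta} q}$, gives \eqref{eq:AdjEntryS1}.

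No serious obstacle is expected: the argument is essentially a rank identification followed by routine bookkeeping. The only mildly subtle point is to correctly interpret the single-element complements $\beta^c$ and $\gamma^c$ as integers in the exponent, and to verify the row/column shift arithmetic $(\beta^c - 2) + (\gamma^c - 1) = \beta^c + \gamma^c - 3$ from the convention used in \eqref{eq:Ddef1}.
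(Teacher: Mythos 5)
Your proof is correct and follows essentially the same route as the paper: both arguments rest on the observation that $D$ has rank at most one (all nonzero rows being multiples of a single Vandermonde-type row), so that every minor of order at least two vanishes, and the $s=1$ case reduces to reading off a single entry of $D$. The only cosmetic difference is that you fold the $s=N_1+1$ case (where $\operatorname{adj}_0(D)=\det D$) into the same rank argument, whereas the paper treats it as a separate sentence; both are fine.
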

\begin{proof}
The $j$-th row of matrix $D$, with $j\geq 2$, is equal to the second row multiplied by $\phi_{N_1+1}^{j-2}$.
Employing \eqref{eq:HighOrdAdj1}, with $L=D$, we see that for any $s\in [N_1]\setminus \left\{1 \right\}$ and $\zeta,\xi\in \mathcal{Q}^{N_1+1}_{N_1+1-s}$, one has $\operatorname{card}\left(\zeta^c \right) = \operatorname{card}\left(\xi^c \right)=s$, where $\operatorname{card}(A)$ denotes the cardinality of the set $A$. Therefore, the matrix $D\left(\xi^c|\zeta^c \right)$ is of rank $1$ and dimension $s\geq 2$, and hence $\left[\operatorname{adj}_{N_1+1-s}(D) \right]_{\zeta, \xi} =0$. When $s=N_1+1$, by definition, $\operatorname{adj}_0(D) = \operatorname{det}(D)=0$.

Finally, for $s=1$, let $\gamma,\beta \in \mathcal{Q}_{N_1}^{N_1+1}$. If $1\not\in\beta$, then $D(\beta^c|\gamma^c)$ contains a row of zeros, and hence $\det(D(\beta^c|\gamma^c))=0$. If, conversely, $1\in \beta$, by \eqref{eq:HighOrdAdj1} with $L=D$, $\zeta = \gamma$, $\xi=\beta$ and the fact that  $\operatorname{card}(\gamma)=\operatorname{card}(\beta)=N_1$, we have that $\operatorname{det}\left( D\left(\beta^c|\gamma^c \right)\right)$ is simply the $(\beta^c,\gamma^c)$ entry in $D$. This yields \eqref{eq:AdjEntryS1}.
\end{proof}

\begin{remark}
The proof of Proposition~\ref{prop:Phimat1} relies on $D$ in \eqref{eq:Ddef1} being a matrix of rank $1$, which is a consequence of the assumption that $N_2 = 1$ in \eqref{eq:MeasurementsProny}. For measurements of the form
\begin{equation*}
\begin{array}{lll}
y(t_k) &= \sum_{n=1}^{N_1}y_n\phi_n^k+\varepsilon y_{N_1+1}\phi_{N_1+1}^k+\varepsilon y_{N_1+2}\phi_{N_1+2}^k,\quad k=0,\dots, 2N_1-1,
\end{array}
\end{equation*}
whose $\varepsilon$-tail contains two elements, the corresponding matrix $D$ has the form
\begin{equation*}
D = y_{N_1+1} \begin{bmatrix}
0 & 0 & 0 & \dots & 0 \\
1 & \phi_{N_1+1} & \phi_{N_1+1}^2 & \dots & \phi_{N_1+1}^{N_1} \\
\phi_{N_1+1} & \phi_{N_1+1}^{2} & \phi_{N_1+1}^{3} & \dots & \phi_{N_1+1}^{N_1+1} \\
\phi_{N_1+1}^{2} & \phi_{N_1+1}^{3} & \phi_{N_1+1}^{4} & \dots & \phi_{N_1+1}^{N_1+2} \\
\vdots & \vdots & \vdots & \ddots & \vdots\\
\phi_{N_1+1}^{N_1-1} & \phi_{N_1+1}^{N_1} & \phi_{N_1+1}^{N_1+1} & \dots & \phi_{N_1+1}^{2N_1-1}
\end{bmatrix} +   y_{N_1+2}\begin{bmatrix}
0 & 0 & 0 & \dots & 0 \\
1 & \phi_{N_1+2} & \phi_{N_1+2}^2 & \dots & \phi_{N_1+2}^{N_1} \\
\phi_{N_1+2} & \phi_{N_1+2}^{2} & \phi_{N_1+2}^{3} & \dots & \phi_{N_1+2}^{N_1+1} \\
\phi_{N_1+2}^{2} & \phi_{N_1+2}^{3} & \phi_{N_1+2}^{4} & \dots & \phi_{N_1+2}^{N_1+2} \\
\vdots & \vdots & \vdots & \ddots & \vdots\\
\phi_{N_1+2}^{N_1-1} & \phi_{N_1+2}^{N_1} & \phi_{N_1+2}^{N_1+1} & \dots & \phi_{N_1+2}^{2N_1-1}
\end{bmatrix},
\end{equation*}
and is of rank $2$, at most. Thus, in the proof of Proposition~\ref{prop:Phimat1}, we would have that $\operatorname{adj}_{N_1+1-s}(D)=0$ only for $s\in[N_1+1]\setminus \left\{1,2 \right\}$. This fact would introduce additional terms into the presented analysis (see also Remark~\ref{rem:FullTail} below). For the general case of an $\varepsilon$-tail with $N_2$ large, all the adjugates $\operatorname{adj}_{N_1+1-s}(D)$ may be non-zero, which would lead to a combinatorial increase of terms in the analysis, as mentioned in Remark~\ref{rem:combExpl}. 
\end{remark}


We now fix $(n,N_1)\in \mathcal{N}_1$ and the noise intensity $\varepsilon>0$. Employing \cite[Lemma 2]{katz2024accuracy}, the homogenized perturbed Prony polynomial $\bar{q}(z)$ in \eqref{eq:qbar-def} can be written as
\begin{equation}\label{eq:CompundExpansion1}
\bar{q}\left(z\right) = \det(\mathfrak{Q}(z)) = \det(\mathfrak{P}(z)+\varepsilon D) = \sum_{s=0}^{N_1+1} \varepsilon^{s}\varsigma_{N_1+1-s}(z), 
\end{equation}
where 
\begin{equation}\label{eq:ThetaDefPronyErr}
\begin{array}{lll}
&\underline{s=0:} \quad \varsigma_{N_1+1-s}(z)=\varsigma_{N_1+1}(z) =\bar{p}(z),\vspace{0.1cm}\\
&\underline{s\in [N_1-1]:} \quad \varsigma_{N_1+1-s}(z) = \sum_{\gamma \in \mathcal{Q}_{N_1+1-s}^{N_1+1}}\sum_{\beta \in \mathcal{Q}^{N_1+1}_{N_1+1-s}: 1\in \beta}\left[\operatorname{adj}_{N_1+1-s}(D) \right]_{\gamma, \beta} \Gamma_{\beta,\gamma}(z), \vspace{0.1cm} \\
& \underline{s=N_1:} \quad \varsigma_{N_1+1-s}(z) = \varsigma_{1}(z) = \sum_{i=1}^{N_1+1}\left[\operatorname{adj}(D) \right]_{i, 1}z^{i-1}, \vspace{0.1cm}\\
&\underline{s=N_1+1:} \quad \varsigma_{N_1+1-s}(z) = \varsigma_{0}(z) = \operatorname{det}(D),
\end{array}
\end{equation}
and for $s\in[N_1-1]$, $\gamma,\beta \in \mathcal{Q}^{N_1+1}_{N_1+1-s}$ with $1\in \beta$, the polynomial $\Gamma_{\beta,\gamma}(z)$ in $z$ is given by
\begin{equation}\label{eq:OrigGamma}
\begin{array}{lll}
\Gamma_{\beta,\gamma}(z)= \operatorname{det}\begin{bmatrix}
z^\mathrm{a} & z^{\mathrm{a}+\mathrm{k}_1} & \dots & z^{\mathrm{a}+\mathrm{k}_{N_1-s}}\\
m_{\mathrm{b}} & m_{\mathrm{b}+\mathrm{k}_1}& \dots & m_{\mathrm{b}+\mathrm{k}_{N_1-s}}\\
\vdots & \vdots & \ddots & \vdots\\
m_{\mathrm{b}+\mathrm{l}_{N_1-s-1}} & m_{\mathrm{b}+\mathrm{l}_{N_1-s-1}+\mathrm{k}_1} & \dots & m_{\mathrm{b}+\mathrm{l}_{N_1-s-1}+\mathrm{k}_{N_1-s}}
\end{bmatrix},
\end{array}
\end{equation}
where we employ the notations \eqref{eq:GammaParam1} and 
\begin{equation*}
m_k  := \sum_{n=1}^{N_1}y_n\phi_n^k.
\end{equation*}
\begin{remark}\label{rem:FullTail}
From \eqref{eq:CompundExpansion1} and \eqref{eq:ThetaDefPronyErr}, it can be seen that considering an $\varepsilon$-tail of generic length $N_2$ would introduce higher powers of $\varepsilon$ in the expression of $\bar{q}(z)$, thereby adding higher order terms that need to be upper bounded in estimating the difference $\bar{p}(z)-\bar{q}(z)$.
\end{remark}

While the representation \eqref{eq:CompundExpansion1}-\eqref{eq:OrigGamma} was sufficient in \cite[Theorem 4]{katz2024accuracy}, here we require a more accurate presentation that is amenable to combinatorial analysis and is given in the following lemma, which is the main result of this section.

\begin{lemma}\label{lem:PolyDiscrep}
 The discrepancy $\bar{q}(z)-\bar{p}(z)$ can be written as
\begin{equation}\label{eq:CompundExpansionNew2}
\begin{array}{lll}
\bar{q}\left(z\right)-\bar{p}(z) &=\varepsilon  \sum_{\gamma \in \mathcal{Q}_{N_1}^{N_1+1}}\sum_{\beta \in \mathcal{Q}^{N_1+1}_{N_1}: 1\in \beta}\sum_{\omega \in \mathcal{Q}_{N_1-1}^{N_1}} \left\{(-1)^{\sum_{q\in \gamma}q+\sum_{q\in \beta}q}y_{N_1+1}\phi_{N_1+1}^{\beta^c+\gamma^c-3} \right.\\
&\hspace{10mm}\times \left(\prod_{s=1}^{N_1-1}y_{\omega_{s}} \right) z^{\mathrm{a}}\left(\prod_{j=1}^{N_1-1}\phi_{\omega_j}^{\mathrm{b}}\right)\left(\prod_{s=1}^{N_1-1}(\phi_{\omega_s}-z) \right)\\
&\hspace{10mm} \left. \times  \mathfrak{M}_{N_1+1-\gamma^c}(z,\phi_{\omega}) \mathfrak{N}_{N_1+1-\beta^c}(\phi_{\omega})\left(\prod_{1\leq s< t\leq N_1-1}(\phi_{\omega_t}-\phi_{\omega_s})^2 \right) \right\},
\end{array}
\end{equation}
where
\begin{equation}\label{eq:CompundExpansionNew3}
\begin{array}{lll}
&\omega = (\omega_1,\dots,\omega_{N_1-1})\in \mathcal{Q}^{N_1}_{N_1-1},\quad \phi_{\omega} = \left\{ \phi_{\omega_j}\right\}_{j=1}^{N_1-1},\\
&\mathfrak{M}_{N_1+1-\gamma^c}(z,\phi_{\omega}) = \begin{cases}
1, & \gamma^c=1,N_1+1,\\
\mathfrak{S}_{N_1+1-\gamma^c}(z,\phi_{\omega_1},\dots,\phi_{\omega_{N_1-1}}), &\text{otherwise},
\end{cases}\\
&\mathfrak{N}_{N_1+1-\beta^c}(\phi_{\omega}) = \begin{cases}
1,& \beta^c=2,N_1+1,\\
\mathfrak{S}_{N_1+1-\beta^c}(\phi_{\omega_1},\dots,\phi_{\omega_{N_1-1}}), & \text{otherwise},
\end{cases}
\end{array}
\end{equation}
and we identify the singletons $\gamma^c,\beta^c$ and $\omega^c$ with their elements. 
\end{lemma}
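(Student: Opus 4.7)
The plan is to start from the expansion \eqref{eq:CompundExpansion1}, drastically prune it using Proposition~\ref{prop:Phimat1}, and then evaluate the single surviving determinant $\Gamma_{\beta,\gamma}(z)$ in \eqref{eq:OrigGamma} via a Cauchy--Binet expansion combined with the generalized Vandermonde identity of Proposition~\ref{Prop:Symfuncdet1}.

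First I would observe that, since $\varsigma_{N_{1}+1}(z)=\bar{p}(z)$, the discrepancy equals $\sum_{s=1}^{N_{1}+1}\varepsilon^{s}\varsigma_{N_{1}+1-s}(z)$. By Proposition~\ref{prop:Phimat1}, $\operatorname{adj}_{N_{1}+1-s}(D)$ vanishes for every $s\in[N_{1}+1]\setminus\{1\}$, and $\det(D)=0$ because $D$ has a zero first row; inspection of \eqref{eq:ThetaDefPronyErr} therefore shows that only the $s=1$ term survives, while the restriction $1\in\beta$ appearing in \eqref{eq:CompundExpansionNew2} is the one already recorded in Proposition~\ref{prop:Phimat1}. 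The explicit formula \eqref{eq:AdjEntryS1} extracts the prefactor $(-1)^{\sum_{q\in\gamma}q+\sum_{q\in\beta}q}y_{N_{1}+1}\phi_{N_{1}+1}^{\beta^{c}+\gamma^{c}-3}$ with no further work, so the entire problem reduces to computing $\Gamma_{\beta,\gamma}(z)$.

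The core of the argument is the evaluation of this $N_{1}\times N_{1}$ determinant. With $s=1$ and $\beta=(1,j_{1},\dots,j_{N_{1}-1})$, the first row is $(z^{i_{s}-1})_{s=1}^{N_{1}}$ and the remaining rows are $(m_{j_{r}+i_{s}-3})_{s=1}^{N_{1}}$, $r=1,\dots,N_{1}-1$. Expanding $m_{k}=\sum_{n}y_{n}\phi_{n}^{k}$ and introducing the fictitious node $v_{0}:=z$, the whole matrix factors as $\widetilde{U}\cdot V^{\mathrm{aug}}$, where $\widetilde{U}$ is $N_{1}\times(N_{1}+1)$ with first row $(1,0,\dots,0)$ and subsequent rows $(0,y_{n}\phi_{n}^{j_{r}-2})_{n=1}^{N_{1}}$, and $V^{\mathrm{aug}}$ is $(N_{1}+1)\times N_{1}$ with $(n+1)$-th row $(v_{n}^{i_{s}-1})_{s=1}^{N_{1}}$, $v_{n}=\phi_{n}$ for $n\geq 1$. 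Applying Cauchy--Binet, subsets $\sigma\subset\{0,\dots,N_{1}\}$ with $0\notin\sigma$ drop out because the first row of $\widetilde{U}_{[N_{1}],\sigma}$ vanishes, so the summation collapses to $\sigma=\{0\}\cup\omega$ with $\omega\in\mathcal{Q}_{N_{1}-1}^{N_{1}}$. Expanding $\det\widetilde{U}_{[N_{1}],\{0\}\cup\omega}$ along the zeroth column produces $\prod_{n}y_{\omega_{n}}\cdot\det[\phi_{\omega_{n}}^{j_{r}-2}]_{r,n=1}^{N_{1}-1}$, while $\det V^{\mathrm{aug}}_{\{0\}\cup\omega,[N_{1}]}$ is a generalized Vandermonde in the nodes $(z,\phi_{\omega_{1}},\dots,\phi_{\omega_{N_{1}-1}})$ with exponents $(i_{s}-1)_{s=1}^{N_{1}}$. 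Pulling out the smallest exponent from each row of both generalized Vandermondes produces exactly the common prefactor $z^{\mathrm{a}}\prod_{j}\phi_{\omega_{j}}^{\mathrm{b}}$ (since $(j_{1}-2)+(i_{1}-1)=\mathrm{b}$ and $i_{1}-1=\mathrm{a}$), after which the residual exponent sets lie in $\{0,\dots,N_{1}-1\}$ and $\{0,\dots,N_{1}\}$, each missing exactly one integer, which a short bookkeeping identifies as $\beta^{c}-2$ and $\gamma^{c}-1$ respectively. Proposition~\ref{Prop:Symfuncdet1} then converts each residual determinant into an elementary symmetric polynomial times a plain Vandermonde: the two Vandermondes on $\phi_{\omega}$ combine into the square $\prod_{s<t}(\phi_{\omega_{t}}-\phi_{\omega_{s}})^{2}$ and the Vandermonde involving $z$ yields $\prod_{s}(\phi_{\omega_{s}}-z)$, which together with the elementary symmetric factors reproduce the expression \eqref{eq:CompundExpansionNew2}.

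The main technical obstacle is the careful treatment of the boundary cases $\gamma^{c}\in\{1,N_{1}+1\}$ and $\beta^{c}\in\{2,N_{1}+1\}$, in which the residual exponent set collapses to a contiguous block and Proposition~\ref{Prop:Symfuncdet1} either applies with $\mathfrak{S}_{0}\equiv 1$ or requires one further power of the smallest node to be absorbed into the $z^{\mathrm{a}}\prod\phi_{\omega_{j}}^{\mathrm{b}}$ prefactor; both situations are precisely the content of the piecewise definitions of $\mathfrak{M}$ and $\mathfrak{N}$ in \eqref{eq:CompundExpansionNew3}, so no new term appears in \eqref{eq:CompundExpansionNew2}. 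A final verification that the Cauchy--Binet expansion, carried out consistently with increasingly ordered subsets, introduces no additional signature beyond the one already present in \eqref{eq:AdjEntryS1} completes the proof.
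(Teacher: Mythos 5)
Your proposal is correct, and the first half (reduction to the $s=1$ term via Proposition~\ref{prop:Phimat1} and extraction of the prefactor from \eqref{eq:AdjEntryS1}) is identical to the paper's argument. Where you diverge is in the evaluation of $\Gamma_{\beta,\gamma}(z)$: the paper expands $m_k=\sum_n y_n\phi_n^k$ by multilinearity over \emph{arbitrary} index tuples $(\omega_1,\dots,\omega_{N_1-1})\in[N_1]^{N_1-1}$, applies Proposition~\ref{Prop:Symfuncdet1} once to the resulting determinant in the $\mathrm{k}$-exponents, and then performs an explicit symmetrization — grouping the permutations of each distinct index set, using the invariance of most factors and the antisymmetry of $\prod_{s<t}(\phi_{\omega_t}-\phi_{\omega_s})$ to reassemble the signed permutation sum into a second determinant in the $\mathrm{l}$-exponents, to which Proposition~\ref{Prop:Symfuncdet1} is applied a second time. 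Your Cauchy--Binet factorization $\Gamma_{\beta,\gamma}=\det(\widetilde{U}V^{\mathrm{aug}})$ packages that multilinearity-plus-symmetrization step into a single standard identity: the restriction to increasing tuples $\omega$, the product of the two generalized Vandermonde minors, and the absence of extra signs all come for free, and the two applications of Proposition~\ref{Prop:Symfuncdet1} (together with the exponent bookkeeping $\mathrm{a}=i_1-1$, $\mathrm{b}=(j_1-2)+(i_1-1)$ and the boundary cases $\gamma^c\in\{1,N_1+1\}$, $\beta^c\in\{2,N_1+1\}$) land on exactly the same residual determinants. The two routes are computationally equivalent — the paper's permutation argument is essentially a proof of Cauchy--Binet in this special case — but yours is shorter and less error-prone on signs, at the cost of introducing the auxiliary matrices $\widetilde{U}$ and $V^{\mathrm{aug}}$ and verifying the rank-one collapse of the column-subset sum. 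No gap.
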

\begin{proof}
Considering \eqref{eq:CompundExpansion1}-\eqref{eq:OrigGamma}, we see that for all cases $s\in [N_1+1]\setminus\left\{1 \right\}$ in \eqref{eq:ThetaDefPronyErr}, the polynomials in $z$ are multiplied by coefficients of the form $\left[\operatorname{adj}_{N_1+1-s}(D)\right]_{\gamma,\beta}$ for some appropriate $\gamma,\beta \in \mathcal{Q}^{N_1+1}_{N_1+1-s}$. By Proposition~\ref{prop:Phimat1}, we have $\operatorname{adj}_{N_1+1-s}(D) =0 $ for such values of $s$, whence
\begin{equation}\label{eq:CompundExpansionNew1}
\begin{array}{lll}
\bar{q}\left(z\right)-\bar{p}(z)& = \varepsilon\sum_{\gamma \in \mathcal{Q}_{N_1}^{N_1+1}}\sum_{\beta \in \mathcal{Q}^{N_1+1}_{N_1}: 1\in \beta}\left[\operatorname{adj}_{N_1}(D) \right]_{\gamma, \beta} \Gamma_{\beta,\gamma}(z)\\
&= \varepsilon\sum_{\gamma \in \mathcal{Q}_{N_1}^{N_1+1}}\sum_{\beta \in \mathcal{Q}^{N_1+1}_{N_1}: 1\in \beta}(-1)^{\sum_{q\in \gamma}q+\sum_{q\in \beta}q}y_{N_1+1}\phi_{N_1+1}^{\beta^c+\gamma^c-3}\Gamma_{\beta,\gamma}(z).
\end{array}
\end{equation}
Based on \eqref{eq:OrigGamma}, the polynomial $\Gamma_{\beta,\gamma}(z)$ in the above expression is given by
\begin{equation}\label{eq:Gammabetagam}
 \Gamma_{\beta,\gamma}(z)= \sum_{\omega_1,\dots,\omega_{N_1-1}=1}^{N_1}\left(\prod_{\mu=1}^{N_1-1}y_{\omega_{\mu}} \right) z^{\mathrm{a}}\phi_{\omega_1}^{\mathrm{b}}\phi_{\omega_2}^{\mathrm{b}+\mathrm{l}_1}\dots \phi_{\omega_{N_1-1}}^{\mathrm{b}+\mathrm{l}_{N_1-2}}\operatorname{det} \begin{bmatrix}
1 & z^{\mathrm{k}_1} & \ldots & z^{\mathrm{k}_{N_1-2}} & z^{\mathrm{k}_{N_1-1}} \\
1 & \phi_{\omega_1}^{\mathrm{k}_1} & \ldots & \phi_{\omega_1}^{\mathrm{k}_{N_1-2}}  & \phi_{\omega_1}^{\mathrm{k}_{N_1-1}} \\
\vdots & \vdots & \ddots & \vdots & \vdots \\
1 & \phi_{\omega_{N_1-1}}^{\mathrm{k}_1}  & \ldots & \phi_{\omega_{N_1-1}}^{\mathrm{k}_{N_1-2}} & \phi_{\omega_{N_1-1}}^{\mathrm{k}_{N_1-1}}
\end{bmatrix}.
\end{equation}
The determinant in \eqref{eq:Gammabetagam} can be further expanded in light of \eqref{eq:GammaParam1} and Proposition~\ref{Prop:Symfuncdet1}, depending on the powers appearing therein. We consider the following cases:
\begin{itemize}
    \item \underline{Case 1:} Either $\gamma = (2,\dots,N_1+1)$, and hence $\gamma^c=1$, or $\gamma = (1,\dots,N_1)$, and hence $\gamma^c=N_1+1$. In both cases, we get $\mathrm{k}_{q}=q$, $q\in [N_1-1]$, in \eqref{eq:GammaParam1}.
    \item \underline{Case 2:} $\gamma = (1,2,\dots,\gamma^c-1,\gamma^c+1,\dots,N_1+1)$, with $2\leq \gamma^c\leq N_1$. Then, the values of $\mathrm{k}_q$, $q \in [N_1-1]$ in \eqref{eq:GammaParam1} are $(\mathrm{k}_1,\mathrm{k}_2,\dots,\mathrm{k}_{\gamma^c-2},\mathrm{k}_{\gamma^c-1},\dots,\mathrm{k}_{N_1-2},\mathrm{k}_{N_1-1})=(1,2,\dots,\gamma^c-2,\gamma^c,\dots,N_1-1,N_1)$.
\end{itemize}
In Case 1, we have a Vandermonde determinant
\begin{equation*}
\begin{array}{lll}
&\operatorname{det}\begin{bmatrix}
1 & z & \ldots & z^{N_1-2} & z^{N_1-1} \\
1 & \phi_{\omega_1} & \ldots & \phi_{\omega_1}^{N_1-2} & \phi_{\omega_1}^{N_1-1} \\
\vdots & \vdots & \ddots & \vdots & \vdots \\
1 & \phi_{\omega_{N_1-1}}  & \ldots & \phi_{\omega_{N_1-1}}^{N_1-2} & \phi_{\omega_{N_1-1}}^{N_1-1}
\end{bmatrix}=\left(\prod_{s=1}^{N_1-1}(\phi_{\omega_s}-z) \right) \left(\prod_{1\leq s< t\leq N_1-1}(\phi_{\omega_t}-\phi_{\omega_s}) \right).
\end{array}
\end{equation*}
In Case 2, we can apply Proposition~\ref{Prop:Symfuncdet1} with $m=N_1$ and $k=\gamma^c-1$ to obtain
\begin{equation*}
\begin{array}{lll}
\operatorname{det}
\begin{bmatrix}
1 & z & \ldots & z^{\mathrm{k}_{N_1-2}} & z^{\mathrm{k}_{N_1-1}} \\
1 & \phi_{\omega_1} & \ldots & \phi_{\omega_1}^{\mathrm{k}_{N_1-2}}  & \phi_{\omega_1}^{\mathrm{k}_{N_1-1}} \\
\vdots & \vdots & \ddots & \vdots & \vdots \\
1 & \phi_{\omega_{N_1-1}}  & \ldots & \phi_{\omega_{N_1-1}}^{\mathrm{k}_{N_1-2}} & \phi_{\omega_{N_1-1}}^{\mathrm{k}_{N_1-1}}
\end{bmatrix} &= \mathfrak{S}_{N_1+1-\gamma^c}(z,\phi_{\omega_1},\dots,\phi_{\omega_{N_1-1}}) \left(\prod_{s=1}^{N_1-1}(\phi_{\omega_s}-z) \right) \\
&\hspace{5mm}\times \left(\prod_{1\leq s< t\leq N_1-1}(\phi_{\omega_t}-\phi_{\omega_s}) \right).
\end{array}
\end{equation*}
These two cases are unified in the notation $\mathfrak{M}_{N_1+1-\gamma^c}(z,\phi_{\omega})$ in \eqref{eq:CompundExpansionNew3} and yield
\begin{equation}\label{eq:GammaExpr1}
\begin{array}{lll}
& \Gamma_{\beta,\gamma}(z)= \sum_{\omega_1,\dots,\omega_{N_1-1}=1}^{N_1}\left\{\left(\prod_{s=1}^{N_1-1}y_{\omega_{s}} \right) z^{\mathrm{a}}\phi_{\omega_1}^{\mathrm{b}}\phi_{\omega_2}^{\mathrm{b}+\mathrm{l}_1}\dots \phi_{\omega_{N_1-1}}^{\mathrm{b}+\mathrm{l}_{N_1-2}}\mathfrak{M}_{N_1+1-\gamma^c}(z,\phi_{\omega})\right.\\
 &\hspace{45mm}\left.  \times \left(\prod_{s=1}^{N_1-1}(\phi_{\omega_s}-z) \right) \left(\prod_{1\leq s< t\leq N_1-1}(\phi_{\omega_t}-\phi_{\omega_s}) \right)\right\}.
 \end{array}
\end{equation}
Consider all possible indices $\omega_1,\dots, \omega_{N_1-1}\in [N_1]$ in the summation \eqref{eq:GammaExpr1} and fix a permutation $\pi\in \mathrm{S}_{N_1-1}$, where $\mathrm{S}_{N_1-1}$ is the symmetric group on $N_1-1$ elements. Considering the action of $\pi$ on the indices $\left\{\omega_j \right\}_{j=1}^{N_1-1}$, we observe the following properties of \eqref{eq:GammaExpr1}:
\begin{enumerate}
\item If $\omega_i=\omega_j$ for some $i\neq j$, the corresponding summation term in \eqref{eq:GammaExpr1} equals zero.
\item  Terms $\left(\prod_{s=1}^{N_1-1}y_{\omega_{s}} \right)$,  $\left(\prod_{j=1}^{N_1-1}\phi_{\omega_j}^{\mathrm{b}}\right)$, $\left(\prod_{s=1}^{N_1-1}(\phi_{\omega_s}-z) \right)$ and $\mathfrak{M}_{N_1+1-\gamma^c}(z,\phi_{\omega})$ are invariant under $\pi$.
\item $\left(\prod_{1\leq s< t\leq N_1-1}(\phi_{\omega_t}-\phi_{\omega_s}) \right)$ is multiplied by $(-1)^{\operatorname{sgn}(\pi)}$ when applying $\pi$ to the indices $\left\{\omega_j \right\}_{j=1}^{N_1-1}$.
\end{enumerate}
Hence, rearranging the sum of terms in $\Gamma_{\beta,\gamma}(z)$ so that all permutations of a given set of distinct indices $\left\{\omega_j \right\}_{j=1}^{N_1-1}$ are summed together leads to
\begin{equation}\label{eq:GammaPreEnd}
\begin{array}{lll}
 \Gamma_{\beta,\gamma}(z)&= \sum_{\omega\in \mathcal{Q}_{N_1-1}^{N_1}}\left\{\left(\prod_{s=1}^{N_1-1}y_{\omega_{s}} \right) z^{\mathrm{a}}\left(\prod_{j=1}^{N_1-1}\phi_{\omega_j}^{\mathrm{b}}\right)\left(\prod_{s=1}^{N_1-1}(\phi_{\omega_s}-z) \right)\mathfrak{M}_{N_1+1-\gamma^c}(z,\phi_{\omega})\right.\\
 &\hspace{10mm}\left.  \times  \left(\prod_{1\leq s< t\leq N_1-1}(\phi_{\omega_t}-\phi_{\omega_s}) \right)\left( \sum_{\pi\in S_{N_1-1}}(-1)^{\operatorname{sgn}(\pi)}\phi_{\omega_{\pi(1)}}^{0}\phi_{\omega_{\pi(2)}}^{\mathrm{l}_1}\dots \phi_{\omega_{\pi(N_1-1)}}^{\mathrm{l}_{N_1-2}}\right)\right\}\vspace{0.1cm}\\
& = \sum_{\omega\in \mathcal{Q}_{N_1-1}^{N_1}}\left\{\left(\prod_{s=1}^{N_1-1}y_{\omega_{s}} \right) z^{\mathrm{a}}\left(\prod_{j=1}^{N_1-1}\phi_{\omega_j}^{\mathrm{b}}\right)\left(\prod_{s=1}^{N_1-1}(\phi_{\omega_s}-z) \right)\mathfrak{M}_{N_1+1-\gamma^c}(z,\phi_{\omega})\right.\\
 &\hspace{10mm}\left.  \times  \left(\prod_{1\leq s< t\leq N_1-1}(\phi_{\omega_t}-\phi_{\omega_s}) \right) \operatorname{det} \begin{bmatrix}
1 & \phi_{\omega_1}^{\mathrm{l}_1} & \ldots & \phi_{\omega_1}^{\mathrm{l}_{N_1-2}} \\
\vdots & \vdots & \ddots & \vdots \\
1 & \phi_{\omega_{N_1-1}}^{\mathrm{l}_1}  & \ldots & \phi_{\omega_{N_1-1}}^{\mathrm{l}_{N_1-1}}
\end{bmatrix}\right\}.
\end{array}
\end{equation}
Also the determinant in \eqref{eq:GammaPreEnd} can be further expanded in light of \eqref{eq:GammaParam1} and Proposition~\ref{Prop:Symfuncdet1},
distinguishing between the two cases:
\begin{itemize}
    \item \underline{Case 1:} Either $\beta = (1,3,\dots,N_1+1)$ and hence $\beta^c=2$, or $\beta = (1,2,\dots,N_1)$, and hence $\beta^c=N_1+1$. In both cases, we get $\mathrm{l}_{q}=q$, $q\in [N_1-2]$, in \eqref{eq:GammaParam1}.
    \item \underline{Case 2:} $\beta = (1,2,\dots,\beta^c-1,\beta^c+1,\dots,N_1+1)$, with $ \beta^c\in \left\{3,\dots,N_1 \right\}$.
    Then, $\mathrm{l}_q$, $q \in [N_1-2]$ in \eqref{eq:GammaParam1} are $(\mathrm{l}_1,\mathrm{l}_2,\dots,\mathrm{l}_{\beta^c-3},\mathrm{l}_{\beta^c-2},\dots,\mathrm{l}_{N_1-3},\mathrm{l}_{N_1-2})=(1,2,\dots,\beta^c-3,\beta^c-1,\dots,N_1-2,N_1-1)$.
\end{itemize}
In both cases, the determinant is equal to $\mathfrak{N}_{N_1+1-\beta^c}(\phi_{\omega})$ in \eqref{eq:CompundExpansionNew3}, whence substitution back into \eqref{eq:GammaPreEnd} and then substitution of the resulting $\Gamma_{\beta,\gamma}(z)$ into \eqref{eq:CompundExpansionNew1} finish the proof of the lemma. 
\end{proof}

\subsection{Proof of Theorem~\ref{thm:unifbdd} and of Corollary~\ref{cor:phinPrreal}}\label{Append:ProofUnifBdd}

Consider $\phi_n=e^{-\Delta\lambda_{n}}$, $n\in \mathbb{N}$, and $(n,N_1)\in \mathcal{N}_1$, i.e., here we set $\eta = 1$ in \eqref{eq:setN}. Recalling Proposition~\ref{Prop:relSepUnif}, let $\tau_n := \tau\phi_n$  and consider $z\in \partial B(\phi_n,\tau_n)\subset \mathbb{C}$, where $B(\phi_n,\tau_n)$ is a disk of radius $\tau_n$ centered at $\phi_n$. We aim to bound uniformly (in $n$ and in $z$) the ratio $\frac{\left|\bar{q}(z)-\bar{p}(z) \right|}{\left|\bar{p}(z) \right|}$ as follows: we define
\begin{equation}\label{def:RatioTerm}
\begin{array}{lll}
\max_{n\in [N_1]}\mathcal{R}_n:=\max_{n\in [N_1]}\max_{z\in \partial B(\phi_n,\tau_n)}\frac{\left|\bar{q}(z)-\bar{p}(z) \right|}{\left|\bar{p}(z) \right|}
\end{array}
\end{equation}
and we show that there exists some $\varepsilon_0>0$ such that $\max_{n\in [N_1]}\mathcal{R}_n<1$ for all $\varepsilon \in (0,\varepsilon_0]$ and all $N_1\in \mathbb{N}$, in each of the regimes in \eqref{Eq:Regimes} separately. Invoking Rouche's theorem \cite[Chapter 4]{stein2010complex}, we will then obtain the result in Theorem~\ref{thm:unifbdd}.

Employing the representation of the unperturbed Prony polynomial in \eqref{eq:HomogPronyPol}, according to \cite[Equation (15)]{katz2024accuracy}, we have 
\begin{equation}\label{eq:PbarExplicitExpr1}
\bar{p}(z) = (-1)^{N_1} \left(\prod_{k=1}^{N_1} y_k\right) \cdot \left(\prod_{1\leq s<t\leq N_1}\left(\phi_{t}-\phi_s \right)^2\right)\cdot \left(\prod_{s=1}^{N_1}(\phi_s-z)\right).
\end{equation}
By Lemma~\ref{lem:PolyDiscrep}, the term to be maximized in \eqref{def:RatioTerm} has the upper bound
\begin{equation*}
\begin{array}{lll}
&\frac{\left|\bar{q}(z)-\bar{p}(z) \right|}{\left|\bar{p}(z) \right|} \leq \varepsilon \sum_{\gamma \in \mathcal{Q}^{N_1+1}_{N_1}} \sum_{\beta \in \mathcal{Q}^{N_1+1}_{N_1}: 1\in \beta}\sum_{\omega \in \mathcal{Q}_{N_1-1}^{N_1}}\Sigma_{\gamma,\beta,\omega}(z),\vspace{0.1cm}\\
& \Sigma_{\gamma,\beta,\omega}(z): = \frac{|y_{N_1+1}|\left(\prod_{s=1}^{N_1-1}|y_{\omega_{s}}| \right)}{\left(\prod_{k=1}^{N_1} |y_k|\right)}\cdot \frac{\left(\prod_{s=1}^{N_1-1}|\phi_{\omega_s}-z| \right)}{\left(\prod_{s=1}^{N_1}|\phi_s-z| \right)}\cdot \frac{\left(\prod_{1\leq s< t\leq N_1-1}(\phi_{\omega_t}-\phi_{\omega_s})^2 \right)}{\left(\prod_{1\leq s<t\leq N_1}\left|\phi_{t}-\phi_s \right|^2\right)}\vspace{0.1cm}\\
&\hspace{16mm}\times \phi_{N_1+1}^{\beta^c+\gamma^c-3} |z|^{\mathrm{a}}\left(\prod_{j=1}^{N_1-1}\phi_{\omega_j}^{\mathrm{b}}\right)\mathfrak{M}_{N_1+1-\gamma^c}(|z|,\phi_{\omega})\mathfrak{N}_{N_1+1-\beta^c}(\phi_{\omega}).
\end{array}
\end{equation*}
Recalling that $\omega^c \in [N_1]$ denotes the complement of $\omega\in \mathcal{Q}^{N_1}_{N_1-1}$ and canceling out the elements in the first line of $\Sigma_{\gamma,\beta,\omega}(z)$, we have
\begin{equation}\label{eq:Ssum}
\begin{array}{lll}
&\frac{\left|\bar{q}(z)-\bar{p}(z) \right|}{\left|\bar{p}(z) \right|} \leq  \varepsilon  \sum_{\gamma \in \mathcal{Q}_{N_1}^{N_1+1}}\sum_{\beta \in \mathcal{Q}^{N_1+1}_{N_1}: 1\in \beta} \mathfrak{s}(z,\gamma^c,\beta^c) ,\vspace{0.1cm}\\
&\mathfrak{s}(z,\gamma^c,\beta^c) = \sum_{\omega \in \mathcal{Q}^{N_1}_{N_1-1}}\frac{ |y_{N_1+1}|}{|y_{\omega^c}|} \phi_{N_1+1}^{\beta^c+\gamma^c-3}\frac{\prod_{s\in[N_1],s\neq \omega^c}\phi_s^{\mathrm{b}}}{\prod_{s\in [N_1],s\neq \omega^c}|\phi_s-\phi_{\omega^c}|^2}\frac{|z|^{\mathrm{a}} \mathfrak{M}_{N_1+1-\gamma^c}(|z|,\phi_{\omega}) \mathfrak{N}_{N_1+1-\beta^c}(\phi_{\omega})}{|z-\phi_{\omega^c}|}.
\end{array}
\end{equation}
We proceed with analyzing the behavior of $\mathfrak{s}\left(z,\gamma^c,\beta^c\right)$ for $z\in \partial B(\phi_n,\tau_n)$.

To this aim, we begin by proving the next auxiliary proposition, which relies on the classical MacLaurin inequality \cite{maclaurin1730iv} (see also \cite{tao2023maclaurin}) and on the definition of imaginary error function $\mathcal{E}(x)$.

    \begin{lemma}[MacLaurin inequality]\label{Lem:MacIneq}
    Let $\left\{ \chi_n\right\}_{n=1}^{N_1}\subset \mathbb{R}$ be a set of non-negative real numbers. The elementary symmetric means, defined as
    \begin{equation}\label{eq:SymmMeans}
    \begin{array}{lll}
    \mathcal{M}_k(\chi_1,\dots,\chi_{N_1}) = \frac{1}{\binom{N_1}{k}}\mathfrak{S}_k(\chi_1,\dots,\chi_{N_1}),\quad k\in [N_1],
    \end{array}
    \end{equation}
    obey the inequality
    \begin{equation*}
      \left(\mathcal{M}_{\ell}(\chi_1,\dots,\chi_{N_1}) \right)^{\frac{1}{\ell}}  \leq \left(\mathcal{M}_{k}(\chi_1,\dots,\chi_{N_1}) \right)^{\frac{1}{k}}
    \end{equation*}
    for $1\leq k \leq \ell \leq N_1$. 
    \end{lemma}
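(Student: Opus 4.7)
The plan is to derive MacLaurin's inequality from the stronger Newton's inequalities
\begin{equation*}
\mathcal{M}_k^2 \geq \mathcal{M}_{k-1}\mathcal{M}_{k+1}, \qquad 1\leq k \leq N_1-1,
\end{equation*}
with the convention $\mathcal{M}_0 := 1$. First I would reduce to the strictly positive case $\chi_i>0$ for every $i$: if some entries vanish, perturb them to $\chi_i+\delta$ and let $\delta\to 0^+$, invoking continuity of both sides of the inequality in the $\chi_i$'s.

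To establish Newton's inequalities, I would consider the polynomial
\begin{equation*}
P(z) = \prod_{i=1}^{N_1}(z+\chi_i) = \sum_{j=0}^{N_1}\binom{N_1}{j}\mathcal{M}_j(\chi_1,\dots,\chi_{N_1})\, z^{N_1-j},
\end{equation*}
which under the positivity assumption has only negative real roots. Rolle's theorem then ensures that every derivative $P^{(r)}(z)$ has only real roots, and the same property is preserved by the reciprocal transformation $P(z)\mapsto z^{\deg P}P(1/z)$. Given any index $k\in\{1,\dots,N_1-1\}$, a suitable combination of these two operations reduces $P$ to a quadratic of the form $A\,\mathcal{M}_{k-1}\,z^2 + B\,\mathcal{M}_k\,z + C\,\mathcal{M}_{k+1}$, with $A,B,C>0$ depending only on $N_1$ and $k$ through binomial and factorial prefactors. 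The non-negativity of the discriminant $B^2-4AC \geq 0$ of this quadratic collapses, after the prefactors cancel, to $\mathcal{M}_k^2 \geq \mathcal{M}_{k-1}\mathcal{M}_{k+1}$.

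Finally, I would deduce MacLaurin from Newton by an AM-GM-type comparison. Setting $b_k := \mathcal{M}_k/\mathcal{M}_{k-1}$, Newton's inequality rewrites as $b_k \geq b_{k+1}$, so $(b_k)_{k=1}^{N_1}$ is non-increasing and $\mathcal{M}_k = b_1 b_2 \cdots b_k$. Thus $\mathcal{M}_k^{1/k}$ is the geometric mean of $b_1,\dots,b_k$. For $1\leq k\leq \ell \leq N_1$, monotonicity yields $b_j \leq b_k \leq (b_1\cdots b_k)^{1/k}$ for every $j>k$ (the last inequality holds since the geometric mean dominates the minimum), so multiplying over $j=k+1,\dots,\ell$ gives $b_{k+1}\cdots b_\ell \leq (b_1\cdots b_k)^{(\ell-k)/k}$ and hence $\mathcal{M}_\ell \leq \mathcal{M}_k^{\ell/k}$; taking $\ell$-th roots yields the claim.

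The main obstacle is the careful bookkeeping required to extract the quadratic isolating $\mathcal{M}_{k-1},\mathcal{M}_k,\mathcal{M}_{k+1}$: one has to track the explicit binomial and factorial prefactors through the sequence of differentiations and reciprocal inversions and verify that they cancel to produce exactly Newton's inequality rather than a weaker version. The remaining steps, namely the perturbation reduction and the AM-GM-to-MacLaurin passage, are essentially routine. Alternatively, the entire lemma can simply be invoked from the standard references such as \cite{tao2023maclaurin}.
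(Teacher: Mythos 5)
Your proof is correct, but note that the paper does not prove this lemma at all: it is stated as the classical MacLaurin inequality and simply attributed to the references \cite{maclaurin1730iv} and \cite{tao2023maclaurin}, exactly as you suggest in your closing sentence. The argument you supply is the standard textbook derivation via Newton's inequalities $\mathcal{M}_k^2 \geq \mathcal{M}_{k-1}\mathcal{M}_{k+1}$, and each step checks out: the perturbation reduction to strictly positive $\chi_i$ ensures all $\mathcal{M}_k>0$ so the ratios $b_k=\mathcal{M}_k/\mathcal{M}_{k-1}$ are well defined; real-rootedness of $\prod_i(z+\chi_i)$ is preserved by differentiation (Rolle) and by the reciprocal map (the constant term $\prod_i\chi_i$ is nonzero), so the isolated quadratic in $\mathcal{M}_{k-1},\mathcal{M}_k,\mathcal{M}_{k+1}$ has real roots and its discriminant condition yields Newton after the binomial prefactors cancel; and the passage from Newton to MacLaurin via the non-increasing sequence $(b_k)$ and the bound $b_j\leq b_k\leq(b_1\cdots b_k)^{1/k}$ for $j>k$ is sound. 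The only part requiring care is, as you say, the prefactor bookkeeping in the differentiation/reversal step, but this is standard and introduces no gap.
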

    
The imaginary error function $\mathcal{E}(x)$ \cite[Chapter 7]{abramowitz1965handbook} is defined as
    \begin{equation}\label{eq:DefErfi}
    \begin{array}{lll}
    \mathcal{E}(x) := \frac{2}{\sqrt{\pi}} \int_0^x e^{t^2}\mathrm{d}t,\quad x\in [0,\infty), 
    \end{array}
    \end{equation}
    and is known to satisfy the asymptotic behavior
    \begin{equation}\label{eq:efiasymp}
    \begin{array}{lll}
    e^{-x^2}\mathcal{E}(x) = \frac{1}{\sqrt{\pi}x}+\frac{1}{2\sqrt{\pi}x^3}+O\left(\frac{1}{x^4} \right),\quad x\to \infty.
    \end{array}
    \end{equation}

\begin{proposition}\label{prop:frakxfraky}
Employing the notations \eqref{eq:GammaParam1}, let $\gamma^c\in [N_1]\setminus \left\{1\right\}$ and $\beta^c\in [N_1]\setminus \left\{1,2 \right\}$. Let $(n,N_1)\in \mathcal{N}_{1}$ and $\omega\in \mathcal{Q}^{N_1}_{N_1-1}$, and introduce 
\begin{equation}\label{eq:FrakeUpper}
\begin{array}{lll}
&\mathfrak{m}(\gamma^c,\omega^c):= \frac{\phi_{N_1+1}^{\gamma^c-1}}{\phi_n\prod_{s\in [N_1],s\neq \omega^c}\phi_s} \mathfrak{M}_{N_1+1-\gamma^c}(\phi_n,\phi_{\omega}) = \sum_{\substack{B\subseteq \mathcal{J}(\omega^c): \\ \operatorname{card}(B)=N_1+1-\gamma^c}} \frac{\phi_{N_1+1}^{\gamma^c-1}\left(\prod_{\chi\in B}\chi \right)}{\phi_n\prod_{s\in [N_1],s\neq \omega^c}\phi_s},\vspace{0.1cm}\\
&\mathfrak{n}(\beta^c,\omega^c) := \frac{\phi_{N_1+1}^{\beta^c-2}}{\prod_{s\in [N_1], s\neq \omega^c}\phi_s} \mathfrak{N}_{N_1+1-\beta^c}(\phi_{\omega}) = \sum_{\substack{B\subseteq \left\{\phi_{\omega_j} \right\}_{j=1}^{N_1-1}: \\ \operatorname{card}(B)=N_1+1-\beta^c}} \frac{\phi_{N_1+1}^{\beta^c-2}\left(\prod_{\chi\in B}\chi \right)}{\prod_{s\in [N_1], s\neq \omega^c}\phi_s}\vspace{0.1cm}\\
&\mathcal{J}(\omega^c) :=  \left(\phi_n,\phi_{\omega_1},\dots, \phi_{\omega_{N_1-1}} \right).
\end{array}
\end{equation}   
Note that here we abuse the notations in the definition of $\mathcal{J}\left(\omega^c \right)$ by considering it to be both an ordered $N_1$-tuple, which may contain two appearances of $\phi_n$, depending on $\left\{\phi_{\omega_j} \right\}_{j=1}^{N_1-1}$, and a regular set. Then, for the regimes in \eqref{Eq:Regimes}, we have the following upper bounds.
\begin{itemize}
    \item \underline{Regimes 1 and 2:} 
    \begin{equation}\label{eq:frakyfrakxReg1}
    \begin{array}{lll}
        \mathfrak{m}(\gamma^c,\omega^c)&\leq \binom{N_1}{\gamma^c-1}e^{-\upsilon \Delta (2N_1+1)(\gamma^c-1)},\vspace{0.1cm}\\
        \mathfrak{n}(\beta^c,\omega^c)&\leq \binom{N_1-1}{\beta^c-2}e^{-\upsilon \Delta (2N_1+1)(\beta^c-2)}.
    \end{array}
    \end{equation}

    \item \underline{Regime 3:}
\begin{equation}\label{eq:frakyfrakxReg3}
    \begin{array}{lll}
        \mathfrak{m}(\gamma^c,\omega^c)&\leq \binom{N_1}{\gamma^c-1}\left(\frac{ \upsilon T+1}{\upsilon T N_1} \right)^{\gamma^c-1},\\
        \mathfrak{n}(\beta^c,\omega^c)&\leq \binom{N_1-1}{\beta^c-2}\left(\frac{\upsilon T +1}{\upsilon T N_1} \right)^{\beta^c-2}.
    \end{array}
    \end{equation}
\end{itemize}
\end{proposition}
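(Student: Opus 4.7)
I would prove Proposition~\ref{prop:frakxfraky} in three steps: (i)~dualize the elementary symmetric polynomials in $\mathfrak{m}$ and $\mathfrak{n}$ to elementary symmetric polynomials in the reciprocals $1/\phi_s$; (ii)~apply MacLaurin's inequality (Lemma~\ref{Lem:MacIneq}) to bound those by arithmetic means; (iii)~estimate the resulting arithmetic means using the eigenvalue separation provided by Proposition~\ref{prop:EigDiffBound}, distinguishing the three regimes of \eqref{Eq:Regimes}.

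For step~(i), the identity $\mathfrak{S}_k(x_1,\dots,x_N)/(x_1\cdots x_N) = \mathfrak{S}_{N-k}(1/x_1,\dots,1/x_N)$, which follows by pairing each subset with its complement, is the key. Applied with $N=N_1$ to the $N_1$-tuple $\mathcal{J}(\omega^c)$ for $\mathfrak{m}$ (so $N-k=\gamma^c-1$), and with $N=N_1-1$ to $(\phi_{\omega_1},\dots,\phi_{\omega_{N_1-1}})$ for $\mathfrak{n}$ (so $N-k=\beta^c-2$), it yields
\[
\mathfrak{m}(\gamma^c,\omega^c) = \phi_{N_1+1}^{\gamma^c-1}\,\mathfrak{S}_{\gamma^c-1}\!\bigl(1/\phi_n, 1/\phi_{\omega_1},\dots,1/\phi_{\omega_{N_1-1}}\bigr), \qquad \mathfrak{n}(\beta^c,\omega^c) = \phi_{N_1+1}^{\beta^c-2}\,\mathfrak{S}_{\beta^c-2}\!\bigl(1/\phi_{\omega_1},\dots,1/\phi_{\omega_{N_1-1}}\bigr).
\]
In step~(ii), MacLaurin's inequality $\mathfrak{S}_k(y_1,\dots,y_m)\leq \binom{m}{k}(\tfrac{1}{m}\sum_i y_i)^k$ then reduces the whole proposition to controlling the arithmetic means $\bar{R}_\mathfrak{m}:=\tfrac{1}{N_1}\sum_{\chi\in\mathcal{J}(\omega^c)}\phi_{N_1+1}/\chi$ and $\bar{R}_\mathfrak{n}:=\tfrac{1}{N_1-1}\sum_{j=1}^{N_1-1}\phi_{N_1+1}/\phi_{\omega_j}$, which, once raised to the powers $\gamma^c-1$ and $\beta^c-2$ and multiplied by the binomial coefficients, give the target estimates.

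For step~(iii), writing $k:=N_1+1-s$, Proposition~\ref{prop:EigDiffBound} gives $\lambda_{N_1+1}-\lambda_s\geq \upsilon k(N_1+1+s)\geq \upsilon k(N_1+1)$ for $s\in[N_1]$, so $\phi_{N_1+1}/\phi_s\leq e^{-\upsilon\Delta k(N_1+1)}$. In Regimes~1 and 2, where $\Delta\geq\Delta_*$ is bounded below, the worst case $k=1$ yields the termwise bound $\phi_{N_1+1}/\phi_s\leq e^{-\upsilon\Delta(2N_1+1)}$, which transfers directly to both $\bar{R}_\mathfrak{m}$ and $\bar{R}_\mathfrak{n}$ and produces \eqref{eq:frakyfrakxReg1}. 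In Regime~3, where $\Delta=T/N_1$, individual terms no longer decay, but geometric summability is restored:
\[
\sum_{s=1}^{N_1}\frac{\phi_{N_1+1}}{\phi_s}\leq \sum_{k=1}^{\infty} e^{-\upsilon T k}= \frac{1}{e^{\upsilon T}-1}\leq \frac{1}{\upsilon T}.
\]
For $\mathfrak{m}$, the tuple $\mathcal{J}(\omega^c)$ contains $\phi_n$ twice whenever $n\neq\omega^c$, contributing an extra $\phi_{N_1+1}/\phi_n\leq 1$, so $\bar{R}_\mathfrak{m}\leq \tfrac{1}{N_1}(1+\tfrac{1}{\upsilon T})=\tfrac{\upsilon T+1}{\upsilon T N_1}$. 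For $\mathfrak{n}$, summing over only $N_1-1$ terms gives $\bar{R}_\mathfrak{n}\leq \tfrac{1}{\upsilon T(N_1-1)}\leq \tfrac{\upsilon T+1}{\upsilon T N_1}$ in the relevant $N_1\gg 1$ range, producing \eqref{eq:frakyfrakxReg3}. The main technical obstacle is precisely this Regime~3 step: one loses termwise exponential decay, must carefully track the possible duplication of $\phi_n$ inside $\mathcal{J}(\omega^c)$, and must absorb the $N_1$-versus-$N_1-1$ mismatch between the two arithmetic means into the factor $\upsilon T+1$ appearing in the stated bound.
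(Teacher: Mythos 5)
Your proposal is correct and follows the same architecture as the paper's proof: pass to the complementary subsets to rewrite $\mathfrak{m}$ and $\mathfrak{n}$ as elementary symmetric polynomials in the ratios $\phi_{N_1+1}/\phi_r$, invoke the MacLaurin inequality (Lemma~\ref{Lem:MacIneq}) to reduce to the first symmetric mean $\mathcal{M}_1$, and then bound $\mathcal{M}_1$ termwise by $e^{-\upsilon\Delta(2N_1+1)}$ in Regimes 1 and 2. The one place you genuinely diverge is the Regime 3 estimate of $\mathcal{M}_1$: the paper compares the sum $\sum_r e^{-\upsilon\frac{T}{N_1}((N_1+1)^2-r^2)}$ to an integral and invokes the imaginary error function $\mathcal{E}$ together with its asymptotic expansion \eqref{eq:efiasymp}, whereas you use Proposition~\ref{prop:EigDiffBound} to get the termwise bound $e^{-\upsilon T(N_1+1-s)}$ and sum the geometric series, obtaining $\frac{1}{e^{\upsilon T}-1}\leq\frac{1}{\upsilon T}$. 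Your route is more elementary and lands on the same constant $\frac{\upsilon T+1}{\upsilon T N_1}$ (after accounting, as you do, for the duplicated $\phi_n$ in $\mathcal{J}(\omega^c)$ and the $N_1$ versus $N_1-1$ normalization, both of which require only $N_1\gg 1$ — a restriction the paper's asymptotic-$\mathcal{E}$ argument also imposes); the paper's integral-comparison machinery is heavier here but is reused elsewhere (e.g., in the proof of Lemma~\ref{lem:UppBdlimN1}), which is presumably why it is set up once and applied uniformly.
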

\begin{proof}
    Consider $\mathfrak{m}(\gamma^c,\omega^c)$ with $\gamma^c\in [N_1]\setminus \left\{ 1\right\}$. Fixing any $B\subseteq \mathcal{J}(\omega^c)$ with $\operatorname{card}(B)=N_1+1-\gamma^c$, we denote $B^c := \mathcal{J}(\omega^c)\setminus B =: \left\{\phi_{r_1(B)},\dots, \phi_{r_{\gamma^c-1}(B)} \right\}$, where $\phi_{r_i(B)}=e^{-\Delta\lambda_{r_i(B)}}$, $i=1,\dots,\gamma^c-1$.
    Then, we note that $\prod_{\chi\in B}\chi = \frac{\phi_n \phi_{\omega_1}\dots \phi_{\omega_{N_1-1}}}{\phi_{r_1(B)}\dots \phi_{r_{\gamma^c-1}(B)}}$, where the product at the numerator includes all the elements in $\mathcal{J}(\omega^c)$, and $\prod_{s\in [N_1],s\neq \omega^c}\phi_s = \phi_{\omega_1}\dots \phi_{\omega_{N_1-1}}$. Hence, we have
    \begin{equation*}    
    \frac{\phi_{N_1+1}^{\gamma^c-1}\left(\prod_{\chi\in B}\chi \right)}{\phi_n\prod_{s\in [N_1],s\neq \omega^c}\phi_s} = \phi_{N_1+1}^{\gamma^c-1} \frac{\phi_n}{\phi_n} \frac{\phi_{\omega_1}\dots \phi_{\omega_{N_1-1}}}{\prod_{s\in [N_1],s\neq \omega^c}\phi_s} \frac{1}{\phi_{r_1(B)}\dots \phi_{r_{\gamma^c-1}(B)}}=\frac{\phi_{N_1+1}^{\gamma^c-1}}{\phi_{r_1(B)}\dots \phi_{r_{\gamma^c-1}(B)}}
    \end{equation*}
    and thus
    \begin{equation*}
    \begin{array}{lll}
    \mathfrak{m}(\gamma^c,\omega^c) &= \sum_{\substack{B\subseteq \mathcal{J}(\omega^c): \\ \operatorname{card}(B)=N_1+1-\gamma^c}}\frac{\phi_{N_1+1}^{\gamma^c-1}}{\phi_{r_1(B)}\dots \phi_{r_{\gamma^c-1}(B)}} = \sum_{\substack{B^c\subseteq \mathcal{J}(\omega^c): \\ \operatorname{card}(B^c)=\gamma^c-1}} \prod_{\phi_r \in B^c} \frac{\phi_{N_1+1}}{\phi_r} \\
    &\overset{\eqref{eq:SymmPolDef}}{=} \mathfrak{S}_{\gamma^c-1}\left(\left\{ \frac{\phi_{N_1+1}}{\phi_r}\right\}_{\phi_r\in \mathcal{J}(\omega^c)}  \right)  \overset{\eqref{eq:SymmMeans} }{=} \binom{N_1}{\gamma^c-1}\mathcal{M}_{\gamma^c-1}\left(\left\{ \frac{\phi_{N_1+1}}{\phi_r}\right\}_{\phi_r\in \mathcal{J}(\omega^c)}  \right) . 
    \end{array}
    \end{equation*}
    Since the elements of $\left\{ \frac{\phi_{N_1+1}}{\phi_r}\right\}_{\phi_r\in \mathcal{J}(\omega^c)}$ are positive, we apply Lemma~\ref{Lem:MacIneq} to obtain
    \begin{equation}\label{eq:xFrakDeltaN1Const}
    \begin{array}{lll}
    \mathfrak{m}(\gamma^c,\omega^c) &\leq \binom{N_1}{\gamma^c-1}\left[\mathcal{M}_{1}\left(\left\{ \frac{\phi_{N_1+1}}{\phi_r}\right\}_{\phi_r\in \mathcal{J}(\omega^c)} \right)\right]^{\gamma^c-1}.
    \end{array}
    \end{equation}
    Considering $\mathfrak{n}(\beta^c,\omega^c)$ with $\beta^c\in [N_1]\setminus \left\{1,2 \right\}$, 
    similar arguments yield 
    \begin{equation}\label{eq:yFrakDeltaN1Const}
    \begin{array}{lll}
    \mathfrak{n}(\beta^c,\omega^c) &\leq \binom{N_1-1}{\beta^c-2}\left[\mathcal{M}_{1}\left(\left\{\frac{\phi_{N_1+1}}{\phi_{\omega_j}} \right\}_{j=1}^{N_1-1} \right)\right]^{\beta^c-2}.
    \end{array}
    \end{equation}
    By \eqref{eq:xFrakDeltaN1Const} and \eqref{eq:yFrakDeltaN1Const}, we proceed with upper-bounding $\mathcal{M}_{1}\left(\left\{ \frac{\phi_{N_1+1}}{\phi_r}\right\}_{\phi_r\in \mathcal{J}(\omega^c)} \right)$ and $\mathcal{M}_{1}\left(\left\{\frac{\phi_{N_1+1}}{\phi_{\omega_j}} \right\}_{j=1}^{N_1-1} \right)$ in each of the regimes.
    
    \underline{Regimes 1 and 2:} In both cases, by applying the definition \eqref{eq:SymmMeans}, the definition $\phi_n=e^{-\Delta\lambda_{n}}$, as well as the lower bound $\lambda_m-\lambda_n \geq \upsilon (m^2-n^2)$ in \eqref{eq:EigDiff}, proven in Proposition~\ref{prop:EigDiffBound}, 
    \begin{equation*}
    \begin{array}{lll}
    \mathcal{M}_{1}\left(\left\{ \frac{\phi_{N_1+1}}{\phi_r}\right\}_{\phi_r\in \mathcal{J}(\omega^c)} \right) &=  \frac{1}{N_1} \sum_{\phi_r \in \mathcal{J}(\omega^c)} \frac{\phi_{N_1+1}}{\phi_r} = \frac{1}{N_1}\sum_{\phi_r \in \mathcal{J}(\omega^c)}e^{-\Delta(\lambda_{N_1+1}-\lambda_r)} \\
    & \leq \frac{1}{N_1}\sum_{\phi_r \in \mathcal{J}(\omega^c)}e^{-\upsilon \Delta ((N_1+1)^2-r^2)}\leq \frac{1}{N_1}\sum_{\phi_r \in \mathcal{J}(\omega^c)}e^{-\upsilon \Delta (2N_1+1)} = e^{-\upsilon \Delta (2N_1+1)},\\[4mm]
    \mathcal{M}_{1}\left(\left\{\frac{\phi_{N_1+1}}{\phi_{\omega_j}}  \right\}_{j=1}^{N_1-1} \right) &=  \frac{1}{N_1-1} \sum_{j=1}^{N_1-1} \frac{\phi_{N_1+1}}{\phi_{\omega_j}} = \frac{1}{N_1-1}\sum_{j=1}^{N_1-1}e^{-\Delta (\lambda_{N_1+1}-\lambda_{\omega_j})}\leq e^{-\upsilon \Delta (2N_1+1)}.
    \end{array}
    \end{equation*}
    Substituting the latter into  \eqref{eq:xFrakDeltaN1Const} and \eqref{eq:yFrakDeltaN1Const} yields the result for Regimes 1 and 2.
    
    \underline{Regime 3:} Since in this case $e^{-2\upsilon \Delta (2N_1+1)} = e^{-2\upsilon T \left(2+\frac{1}{N_1} \right)}$ does not tend to zero as $N_1\to \infty$, we need to employ  sharper estimates. For $(n,N_1)\in \mathcal{N}_1$, considering $\Delta = \frac{T}{N_1}$, we have 
    \begin{equation}\label{eq:CalM1Reg3}
    \begin{array}{lll}
    \mathcal{M}_1\left(\left\{ \frac{\phi_{N_1+1}}{\phi_r}\right\}_{\phi_r\in \mathcal{J}(\omega^c)} \right) &=  \frac{1}{N_1} \sum_{\phi_r \in \mathcal{J}(\omega^c)} \frac{\phi_{N_1+1}}{\phi_r}= \frac{1}{N_1}\sum_{\phi_r \in \mathcal{J}(\omega^c)}e^{-\Delta(\lambda_{N_1+1}-\lambda_r)}\\
    &\leq \frac{1}{N_1}\sum_{\phi_r\in \mathcal{J}(\omega^c)}e^{-\upsilon \frac{T}{N_1}\left((N_1+1)^2-r^2 \right)}\vspace{0.1cm}\\
    & = \frac{1}{N_1} e^{-\upsilon T \frac{(N_1+1)^2}{N_1}}\left(\sum_{r=1}^{N_1}e^{\upsilon T N_1 \left(\frac{r}{N_1} \right)^2}+e^{\upsilon T N_1 \left(\frac{n}{N_1} \right)^2}-e^{\upsilon T N_1 \left(\frac{\omega^c}{N_1} \right)^2} \right)\\
    &\leq e^{-\upsilon T \frac{(N_1+1)^2}{N_1}}\sum_{r=1}^{N_1}\frac{1}{N_1}e^{\upsilon T N_1 \left(\frac{r}{N_1} \right)^2}+\frac{1}{N_1} e^{-\upsilon T N_1 \left(\frac{N_1+1-n }{N_1} \right)^2}\\
    &\leq e^{-\upsilon T \frac{(N_1+1)^2}{N_1}}\int_0^{1+\frac{1}{N_1}}e^{\upsilon T N_1 x^2}\mathrm{d}x +\frac{1}{N_1}e^{-\upsilon T N_1 \left(\frac{N_1+1-n }{N_1} \right)^2}\\
    &\overset{\eqref{eq:DefErfi}}{=} \frac{1}{2} \sqrt{\frac{ \pi}{N_1\upsilon T }} e^{-\upsilon T \frac{(N_1+1)^2}{N_1}}\mathcal{E}\left( \sqrt{\frac{\upsilon T }{N_1}}(N_1+1)\right)+ \frac{1}{N_1} e^{-\upsilon T N_1 \left(\frac{N_1+1-n }{N_1} \right)^2},
    \end{array}
    \end{equation}
    where the third inequality follows from the fact that $x\mapsto e^{\upsilon T N_1 x^2}$ is strictly increasing on $x\in[0,\infty)$, whereas the equality immediately after follows from a change of variables in integration (from $x$ to $z:= \sqrt{\upsilon T N_1} x$).
    Then, for $N_1 \gg 1$, resorting to the asymptotic behavior in \eqref{eq:efiasymp} yields
    \begin{equation*}    
    \mathcal{M}_1\left(\left\{ \frac{\phi_{N_1+1}}{\phi_r}\right\}_{\phi_r\in \mathcal{J}(\omega^c)} \right) \leq \frac{1}{\upsilon T} \frac{1}{N_1+1}+\frac{1}{N_1}e^{-\frac{\upsilon T}{N_1} }\leq \frac{\upsilon T + 1}{\upsilon T N_1}.
    \end{equation*}
    Similarly, 
    \begin{equation*}
    \begin{array}{lll}
    \mathcal{M}_{1}\left(\left\{\frac{\phi_{N_1+1}}{\phi_{\omega_j}}  \right\}_{j=1}^{N_1-1} \right) &= \frac{1}{N_1-1}\sum_{j=1}^{N_1-1}e^{-\Delta (\lambda_{N_1+1}-\lambda_{\omega_j})}\\
    & \leq \frac{1}{N_1-1}\left(\sum_{j=1}^{N_1}e^{-\upsilon \frac{T}{N_1}((N_1+1)^2-j^2)} - e^{-\upsilon \frac{T}{N_1}\left((N_1+1)^2-(\omega^c)^2\right)} \right)\\
    &\leq \frac{N_1}{N_1-1}e^{-\upsilon T \frac{(N_1+1)^2}{N_1}}\sum_{j=1}^{N_1}\frac{1}{N_1}e^{\upsilon T N_1 \left(\frac{j}{N_1} \right)^2 }.
    \end{array}
    \end{equation*}
    For $N_1\gg 1$, proceeding as before, we have
    \begin{equation*}
        \mathcal{M}_{1}\left(\left\{\frac{\phi_{N_1+1}}{\phi_{\omega_j}}  \right\}_{j=1}^{N_1-1} \right)  \leq \frac{\upsilon T+ 1}{\upsilon T N_1}.
    \end{equation*}
    Again, substitution into \eqref{eq:xFrakDeltaN1Const} and \eqref{eq:yFrakDeltaN1Const} yields the result for Regime 3.
\end{proof}

We now provide a finite upper bound for the ratio $\frac{\left|\bar{q}(z)-\bar{p}(z) \right|}{\left|\bar{p}(z) \right|}$ in the limit, in all the considered regimes.

\begin{lemma}\label{lem:UppBdlimN1}
Consider
\begin{equation}\label{eq:PHIdef}
\Phi(\Delta, N_1):=\max_{n \in [N_1]}\left( \sum_{\gamma \in \mathcal{Q}_{N_1}^{N_1+1}}\sum_{\beta \in \mathcal{Q}^{N_1+1}_{N_1}: 1\in \beta} \max_{z\in \partial B(\phi_n,\tau_n)}\mathfrak{s}(z,\gamma^c,\beta^c)\right).
\end{equation}
For the regimes in \eqref{Eq:Regimes}, the following bounds hold.
\begin{itemize}
    \item \underline{Regime 1}: Let $ \Delta_*>0$. Then,
    \begin{equation}\label{eq:limN1}
\limsup_{N_1\to \infty} \Phi(\Delta, N_1) \leq M_y \frac{1+\tau}{\tau},
\end{equation}
uniformly in $\Delta \in [\Delta_*,\infty)$.

\item \underline{Regime 2:} 
\begin{equation}\label{eq:RelErrDeltaInfty}
\limsup_{\Delta \to \infty} \Phi(\Delta, N_1) \leq M_y\frac{1+\tau}{\tau}
\end{equation}
uniformly in  $N_1\geq 2$.

\item \underline{Regime 3:} Let $T>0$ and $\Delta N_1 =T$. Then, there exists a positive, monotonically decreasing function $\mathcal{C}(T)$ such that 
\begin{equation}\label{eq:limN1DeltaConst}
\limsup_{N_1\to \infty} \Phi(\Delta, N_1) \leq \mathcal{C}(T).
\end{equation}
\end{itemize}
\end{lemma}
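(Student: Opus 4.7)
The plan is to estimate $\mathfrak{s}(z,\gamma^c,\beta^c)$ pointwise on $\partial B(\phi_n,\tau_n)$, then perform the outer summations over $\gamma^c,\beta^c$ via the binomial theorem applied to the bounds of Proposition~\ref{prop:frakxfraky}, and finally dispose of the inner summation over $\omega^c$ using the Lagrange-polynomial identity $(\prod_{s\neq \omega^c}\phi_s)^2/\prod_{s\neq \omega^c}|\phi_s-\phi_{\omega^c}|^2=L^2_{\omega^c}(0)$ combined with an analog of Lemma~\ref{lem:Lsquared} evaluated at $z=0$ instead of $z=\phi_{N_1+1}$.

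\textbf{Pointwise step.} For $z\in \partial B(\phi_n,\tau_n)$, Proposition~\ref{Prop:relSepUnif} together with the reverse triangle inequality gives $|z-\phi_{\omega^c}|\geq \tau\phi_n$ uniformly in $\omega^c\in[N_1]$; moreover $|z|\leq (1+\tau)\phi_n$, and the affine dependence of $\mathfrak{M}$ on its first argument with non-negative coefficients yields $\mathfrak{M}(|z|,\phi_\omega)\leq (1+\tau)\,\mathfrak{M}(\phi_n,\phi_\omega)$. Using $|y_{N_1+1}|/|y_{\omega^c}|\leq M_y$ (Assumption~\ref{assump:yassump}) and substituting the identities $\phi_{N_1+1}^{\gamma^c-1}\mathfrak{M}(\phi_n,\phi_\omega)=\mathfrak{m}(\gamma^c,\omega^c)\,\phi_n\prod_{s\neq \omega^c}\phi_s$ and $\phi_{N_1+1}^{\beta^c-2}\mathfrak{N}(\phi_\omega)=\mathfrak{n}(\beta^c,\omega^c)\prod_{s\neq \omega^c}\phi_s$ from Proposition~\ref{prop:frakxfraky}, I obtain
\[
\mathfrak{s}(z,\gamma^c,\beta^c)\leq \frac{M_y(1+\tau)}{\tau}\,\phi_n^{\mathrm{a}}\sum_{\omega^c=1}^{N_1}\Bigl(\prod_{s\neq \omega^c}\phi_s\Bigr)^{\mathrm{b}}L^2_{\omega^c}(0)\,\mathfrak{m}(\gamma^c,\omega^c)\,\mathfrak{n}(\beta^c,\omega^c),
\]
with $\phi_n^{\mathrm{a}}(\prod_{s\neq \omega^c}\phi_s)^{\mathrm{b}}=O(1)$ uniformly, since $\mathrm{a},\mathrm{b}\in\{0,1,2\}$ and all $\phi_s$ are bounded.

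\textbf{Summations.} Applying Proposition~\ref{prop:frakxfraky} and the binomial identity $\sum_{k=0}^N\binom{N}{k}x^k=(1+x)^N$, the sum $\sum_{\gamma^c}\mathfrak{m}(\gamma^c,\omega^c)$ is bounded by $(1+e^{-\upsilon \Delta(2N_1+1)})^{N_1}$ in Regimes~1--2 and by $(1+(\upsilon T+1)/(\upsilon T N_1))^{N_1}$ in Regime~3, with analogous bounds for $\sum_{\beta^c}\mathfrak{n}(\beta^c,\omega^c)$. The first tends to $1$ as $N_1\to\infty$ (Regime~1, uniformly in $\Delta\geq \Delta_*$) or as $\Delta\to\infty$ (Regime~2, uniformly in $N_1\geq 2$), while the second converges to $e^{(\upsilon T+1)/(\upsilon T)}$ in Regime~3. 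For the inner summation, an analog of the derivation in Lemma~\ref{lem:Lsquared} produces $L^2_{\omega^c}(0)=\exp\bigl(-2\Delta\sum_{j>\omega^c}(\lambda_j-\lambda_{\omega^c})+2\theta^2_{\omega^c,\Delta}+2\theta^3_{\omega^c,\Delta,N_1}\bigr)$. In Regimes~1 and 2 the $\theta$-terms are $O(1)$ by Corollary~\ref{Cor:ThetaAssymp}, so every term with $\omega^c<N_1$ decays super-exponentially while $L^2_{N_1}(0)=e^{2\theta^2_{N_1,\Delta}}\to 1$ in the respective limits (because a direct computation on the explicit sum shows $\theta^2_{N_1,\Delta}\to 0$), yielding $\sum_{\omega^c}L^2_{\omega^c}(0)\to 1$. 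In Regime~3, splitting the sum at $\lfloor \varphi N_1\rfloor$ and using $\Psi(\omega^c;N_1)/N_1\gtrsim (N_1-\omega^c)^2$ in the tail, together with the super-exponential compensation $e^{-2\upsilon T\Psi/N_1}\lesssim e^{-CN_1^2}$ on the bulk (Proposition~\ref{Prop:VFrakAsymp}), bounds $\sum_{\omega^c}L^2_{\omega^c}(0)$ by $\sum_{k\geq 0}e^{-c\upsilon T k^2}+o(1)$, which is finite and monotonically decreasing in $T$.

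\textbf{Assembly and main obstacle.} Combining the three ingredients and taking $\limsup$ yields \eqref{eq:limN1}--\eqref{eq:limN1DeltaConst}, with $\mathcal{C}(T)$ realized as the product of the Regime~3 limits of the three summations. The main obstacle will be the handling of the boundary cases $\gamma^c\in\{1,N_1+1\}$ (where $\mathfrak{M}\equiv 1$) and $\beta^c\in\{2,N_1+1\}$ (where $\mathfrak{N}\equiv 1$), for which Proposition~\ref{prop:frakxfraky} requires a minor extension: these contribute only $O(1)$ pairs of indices and must be controlled by direct estimation of $\phi_{N_1+1}^{\gamma^c-1}/(\phi_n\prod_{s\neq \omega^c}\phi_s)$ via Proposition~\ref{prop:EigDiffBound}, showing that their contribution is absorbed into the same overall bound. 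A second delicate point, specific to Regime~3, is verifying that the $O(N_1)$ growth of $\theta^2_{\omega^c,\Delta}$ and $\theta^3_{\omega^c,\Delta,N_1}$ on the bulk $\omega^c\leq \lfloor \varphi N_1\rfloor$ is dominated by the super-exponential factor inherited from $\Psi(\omega^c;N_1)/N_1\gtrsim N_1^2$, which requires a careful choice of the auxiliary parameter $\varphi\in(0,1)$.
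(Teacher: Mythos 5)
Your proposal is correct and follows essentially the same route as the paper's proof: the paper organizes the argument by expressing every $\mathfrak{f}(z,\gamma^c,\beta^c,\omega^c)$ as a multiple of the base term $\mathfrak{f}(z,1,2,\omega^c)$ (relations I--VIII) and then bounds $\mathfrak{s}(z,1,2)$ directly, whereas you carry the factor $L^2_{\omega^c}(0)=\prod_{s\neq\omega^c}\phi_s^2/\prod_{s\neq\omega^c}|\phi_s-\phi_{\omega^c}|^2$ explicitly, but the ingredients are identical — Proposition~\ref{Prop:relSepUnif} for $|z-\phi_{\omega^c}|\geq\tau\phi_n$, Proposition~\ref{prop:frakxfraky} plus the binomial theorem for the $\gamma^c,\beta^c$ sums, Corollary~\ref{Cor:ThetaAssymp} and Proposition~\ref{Prop:VFrakAsymp} (with the Gaussian-sum/erfi estimate in Regime~3) for the $\omega^c$ sum, and the surviving $\omega^c=N_1$ term $e^{2\theta^2_{N_1,\Delta}}\to 1$ producing the constant $M_y\tfrac{1+\tau}{\tau}$. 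The two "delicate points" you flag (boundary indices $\gamma^c\in\{1,N_1+1\}$, $\beta^c\in\{2,N_1+1\}$, and the $O(N_1)$ growth of the $\theta$'s on the bulk in Regime~3) are exactly the ones the paper resolves via \eqref{eq:dfrakrelations} and the split at $\lfloor\varphi N_1\rfloor$, in the way you describe.
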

\begin{proof}
    We split the proof into two parts.
    First, we show that the considered quantities are upper bounded by the product between a regime-specific constant and a limit, which is assumed to be finite. Second, we prove that such a limit is indeed finite and we provide an upper bound for it in each regime.
    
    \underline{Part 1:} Given $\omega \in \mathcal{Q}^{N_1}_{N_1-1}$, let us further denote  one summand in $\mathfrak{s}(z,\gamma^c,\beta^c)$, defined in \eqref{eq:Ssum}, as 
    \begin{equation}\label{eq:dfracDef}
    \begin{array}{lll}
    &\mathfrak{f}(z,\gamma^c,\beta^c,\omega^c):= \frac{ |y_{N_1+1}|}{|y_{\omega^c}|} \phi_{N_1+1}^{\beta^c+\gamma^c-3}\frac{\prod_{s\neq \omega^c}\phi_s^{\mathrm{b}}}{\prod_{s\neq \omega^c}|\phi_s-\phi_{\omega^c}|^2}\frac{|z|^{\mathrm{a}} \mathfrak{M}_{N_1+1-\gamma^c}(|z|,\phi_{\omega}) \mathfrak{N}_{N_1+1-\beta^c}(\phi_{\omega})}{|z-\phi_{\omega^c}|}.
    \end{array}
    \end{equation}
    Then, we can write all the summands in $\mathfrak{s}(z,\gamma^c,\beta^c)$ as proportional to
    \begin{equation}\label{eq:mathfrakf12}  
    \mathfrak{f}(z,1,2,\omega^c) = \frac{ |y_{N_1+1}|}{|y_{\omega^c}|} \frac{\prod_{s\neq \omega^c}\phi_s^{2}}{\prod_{s\neq \omega^c}|\phi_s-\phi_{\omega^c}|^2}\frac{|z|}{|z-\phi_{\omega^c}|},
    \end{equation}
    where, in the case $\gamma^c=1$ and $\beta^c=2$, \eqref{eq:GammaParam1} and \eqref{eq:CompundExpansionNew3} yield
    $\gamma = (2,\dots,N_1+1)$,
    $\beta = (1,3,\dots,N_1+1)$, $\mathrm{a} = 1$, $\mathrm{b}=2$ and $\mathfrak{M}_{N_1+1-\gamma^c}(|z|,\phi_{\omega})  = \mathfrak{N}_{N_1+1-\beta^c}(\phi_{\omega})  = 1$.
    
    In fact, the following equalities hold:
    \begin{equation}\label{eq:dfrakrelations}
    \hspace{-4mm}
    \begin{array}{lll}
    &\mathrm{I.}\quad  \mathfrak{f}(z,N_1+1,2,\omega^c) = \frac{\phi_{N_1+1}^{N_1}}{|z|\prod_{s\neq 
    \omega^c}\phi_s}\mathfrak{f}(z,1,2,\omega^c),\\
    &\mathrm{II.}\quad \mathfrak{f}(z,1,N_1+1,\omega^c)  = \frac{\phi_{N_1+1}^{N_1-1}}{\prod_{s\neq \omega^c}\phi_s}\mathfrak{f}(z,1,2,\omega^c),\\
    &\mathrm{III.}\quad \mathfrak{f}(z,N_1+1,N_1+1,\omega^c)  = \frac{\phi_{N_1+1}}{|z|}\left(\frac{\phi_{N_1+1}^{N_1-1}}{\prod_{s\neq \omega^c}\phi_s}\right)^2\mathfrak{f}(z,1,2,\omega^c),\\
    &\mathrm{IV.}\quad \mathfrak{f}(z,\gamma^c,2,\omega^c) = \frac{\mathfrak{M}_{N_1+1-\gamma^c}(|z|,\phi_{\omega})\phi_{N_1+1}^{\gamma^c-1}}{|z|\prod_{s\neq \omega^c}\phi_s}\mathfrak{f}(z,1,2,\omega^c), \quad \gamma^c\in [N_1]\setminus \left\{1 \right\},\\
    &\mathrm{V.}\quad \mathfrak{f}(z,\gamma^c,N_1+1,\omega^c) = \frac{\phi_{N_1+1}^{N_1-1}}{\prod_{s\neq \omega^c}\phi_s}\mathfrak{f}(z,\gamma^c,2,\omega^c), \quad \gamma^c \in [N_1]\setminus \left\{1\right\},\\
    &\mathrm{VI.}\quad \mathfrak{f}(z,1,\beta^c,\omega^c) = \frac{\mathfrak{N}_{N_1+1-\beta^c}(\phi_{\omega})\phi_{N_1+1}^{\beta^c-2}}{\prod_{s\neq \omega^c}\phi_s}\mathfrak{f}(z,1,2,\omega^c),\quad \beta^c \in [N_2]\setminus \left\{1,2 \right\},\\
    &\mathrm{VII.}\quad \mathfrak{f}(z,N_1+1,\beta^c,\omega^c) = \frac{1}{|z|}\frac{\phi_{N_1+1}^{N_1}}{\prod_{s\neq \omega^c}\phi_s}\mathfrak{f}(z,1,\beta^c,\omega^c),\quad \beta^c\in [N_1]\setminus \left\{1,2 \right\},\\
    &\mathrm{VIII.}\quad \mathfrak{f}(z,\gamma^c,\beta^c,\omega^c) = \frac{\mathfrak{M}_{N_1+1-\gamma^c}(|z|,\phi_{\omega})\phi_{N_1+1}^{\gamma^c-1}}{|z|\prod_{s\neq \omega^c}\phi_s}\\
    &\hspace{40mm}\times \frac{\mathfrak{N}_{N_1+1-\beta^c}(\phi_{\omega})\phi_{N_1+1}^{\beta^c-2}}{\prod_{s\neq \omega^c}\phi_s}\mathfrak{f}(z,1,2,\omega^c), \quad \gamma^c\in [N_1]\setminus \left\{1\right\},\ \beta^c\in [N_1]\setminus \left\{1,2 \right\}.
    \end{array}
    \end{equation}
    We now show that, for $\Phi(\Delta, N_1)$ defined in \eqref{eq:PHIdef}, \eqref{eq:dfrakrelations} implies 
    \begin{equation}\label{eq:Pasrt1Des}
    \limsup_{*} \Phi(\Delta, N_1) \leq C_* \cdot \limsup_{*}\max_{n\in [N_1 ]}\max_{z\in\partial B(\phi_n,\tau_n)}\mathfrak{s}(z,1,2) < \infty,
    \end{equation}
    provided that the limit on the right hand side is finite; at this stage of the proof, we assume that this is the case. Here, the subscripts $*$ in the limits depend on the considered regime, and $C_*\geq 0$ is a constant that varies depending on the regime.
    
    For $z\in \partial B(\phi_n,\tau_n)$, we have $|z| = |\phi_n + (z - \phi_n)| \geq \phi_n - |z-\phi_n| = \phi_n - \tau \phi_n = (1-\tau)\phi_n$, where we used the reverse triangle inequality. Since, in addition, $\left\{\lambda_n\right\}_{n=1}^{\infty}$ is a monotonically increasing sequence, the equalities I., II. and III. in \eqref{eq:dfrakrelations} imply
    \begin{equation*}
    \begin{array}{lll}
    &\mathfrak{f}(z,N_1+1,2,\omega^c) \leq \frac{1}{1-\tau} \frac{\phi_{N_1+1}}{\phi_n} \frac{\phi_{N_1+1}^{N_1-1}}{\prod_{s\neq 
    \omega^c}\phi_s} \mathfrak{f}(z,1,2,\omega^c) \leq  \frac{1}{1-\tau}e^{-\Delta (\lambda_{N_1+1}-\lambda_n)-\Delta\sum_{s=2}^{N_1}(\lambda_{N_1+1}-
    \lambda_s)}\mathfrak{f}(z,1,2,\omega^c),\\
    &\mathfrak{f}(z,1,N_1+1,\omega^c) \leq \frac{\phi_{N_1+1}^{N_1-1}}{\prod_{s\neq \omega^c}\phi_s} \mathfrak{f}(z,1,2,\omega^c) \leq e^{-\Delta \sum_{s=2}^{N_1}(\lambda_{N_1+1}-\lambda_s)}\mathfrak{f}(z,1,2,\omega^c),\\
    &\mathfrak{f}(z,N_1+1,N_1+1,\omega^c) \leq \frac{1}{1-\tau} \frac{\phi_{N_1+1}}{\phi_n}\left(\frac{\phi_{N_1+1}^{N_1-1}}{\prod_{s\neq \omega^c}\phi_s}\right)^2 \mathfrak{f}(z,1,2,\omega^c) \\ & \hspace{36mm} \leq \frac{1}{1-\tau}e^{-\Delta(\lambda_{N_1+1}-\lambda_n)} e^{-2\Delta \sum_{s=2}^{N_1}(\lambda_{N_1+1}-\lambda_s)}\mathfrak{f}(z,1,2,\omega^c),
    \end{array}
    \end{equation*}
    and hence, summing over all possible $\omega^c$,
    \begin{equation}\label{eq:MultConst}
    \begin{array}{lll}
    &\mathfrak{s}(z,N_1+1,2) \leq  \frac{1}{1-\tau}e^{-\Delta (\lambda_{N_1+1}-\lambda_n)-\Delta\sum_{s=2}^{N_1}(\lambda_{N_1+1}-
    \lambda_s)}\mathfrak{s}(z,1,2),\\
    &\mathfrak{s}(z,1,N_1+1)\leq e^{-\Delta \sum_{s=2}^{N_1}(\lambda_{N_1+1}-\lambda_s)}\mathfrak{s}(z,1,2),\\
    &\mathfrak{s}(z,N_1+1,N_1+1)\leq \frac{1}{1-\tau}e^{-\Delta(\lambda_{N_1+1}-\lambda_n)} e^{-2\Delta \sum_{s=2}^{N_1}(\lambda_{N_1+1}-\lambda_s)}\mathfrak{s}(z,1,2).
    \end{array}
    \end{equation}
    Consider the terms multiplying $\mathfrak{s}(z,1,2)$ on the right-hand sides of \eqref{eq:MultConst}. In Regimes 1 and 2, all the terms converge to zero, since they all contain the term $e^{-\Delta (\lambda_{N_1+1}-\lambda_{N_1})}\leq e^{-\upsilon \Delta (2N_1+1)}$, which converges to zero in both regimes. In Regime 3, since $\Delta = \frac{T}{N_1}$, we employ
    the fact that $\sum_{j=1}^{N_1}j^2 = \frac{N_1(N_1+1)(2N_1+1)}{6}$ to obtain 
    \begin{equation*}
    \begin{array}{lll}
    e^{-\Delta \sum_{s=2}^{N_1}\left(\lambda_{N_1+1}-\lambda_s \right)}\leq e^{-\frac{\upsilon T}{N_1}  \sum_{s=2}^{N_1}\left((N_1+1)^2-s^2 \right)} \leq e^{-\upsilon T \frac{N_1+1}{N_1}\frac{4N_1^2-N_1-6}{6}}, 
    \end{array}
    \end{equation*}
    which tends to zero as $N_1 \to \infty$. Hence, all terms multiplying $\mathfrak{s}(z,1,2)$ on the right-hand side of \eqref{eq:MultConst} tend to zero in all of the regimes. We conclude that the limits of the terms on the left-hand side of \eqref{eq:MultConst} are zero, uniformly in $n\in [N_1]$ and $z\in \partial B(\phi_n,\tau_n)$.
    
    For case IV. in \eqref{eq:dfrakrelations}, the inequalities $(1-\tau)\phi_n\leq |z| \leq (1+\tau)\phi_n$ (where the upper bound follows from the triangle inequality) imply that
    \begin{equation}\label{eq:dFrakRelgammac2}
    \begin{array}{lll}
    \mathfrak{f}(z,\gamma^c,2,\omega^c) &= \frac{\mathfrak{M}_{N_1+1-\gamma^c}(|z|,\phi_{\omega})\phi_{N_1+1}^{\gamma^c-1}}{|z|\prod_{s\neq \omega^c}\phi_s}\mathfrak{f}(z,1,2,\omega^c) \leq \frac{1+\tau}{1-\tau}\mathfrak{m}(\gamma^c,\omega^c)\mathfrak{f}(z,1,2,\omega^c),
    \end{array}
    \end{equation}
    where $\mathfrak{m}(\gamma^c,\omega^c)$ was defined in Proposition~\ref{prop:frakxfraky}. Employing this proposition, we sum both sides of \eqref{eq:dFrakRelgammac2} over $\omega \in \mathcal{Q}^{N_1}_{N_1-1}$, take the maximum over $z\in \partial B(\phi_n,\tau_n)$ and then sum over $\gamma^c\in [N_1]\setminus\left\{1 \right\}$ to obtain
    \begin{equation*}
    \begin{array}{lll}
    \sum_{\gamma^c\in [N_1]\setminus \left\{1 \right\}}\max_{z\in \partial B(\phi_n,\tau_n)}\mathfrak{s}(z,\gamma^c,2) \leq \frac{1+\tau}{1-\tau}\left(\sum_{\gamma^c=2}^{N_1} \binom{N_1}{\gamma^c-1}q^{\gamma^c-1}\right)\max_{z\in \partial B(\phi_n,\tau_n)}\mathfrak{s}(z,1,2),
    \end{array}
    \end{equation*}
    where $q\in \left\{e^{-\upsilon \Delta (2N_1+1)},\frac{\upsilon T +1}{\upsilon T N_1} \right\}$, depending on the regime (see Proposition~\ref{prop:frakxfraky}). By the binomial formula,
    \begin{equation*}
    \begin{array}{lll}
    \sum_{\gamma^c=2}^{N_1} \binom{N_1}{\gamma^c-1}q^{\gamma^c-1} = \left( 1+q\right)^{N_1} - 1 - q^{N_1}.
    \end{array}
    \end{equation*}

    In Regime 1,
    \begin{equation*}
    \lim_{N_1 \to \infty} \left( \left(1+e^{-\upsilon \Delta (2N_1+1)}\right)^{N_1}-1-e^{-\upsilon \Delta N_1 (2N_1+1)} \right)=0
    \end{equation*}
    uniformly in $\Delta$, because
    \begin{equation*}
    \lim_{N_1 \to \infty} \left(1+e^{-\upsilon \Delta (2N_1+1)}\right)^{N_1} = e^{\lim_{N_1 \to \infty} N_1 \ln(1+e^{-\upsilon \Delta (2N_1+1)})}=e^0=1.
    \end{equation*}

    In Regime 2,
    \begin{equation*}
    \lim_{\Delta \to \infty} \left( \left(1+e^{-\upsilon \Delta (2N_1+1)}\right)^{N_1}-1-e^{-\upsilon \Delta N_1 (2N_1+1)} \right)=0
    \end{equation*}
    uniformly in $N_1$, because $0 \leq e^{-\upsilon \Delta N_1 (2N_1+1)} \leq e^{- 3\upsilon \Delta} \to 0$ as $\Delta \to \infty$, while, setting $\aleph = \upsilon \Delta (2N_1+1)$, so that $\aleph \to \infty$ as $\Delta \to \infty$, $\left(1+e^{-\upsilon \Delta (2N_1+1)}\right)^{N_1}-1 = \left\{\left[\left(1+e^{-\aleph}\right)^{\aleph}\right]^{\frac{N_1}{\upsilon(2N_1+1)}}\right\}^{\frac{1}{\Delta}}-1 \leq M^{\frac{1}{\Delta}}-1$ for some $M \geq 1$, and $\lim_{\Delta \to \infty} M^{\frac{1}{\Delta}}-1=0$.

    In Regime 3, 
    \begin{equation*}
    \lim_{N_1 \to \infty} \left( \left(1+\frac{\upsilon T +1}{\upsilon T N_1}\right)^{N_1}-1-\left(\frac{\upsilon T +1}{\upsilon T N_1}\right)^{N_1} \right)= e^{1+
    \frac{1}{\upsilon T}}-1,
    \end{equation*}    
    because, setting $x := \frac{1}{N_1}$ and $\alpha := \frac{\upsilon T +1}{\upsilon T}$, $\lim_{x\to 0^+}\left(1+ \alpha x \right)^{\frac{1}{x}} = e^\alpha$, while $\left(\frac{\alpha}{N_1}\right)^{N_1} \leq \left(\frac{1}{2}\right)^{N_1}$ for sufficiently large $N_1$, and $\lim_{N_1 \to \infty} \left(\frac{1}{2}\right)^{N_1}=0$.

    Therefore, to sum up,
    \begin{equation*}
    \begin{array}{lll}
    &\limsup_{*}\max_{n\in [N_1]}\sum_{\gamma^c\in [N_1]\setminus \left\{1 \right\}}\max_{z\in \partial B(\phi_n,\tau_n)}\mathfrak{s}(z,\gamma^c,2) \\
    &\hspace{10mm}\leq \limsup_{*}\max_{n\in [N_1]}\max_{z\in \partial B(\phi_n,\tau_n)}\mathfrak{s}(z,1,2) \times \begin{cases}
    0, & \text{Regimes 1 and 2},\\
    \frac{1+\tau}{1-\tau}\left(e^{1+
    \frac{1}{\upsilon T}}-1 \right), & \text {Regime 3}. 
    \end{cases}
    \end{array}
    \end{equation*}
    
    For case V. in \eqref{eq:dfrakrelations}, since $\frac{\phi_{N_1+1}^{N_1-1}}{\prod_{s\neq \omega^c}\phi_s}\leq e^{-\Delta \sum_{s=2}^{N_1}(\lambda_{N_1+1}-\lambda_s)}$ tends to zero in all regimes, employing IV. yields 
    \begin{equation*}
    \begin{array}{lll}
    &\limsup_{*}\max_{n\in [N_1]}\sum_{\gamma^c\in [N_1]\setminus \left\{1 \right\}}\max_{z\in \partial B(\phi_n,\tau_n)}\mathfrak{s}(z,\gamma^c,N_1+1) = 0.
    \end{array}
    \end{equation*}
    
    The cases VI. and VII. in \eqref{eq:dfrakrelations} are analogous to IV. and V. and yield, via Proposition~\ref{prop:frakxfraky},
    \begin{equation*}
    \begin{array}{lll}
    &\limsup_{*}\max_{n\in [N_1]}\sum_{\beta^c\in [N_1]\setminus \left\{1,2 \right\}}\max_{z\in \partial B(\phi_n,\tau_n)}\mathfrak{s}(z,1,\beta^c) \\
    &\hspace{10mm}\leq \limsup_{*}\max_{n\in [N_1]}\max_{z\in \partial B(\phi_n,\tau_n)}\mathfrak{s}(z,1,2) \times \begin{cases}
    0,& \text{Regimes 1 and 2,}\\
    \left(e^{1+\frac{1}{\upsilon T}}-1 \right),& \text {Regime 3,} 
    \end{cases}\\
    &\limsup_{*}\max_{n\in [\lfloor \eta N_1 \rfloor]}\sum_{\beta^c\in [N_1]\setminus \left\{1,2 \right\}}\max_{z\in \partial B(\phi_n,\tau_n)}\mathfrak{s}(z,N_1+1,\beta^c) = 0.
    \end{array}
    \end{equation*}
    
    Finally, in case VIII., by arguments similar to cases IV. and VI., we have 
    \begin{equation*}
    \begin{array}{lll}
    \mathfrak{f}(z,\gamma^c,2,\omega^c) &\leq \frac{1+\tau}{1-\tau}\mathfrak{m}(\gamma^c,\omega^c)\mathfrak{n}(\beta^c,\omega^c)\mathfrak{f}(z,1,2,\omega^c),
    \end{array}
    \end{equation*}
     which yields 
    \begin{equation*}
    \begin{array}{lll}
    &\limsup_{*}\max_{n\in [\lfloor \eta N_1 \rfloor]}\sum_{\gamma^c \in [N_1]\setminus\left\{1 \right\}}\sum_{\beta^c\in [N_1]\setminus \left\{1,2 \right\}}\max_{z\in \partial B(\phi_n,\tau_n)}\mathfrak{s}(z,1,\beta^c) \\
    &\hspace{10mm}\leq \limsup_{*}\max_{n\in [\lfloor \eta N_1 \rfloor]}\max_{z\in \partial B(\phi_n,\tau_n)}\mathfrak{s}(z,1,2) \times \begin{cases}
    0, & \text{Regimes 1 and 2},\\
    \frac{1+\tau}{1-\tau}\left(e^{1+\frac{1}{\upsilon T}}-1 \right)^2, & \text {Regime 3}. 
    \end{cases}
    \end{array}
    \end{equation*}
    Combining all cases together, we obtain \eqref{eq:Pasrt1Des} with 
    \begin{equation}\label{eq:Cstar}
    \begin{array}{lll}
    C_* = \begin{cases}
        1, & \text{Regimes 1 and 2,}\\
        \frac{1}{1-\tau}\left(e^{1
        +\frac{1}{\upsilon T}}-1 \right)\left(1-\tau+e^{1+\frac{1}{\upsilon T}}(1+\tau) \right)+1, & \text{Regime 3.}
    \end{cases}\  
    \end{array}
    \end{equation}
    
    \underline{Part 2:} We now estimate $\limsup_{*}\max_{n\in [N_1]}\max_{z\in \partial B(\phi_n,\tau_n)}\mathfrak{s}(z,1,2) $, where the subscript $*$ depends on the regime.
    In \eqref{eq:mathfrakf12}, $\frac{|y_{N_1+1}|}{|y_{\omega^c}|}\leq M_y$ due to Assumption~\ref{assump:yassump} and $\frac{|z|}{|z-\phi_{\omega^c}|}\leq \frac{1+\tau}{\tau}$ because $|z| \leq (1+\tau) \phi_n$ and $|z-\phi_{\omega^c}| = |(\phi_n -\phi_{\omega^c})+(z- \phi_n)| \geq 2 \tau \phi_n - \tau \phi_n = \tau \phi_n$, where we used the reverse triangle inequality and Proposition~\ref{Prop:relSepUnif}. Hence, it suffices to upper bound $\frac{\prod_{s\neq \omega^c}\phi_s^{2}}{\prod_{s\neq \omega^c}|\phi_s-\phi_{\omega^c}|^2}$. By arguments similar to Proposition~\ref{prop:ComponentBounds0}, we have 
    \begin{equation}\label{eq:omegacoptions}
    \begin{array}{lll}
    \omega^c=1 & \Longrightarrow \ln \left(\frac{\prod_{s\in [N_1],s\neq 1}\phi_s^{2}}{\prod_{s\in [N_1],s\neq 1}|\phi_s-\phi_{\omega^c}|^2}\right)&=-2\Delta \sum_{s=2}^{N_1}\left(\lambda_s-\lambda_1 \right)+2\theta^3_{1,\Delta,N_1},\\
    \omega^c\neq 1,N_1 & \Longrightarrow \ln \left(\frac{\prod_{s\in [N_1],s\neq \omega^c}\phi_s^{2}}{\prod_{s\in [N_1],s\neq \omega^c}|\phi_s-\phi_{\omega^c}|^2}\right)&=-2\Delta \sum_{s=\omega^c+1}^{N_1}\left(\lambda_s-\lambda_{\omega^c} \right)+2\theta^3_{\omega^c,\Delta,N_1}+2\theta^2_{\omega^c,\Delta},\\
    \omega^c=N_1 & \Longrightarrow \ln \left(\frac{\prod_{s\in [N_1],s\neq N_1}\phi_s^{2}}{\prod_{s\in [N_1],s\neq N_1}|\phi_s-\phi_{\omega^c}|^2}\right)&=2\theta^2_{N_1,\Delta}.
    \end{array}
    \end{equation}
    Therefore, summing over all possible $\omega^c$,
    \begin{equation*}
    \begin{array}{lll}
    \mathfrak{s}(z,1,2) \leq M_y \frac{1+\tau}{\tau} \left(e^{2\theta^2_{N_1,\Delta}} + e^{-2\Delta \sum_{s=2}^{N_1}(\lambda_s-\lambda_1)+2\theta^3_{1,\Delta,N_1}}+\sum_{\omega^c=2}^{N_1-1}e^{-2\Delta \sum_{s=\omega^c+1}^{N_1}(\lambda_s-\lambda_{\omega^c})+2\theta^3_{\omega^c,\Delta,N_1}+2\theta^2_{\omega^c,\Delta}}\right).
    \end{array}
    \end{equation*}
    We now consider the three regimes separately.
    
    \underline{Regime 1:} 
    Since for any $\omega^c\in [N_1-1]$, in view of Proposition~\ref{prop:EigDiffBound}, 
    \begin{equation*}
    -2\Delta \sum_{s=\omega^c+1}^{N_1}(\lambda_s-\lambda_{\omega^c})\leq -2\upsilon \Delta \left( N_1^2 - (N_1-1)^2 \right) = -2\upsilon \Delta (2N_1-1)
    \end{equation*}
    and, by Corollary~\ref{Cor:ThetaAssymp}, statement (i),  $\theta^2_{n,\Delta}$ and $\theta^3_{n,\Delta,N_1}$ are $O(1)$, uniformly in $n\in [N_1]$ and $\Delta \in [\Delta_*,\infty)$, we obtain 
    \begin{equation*}
    \max_{n\in [N_1 ]}\max_{z\in \partial B(\phi_n,\tau_n)}\mathfrak{s}(z,1,2)\leq M_y \frac{1+\tau}{\tau}\left(e^{2\theta^2_{N_1,\Delta}}+(N_1-1)e^{-2\upsilon \Delta (2N_1-1) + O(1)} \right),
    \end{equation*}
    whence $\limsup_{N_1\to \infty}\max_{n\in [N_1 ]}\max_{z\in \partial B(\phi_n,\tau_n)}\mathfrak{s}(z,1,2) \leq M_y \frac{1+\tau}{\tau}$, because $\theta^2_{N_1,\Delta}$ tends to zero, uniformly in $\Delta \in [\Delta_*,\infty)$, in view of \eqref{eq:thetabounds}. In combination with \eqref{eq:Pasrt1Des} and \eqref{eq:Cstar}, this yields \eqref{eq:limN1}.
    
    \underline{Regime 2:} By Corollary~\ref{Cor:ThetaAssymp}, statement (ii), $\theta^1_{n,\Delta,N_1}$, $\theta^2_{n,\Delta}$ and $\theta^3_{n,\Delta,N_1}$ are $O(1)$, uniformly in $(n,N_1)\in \mathcal{N}_1$, and, if $\Delta \to \infty$, then $\theta^1_{n,\Delta,N_1}$, $\theta^2_{n,\Delta}$ and $\theta^3_{n,\Delta,N_1}$ are $o(1)$, uniformly in $n,N_1$. Hence, since $\Delta \geq \Delta_*$, in this case we have  
    \begin{equation*}
    \begin{array}{lll}
    (N_1-1)e^{-2\upsilon \Delta (2N_1-1) + O(1)} &\leq (N_1-1)e^{-\upsilon \Delta_* (2N_1-1) + O(1)} e^{-\upsilon \Delta }\\
    &\leq \max_{N_1\in \mathbb{N} }\left[(N_1-1)e^{-\upsilon \Delta_* (2N_1-1) + O(1)} \right] e^{-\upsilon \Delta },
    \end{array}
    \end{equation*}
    with the maximum being finite. Therefore, 
    \begin{equation*}
    \begin{array}{lll}
    &\max_{n\in [N_1 ]}\max_{z\in \partial B(\phi_n,\tau_n)}\mathfrak{s}(z,1,2)\\
    &\hspace{20mm}\leq M_y \frac{1+\tau}{\tau}\left(e^{2\theta^2_{N_1,\Delta}}+\max_{N_1\in \mathbb{N}}\left[(N_1-1)e^{-\upsilon \Delta_* (2N_1-1) + O(1)} \right]e^{-\upsilon \Delta }\right),
    \end{array}
    \end{equation*}
    whence $\limsup_{\Delta \to \infty}\max_{n\in [N_1 ]}\max_{z\in \partial B(\phi_n,\tau_n)}\mathfrak{s}(z,1,2) \leq M_y \frac{1+\tau}{\tau}$, uniformly in $N_1\in \mathbb{N}\setminus \left\{1 \right\}$. In combination with \eqref{eq:Pasrt1Des} and \eqref{eq:Cstar}, this yields \eqref{eq:RelErrDeltaInfty}.
    
    \underline{Regime 3:}  
    Fix $\varphi \in (0,1)$.  Consider first $\omega^c \leq \lfloor \varphi N_1 \rfloor$, $N_1\gg1$. Then, $\theta^3_{\omega^c,\Delta,N_1}$ and $\theta^2_{\omega^c,\Delta}$ are both $O(N_1)$, by Corollary~\ref{Cor:ThetaAssymp}, statement (iii). Therefore, for $\omega^c \in [\lfloor \varphi N_1 \rfloor]$, in view of Proposition~\ref{prop:EigDiffBound},
    \begin{equation}\label{eq:auxsum}
    \begin{array}{lll}
    -2\Delta \sum_{s=\omega^c+1}^{N_1}\left(\lambda_s-\lambda_{\omega^c} \right) \leq -2\Delta \sum_{s=\omega^c+1}^{N_1} \upsilon \left(s^2-(\omega^c)^2 \right) \overset{\eqref{eq:CalSdef}}{=} -2\Delta \upsilon \Psi(\omega^c; N_1)\leq-2\upsilon T\frac{\Psi(\lfloor \varphi N_1 \rfloor; N_1 )}{N_1},
    \end{array}
    \end{equation}
    since $\Psi(n;N_1)$ is monotonically decreasing in $n$ in view of Proposition~\ref{Prop:VFrakAsymp}, which also ensures that, for $N_1 \gg 1$, $\Psi(\lfloor \varphi N_1 \rfloor; N_1 ) \geq a(\varphi)N_1^3$. Hence, we conclude that 
    \begin{equation*}
    \begin{array}{lll}
    &\limsup_{N_1\to \infty }\max_{n\in [N_1]}\left(\max_{z\in \partial B(\phi_n,\tau_n)}\sum_{\omega^c \in [\lfloor \varphi N_1 \rfloor] }\frac{\prod_{s\neq \omega^c}\phi_s^{2}}{\prod_{s\neq \omega^c}|\phi_s-\phi_{\omega^c}|^2}\right)\\
    &\hspace{40mm}\leq \lim_{N_1\to \infty }\lfloor \varphi N_1 \rfloor e^{-2\upsilon T\frac{\Psi(\lfloor \varphi N_1 \rfloor; N_1 )}{N_1}+O(N_1)} = 0.
    \end{array}
    \end{equation*}
    It remains to consider the case $\omega^c\in [N_1]\setminus [\lfloor \varphi N_1 \rfloor ]$, where \eqref{eq:auxsum} does not necessarily yield an exponentially decaying term.
    In this case, the proof of Corollary~\ref{Cor:ThetaAssymp}, statement (iii), shows that $\theta^2_{\omega^c,\Delta}$ and $\theta^3_{\omega^c,\Delta,N_1}$ are bounded for $\omega^c\in [N_1]\setminus [\lfloor \varphi N_1 \rfloor ]$, uniformly in $N_1$, with the bounds
    \begin{equation}\label{eq:ThetaTailBounds}
    \begin{array}{lll}
     0\leq \theta^2_{\omega^c,\Delta} \leq \mathcal{G}_{0,\infty}(\varphi \upsilon T),\quad 
     0\leq \theta^3_{\omega^c,\Delta,N_1}\leq \mathcal{G}_{0,\infty}(\varphi \upsilon T).
     \end{array}
    \end{equation}
    Therefore, for $N_1\gg 1$ we have 
    \begin{equation}\label{eq:Step1gammacabetac2DeltaN1Const}
    \begin{array}{lll}
   \sum_{\lfloor \varphi N_1 \rfloor < \omega^c \leq N_1} \frac{\prod_{s\neq \omega^c}\phi_s^{2}}{\prod_{s\neq \omega^c}|\phi_s-\phi_{\omega^c}|^2} &\leq e^{4\mathcal{G}_{0,\infty}(\varphi \upsilon T)}\left(1+ \sum_{\lfloor \varphi N_1 \rfloor <\omega^c < N_1 }e^{-2\Delta \sum_{s=\omega^c+1}^{N_1}(\lambda_s-\lambda_{\omega^c})} \right)\\
    &\leq e^{4\mathcal{G}_{0,\infty}(\varphi \upsilon T)}\left(1+ \sum_{\lfloor \varphi N_1 \rfloor <\omega^c < N_1 }e^{-\frac{2\upsilon T}{N_1} \sum_{s=\omega^c+1}^{N_1} \left( s^2-\left(\omega^c\right)^2 \right)} \right)\\
    & \leq e^{4\mathcal{G}_{0,\infty}(\varphi \upsilon T)}\left(1+ e^{-2\upsilon T N_1}\sum_{\lfloor \varphi N_1 \rfloor <\omega^c < N_1 }e^{2\upsilon T \frac{\left(\omega^c\right)^2}{N_1}} \right),
    \end{array}
    \end{equation}
    where the first inequality employs  \eqref{eq:omegacoptions} and \eqref{eq:ThetaTailBounds}, the second follows from Proposition~\ref{prop:EigDiffBound}, and the third relies on the upper bound $e^{-\frac{2\upsilon T}{N_1} \sum_{s=\omega^c+1}^{N_1} \left( s^2-\left(\omega^c\right)^2 \right)} \leq e^{-\frac{2\upsilon T}{N_1} \left( {N_1}^2-\left(\omega^c\right)^2 \right)} = e^{-2\upsilon T N_1} e^{\frac{2\upsilon T}{N_1} \left(\omega^c\right)^2}$.
    
    Recall the imaginary error function $\mathcal{E}(x)$ given in \eqref{eq:DefErfi}. Since function $x\mapsto e^{2\upsilon T\frac{x^2}{N_1}}$ is strictly increasing on $x\in [0,\infty)$, and we can employ a change of variables ($t:=\sqrt{\frac{2\upsilon T}{N_1}}x$) in the integration, we obtain
    \begin{equation*}
    \begin{array}{lll}
    \sum_{\lfloor \varphi N_1 \rfloor <\omega^c < N_1 }e^{2\upsilon T \frac{\left(\omega^c\right)^2}{N_1}} \leq \int_{\lfloor \varphi N_1 \rfloor}^{N_1}e^{2\upsilon T \frac{x^2}{N_1}}\mathrm{d}x \leq e^{-2\upsilon T N_1}\int_0^{N_1}e^{2\upsilon T \frac{x^2}{N_1}}\mathrm{d}x \overset{\eqref{eq:DefErfi}}{=} \frac{1}{2}\sqrt{\frac{\pi N_1}{2\upsilon T}}\mathcal{E}(\sqrt{2\upsilon T N_1}).
    \end{array}
    \end{equation*}
    Then, the asymptotic behavior in \eqref{eq:efiasymp} yields, for $N_1\gg 1$,
    \begin{equation}\label{eq:Step1gammacabetac2DeltaN1Const1}
    \begin{array}{lll}
    e^{-2\upsilon TN_1} \sum_{\lfloor \varphi N_1 \rfloor <\omega^c < N_1 }e^{2\upsilon T \frac{\left(\omega^c\right)^2}{N_1}} \leq \frac{1}{2}\sqrt{\frac{\pi N_1}{2\upsilon T}}e^{-2\upsilon TN_1}\mathcal{E}(\sqrt{2\upsilon T N_1})\leq\frac{1}{2\upsilon T}.
    \end{array}
    \end{equation}
    
    Combining \eqref{eq:Step1gammacabetac2DeltaN1Const} and \eqref{eq:Step1gammacabetac2DeltaN1Const1}, we obtain
    \begin{equation*}
    \begin{array}{lll}
    &\limsup_{N_1\to \infty} \max_{n\in [N_1 ]}\left(\max_{z\in \partial B(\phi_n,\tau_n)}\sum_{\lfloor \eta N_1 \rfloor < \omega^c 
    \leq N_1}\frac{ |y_{N_1+1}|}{|y_{\omega^c}|}\frac{|z|}{\left|z-\phi_{\omega^c} \right|}\frac{\prod_{s\neq \omega^c}\phi_s^{2}}{\prod_{s\neq \omega^c}|\phi_s-\phi_{\omega^c}|^2}\right)\\
    &\hspace{50mm}\leq M_y\frac{1+\tau}{\tau}e^{4\mathcal{G}_{0,\infty}(\eta \upsilon T)}\left(1+ \frac{1}{2\upsilon T} \right).
    \end{array}
    \end{equation*}
    In combination with \eqref{eq:Pasrt1Des} and \eqref{eq:Cstar}, this yields \eqref{eq:limN1DeltaConst}, where
    \begin{equation*}
    \begin{array}{lll}
    \mathcal{C}(T) = \left[\frac{1}{1-\tau}\left(e^{1+ \frac{1}{\upsilon T}}-1 \right)\left(1-\tau+e^{1+\frac{1}{\upsilon T}}(1+\tau) \right)+1 \right] M_y \frac{1+\tau}{\tau}e^{4\mathcal{G}_{0,\infty}(\eta \upsilon T)}\left(1+ \frac{1}{2\upsilon T} \right)
    \end{array}
    \end{equation*}
    is a positive function (since $0 < \tau < \frac{1}{2}$ by Proposition~\ref{Prop:relSepUnif}), monotonically decreasing in $T$.
    \end{proof}
    
    

\begin{remark}
In Lemma~\ref{lem:UppBdlimN1}, the lower bound $N_1\geq 2$ in Regime 2 was chosen for brevity of notations only. Similar arguments can be employed with $N_1=1$, yielding a slightly different upper bound in \eqref{eq:RelErrDeltaInfty}.
\end{remark}

\begin{remark}
The similarity between \eqref{eq:limN1} and \eqref{eq:RelErrDeltaInfty} is not surprising. As the proof of Lemma~\ref{lem:UppBdlimN1} shows, the bound on the limit depends on the asymptotic behavior of the product $\Delta N_1$, and we have $\Delta N_1 \to \infty $ in both Regime 1 and Regime 2.
\end{remark}

\begin{corollary}\label{Cor:UnifBoundThreeRegimes}
Consider $\Phi(\Delta, N_1)$ defined in \eqref{eq:PHIdef}.
For the regimes in \eqref{Eq:Regimes}, the following statements hold.
\begin{itemize}
\item \underline{Regimes 1 and 2:} Let $\Delta_*>0$. There exists $\mathcal{C}_{\Delta_*}>0$ such that $\Phi(\Delta,N_1)\leq \mathcal{C}_{\Delta_*}$ on $\Delta \in [\Delta_*,\infty)$ and $N_1\in \mathbb{N}\setminus \left\{1 \right\}$.

\item \underline{Regime 3:} Given $T>0$, there exists $\mathcal{C}_{T}>0$ such that $\Phi(\Delta,N_1)\leq \mathcal{C}_{T}$ on
\begin{equation*}
\left\{(\Delta,N_1)\in (0,\infty)\times \mathbb{N} \ ; \ \Delta N_1 =T \right\}.
\end{equation*}
\end{itemize}
\end{corollary}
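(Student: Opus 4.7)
The strategy is to upgrade the asymptotic $\limsup$ bounds established in Lemma~\ref{lem:UppBdlimN1} to uniform bounds via a continuity and finite-exceptions argument. The key preliminary observation is that for each fixed $N_1\in\mathbb{N}\setminus\{1\}$, the function $\Delta\mapsto\Phi(\Delta,N_1)$ is continuous on $(0,\infty)$. Indeed, each node $\phi_n(\Delta)=e^{-\Delta\lambda_n}$ varies continuously with $\Delta$; the circles $\partial B(\phi_n(\Delta),\tau\phi_n(\Delta))$ are compact and vary continuously with $\Delta$; and, on a neighborhood of each such circle, the integrands $\mathfrak{s}(z,\gamma^c,\beta^c)$ defined in \eqref{eq:Ssum} are jointly continuous in $(z,\Delta)$, since the denominators $|z-\phi_{\omega^c}|$ are bounded below either by $\tau\phi_n(\Delta)>0$ when $\omega^c=n$, or by $\tau\phi_n(\Delta)$ thanks to Proposition~\ref{Prop:relSepUnif} when $\omega^c\neq n$. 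Consequently, each inner maximum is continuous in $\Delta$, and $\Phi(\cdot,N_1)$ is continuous as a finite maximum–sum of such terms.

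For Regimes~1 and~2, I would combine the two relevant parts of Lemma~\ref{lem:UppBdlimN1}. The Regime~1 statement, with its $\limsup$ uniform in $\Delta\in[\Delta_*,\infty)$, yields an $N_0=N_0(\Delta_*)\in\mathbb{N}$ such that $\Phi(\Delta,N_1)\leq M_y\frac{1+\tau}{\tau}+1$ for every $N_1\geq N_0$ and every $\Delta\in[\Delta_*,\infty)$. For the finitely many remaining values $N_1\in\{2,\dots,N_0-1\}$, the Regime~2 statement furnishes, for each such $N_1$, a threshold $\Delta_{N_1}\geq\Delta_*$ beyond which $\Phi(\Delta,N_1)\leq M_y\frac{1+\tau}{\tau}+1$; on the compact interval $[\Delta_*,\Delta_{N_1}]$, the continuity of $\Phi(\cdot,N_1)$ produces a finite upper bound $\tilde{B}_{N_1}$. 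Setting
\begin{equation*}
\mathcal{C}_{\Delta_*}:=\max\Bigl\{M_y\tfrac{1+\tau}{\tau}+1,\ \max_{2\leq N_1<N_0}\tilde{B}_{N_1}\Bigr\}<\infty
\end{equation*}
provides the required uniform bound on $[\Delta_*,\infty)\times(\mathbb{N}\setminus\{1\})$.

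For Regime~3, the parameter set $\{(T/N_1,N_1):N_1\in\mathbb{N}\}$ is countable, so the argument is a single-parameter version of the previous one: the Regime~3 part of Lemma~\ref{lem:UppBdlimN1} produces $N_0=N_0(T)$ with $\Phi(T/N_1,N_1)\leq\mathcal{C}(T)+1$ for all $N_1\geq N_0$, and each of the finitely many remaining $\Phi(T/N_1,N_1)$ with $N_1<N_0$ is a strictly positive and finite number by construction. Taking the maximum of all these bounds yields $\mathcal{C}_T$.

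I expect no serious obstacle in carrying out this plan, since Lemma~\ref{lem:UppBdlimN1} has already done the substantive asymptotic work. The only point requiring genuine care is the continuity claim for $\Phi(\cdot,N_1)$, whose verification reduces to checking that the denominators appearing in $\mathfrak{s}(z,\gamma^c,\beta^c)$ stay uniformly bounded away from zero on each circle $\partial B(\phi_n(\Delta),\tau\phi_n(\Delta))$ as $\Delta$ varies in a compact subinterval of $(0,\infty)$ — which is exactly what the separation estimate of Proposition~\ref{Prop:relSepUnif} and the strict monotonicity of the sequence $\{\lambda_n\}$ guarantee.
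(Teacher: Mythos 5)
Your proposal is correct and follows the same skeleton as the paper's proof: the $\limsup$ bounds of \prettyref{lem:UppBdlimN1} dispose of all sufficiently large $N_1$ (uniformly in $\Delta\geq\Delta_*$), leaving only finitely many values of $N_1$ to be bounded over the unbounded range $\Delta\in[\Delta_*,\infty)$, and Regime~3 reduces to a countable sequence with finite $\limsup$. The one step you handle differently is the treatment of the finitely many exceptional $N_1$. The paper observes that every upper bound produced in the proof of \prettyref{lem:UppBdlimN1} is built from terms of the form $Ce^{-\Delta x}$ with $x>0$, hence decreasing in $\Delta$, so that the bound evaluated at $\Delta=\Delta_*$ already dominates $\Phi(\Delta,N_1)$ for all $\Delta\geq\Delta_*$ --- a one-line monotonicity remark. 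You instead split $[\Delta_*,\infty)$ into a compact piece, handled by continuity (really, mere boundedness suffices) of $\Delta\mapsto\Phi(\Delta,N_1)$, and a tail $\Delta\geq\Delta_{N_1}$, handled by the Regime~2 bound \eqref{eq:RelErrDeltaInfty}. Both routes are valid: yours costs the continuity/boundedness verification, which you justify correctly by noting that \prettyref{Prop:relSepUnif} keeps the denominators $\left|z-\phi_{\omega^c}\right|$ bounded below by $\tau\phi_n$ on each circle $\partial B(\phi_n,\tau_n)$, while the paper's avoids this but requires inspecting the internal structure of the lemma's bounds rather than only its statement. Your Regime~3 argument coincides with the paper's.
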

\begin{proof}
\underline{Regimes 1 and 2:} For Regime 1, Lemma~\ref{lem:UppBdlimN1} shows that $\limsup_{N_1\to \infty}\Phi(\Delta,N_1)$ is bounded uniformly in $\Delta \in [\Delta_*,\infty)$, which leaves only finitely many cases (for finite values of $N_1$) to be bounded uniformly in $\Delta \in [\Delta_*,\infty)$. On the other hand, for a fixed $N_1$, the arguments in the proof of Lemma~\ref{lem:UppBdlimN1} demonstrate that any upper bound on $\Phi(\Delta_*,N_1)$ is also an upper bound on $\Phi(\Delta,N_1)$, since $e^{-\Delta x} \leq e^{-\Delta_* x}$ for all positive $x$. Combining these facts together yields the result.
For Regime 2, similar arguments hold.

\underline{Regime 3:} The claim follows directly from \eqref{eq:limN1DeltaConst} and from arguments similar to those in the other regimes. 
\end{proof}


\begin{proof}[Proof of Theorem~\ref{thm:unifbdd}]
The proof is similar for all regimes. Hence, for simplicity, we only consider Regimes 1 and 2. Given $\mathcal{C}_{\Delta_*}$ in  Corollary~\ref{Cor:UnifBoundThreeRegimes}, we can choose $\varepsilon_{\Delta_*}>0$ such that $\mathcal{C}_{\Delta_*} \cdot \varepsilon_{\Delta_*}<1$. Then, recalling \eqref{def:RatioTerm} and \eqref{eq:Ssum}, for any $\varepsilon \in [0,\varepsilon_{\Delta_*}]$ we obtain 
\begin{equation*}
\begin{array}{lll}
&\max_{ n \in [N_1]}\mathcal{R}_n  =\max_{n \in [N_1] }\max_{z\in \partial B(\phi_n,\rho_n)}\frac{\left|\bar{q}(z)-\bar{p}(z) \right|}{\left|\bar{p}(z) \right|}\leq  \varepsilon_{\Delta_*}\Phi(\Delta,N_1)\leq \varepsilon_{\Delta_*} \cdot \mathcal{C}_{\Delta_* }<1.
\end{array}
\end{equation*}
Hence, for any $(n,N_1)\in \mathcal{N}_{1}$ and $z\in \partial B(\phi_n,\tau_n)$, we have $|\bar{q}(z)-\bar{p}(z)|<|\bar{p}(z)|\leq |\bar{p}(z)|+|\bar{q}(z)|$. By Rouche's theorem \cite[Chapter 4]{stein2010complex}, we conclude that the perturbed Prony polynomial $\bar{q}(z)$, which corresponds to $\varepsilon$, has a unique root $\hat{\phi}_n^{Pr}$ such that $\left|\hat{\phi}_n^{Pr}-\phi_n \right|\leq \tau_n = \tau \phi_n$.
\end{proof}

\begin{proof}[Proof of Corollary~\ref{cor:phinPrreal}]
Given the disk $B(\phi_n,\tau_n)$, Rouche's theorem \cite[Chapter 4]{stein2010complex} guarantees that $\bar p(z)$ and $\bar q(z)$ have the same number of roots, counted with their multiplicity, within the disk. Therefore, since for any choice of $\tau$ according to Proposition~\ref{Prop:relSepUnif}, the real roots $\phi_n$ of the polynomial $\bar p(z)$ are sufficiently separated and only one of them is contained within the disk $B(\phi_n,\tau_n)$, also $\bar q(z)$ has a unique root within the disk. The polynomial $\bar q(z)$ has real coefficients, and hence its roots are either real or complex conjugate. Since the disk $B(\phi_n,\tau_n)$ is symmetric around the real axis and must contain a single root of $\bar q(z)$, this root is necessarily real. Since $n \in [N_1]$ is arbitrary, the result follows.
\end{proof}





\subsection{Proof of Theorem~\ref{thm:unifbdd11}}\label{Append:Pf13}

We fix some $\Delta_*>0 $ and $\eta\in (0,1)$, and we aim to upper bound the asymptotic behavior of the condition numbers $\left|\mathcal{K}_{\lambda}^{Pr}(n)\right|$ associated with recovering $\left\{ \lambda_n \right\}_{n=1}^{\lfloor \eta N_1 \rfloor}$ via Prony's method, as well as the discrepancies $\left|\phi_n -\hat{\phi}_n^{Pr} \right|$ for $n\in [N_1]$.  We further assume that $\varepsilon \in [0,\varepsilon_{\Delta_*}]$, for Regimes 1 and 2, and that $\varepsilon\in [0,\varepsilon_{T}]$, for Regime 3, so that the conclusions of Theorem~\ref{thm:unifbdd} hold. For brevity, we subsequently denote $\hat{\phi}_n^{Pr}$  and $\hat{\lambda}_n^{Pr}$ by $\hat{\phi}_n$ and $\hat{\lambda}_n$, respectively.

\begin{lemma}\label{lem:P1P2N1infy}
Let $(n,N_1)\in \mathcal{N}_{1}$ and denote
\begin{equation}\label{eq:Thetadef}
\begin{array}{lll}
&\Theta_{n,N_1} := (-1)^{N_1+1}\left(\prod_{k=1}^{N_1} y_k\right)  \left(\prod_{1\leq s<t\leq N_1}\left(\phi_{t}-\phi_s \right)^2\right) \left(\prod_{\substack{s \in [N_1]\\s\neq n}}( \phi_s-\hat{\phi}_n) \right),\\
&\mathfrak{M}^{(n)}_{N_1+1-\gamma^c}\left(\hat{\phi}_n \right) := \mathfrak{M}_{N_1+1-\gamma^c}\left(\hat{\phi}_n, \phi_1,\dots,\phi_{n-1},\phi_{n+1},\dots, \phi_{N_1}\right),\\
&\mathfrak{N}^{(n)}_{N_1+1-\beta^c} := \mathfrak{N}_{N_1+1-\beta^c}\left(\phi_1,\dots , \phi_{n-1},\phi_{n+1},\dots, \phi_{N_1} \right),
\end{array}
\end{equation}
where $\mathfrak{M}_{N_1+1-\gamma^c}$ and $\mathfrak{N}_{N_1+1-\beta^c}$ are defined as in \eqref{eq:CompundExpansionNew3}, and
\begin{equation}\label{eq:CalACAl}
\begin{array}{lll}
\mathcal{P}_{n,N_1}^{(1)} &:= 1 - \varepsilon  \sum_{\gamma \in \mathcal{Q}_{N_1}^{N_1+1}}\sum_{\beta \in \mathcal{Q}^{N_1+1}_{N_1}: 1\in \beta} \sum_{\substack{\omega \in \mathcal{Q}^{N_1}_{N_1-1}\\ \omega^c\neq n}} \left\{(-1)^{\sum_{q\in \gamma}q+\sum_{q\in \beta}q} \,\, \frac{y_{N_1+1}}{\Theta_{n,N_1}}\phi_{N_1+1}^{\beta^c+\gamma^c-3}\right.\\
&\left.\times \left(\prod_{s=1}^{N_1-1}y_{\omega_{s}} \right) \hat{\phi}_n^{\mathrm{a}}\left(\prod_{j=1}^{N_1-1}\phi_{\omega_j}^{\mathrm{b}}\right) \left(\prod_{\substack{s\in [N_1-1],\\\omega_s\neq n}}( \phi_{\omega_s}-\hat{\phi}_n) \right)\right.
\\
&\left.\times \mathfrak{M}_{N_1+1-\gamma^c}(\hat{\phi}_n,\phi_{\omega})\mathfrak{N}_{N_1+1-\beta^c}(\phi_{\omega}) \left(\prod_{1\leq s< t\leq N_1-1}(\phi_{\omega_t}-\phi_{\omega_s})^2 \right) \right\}\\
&=:1-\varepsilon \overline{\mathcal{P}}^{(1)}_{n,N_1} ,\\[2mm]
\mathcal{P}_{n,N_1}^{(2)} &:=  \sum_{\gamma \in \mathcal{Q}_{N_1}^{N_1+1}}\sum_{\beta \in \mathcal{Q}^{N_1+1}_{N_1}: 1\in \beta} \left\{(-1)^{\sum_{q\in \gamma}q+\sum_{q\in \beta}q} \, \, \frac{y_{N_1+1}}{\Theta_{n,N_1}}\phi_{N_1+1}^{\beta^c+\gamma^c-3} \right.\\
& \left. \times \left(\prod_{\substack{s \in [N_1]\\s\neq n}}y_{s} \right) \hat{\phi}_n^{\mathrm{a}}\left( \prod_{\substack{s \in [N_1]\\s\neq n}}\phi_s^{\mathrm{b}}\right) \left(\prod_{\substack{s \in [N_1]\\s\neq n}} ( \phi_{s}-\hat{\phi}_n)\right)\mathfrak{M}^{(n)}_{N_1+1-\gamma^c}\left( \hat{\phi}_n\right)\mathfrak{N}^{(n)}_{N_1+1-\beta^c}\right.
\\
&\left.\times \left(\prod_{\substack{1\leq s< t\leq N_1\\s,t\neq n}}(\phi_{t}-\phi_{s})^2 \right) \right\},
\end{array}
\end{equation}
where we adopt the notation in \eqref{eq:GammaParam1}. Then,
\begin{equation}\label{eq:PProduct}
\left|\hat{\phi}_n-\phi_n \right|\left|\mathcal{P}_{n,N_1}^{(1)} \right|= \varepsilon \left|\mathcal{P}_{n,N_1}^{(2)} \right|.
\end{equation}
\end{lemma}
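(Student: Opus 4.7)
The plan is to derive the identity $(\phi_n - \hat{\phi}_n)\mathcal{P}^{(1)}_{n,N_1} = \varepsilon \mathcal{P}^{(2)}_{n,N_1}$ by computing $\bar{p}(\hat{\phi}_n)$ in two different ways and equating the results. Since $\hat{\phi}_n$ is a root of $\bar{q}$, we have $\bar{q}(\hat{\phi}_n)=0$, so $-\bar{p}(\hat{\phi}_n) = \bar{q}(\hat{\phi}_n) - \bar{p}(\hat{\phi}_n)$. One side will be computed from the explicit factored form of $\bar{p}$, and the other from the discrepancy formula in Lemma~\ref{lem:PolyDiscrep}; both will be matched against the terms defined in \eqref{eq:Thetadef}--\eqref{eq:CalACAl}. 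Taking absolute values at the end yields \eqref{eq:PProduct}.

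First I would use \eqref{eq:PbarExplicitExpr1} to evaluate $\bar{p}(\hat{\phi}_n)$, peeling off the factor $(\phi_n-\hat{\phi}_n)$ from $\prod_{s=1}^{N_1}(\phi_s-\hat{\phi}_n)$. The remaining product matches exactly the quantity $\Theta_{n,N_1}$ in \eqref{eq:Thetadef} up to the sign $(-1)^{N_1}$ versus $(-1)^{N_1+1}$, giving $\bar{p}(\hat{\phi}_n)=-(\phi_n-\hat{\phi}_n)\Theta_{n,N_1}$.

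Next I would apply Lemma~\ref{lem:PolyDiscrep} at $z=\hat{\phi}_n$ and split the $\omega$-sum over $\mathcal{Q}^{N_1}_{N_1-1}$ into two pieces, depending on whether $\omega^c=n$ (i.e.\ $n\notin\{\omega_s\}$) or $\omega^c\neq n$ (i.e.\ $n\in\{\omega_s\}$). In the first case there is a single $\omega$, namely the increasing arrangement of $[N_1]\setminus\{n\}$; the product $\prod_s(\phi_{\omega_s}-\hat{\phi}_n)$ equals $\prod_{s\neq n}(\phi_s-\hat{\phi}_n)$ and contains no factor of $(\phi_n-\hat{\phi}_n)$, and the whole summand matches term-for-term the definition of $\Theta_{n,N_1}\mathcal{P}^{(2)}_{n,N_1}$, with $\mathfrak{M}_{N_1+1-\gamma^c}$ and $\mathfrak{N}_{N_1+1-\beta^c}$ specializing to $\mathfrak{M}^{(n)}_{N_1+1-\gamma^c}(\hat{\phi}_n)$ and $\mathfrak{N}^{(n)}_{N_1+1-\beta^c}$, respectively. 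In the second case, $n$ appears among the $\omega_s$'s and the factor $(\phi_n-\hat{\phi}_n)$ can be extracted from $\prod_s(\phi_{\omega_s}-\hat{\phi}_n)$; after division by $\Theta_{n,N_1}$, the remaining sum is precisely $\overline{\mathcal{P}}^{(1)}_{n,N_1}$ from \eqref{eq:CalACAl}.

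Assembling the two computations gives
\begin{equation*}
(\phi_n-\hat{\phi}_n)\Theta_{n,N_1} \;=\; \varepsilon\,\Theta_{n,N_1}\mathcal{P}^{(2)}_{n,N_1} \;+\; \varepsilon\,(\phi_n-\hat{\phi}_n)\,\Theta_{n,N_1}\,\overline{\mathcal{P}}^{(1)}_{n,N_1}.
\end{equation*}
By Theorem~\ref{thm:unifbdd} and Proposition~\ref{Prop:relSepUnif}, for $\varepsilon\in[0,\varepsilon_{\Delta_*}]$ we have $|\hat{\phi}_n-\phi_n|\leq \tau\phi_n$, hence $\hat{\phi}_n\neq \phi_s$ for every $s\neq n$, so $\Theta_{n,N_1}\neq 0$ and we may cancel it. Rearranging then yields $(\phi_n-\hat{\phi}_n)(1-\varepsilon\overline{\mathcal{P}}^{(1)}_{n,N_1})=\varepsilon\mathcal{P}^{(2)}_{n,N_1}$, which is exactly \eqref{eq:PProduct} after taking absolute values and recalling $\mathcal{P}^{(1)}_{n,N_1}=1-\varepsilon\overline{\mathcal{P}}^{(1)}_{n,N_1}$. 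The main obstacle will be the combinatorial bookkeeping required for the term-by-term identification in the $\omega^c=n$ and $\omega^c\neq n$ cases: tracking the signs $(-1)^{\sum_q q+\sum_q q}$, the exponents $\mathrm{a},\mathrm{b}$ from \eqref{eq:GammaParam1}, and verifying that $\mathfrak{M},\mathfrak{N}$ collapse to their specialized counterparts $\mathfrak{M}^{(n)},\mathfrak{N}^{(n)}$ exactly as in \eqref{eq:Thetadef}.
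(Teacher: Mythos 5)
Your proposal is correct and follows essentially the same route as the paper's proof: evaluate $\bar{p}$ at the root $\hat{\phi}_n$ of $\bar{q}$ both via the factored form \eqref{eq:PbarExplicitExpr1} and via the discrepancy formula of Lemma~\ref{lem:PolyDiscrep}, split the $\omega$-sum according to whether $\omega^c=n$, move the terms carrying the factor $(\phi_n-\hat{\phi}_n)$ to the left-hand side, and take absolute values. Your explicit verification that $\Theta_{n,N_1}\neq 0$ (via Theorem~\ref{thm:unifbdd} and Proposition~\ref{Prop:relSepUnif}) is a detail the paper leaves implicit, but it does not change the argument.
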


\begin{proof}

    We substitute $z= \hat{\phi}_n$ into both sides of \eqref{eq:CompundExpansionNew2}, take into account that $\bar{q}(\hat{\phi}_n)=0$ and write the unperturbed Prony polynomial $\bar{p}(\hat{\phi}_n)$ in the form \eqref{eq:PbarExplicitExpr1}, thereby obtaining
    \begin{equation}\label{eq:FirstEq}
    \begin{array}{lll}
    &(-1)^{N_1+1}\left(\prod_{k=1}^{N_1} y_k\right) \left(\prod_{1\leq s<t\leq N_1}\left(\phi_{t}-\phi_s \right)^2\right)\cdot \left(\prod_{s=1}^{N_1}( \phi_s-\hat{\phi}_n) \right)\\
    &\hspace{20mm}=\varepsilon  \sum_{\gamma \in \mathcal{Q}_{N_1}^{N_1+1}}\sum_{\beta \in \mathcal{Q}^{N_1+1}_{N_1}: 1\in \beta}\sum_{\omega\in \mathcal{Q}_{N_1-1}^{N_1}} \left\{(-1)^{\sum_{q\in \gamma}q+\sum_{q\in \beta}q}y_{N_1+1} \right.\\
    &\hspace{30mm}\times \phi_{N_1+1}^{\beta^c+\gamma^c-3}\left(\prod_{s=1}^{N_1-1}y_{\omega_{s}} \right) \hat{\phi}_n^{\mathrm{a}}\left(\prod_{j=1}^{N_1-1}\phi_{\omega_j}^{\mathrm{b}}\right)\left(\prod_{s=1}^{N_1-1}( \phi_{\omega_s}-\hat{\phi}_n) \right)\\
    &\hspace{30mm}\left. \times \mathfrak{M}_{N_1+1-\gamma^c}(\hat{\phi}_n,\phi_{\omega}) \mathfrak{N}_{N_1+1-\beta^c}(\phi_{\omega})\left(\prod_{1\leq s< t\leq N_1-1}(\phi_{\omega_t}-\phi_{\omega_s})^2 \right) \right\}.
    \end{array}
    \end{equation}
    Isolating $\phi_n-\hat{\phi}_n$ on the left-hand side of \eqref{eq:FirstEq}, we have
    \begin{equation}\label{eq:FirstEq1}
    \begin{array}{lll}
    \phi_n-\hat{\phi}_n&=\varepsilon  \sum_{\gamma \in \mathcal{Q}_{N_1}^{N_1+1}}\sum_{\beta \in \mathcal{Q}^{N_1+1}_{N_1}: 1\in \beta}\sum_{\omega\in \mathcal{Q}_{N_1-1}^{N_1}} \left\{(-1)^{\sum_{q\in \gamma}q+\sum_{q\in \beta}q}\frac{y_{N_1+1}}{\Theta_{n,N_1}} \right.\\
    &\hspace{10mm}\times \phi_{N_1+1}^{\beta^c+\gamma^c-3}\left(\prod_{s=1}^{N_1-1}y_{\omega_{s}} \right) \hat{\phi}_n^{\mathrm{a}}\left(\prod_{j=1}^{N_1-1}\phi_{\omega_j}^{\mathrm{b}}\right)\left(\prod_{s=1}^{N_1-1}( \phi_{\omega_s}-\hat{\phi}_n) \right)\\
    &\hspace{10mm}\left. \times \mathfrak{M}_{N_1+1-\gamma^c}( \hat{\phi}_n,\phi_{\omega}) \mathfrak{N}_{N_1+1-\beta^c}(\phi_{\omega})\left(\prod_{1\leq s< t\leq N_1-1}(\phi_{\omega_t}-\phi_{\omega_s})^2 \right) \right\}.
    \end{array}
    \end{equation}
    Next, considering the right-hand side of \eqref{eq:FirstEq1} and the product $\prod_{s=1}^{N_1-1}( \phi_{\omega_s}-\hat{\phi}_n)$ appearing therein, we distinguish between two cases. If $\omega^c=n$, then the product does not include the term $\phi_n-\hat{\phi}_n$; in the complementary case, whenever $\omega^c \neq n$, the product includes the term $\phi_n-\hat{\phi}_n$. All terms belonging to the second case are moved to the left-hand side. Taking the absolute value of both sides of the obtained equality then yields \eqref{eq:PProduct}. 
\end{proof}

From \eqref{eq:PProduct}, we see that 
\begin{equation}\label{eq:maxratioterm}
\begin{array}{lll}
\max_{ n \in [N_1]}\left|\phi_n -\hat{\phi}_n \right| = \varepsilon\cdot \max_{ n \in [N_1]}\left( \left|\mathcal{P}_{n,N_1}^{(1)} \right|^{-1}\left|\mathcal{P}_{n,N_1}^{(2)} \right|\right) =: \varepsilon \cdot \max_{n\in [N_1] }\left|\mathcal{K}_{\lambda}^{Pr}(n)\right| ,
\end{array}
\end{equation}
provided $\left|\mathcal{P}_{n,N_1}^{(1)} \right|\neq 0$. We bound $\left|\mathcal{P}_{n,N_1}^{(1)} \right|$ and $\left|\mathcal{P}_{n,N_1}^{(2)} \right|$ in the next lemma.
\begin{lemma}\label{eq:CAlP12}
Let $\Delta_*>0$  be fixed. Then,
\begin{enumerate}

    \item Fix any $\kappa \in (0,1)$. For Regimes 1 and 2, there exists $\varepsilon_{\Delta_*,\kappa}\in (0,\varepsilon_{\Delta_* }]$ such that, for $\varepsilon \in [0,\varepsilon_{\Delta_*,\kappa}]$,
     \begin{equation}\label{eq:P1Callower}
    \begin{array}{lll}
    \min_{n\in [N_1 ]}\left|\mathcal{P}_{n,N_1}^{(1)}\right| = \min_{n\in [N_1]} \mathcal{P}_{n,N_1}^{(1)} \geq \kappa
    \end{array}
    \end{equation}
    holds for all $\Delta \in [\Delta_*,\infty)$ and $N_1\in \mathbb{N}$.

    For Regime 3, there exists $\varepsilon_{T,\kappa}\in (0,\varepsilon_{T }]$ such that, for $\varepsilon \in [0,\varepsilon_{T,\kappa}] $, \eqref{eq:P1Callower} holds for all $\Delta \in [\Delta_*,\infty)$ and $N_1\in \mathbb{N}$.

    \item For some constants $\mathcal{D}(\Delta_*)>0$ and $\mathcal{D}(T)>0$, $\mathcal{P}^{(2)}_{n,N_1}$ satisfies 

    \begin{equation*}
    \begin{array}{lll}
    & \left|\mathcal{P}_{n,N_1}^{(2)} \right| \leq \frac{\phi_n\left( \prod_{s\neq n}\phi_s^2\right)}{\prod_{s\neq n}|\phi_{s}-\phi_{n}|^2 } \cdot \begin{cases}
\mathcal{D}(\Delta_*),& \text{Regimes 1 and 2},\\
\mathcal{D}(T), & \text{Regime 3}.
\end{cases}
    \end{array}
    \end{equation*}

\end{enumerate}
\end{lemma}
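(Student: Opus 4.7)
The overall approach is to exploit the structural similarity between the sums defining $\mathcal{P}_{n,N_1}^{(1)}$ and $\mathcal{P}_{n,N_1}^{(2)}$ and those controlled in Lemma~\ref{lem:UppBdlimN1} and Corollary~\ref{Cor:UnifBoundThreeRegimes}. The bridging identity
\begin{equation*}
\bar{p}(\hat{\phi}_n)=-(\phi_n-\hat{\phi}_n)\,\Theta_{n,N_1},
\end{equation*}
obtained by factoring $(\phi_n-\hat{\phi}_n)$ out of the product $\prod_s(\phi_s-\hat{\phi}_n)$ in \eqref{eq:PbarExplicitExpr1}, shows that normalising by $\Theta_{n,N_1}$ plays the same role in $\mathcal{P}^{(1)}$ and $\mathcal{P}^{(2)}$ as normalising by $\bar{p}(z)$ does in the ratio $(\bar{q}(z)-\bar{p}(z))/\bar{p}(z)$ from the proof of Lemma~\ref{lem:UppBdlimN1}, once the singular factor $(\phi_n-\hat{\phi}_n)$ has been removed on both sides. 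Moreover, combining Theorem~\ref{thm:unifbdd} and Proposition~\ref{Prop:relSepUnif} yields the termwise comparison
\begin{equation*}
\tfrac{1}{2}|\phi_s-\phi_n|\,\leq\,|\phi_s-\hat{\phi}_n|\,\leq\,\tfrac{3}{2}|\phi_s-\phi_n|,\qquad s\neq n,
\end{equation*}
together with $(1-\tau)\phi_n\leq |\hat{\phi}_n|\leq (1+\tau)\phi_n$ and $|\hat{\phi}_n-\phi_{\omega^c}|\geq \tau\phi_n$ for $\omega^c\neq n$. These elementary estimates are the key device that transplants the bounds of Lemma~\ref{lem:UppBdlimN1}, originally derived on the circle $\partial B(\phi_n,\tau_n)$, to the interior point $z=\hat{\phi}_n$, at the cost of absorbing only $O(1)$ multiplicative constants.

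For statement~1, I would first rewrite $\overline{\mathcal{P}}^{(1)}_{n,N_1}$ in a form paralleling the decomposition \eqref{eq:Ssum} of $\mathfrak{s}(z,\gamma^c,\beta^c)$, with the crucial difference that the restriction $\omega^c\neq n$ already removes the single summand of $\mathfrak{s}(\hat{\phi}_n,\gamma^c,\beta^c)$ that would be singular at $z=\hat{\phi}_n$. Because for all remaining summands $|\hat{\phi}_n-\phi_{\omega^c}|$ stays bounded below by $\tau\phi_n$, the reductions \eqref{eq:dfrakrelations} and Proposition~\ref{prop:frakxfraky} apply just as in Lemma~\ref{lem:UppBdlimN1}, yielding a uniform upper bound $|\overline{\mathcal{P}}^{(1)}_{n,N_1}|\leq C$ with $C=C(\Delta_*)$ in Regimes 1 and 2 and $C=C(T)$ in Regime 3. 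The choice $\varepsilon_{\Delta_*,\kappa}:=\min\{\varepsilon_{\Delta_*},(1-\kappa)/C\}$, and analogously $\varepsilon_{T,\kappa}$, then forces $\mathcal{P}^{(1)}_{n,N_1}\geq 1-\varepsilon C\geq \kappa>0$, which in particular makes the modulus in the claim superfluous and justifies the equality $\min_n |\mathcal{P}^{(1)}_{n,N_1}|=\min_n \mathcal{P}^{(1)}_{n,N_1}$.

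For statement~2, the algebra is considerably cleaner because $\mathcal{P}^{(2)}_{n,N_1}$ is exactly the single $\omega^c=n$ summand of \eqref{eq:FirstEq1}. The product $\prod_{s\neq n}(\phi_s-\hat{\phi}_n)$ appearing in the numerator cancels the identical product inside $\Theta_{n,N_1}$, and the dominant contribution $(\gamma^c,\beta^c)=(1,2)$ collapses (up to sign) to
\begin{equation*}
\frac{y_{N_1+1}}{y_n}\,\hat{\phi}_n\,\frac{\prod_{s\neq n}\phi_s^2}{\prod_{s\neq n}(\phi_s-\phi_n)^2},
\end{equation*}
whose modulus is bounded by $M_y(1+\tau)$ times the desired geometric factor by Assumption~\ref{assump:yassump}. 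The remaining pairs $(\gamma^c,\beta^c)$ are obtained from the leading one by multiplication with the quantities $\mathfrak{m}(\gamma^c,n)$ and $\mathfrak{n}(\beta^c,n)$ of \eqref{eq:FrakeUpper}; summing these contributions via Proposition~\ref{prop:frakxfraky} and the binomial identity used at the end of Part~1 of the proof of Lemma~\ref{lem:UppBdlimN1} produces a regime-dependent but uniformly bounded constant, $\mathcal{D}(\Delta_*)$ in Regimes 1 and 2 and $\mathcal{D}(T)$ in Regime 3.

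The main obstacle will be the careful bookkeeping needed to confirm that the estimates of Lemma~\ref{lem:UppBdlimN1}, originally derived at $z\in \partial B(\phi_n,\tau_n)$ and normalised by $\bar{p}(z)$, remain valid after substitution of the interior point $z=\hat{\phi}_n$ and normalisation by $\Theta_{n,N_1}$. The termwise comparison stated in the first paragraph handles this factor-by-factor, but it must be propagated through the sign-carrying reductions \eqref{eq:dfrakrelations} and, in Regime 3, through the case distinction between $\omega^c\leq \lfloor\varphi N_1\rfloor$ and $\omega^c>\lfloor\varphi N_1\rfloor$ that drove the treatment of $\theta^2,\theta^3$ in Corollary~\ref{Cor:ThetaAssymp}. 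Once this adjustment is carried out, the analysis in each of the three regimes proceeds exactly as in Lemma~\ref{lem:UppBdlimN1} and Corollary~\ref{Cor:UnifBoundThreeRegimes}, yielding the claimed constants $C(\Delta_*)$, $C(T)$, $\mathcal{D}(\Delta_*)$ and $\mathcal{D}(T)$.
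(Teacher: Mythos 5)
Your proposal is correct and follows essentially the same route as the paper: the paper also bounds $\left|\mathcal{P}^{(2)}_{n,N_1}\right|$ by reducing every pair $(\gamma^c,\beta^c)$ to the leading $(1,2)$ term via relations mirroring \eqref{eq:dfrakrelations}, applying Proposition~\ref{prop:frakxfraky} and the binomial sums from Lemma~\ref{lem:UppBdlimN1}, and using $(1-\tau)\phi_n\leq|\hat{\phi}_n|\leq(1+\tau)\phi_n$ from Theorem~\ref{thm:unifbdd}; statement~1 is then obtained exactly as you describe, by bounding $\bigl|\overline{\mathcal{P}}^{(1)}_{n,N_1}\bigr|$ uniformly and shrinking $\varepsilon$.
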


\begin{proof}
    Both claims follow from arguments similar to the proof of Lemma~\ref{lem:UppBdlimN1}. For brevity, we only provide an explicit proof of the second claim, which provides an upper bound for $\mathcal{P}^{(2)}_{n,N_1}$; analogous arguments can be used to provide an upper bound for $\overline{\mathcal{P}}^{(1)}_{n,N_1}$ such that $\mathcal{P}_{n,N_1}^{(1)} = 1-\varepsilon \overline{\mathcal{P}}^{(1)}_{n,N_1}$, and hence, for small enough $\varepsilon$, a lower bound for $\mathcal{P}_{n,N_1}^{(1)}$.
    
    Applying an absolute value to $\mathcal{P}^{(2)}_{n,N_1}$, employing the triangle inequality, canceling out terms with $\Theta_{n,N_1}$ and employing Assumption~\ref{assump:yassump}, we have 
    \begin{equation*}
    \begin{array}{lll}
    |\mathcal{P}^{(2)}_{n,N_1}| \leq M_y \sum_{\gamma\in \mathcal{Q}^{N_1+1}_{N_1}}\sum_{\beta \in \mathcal{Q}_{N_1}^{N_1+1}: 1\in \beta}\mathfrak{p}^{(2)}(\gamma^c,\beta^c),\vspace{0.1cm}\\
    \mathfrak{p}^{(2)}(\gamma^c,\beta^c):=\frac{\phi_{N_1+1}^{\beta^c+\gamma^c-3}|\hat{\phi}_n|^{\mathrm{a}}\left( \prod_{s\neq n}\phi_s^{\mathrm{b}}\right)\mathfrak{M}^{(n)}_{N_1+1-\gamma^c}\left(\left|\hat{\phi}_n \right|\right)\mathfrak{N}^{(n)}_{N_1+1-\beta^c}}{\prod_{s\neq n}|\phi_{s}-\phi_{n}|^2 }.
    \end{array}
    \end{equation*}
    Proceeding as in Lemma~\ref{lem:UppBdlimN1}, it can be shown that 
    \begin{equation*}
    \begin{array}{lll}
    &\mathfrak{p}^{(2)}(N_1+1,2) = \frac{\phi_{N_1+1}^{N_1}}{|\hat{\phi}_n|\prod_{s\neq 
    n}\phi_s}\mathfrak{p}^{(2)}(1,2),\ \quad \mathfrak{p}^{(2)}(1,N_1+1)  = \frac{\phi_{N_1+1}^{N_1-1}}{\prod_{s\neq n}\phi_s}\mathfrak{p}^{(2)}(1,2),\\
    &\mathfrak{p}^{(2)}(N_1+1,N_1+1)  = \frac{\phi_{N_1+1}}{|\hat{\phi}_n|}\left(\frac{\phi_{N_1+1}^{N_1-1}}{\prod_{s\neq n}\phi_s}\right)^2\mathfrak{p}^{(2)}(1,2),\\
    &\mathfrak{p}^{(2)}(\gamma^c,2) = \frac{\mathfrak{M}^{(n)}_{N_1+1-\gamma^c}\left(|\hat{\phi}_n |\right)\phi_{N_1+1}^{\gamma^c-1}}{|\hat{\phi}_n|\prod_{s\neq n}\phi_s}\mathfrak{p}^{(2)}(1,2), \quad \gamma^c\in [N_1]\setminus \left\{1 \right\},\\
    &\mathfrak{p}^{(2)}(\gamma^c,N_1+1) = \frac{\phi_{N_1+1}^{N_1-1}}{\prod_{s\neq n}\phi_s}\mathfrak{p}^{(2)}(\gamma^c,2), \quad \gamma^c \in [N_1]\setminus \left\{1\right\},\\
    &\mathfrak{p}^{(2)}(1,\beta^c) = \frac{\mathfrak{N}^{(n)}_{N_1+1-\beta^c}\phi_{N_1+1}^{\beta^c-2}}{\prod_{s\neq n}\phi_s}\mathfrak{p}^{(2)}(1,2),\quad \beta^c \in [N_2]\setminus \left\{1,2 \right\},\\
    &\mathfrak{p}^{(2)}(N_1+1,\beta^c) = \frac{1}{|\hat{\phi}_n|}\frac{\phi_{N_1+1}^{N_1}}{\prod_{s\neq n}\phi_s}\mathfrak{p}^{(2)}(1,\beta^c),\quad \beta^c\in [N_1]\setminus \left\{1,2 \right\},\\
    &\mathfrak{p}^{(2)}(\gamma^c,\beta^c) = \frac{\mathfrak{M}^{(n)}_{N_1+1-\gamma^c}\left(|\hat{\phi}_n |\right)\phi_{N_1+1}^{\gamma^c-1}}{|\hat{\phi}_n|\prod_{s\neq n}\phi_s}\frac{\mathfrak{N}^{(n)}_{N_1+1-\beta^c}\phi_{N_1+1}^{\beta^c-2}}{\prod_{s\neq n}\phi_s}\mathfrak{p}^{(2)
    }(1,2), \quad \gamma^c\in [N_1]\setminus \left\{1\right\},\ \beta^c\in [N_1]\setminus \left\{1,2 \right\}.
    \end{array}
    \end{equation*}
    Then, the arguments employed in the proof of Lemma~\ref{lem:UppBdlimN1} (including Proposition~\ref{prop:frakxfraky}) yield
    \begin{equation}\label{eq:p212bd}
    \begin{array}{lll}
    |\mathcal{P}^{(2)}_{n,N_1}| \leq M_y \Omega^{(2)}_{n,N_1}\mathfrak{p}^{(2)}(1,2) = M_y \Omega^{(2)}_{n,N_1} \frac{|\hat{\phi}_n|\left( \prod_{s\neq n}\phi_s^2\right)}{\prod_{s\neq n}|\phi_{s}-\phi_{n}|^2 }\leq M_y(1+\tau)\Omega^{(2)}_{n,N_1} \frac{\phi_n \left( \prod_{s\neq n}\phi_s^2\right)}{\prod_{s\neq n}|\phi_{s}-\phi_{n}|^2 },
    \end{array}
    \end{equation}
    where
    \begin{equation*}
    \begin{array}{lll}
    \Omega^{(2)}_{n,N_1} =& 1 +\frac{\phi_{N_1+1}^{N_1}}{|\hat{\phi}_n|\prod_{s\neq 
    n}\phi_s}+\frac{\phi_{N_1+1}^{N_1-1}}{\prod_{s\neq n}\phi_s}+\frac{\phi_{N_1+1}}{|\hat{\phi}_n|}\left(\frac{\phi_{N_1+1}^{N_1-1}}{\prod_{s\neq n}\phi_s}\right)^2+\left(1+\frac{\phi_{N_1+1}^{N_1-1}}{\prod_{s\neq n}\phi_s} \right)\frac{1+\tau}{1-\tau}\left(\sum_{\gamma^c=2}^{N_1} \binom{N_1}{\gamma^c-1}q^{\gamma^c-1}\right)\\
    &+\left(1+ \frac{1}{|\hat{\phi}_n|}\frac{\phi_{N_1+1}^{N_1}}{\prod_{s\neq n}\phi_s} \right)\left(\sum_{\beta^c=3}^{N_1-1}\binom{N_1-1}{\beta^c-2}q^{\beta^c-2} \right) +\frac{1+\tau}{1-\tau}\left(\sum_{\gamma^c=2}^{N_1} \binom{N_1}{\gamma^c-1}q^{\gamma^c-1}\right)\left(\sum_{\beta^c=3}^{N_1-1}\binom{N_1}{\beta^c-2}q^{\beta^c-2} \right)
    \end{array}
    \end{equation*}
    and $q\in \left\{e^{-\upsilon \Delta (2N_1+1)},\frac{\upsilon T+1}{\upsilon TN_1} \right\}$, depending on the regime (see Proposition~\ref{prop:frakxfraky}). As in the proofs of Lemma~\ref{lem:UppBdlimN1} and Corollary~\ref{Cor:UnifBoundThreeRegimes}, it can be shown that $\Omega^{(2)}_{n,N_1}$ is bounded in all of the regimes, uniformly in the corresponding parameters, meaning that 
    \begin{equation*}
    M_y(1+\tau) \Omega^{(2)}_{n,N_1} \leq \begin{cases}
    \mathcal{D}(\Delta_*), &\text{Regimes 1 and 2},\\
    \mathcal{D}(T), & \text{Regime 3},
    \end{cases}
    \end{equation*}
    for some constants $\mathcal{D}(\Delta_*)>0$ and $\mathcal{D}(T)>0$. Note that the final inequality in \eqref{eq:p212bd}, as well as the upper bound on $\Omega^{(2)}_{n,N_1}$, employ the fact that $(1-\tau)\phi_n \leq \hat{\phi}_n\leq (1+\tau)\phi_n$, which follows from Theorem~\ref{thm:unifbdd}.
\end{proof}

\begin{proof}[Proof of Theorem~\ref{thm:unifbdd11}]
Recalling Lemma~\ref{eq:CAlP12}, let $\kappa \in (0,1)$ and let $\varepsilon_*= \varepsilon_{\Delta_*,\kappa}$ in Regimes 1 and 2, while $\varepsilon_* = \varepsilon_{T,\kappa}$ in Regime 3.

Then, in view of \eqref{eq:maxratioterm} and of Lemma~\ref{eq:CAlP12}, the inequality $|\hat{\phi}_n-\phi_n| \leq \varepsilon \mathcal{D}_{\phi} \,\, \frac{\phi_n \left( \prod_{s\neq n}\phi_s^2\right)}{\prod_{s\neq n}|\phi_{s}-\phi_{n}|^2 }$ in \eqref{eq:Basic estimates} holds with
\begin{equation*}
 \mathcal{D}_{\phi} = \kappa^{-1}\begin{cases}
\mathcal{D}(\Delta_*), & \text{Regimes 1 and 2},\\
\mathcal{D}(T), & \text{Regime 3}.
\end{cases}
\end{equation*}
Concerning the inequality $|\hat{\lambda}_n-\lambda_n| \leq \varepsilon \mathcal{D}_{\lambda} \,\, \frac{1}{\Delta} \frac{\left( \prod_{s\neq n}\phi_s^2\right)}{\prod_{s\neq n}|\phi_{s}-\phi_{n}|^2 }$ in \eqref{eq:Basic estimates}, let $\psi(z) = -\Delta^{-1} \ln(z)$, $z>0$. Then, by Lagrange's theorem, there exists $\xi$ in the interval between $\phi_n$ and $\hat \phi_n$ such that, given $\psi'(\xi) = -\frac{1}{\Delta \xi}$, we have
\begin{equation}\label{eq:lambdadiffN1}
\begin{array}{lll}
\left|\hat{\lambda}_n-\lambda_n \right| &= \left|\psi\left(\hat{\phi}_n \right)- \psi\left(\phi_n \right) \right| = \frac{1}{\Delta \xi}\left|\hat{\phi}_n - \phi_n\right|\overset{\eqref{eq:Basic estimates}}{\leq} \varepsilon \frac{ \mathcal{D}_{\phi}}{\xi} \frac{1}{\Delta} \frac{\phi_n \left( \prod_{s\neq n}\phi_s^2\right)}{\prod_{s\neq n}|\phi_{s}-\phi_{n}|^2 }\leq \varepsilon \frac{\mathcal{D}_{\phi}}{1-\tau} \frac{1}{\Delta} \frac{ \left( \prod_{s\neq n}\phi_s^2\right)}{\prod_{s\neq n}|\phi_{s}-\phi_{n}|^2 },
\end{array}
\end{equation}
where we use the fact that $\xi\geq (1-\tau)\phi_n$, whence $\xi^{-1}\phi_n\leq (1-\tau)^{-1}$. Therefore, the inequality for $|\hat{\lambda}_n-\lambda_n|$ in \eqref{eq:Basic estimates} follows, with $\mathcal{D}_{\lambda} = \frac{\mathcal{D}_{\phi}}{1-\tau}$. 

Let us now consider the asymptotic behavior.
By arguments of Lemma~\ref{lem:UppBdlimN1}, and in particular \eqref{eq:omegacoptions},
\begin{equation}\label{eq:fracAsymp}
\begin{array}{lll}
\frac{\prod_{s\neq n}\phi_s^2}{\prod_{s\neq n |\phi_s-\phi_n|^2}} &= e^{-2\Delta \sum_{s=n+1}^{N_1}\left(\lambda_s-\lambda_n \right)}  \cdot \begin{cases}
e^{2\theta^3_{1,\Delta ,N_1}}, & n=1\\
e^{2\theta^3_{n,\Delta,N_1}+2\theta^2_{n,\Delta}}, & n\neq 1,N_1\\
e^{2\theta^2_{N_1},\Delta}, & n=N_1
\end{cases}\\
& \leq e^{-2\upsilon \Delta \Psi(n; N_1)}  \cdot \begin{cases}
e^{2\theta^3_{1,\Delta ,N_1}},& n=1\\
e^{2\theta^3_{n,\Delta,N_1}+2\theta^2_{n,\Delta}},& n\neq 1,N_1\\
e^{2\theta^2_{N_1},\Delta},& n=N_1
\end{cases}
\end{array}
\end{equation}
where the last inequality follows from Proposition~\ref{prop:EigDiffBound} and \eqref{eq:CalSdef}, and we recall that $\Psi(N_1; N_1)=0$, by convention.
We assess the asymptotic behavior separately for the three regimes.

\smallskip
\underline{Regime 1:} By Corollary~\ref{Cor:ThetaAssymp}, $\theta^2_{n,\Delta}$ and $\theta^3_{n,\Delta,N_1}$ are $O(1)$, uniformly in $(n,N_1)\in \mathcal{N}_1$ and $\Delta \in [\Delta_*,\infty)$. Therefore, we have 
\begin{equation*}
\begin{array}{lll}
\left|\hat{\phi}_n-\phi_n \right|\leq \varepsilon \mathcal{D}_{\phi}\phi_n e^{-2\upsilon \Delta \Psi(n; N_1)+O(1)} = \varepsilon \mathcal{D}_{\phi}e^{- \Delta \left(2\upsilon\Psi(n; N_1)+\lambda_n\right)+O(1)},\quad (n,N_1)\in \mathcal{N}_1.
\end{array}
\end{equation*}
In particular, employing Proposition~\ref{Prop:VFrakAsymp}, when $(n,N_1)\in \mathcal{N}_{\eta}$ with $N_1\gg 1$, we obtain the first inequality in \eqref{eq:Asymptoticphi1}:
\begin{equation}\label{eq:Reg1FinThm}
\begin{array}{lll}
\left|\hat{\phi}_n-\phi_n \right|\leq  \varepsilon \mathcal{D}_{\phi}\left(\max_{N_1\in \mathbb{N}}e^{- \Delta_* \upsilon\Psi\left(\lfloor \eta N_1 \rfloor; N_1  \right)+O(1)}\right)e^{-\Delta \Gamma^{Pr}(\eta)N_1^3} =: \varepsilon \mathcal{D}^{(1)}_{\phi}e^{-\Delta \Gamma^{Pr}(\eta)N_1^3},
\end{array}
\end{equation}
for some $\Gamma^{Pr}(\eta)>0$. Here, in particular, we used the facts that $\Psi(n; N_1)\geq \Psi(\lfloor \eta N_1 \rfloor; N_1) > 0$ and $\Psi(\lfloor \eta N_1 \rfloor; N_1) \geq a(\eta) N_1^3$ by Proposition~\ref{Prop:VFrakAsymp}, that $\Delta \geq \Delta_*$, and that $\lambda_n \geq \lambda_1$ can be absorbed into $O(1)$.

For $(n,N_1)\in \mathcal{N}_1\setminus \mathcal{N}_{\eta}$, on the other hand, we have that $2\upsilon \Psi(n; N_1) + \lambda_n \geq \lambda_n \geq \lambda_{\lfloor \eta N_1 \rfloor}$ and hence we get the second inequality in \eqref{eq:Asymptoticphi1}:
\begin{equation}\label{eq:Reg1FinThm1}
\begin{array}{lll}
\left|\hat{\phi}_n-\phi_n \right|\leq \varepsilon \mathcal{D}_{\phi}e^{- \Delta \lambda_{\lfloor \eta N_1 \rfloor }+O(1)}\leq \varepsilon \mathcal{D}_{\phi}\left(\max_{N_1\in \mathbb{N}}e^{-\frac{1}{2}\Delta_* \lambda_{\lfloor \eta N_1 \rfloor}+O(1)} \right)e^{-\Delta \Gamma^{Pr}_1(\eta)N_1^2}=: \varepsilon \mathcal{D}_{\phi}^{(2)}e^{-\Delta \Gamma^{Pr}_1(\eta)N_1^2}
\end{array}
\end{equation}
for some $\Gamma^{Pr}_1(\eta)>0$, because $\lambda_{\lfloor \eta N_1 \rfloor }\propto N_1^2$ as $N_1\to \infty$, in view of Proposition~\ref{prop:EigDiffBound}.

Finally, when $\eta \in (0,1)$, \eqref{eq:lambdadiffN1} yields, for $(n,N_1)\in \mathcal{N}_{\eta}$ and $N_1\gg 1$, the third inequality in \eqref{eq:Asymptoticphi1}:
\begin{equation*}
\begin{array}{lll}
\left|\hat{\lambda}_n-\lambda_n \right| &\leq \varepsilon \mathcal{D}_{\lambda}\frac{1}{\Delta}\left(\max_{N_1\in \mathbb{N}}e^{- \Delta_*  \upsilon\Psi\left(\lfloor \eta N_1 \rfloor; N_1  \right)+O(1)}\right)e^{-\Delta \Gamma^{Pr}(\eta)N_1^3}=: \varepsilon \frac{\mathcal{D}^{(1)}_{\lambda}}{\Delta}e^{-\Delta \Gamma^{Pr}(\eta)N_1^3}.
\end{array}
\end{equation*}

\underline{Regime 2:} By Corollary~\ref{Cor:ThetaAssymp}, $\theta^2_{n,\Delta}$ and $\theta^3_{n,\Delta,N_1}$ are $O(1)$, uniformly in $(n,N_1)\in \mathcal{N}_1$ and $\Delta \in [\Delta_*,\infty)$. Choose $N_1(\vartheta)$ such that, for all $N_1\geq N_1(\vartheta)$,
\begin{equation*}
\begin{array}{lll}
\min \left(\upsilon \Psi\left( \lfloor \eta N_1\rfloor; N_1 \right), \frac{1}{2}\lambda_{\lfloor \eta N_1\rfloor} \right)\geq \vartheta.
\end{array}
\end{equation*}
Then, for $(n,N_1)\in \mathcal{N}_\eta$ and $N_1\geq N_1(\vartheta)$
\begin{equation*}
\begin{array}{lll}
\left|\hat{\phi}_n-\phi_n \right|\leq  \varepsilon \mathcal{D}_{\phi}\left(\max_{N_1\in \mathbb{N}}e^{- \Delta_* \upsilon\Psi\left(\lfloor \eta N_1 \rfloor; N_1  \right)+O(1)}\right)e^{-\upsilon \Delta \Psi\left(\lfloor \eta N_1 \rfloor; N_1  \right)} \leq \varepsilon \mathcal{D}^{(1)}_{\phi}e^{-\vartheta \Delta},\quad \Delta\gg 0.
\end{array}
\end{equation*}
Similarly, for $(n,N_1)\in \mathcal{N}_1\setminus \mathcal{N}_{\eta}$ with $N_1\geq N_1(\vartheta)$
\begin{equation*}
\begin{array}{lll}
\left|\hat{\phi}_n-\phi_n \right|\leq \varepsilon \mathcal{D}_{\phi}e^{- \Delta \lambda_{\lfloor \eta N_1 \rfloor }+O(1)}\leq \varepsilon \mathcal{D}_{\phi}\left(\max_{N_1\in \mathbb{N}}e^{-\frac{1}{2} \Delta_* \lambda_{\lfloor \eta N_1 \rfloor}+O(1)} \right)e^{-\frac{1}{2}\Delta \lambda_{\lfloor \eta N_1 \rfloor }}\leq  \varepsilon \mathcal{D}_{\phi}^{(2)}e^{-\vartheta \Delta },\quad \Delta \gg 0.
\end{array}
\end{equation*}
Hence, the first inequality in \eqref{eq:Asymptoticphi11} holds with $\mathcal{D}^{(3)}_{\phi} = \min \left( \mathcal{D}^{(1)}_{\phi},\mathcal{D}^{(2)}_{\phi}\right)$. To get the second inequality in \eqref{eq:Asymptoticphi11}, we employ \eqref{eq:lambdadiffN1} to obtain, for $(n,N_1)\in \mathcal{N}_{\eta}$ with $N_1 \geq N_1(\vartheta)$,
\begin{equation*}
\begin{array}{lll}
\left|\hat{\lambda}_n-\lambda_n \right| &\leq \varepsilon \mathcal{D}_{\lambda}\frac{1}{\Delta}\left(\max_{N_1\in \mathbb{N}}e^{- \Delta_* \upsilon\Psi\left(\lfloor \eta N_1 \rfloor; N_1  \right)+O(1)}\right)e^{-\upsilon \Delta \Psi\left(\lfloor \eta N_1 \rfloor; N_1 \right)}\leq \varepsilon \frac{\mathcal{D}^{(1)}_{\lambda}}{\Delta}e^{-\vartheta \Delta},\quad \Delta \gg 0.
\end{array}
\end{equation*}

\underline{Regime 3:} Let $\Delta N_1=T>0$. Consider first $n\in [\lfloor \eta N_1 \rfloor ]$, $N_1\gg1$. Then, Corollary~\ref{Cor:ThetaAssymp} shows that $\theta^2_{n,\Delta}$ and $\theta^3_{n,\Delta,N_1}$ are $O(N_1)$, uniformly in $N_1\in \mathbb{N}$. Hence, for $(n,N_1)\in \mathcal{N}_{\eta}$ with $N_1\gg 1$, we obtain the first inequality in \eqref{eq:Asymptoticphi111}:
\begin{equation}\label{eq:Reg3etaN1}
\begin{array}{lll}
\left|\hat{\phi}_n-\phi_n \right|&\leq  \varepsilon \mathcal{D}_{\phi}e^{- \frac{T}{N_1} \left(2\upsilon\Psi(n; N_1)+\lambda_n\right)+O(N_1)} \\
&\leq \varepsilon \mathcal{D}_{\phi}\left(\max_{N_1\in \mathbb{N}}e^{-T \frac{\upsilon \Psi\left(\lfloor \eta N_1 \rfloor; N_1 \right)}{N_1}+O(N_1)} \right)e^{-\Gamma^{Pr}_2(T,\eta)N_1^2} =: \varepsilon \mathcal{D}_{\phi}^{(4)} e^{-\Gamma^{Pr}_2(T,\eta)N_1^2}
\end{array}
\end{equation}
for some $\Gamma^{Pr}_2(T,\eta)>0$. Here we used Proposition~\ref{Prop:VFrakAsymp}, which shows that $\frac{\Psi\left(\lfloor \eta N_1 \rfloor; N_1 \right)}{N_1}\propto N_1^2$ for $N_1\gg 1$; hence, the maximum in \eqref{eq:Reg3etaN1} is finite. For $(n,N_1)\in \mathcal{N}_1\setminus \mathcal{N}_{\eta}$,  Corollary~\ref{Cor:ThetaAssymp} shows that $\theta^2_{n,\Delta}$ and $\theta^3_{n,\Delta,N_1}$ are $O(1)$, uniformly in $N_1\in \mathbb{N}$, and hence we obtain the second inequality in \eqref{eq:Asymptoticphi111}:
\begin{equation}\label{eq:Reg3notetaN1}
\begin{array}{lll}
\left|\hat{\phi}_n-\phi_n \right|&\leq  \varepsilon \mathcal{D}_{\phi}e^{- \frac{T}{N_1} \left(2\upsilon\Psi(n; N_1)+\lambda_n\right)+O(1)} \\
&\leq \varepsilon \mathcal{D}_{\phi}\left(\max_{N_1\in \mathbb{N}}e^{-T \frac{\lambda_{\lfloor \eta N_1 \rfloor}}{2N_1}+O(1)} \right)e^{-\Gamma^{Pr}_3(T,\eta)N_1}=:\varepsilon \mathcal{D}_{\phi}^{(5)} e^{-\Gamma^{Pr}_3(T,\eta)N_1}
\end{array}
\end{equation}
for some $\Gamma^{Pr}_3(T,\eta)>0$. Here we used the fact that $\frac{\lambda_{\lfloor \eta N_1\rfloor }}{N_1}\geq \frac{\upsilon \left(\lfloor \eta N_1 \rfloor ^2 -1 \right)+\lambda_1}{N_1}\propto N_1$ as $N_1 \to \infty$, whence the maximum in \eqref{eq:Reg3notetaN1} is finite. Finally, for $(n,N_1)\in \mathcal{N}_{\eta}$ with $N_1\gg 1$, by \eqref{eq:lambdadiffN1}, we obtain the third inequality in \eqref{eq:Asymptoticphi111}:
\begin{equation*}
\begin{array}{lll}
\left|\hat{\lambda}_n-\lambda_n \right| &\leq \varepsilon \mathcal{D}_{\lambda}\frac{1}{\Delta}\left(\max_{N_1\in \mathbb{N}}e^{- T \frac{\upsilon\Psi\left(\lfloor \eta N_1 \rfloor; N_1  \right)}{N_1}+O(N_1)}\right)e^{-\Gamma^{Pr}_2(T,\eta)N_1^2}=:\varepsilon \frac{\mathcal{D}_{\lambda}^{(2)}}{\Delta}e^{-\Gamma^{Pr}_2(T,\eta)N_1^2}.
\end{array}
\end{equation*}
This completes the proof of Theorem~\ref{thm:unifbdd11}.
\end{proof}

\subsection{Proof of Theorem~\ref{thm:PronyAmplitudeCondition} }\label{Sec:PronyAmplitProof}
 Recall the expression $\operatorname{col}\left\{ \hat{y}_n^{Pr} - y_n\right\}_{n=1}^{N_1} = \hat{V}^{-1}\cdot \operatorname{col} \left\{\varepsilon y_{N_1+1}e^{-\Delta \lambda_{N_1+1} k} \right\}_{k=0}^{N_1-1} - \left(I_{N_1}-\hat{V}^{-1}V \right)\cdot \operatorname{col}\left\{y_k \right\}_{k=1}^{N_1}$ in \eqref{eq:PronyYdiff}-\eqref{eq:VDMmatDef}. The proof of Theorem~\ref{thm:PronyAmplitudeCondition} is based on upper bounding the two terms  
\begin{equation}\label{eq:twoterms}
\begin{array}{lll}
\left[\hat{V}^{-1}\cdot \operatorname{col} \left\{\varepsilon y_{N_1+1}e^{-\Delta \lambda_{N_1+1}k} \right\}_{k=0}^{N_1-1}\right]_n,\quad \left[\left(I_{N_1}-\hat{V}^{-1}V \right)\cdot \operatorname{col}\left\{y_k \right\}_{k=1}^{N_1}\right]_n, \quad (n,N_1)\in \mathcal{N}_{\eta},
\end{array}
\end{equation}
separately, so that an upper bound for $\left|\hat{y}_n^{Pr} - y_n\right|$ can be obtained by summing those two upper bounds, and then showing that each of them has the required asymptotic behavior in all the regimes in \eqref{Eq:Regimes}. Initially, we consider $\varepsilon \in (0,\varepsilon_*]$ where $\varepsilon_*>0$ is given in Theorem~\ref{thm:unifbdd11}. The value of $\varepsilon_*>0$ will be further constrained below. For brevity, we subsequently denote $\hat{\phi}_n^{Pr}$ by $\hat{\phi}_n$.

We begin by considering the first term in \eqref{eq:twoterms}. Introduce
\begin{equation}\label{eq:firstterm}
\begin{array}{lll}
 \mathcal{I}_n^{(1)}&: = \left|\left[\hat{V}^{-1}\cdot \operatorname{col} \left\{\varepsilon y_{N_1+1}e^{-\lambda_{N_1+1} k \Delta} \right\}_{k=0}^{N_1-1}\right]_n \right|=\varepsilon |y_{N_1+1}|  \prod_{s\neq n} \frac{\left|\phi_{N_1+1}-\hat{\phi}_s \right|}{\left|\hat{\phi}_s - \hat{\phi}_n \right|},\quad (n,N_1)\in \mathcal{N}_{\eta}.
\end{array}
\end{equation}
where the second equality is obtained by performing the multiplication and using the explicit form of $\hat{V}^{-1}$ in \eqref{eq:VDMmatDef} and its relation to Lagrange interpolation. In fact, since $e^{-\lambda_{N_1+1} k \Delta} = \phi_{N_1+1}^k$, we have
\begin{equation*}
\begin{array}{lll}
\left[\hat{V}^{-1}\cdot \operatorname{col} \left\{\varepsilon y_{N_1+1}e^{-\lambda_{N_1+1} k \Delta} \right\}_{k=0}^{N_1-1}\right]_n & = \varepsilon y_{N_1+1} \sum_{j=0}^{N_1-1} \hat l_{n,j} \phi_{N_1+1}^j = \varepsilon y_{N_1+1} \hat L_n(\phi_{N_1+1})\\ &= \varepsilon y_{N_1+1} \prod_{s \neq n} \frac{\phi_{N_1+1}-\hat{\phi}_s}{\hat{\phi}_s-\hat{\phi}_n}.
\end{array}
\end{equation*}

\begin{proposition}\label{prop:RatioConstituents}
There exist constants $\mathcal{C}_i>0$, $i=1,2$, such that, in all the regimes in \eqref{Eq:Regimes}, 
\begin{equation*}
\begin{array}{lll}
&\frac{\prod_{s\neq n}\phi_s^2}{\prod_{s\neq n}\left|\phi_s - \phi_n \right|^2} \leq \mathcal{C}_1,\quad (n,N_1)\in \mathcal{N}_{1}, \vspace{0.1cm}\\
&\frac{|\phi_{N_1+1}-\hat{\phi}_n|}{\left|\phi_{N_1+1}-\phi_n\right|} \leq 1+\mathcal{C}_2 \varepsilon ,\quad (n,N_1)\in \mathcal{N}_1,\\
& \frac{|\phi_s - \hat{\phi}_s |}{|\phi_s - \phi_n |} \leq \mathcal{C}_2 \varepsilon,\quad (s,N_1),(n,N_1)\in \mathcal{N}_1,\ s\neq n.
\end{array}
\end{equation*}
\end{proposition}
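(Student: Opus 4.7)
The plan is to prove the three claims in order, with (1) being the key technical step and (2), (3) following as relatively direct consequences of (1) combined with Theorem~\ref{thm:unifbdd11} and Proposition~\ref{Prop:relSepUnif}.

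For (1), I would invoke Proposition~\ref{prop:ComponentBounds0} to write $\prod_{s\neq n}|\phi_s-\phi_n|^2 = \mathcal{Z}^2_{n,\Delta}\mathcal{Z}^3_{n,\Delta,N_1}$ and combine with $\prod_{s\neq n}\phi_s^2 = \exp\bigl(-2\Delta\sum_{s\neq n}\lambda_s\bigr)$. A direct cancellation would yield
\[
\frac{\prod_{s\neq n}\phi_s^2}{\prod_{s\neq n}|\phi_s-\phi_n|^2} = \exp\Bigl(-2\Delta \sum_{j=n+1}^{N_1}(\lambda_j-\lambda_n) + 2\theta^2_{n,\Delta} + 2\theta^3_{n,\Delta,N_1}\Bigr),
\]
whose first term in the exponent is non-positive. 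It then suffices to bound the $\theta$-terms using Corollary~\ref{Cor:ThetaAssymp}: in Regimes~1 and~2, both quantities are uniformly $O(1)$ on $\mathcal{N}_1$, so the exponent stays $O(1)$; in Regime~3 the $\theta$-terms may be $O(N_1)$ for $n\in[\lfloor\varphi N_1\rfloor]$ (with a fixed $\varphi\in(0,1)$), but in this range Propositions~\ref{prop:EigDiffBound} and~\ref{Prop:VFrakAsymp} give $-2\Delta\sum_{j=n+1}^{N_1}(\lambda_j-\lambda_n)\leq -2\upsilon T\,\Psi(n;N_1)/N_1 \leq -2\upsilon T a(\varphi)N_1^2$ for $N_1$ large, and the $N_1^2$-decay absorbs the linear growth; for $n$ outside $[\lfloor\varphi N_1\rfloor]$ the $\theta$-terms are themselves $O(1)$. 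This yields a uniform constant $\mathcal{C}_1>0$.

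For (2), I would apply the triangle inequality $|\phi_{N_1+1}-\hat\phi_n|\leq |\phi_{N_1+1}-\phi_n|+|\phi_n-\hat\phi_n|$ and reduce to estimating $|\phi_n-\hat\phi_n|/|\phi_{N_1+1}-\phi_n|$. The numerator is controlled by \eqref{eq:Basic estimates} in Theorem~\ref{thm:unifbdd11} combined with claim (1), giving $|\phi_n-\hat\phi_n|\leq \varepsilon\mathcal{D}_\phi\mathcal{C}_1 \phi_n$. For the denominator, Proposition~\ref{prop:EigDiffBound} yields $|\phi_{N_1+1}-\phi_n|/\phi_n = 1-e^{-\Delta(\lambda_{N_1+1}-\lambda_n)} \geq 1 - e^{-\Delta\upsilon((N_1+1)^2-n^2)}$, whose worst case $n=N_1$ gives exponent $\Delta\upsilon(2N_1+1)$. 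This is bounded below by $3\Delta_*\upsilon>0$ in Regimes~1--2 and by $2\upsilon T>0$ in Regime~3 (since $\Delta\upsilon(2N_1+1)=\upsilon T(2+1/N_1)$), producing a $\mathcal{C}_2>0$ independent of $(n,N_1)$.

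For (3), I would use Proposition~\ref{Prop:relSepUnif} in its symmetric form $|\phi_s-\phi_n|\geq 2\tau\phi_s$ for $s\neq n$ (which follows from applying the proposition with the roles of $s$ and $n$ swapped, as $\tau$ is uniform in the index), together with Theorem~\ref{thm:unifbdd11} applied at index $s$ and claim (1), obtaining
\[
\frac{|\phi_s-\hat\phi_s|}{|\phi_s-\phi_n|}\leq \frac{\varepsilon\mathcal{D}_\phi\phi_s\mathcal{C}_1}{2\tau\phi_s} = \frac{\varepsilon\mathcal{D}_\phi\mathcal{C}_1}{2\tau}.
\]
Taking $\mathcal{C}_2$ to be the maximum of the constants from (2) and (3) then handles both inequalities. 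The hardest step will be establishing (1) uniformly in Regime~3, where the $\theta$-terms may grow as $O(N_1)$ for small $n$ and one must carefully track how the $N_1^2$-decay from $\Psi(n;N_1)/N_1$ absorbs this linear growth; once (1) is in hand, (2) and (3) are essentially corollaries of the already-proven results.
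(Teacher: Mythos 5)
Your proposal is correct and follows essentially the same route as the paper: claim (1) via the representation \eqref{eq:fracAsymp} (i.e., Proposition~\ref{prop:ComponentBounds0}) combined with Corollary~\ref{Cor:ThetaAssymp} and Proposition~\ref{Prop:VFrakAsymp} (with the same two-case split in Regime~3), and claims (2)--(3) via the triangle inequality, the bound \eqref{eq:Basic estimates} from Theorem~\ref{thm:unifbdd11}, claim (1), and the relative separation of the nodes. The only cosmetic difference is that in (2) you re-derive the lower bound on $|\phi_{N_1+1}-\phi_n|/\phi_n$ directly from Proposition~\ref{prop:EigDiffBound} rather than citing \eqref{eq:TauSeparationDelta}, which amounts to the same computation.
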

\begin{proof}
The expression $\frac{\prod_{s\neq n}\phi_s^2}{\prod_{s\neq n}\left|\phi_s - \phi_n \right|^2}$ can be upper bounded as in \eqref{eq:fracAsymp} and then arguments akin to those in the proof of Theorem~\ref{thm:unifbdd11} can be applied. In Regimes 1 and 2, by Corollary~\ref{Cor:ThetaAssymp}, $\theta^2_{n,\Delta}$ and $\theta^3_{n,\Delta,N_1}$ are $O(1)$, uniformly in $(n,N_1)\in \mathcal{N}_1$ and $\Delta \in [\Delta_*,\infty)$; moreover, by Proposition~\ref{Prop:VFrakAsymp}, $\Psi(n;N_1)\geq 0$. Hence, the expression is bounded. In Regime 3, we need to distinguish between two cases. If $(n,N_1)\in \mathcal{N}_\phi$, by Corollary~\ref{Cor:ThetaAssymp}, $\theta^2_{n,\Delta}$ and $\theta^3_{n,\Delta,N_1}$ are $O(N_1)$; however, boundedness of the expression can be ensured since $\frac{\Psi\left(\lfloor \eta N_1 \rfloor; N_1 \right)}{N_1}\propto N_1^2$ for $N_1\gg 1$ by Proposition~\ref{Prop:VFrakAsymp}. Conversely, if $(n,N_1)\in \mathcal{N}_1\setminus \mathcal{N}_{\eta}$,  Corollary~\ref{Cor:ThetaAssymp} shows that $\theta^2_{n,\Delta}$ and $\theta^3_{n,\Delta,N_1}$ are $O(1)$, and hence boundedness follows since $\Psi(n;N_1)\geq 0$. We denote by $\mathcal{C}_1$ the common upper bound.

Then, the second inequality is obtained because, using the triangle inequality,
\begin{equation*}
\begin{array}{lll}
\frac{|\phi_{N_1+1}-\hat{\phi}_n|}{\left|\phi_{N_1+1}-\phi_n\right|}&\leq 1+\frac{\left|\phi_n - \hat{\phi}_n \right|}{\left|\phi_{N_1+1}-\phi_n \right|} = 1+ \frac{\left|\phi_n - \hat{\phi}_n \right|}{\phi_n} \frac{\phi_n}{\left|\phi_{N_1+1}-\phi_n \right|}\overset{\eqref{eq:TauSeparationDelta}}{\leq}1+\frac{1}{2\tau}\frac{\left|\phi_n - \hat{\phi}_n \right|}{\phi_n}
\\
&\overset{\eqref{eq:Basic estimates}}{\leq } 1 + \varepsilon \frac{\mathcal{D}_{\phi}}{2\tau} \frac{ \prod_{s\neq n}\phi_s^2}{\prod_{s\neq n}|\phi_{s}-\phi_{n}|^2 }\leq 1+\varepsilon\mathcal{C}_2
\end{array}
\end{equation*}
for $\mathcal{C}_2 = \frac{\mathcal{D}_{\phi}\mathcal{C}_1}{2\tau}$. Similarly, for  $(s,N_1),(n,N_1)\in \mathcal{N}_1$ with $s\neq n$, we have
\begin{equation*}
\begin{array}{lll}
\frac{|\phi_s - \hat{\phi}_s |}{|\phi_s - \phi_n |} = \frac{|\phi_s - \hat{\phi}_s |}{\phi_s} \frac{\phi_s}{|\phi_s - \phi_n |}\overset{\eqref{eq:TauSeparationDelta}}{\leq} \frac{1}{2\tau}\frac{|\phi_s - \hat{\phi}_s |}{\phi_s}\overset{\eqref{eq:Basic estimates}}{ \leq} \varepsilon \frac{\mathcal{D}_{\phi}}{2\tau}\frac{ \prod_{j\neq s}\phi_j^2}{\prod_{j\neq s}|\phi_{j}-\phi_{s}|^2 }\leq \varepsilon \mathcal{C}_2,
\end{array}
\end{equation*}
which concludes the proof.
\end{proof}
We now further constrain $\varepsilon_*>0$ by assuming that $\varepsilon_* \mathcal{C}_2< \frac{1}{4}$. The value $\frac{1}{4}$ can be replaced by any number in $\left(0,\frac{1}{2}\right)$ and is chosen explicitly just to simplify the subsequent derivations. 

Employing Proposition~\ref{prop:RatioConstituents}, we can bound the numerator and denominator of the fraction in \eqref{eq:firstterm} as follows:
\begin{equation}\label{eq:SwitchEstimates}
\begin{array}{lll}
&\prod_{s\neq n}\left|\phi_{N_1+1}-\hat{\phi}_s \right| \leq \left(1+\varepsilon\mathcal{C}_2 \right)^{N_1-1}\prod_{s\neq n}\left|\phi_{N_1+1}-\phi_s \right| \leq \left(\frac{5}{4} \right)^{N_1-1}\prod_{s\neq n}\left|\phi_{N_1+1}-\phi_s \right|,\vspace{0.1cm}\\
&\frac{\left|\hat{\phi}_s - \hat{\phi}_n \right|}{\left|\phi_s - \phi_n \right|}\geq 1 - \frac{\left|\phi_s - \hat{\phi}_s \right|}{|\phi_s- \phi_n|}- \frac{\left|\phi_n - \hat{\phi}_n \right|}{|\phi_s- \phi_n|}\geq 1-2\varepsilon\mathcal{C}_2 \geq \frac{1}{2},\quad s\neq n\\
&\hspace{55mm} \Longrightarrow \prod_{s\neq n}\left|\hat{\phi}_s - \hat{\phi}_n \right|\geq \frac{1}{2^{N_1-1}}\prod_{s\neq n}\left|\phi_s - \phi_n \right|.
\end{array}
\end{equation}
Therefore,
\begin{equation*}
\begin{array}{lll}
\mathcal{I}_n^{(1)} &\leq \varepsilon \left|y_{N_1+1} \right| \left(\frac{5}{2} \right)^{N_1-1}\prod_{s\neq n}\frac{\left|\phi_{N_1+1}-\phi_s \right|}{\left|\phi_s - \phi_n \right|}= \varepsilon \left|y_{N_1+1} \right| \left(\frac{5}{2} \right)^{N_1-1}\left|L_n\left(\phi_{N_1+1} \right) \right|.
\end{array}
\end{equation*}
We now assess the asymptotic behavior of $\mathcal{I}_n^{(1)}$. Recalling the proof of Lemma~\ref{lem:Lsquared}, and specifically \eqref{eq:LegPol1Useful} and \eqref{eq:LegPol2Useful}, as well as Corollary~\ref{Cor:ThetaAssymp}, we consider the regimes in \eqref{Eq:Regimes} separately, taking into account the fact that $n\in [\lfloor \eta N_1 \rfloor]$ and Assumption~\ref{assump:yassump}, and we resort to arguments partially similar to those in the proof of Theorem~\ref{thm:unifbdd11}.

\begin{itemize}
    \item \underline{Regime 1:} Let $\Delta_*>0$ be fixed and $\Delta\geq \Delta_*$. Then,
    \begin{equation*}
    \begin{array}{lll}
    \mathcal{I}_n^{(1)} &\leq \varepsilon \left|y_{N_1+1} \right| \left(\frac{5}{2} \right)^{N_1-1} e^{-\upsilon \Delta_* \Psi(\lfloor\eta N_1\rfloor; N_1)+O(1)}\\
    &\leq \varepsilon M_y^{(1)}\max_{N_1\in \mathbb{N}}\left( \left(\frac{5}{2} \right)^{N_1-1} e^{-\frac{\upsilon}{2} \Delta_* \Psi(\lfloor\eta N_1\rfloor; N_1)+O(1)} \right)e^{-\frac{\upsilon}{2} \Delta_* \Psi(\lfloor\eta N_1\rfloor; N_1)}.
    \end{array}
    \end{equation*}
    Invoking Proposition~\ref{Prop:VFrakAsymp}, we see that $\frac{\upsilon}{2} \Delta_* \Psi(\lfloor\eta N_1\rfloor; N_1) \propto N_1^3 $ for $N_1 \gg 1$.

    \item \underline{Regime 2:} Let $\Delta_*>0$ and $\vartheta>0$, and choose $N_1^{(1)}(\vartheta)$ such that for all $N_1\geq N_1^{(1)}(\vartheta)$
    \begin{equation*}
    \frac{\upsilon}{2}\Psi\left(\lfloor \eta N_1 \rfloor; N_1 \right) \geq \vartheta,
    \end{equation*}
    where we invoke Proposition~\ref{Prop:VFrakAsymp} to guarantee the existence of $N_1^{(1)}(\vartheta)$. Then, for $N_1\geq N_1^{(1)}(\vartheta)$
    \begin{equation*}
    \begin{array}{lll}
    \mathcal{I}_n^{(1)} &\leq \varepsilon \left|y_{N_1+1} \right| \left(\frac{5}{2} \right)^{N_1-1} e^{-\upsilon \Delta \Psi(\lfloor\eta N_1\rfloor; N_1)+o(1)}\\
    &\leq \varepsilon M_y^{(1)}\max_{N_1\geq N_1^{(1)}(\vartheta)}\left( \left(\frac{5}{2} \right)^{N_1-1} e^{-\frac{\upsilon}{2} \Delta_* \Psi(\lfloor\eta N_1\rfloor; N_1)+o(1)} \right)e^{-\vartheta \Delta}.
    \end{array}
    \end{equation*}

    \item \underline{Regime 3:} Let $N_1\Delta \equiv T>0$. Then, 
    \begin{equation*}
    \begin{array}{lll}
    \mathcal{I}_n^{(1)} &\leq \varepsilon M_y^{(1)} \left(\frac{5}{2} \right)^{N_1-1} e^{-\upsilon T \frac{\Psi(\lfloor\eta N_1\rfloor; N_1) }{N_1}+O(N_1)}\\
    &=\varepsilon M_y^{(1)} e^{-\upsilon T \frac{\Psi(\lfloor\eta N_1\rfloor; N_1) }{N_1}+O(N_1)}.
    \end{array}
    \end{equation*}
    Invoking Proposition~\ref{Prop:VFrakAsymp} again, we have $\upsilon T \frac{\Psi(\lfloor\eta N_1\rfloor; N_1) }{N_1} \propto N_1^2 $  for $N_1 \gg 1$.  
\end{itemize}
Therefore, $\max_{n\in [\lfloor \eta N_1 \rfloor]}\mathcal{I}^{(1)}_n$ exhibits the same asymptotic behavior as $\max_{n\in [\lfloor \eta N_1 \rfloor]}\left|\mathcal{K}_y(n)\right|$, obtained in Theorem~\ref{Thm:FirstOrdCond}, in all regimes \eqref{Eq:Regimes}.

Our next step is to prove analogous estimates on the second term in \eqref{eq:twoterms}. In view of Assumption~\ref{assump:yassump}, $|y_n| \leq \ell$. Then, the triangle inequality yields
\begin{equation*}
\begin{array}{lll}
\left|\left[\left(I_{N_1}-\hat{V}^{-1}V \right)\cdot \operatorname{col}\left\{y_k \right\}_{k=1}^{N_1}\right]_n \right| &\leq \ell \left(\left|1 -\left[\hat{V}^{-1}V \right]_{n,n} \right|+ \sum_{k\neq n}\left|\left[\hat{V}^{-1}V \right] _{n,k}\right| \right)\\
&\leq \ell N_1 \max\left(\left|1 -\left[\hat{V}^{-1}V \right]_{n,n} \right|,\max_{k\neq n}\left|\left[\hat{V}^{-1}V \right] _{n,k}\right|  \right)=:\mathcal{I}_n^2.
\end{array}
\end{equation*}
Given $n\in [\lfloor \eta N_1 \rfloor]$, we upper bound $N_1\left|1 -\left[\hat{V}^{-1}V \right]_{n,n} \right|$ and $N_1\left|\left[\hat{V}^{-1}V \right] _{n,k}\right|$, $(k,N_1)\in \mathcal{N}_{1}$, $k\neq n$, and we show that each of these terms exhibits the desired asymptotic behavior in all regimes \eqref{Eq:Regimes}. 

We begin by noting that, from \eqref{eq:VDMmatDef},
\begin{equation*}
\begin{array}{lll}
V^{-1} \begin{bmatrix}
    \phi_j^0\\
    \phi_j^1\\
    \phi_j^2\\
    \vdots\\
    \phi_j^{N_1-1}    
\end{bmatrix} = 
\begin{bmatrix}
    L_1(\phi_j)\\
    L_2(\phi_j)\\
    \vdots\\
    L_{j-1}(\phi_j)\\
    L_{j}(\phi_j)\\
    L_{j+1}(\phi_j)\\
    \vdots\\
    L_{N_1-1}(\phi_j)    
\end{bmatrix} = \begin{bmatrix}
    0\\
    0\\
    \vdots\\
    0\\
    1\\
    0\\
    \vdots\\
    0    
\end{bmatrix} = \mathrm{e}_{j}
\Longrightarrow V^{-1} \underbrace{\begin{bmatrix}
    \phi_1^0 & \phi_2^0 & \phi_3^0 & \dots & \phi_{N_1}^0\\
    \phi_1^1 & \phi_2^1 & \phi_3^1 & \dots & \phi_{N_1}^1\\
    \phi_1^2 & \phi_2^2 & \phi_3^2 & \dots & \phi_{N_1}^2\\
    \vdots & \vdots & \vdots & \ddots & \vdots\\
    \phi_1^{N_1-1} & \phi_2^{N_1-1} & \phi_3^{N_1-1} & \dots & \phi_{N_1}^{N_1-1}
\end{bmatrix}}_{V} = I_{N_1}.
\end{array} \end{equation*}

Then, $\left[\hat{V}^{-1}V \right]_{n,k} = \sum_{j=0}^{N_1-1} \hat l_{n,j} \phi_k^j = \hat L_n(\phi_k) = \prod_{s \neq n} \frac{\phi_k-\hat \phi_s}{\hat \phi_n - \hat \phi_s}$.

Consider first the off-diagonal element with $(k,N_1)\in \mathcal{N}_1$, $k\neq n$. Recall that we consider $n\in [\lfloor \eta N_1 \rfloor]$, while $k \in [N_1]$. Employing arguments similar to \eqref{eq:SwitchEstimates}, we have that, for some $\mathcal{C}_3>1$,
\begin{equation*}
\begin{array}{lll}
N_1\left| \left[\hat{V}^{-1}V \right]_{n,k} \right| &= N_1\prod_{s\neq n}\frac{\left| \phi_k-\hat{\phi}_s\right|}{\left|\hat{\phi}_n - \hat{\phi}_s \right|} \leq N_1\mathcal{C}_3^{N_1}\left|\phi_k - \hat{\phi}_k \right| \frac{\prod_{s\neq n,k}\left|\phi_k - \phi_s \right|}{\prod_{s\neq n}\left|\phi_s - \phi_n \right|}\\
&\overset{\eqref{eq:Basic estimates}}{\leq }\varepsilon \mathcal{D}_{\phi}N_1\mathcal{C}_3^{N_1}\frac{\phi_k \prod_{s\neq k}\phi_s^2}{\prod_{s\neq k}\left|\phi_s - \phi_k \right|^2} \frac{\prod_{s\neq n,k}\left|\phi_k - \phi_s \right|}{\prod_{s\neq n}\left|\phi_s - \phi_n \right|}\\
& = \varepsilon \mathcal{D}_{\phi}N_1\mathcal{C}_3^{N_1} \frac{\phi_k}{\left|\phi_k -\phi_n \right|}\frac{ \prod_{s\neq k}\phi_s^2}{\prod_{s\neq k}\left|\phi_s - \phi_k \right|^2} \frac{\prod_{s\neq k}\left|\phi_k - \phi_s \right|}{\prod_{s\neq n}\left|\phi_s - \phi_n \right|}\\
&\overset{\eqref{eq:TauSeparationDelta}}{\leq}\varepsilon \frac{\mathcal{D}_{\phi}}{2\tau}N_1\mathcal{C}_3^{N_1} \frac{ \prod_{s\neq k}\phi_s}{\prod_{s\neq k}\left|\phi_s - \phi_k \right|} \frac{\prod_{s\neq k}\phi_s}{\prod_{s\neq n}\left|\phi_s - \phi_n \right|}\\
&=\varepsilon \frac{\mathcal{D}_{\phi}}{2\tau}N_1\mathcal{C}_3^{N_1}  \frac{\phi_n}{\phi_k} \frac{ \prod_{s\neq k}\phi_s}{\prod_{s\neq k}\left|\phi_s - \phi_k \right|} \frac{\prod_{s\neq n}\phi_s}{\prod_{s\neq n}\left|\phi_s - \phi_n \right|}\\
& \overset{\text{Prop.~\ref{prop:RatioConstituents}}}{\leq} \varepsilon \frac{\mathcal{D}_{\phi} \sqrt{\mathcal{C}_1}}{2\tau}N_1\mathcal{C}_3^{N_1}  \frac{\phi_n}{\phi_k} \frac{\prod_{s\neq n}\phi_s}{\prod_{s\neq n}\left|\phi_s - \phi_n \right|}.
\end{array}
\end{equation*}
Note that, for $n\in [\lfloor \eta N_1 \rfloor]$ and $j\in [N_1]$,
$\frac{\phi_n}{\phi_j} = e^{-\Delta\left(\lambda_j-\lambda_n \right)}\leq e^{\Delta \left( \lambda_{\lfloor \eta N_1 \rfloor }- \lambda_1\right)}\overset{\eqref{eq:EigDiff}}{\leq}e^{\Delta \Upsilon \left(\lfloor \eta N_1 \rfloor^2-1 \right)}$.

The expression $\frac{\prod_{s\neq n}\phi_s}{\prod_{s\neq n}\left|\phi_s - \phi_n \right|}$ can be upper bounded as in \eqref{eq:fracAsymp} and then arguments akin to those in the proof of Theorem~\ref{thm:unifbdd11} and already employed earlier on in this proof (which leverage Corollary~\ref{Cor:ThetaAssymp}, Proposition~\ref{Prop:VFrakAsymp} and Lemma~\ref{lem:Lsquared}), can be applied to address each of the regimes in \eqref{Eq:Regimes}, taking into account that $n\in [\lfloor \eta N_1 \rfloor]$, which allows us to obtain the desired decay rates (compared to the case of $k$, which can be arbitrary in $[N_1]$, whence the corresponding term $\frac{\prod_{s\neq k}\phi_s}{\prod_{s\neq k}\left|\phi_s - \phi_k \right|}$ can only be upper bounded by a constant).
\begin{itemize}
    \item \underline{Regime 1:} Let $\Delta_*>0$ be fixed and $\Delta\geq \Delta_*$. Then,
\begin{equation*}
\begin{array}{lll}
N_1\left| \left[\hat{V}^{-1}V \right]_{n,k} \right| &\leq \varepsilon \frac{\mathcal{D}_{\phi}\sqrt{\mathcal{C}_1}}{2\tau}N_1\mathcal{C}_3^{N_1}  e^{\Delta \Upsilon \left(\lfloor \eta N_1 \rfloor^2-1 \right)} e^{-\upsilon\Delta\Psi\left(\lfloor \eta N_1 \rfloor; N_1  \right)+O(1)}.
\end{array}
\end{equation*}
For $N_1\gg 1$, in view of Proposition~\ref{Prop:VFrakAsymp} it is $\Psi(\lfloor\eta N_1\rfloor; N_1) \propto N_1^3 $, and hence we have 
\begin{equation*}
\begin{array}{lll}
\Delta_*\left(\frac{\upsilon}{2} \Psi\left(\lfloor \eta N_1 \rfloor; N_1  \right)-\Upsilon \left(\lfloor \eta N_1 \rfloor^2-1 \right)\right)-N_1\ln\left(\mathcal{C}_3 \right)-\ln(N_1)-O(1) >0,
\end{array}
\end{equation*}
whence
\begin{equation*}
\begin{array}{lll}
N_1 \left| \left[\hat{V}^{-1}V \right]_{n,k} \right| &\leq \varepsilon \frac{\mathcal{D}_{\phi}\sqrt{\mathcal{C}_1}}{2\tau}e^{-\frac{\upsilon}{2}\Delta_*\Psi\left(\lfloor \eta N_1 \rfloor; N_1  \right)},\quad N_1\gg 1.
\end{array}
\end{equation*}
Again, Proposition~\ref{Prop:VFrakAsymp} ensures that $\frac{\upsilon}{2} \Delta_* \Psi(\lfloor\eta N_1\rfloor; N_1) \propto N_1^3 $ for $N_1 \gg 1$.

\item \underline{Regime 2:} Let $\vartheta>0$ and $\Delta_*>0$ be fixed. Then, 
\begin{equation*}
\begin{array}{lll}
N_1 \left| \left[\hat{V}^{-1}V \right]_{n,k} \right| &\leq \varepsilon \frac{\mathcal{D}_{\phi}\sqrt{\mathcal{C}_1}}{2\tau}N_1\mathcal{C}_3^{N_1}  e^{\Delta \Upsilon \left(\lfloor \eta N_1 \rfloor^2-1 \right)} e^{-\upsilon\Delta\Psi\left(\lfloor \eta N_1 \rfloor; N_1  \right)+o(1)}.
\end{array}
\end{equation*}
Choosing $N_1^{(2)}(\vartheta)\in \mathbb{N}$ such that, for all $N_1 \geq N_1^{(2)}(\vartheta)$,
\begin{equation*}
        \Delta_*\left(\frac{\upsilon}{2} \Psi\left(\lfloor \eta N_1 \rfloor; N_1  \right)-\Upsilon \left(\lfloor \eta N_1 \rfloor^2-1 \right)\right)-N_1\ln\left(\mathcal{C}_3 \right)-\ln(N_1) >0,\quad \frac{\upsilon}{2} \Psi\left(\lfloor \eta N_1 \rfloor; N_1  \right)\geq \vartheta,
\end{equation*}
we have that, for all $N_1\geq N_1^{(2)}(\vartheta)$,
\begin{equation*}
\begin{array}{lll}
N_1 \left| \left[\hat{V}^{-1}V \right]_{n,k} \right| &\leq \varepsilon \frac{\mathcal{D}_{\phi}\sqrt{\mathcal{C}_1}}{2\tau}e^{-\vartheta \Delta +o(1)},\quad \Delta \gg 0.
\end{array}
\end{equation*}

\item \underline{Regime 3:} Considering $N_1\Delta = T>0$, we have
\begin{equation*}
\begin{array}{lll}
N_1 \left| \left[\hat{V}^{-1}V \right]_{n,k} \right| &\leq \varepsilon \frac{\mathcal{D}_{\phi}\sqrt{\mathcal{C}_1}}{2\tau}N_1\mathcal{C}_3^{N_1}  e^{T \Upsilon \frac{\lfloor \eta N_1 \rfloor^2-1 }{N_1}} e^{-\upsilon T \frac{\Psi\left(\lfloor \eta N_1 \rfloor; N_1  \right)}{N_1}+O(N_1)}.
\end{array}
\end{equation*}
For $N_1\gg 1$, again in view of Proposition~\ref{Prop:VFrakAsymp},
\begin{equation*}
        \frac{\upsilon T}{2N_1} \Psi\left(\lfloor \eta N_1 \rfloor; N_1  \right)-\frac{\Upsilon T}{N_1} \left(\lfloor \eta N_1 \rfloor^2-1 \right)-N_1\ln\left(\mathcal{C}_3 \right)-\ln(N_1)-O(N_1) >0,
\end{equation*} 
whence
\begin{equation*}
\begin{array}{lll}
\left| \left[\hat{V}^{-1}V \right]_{n,k} \right| &\leq \varepsilon \frac{\mathcal{D}_{\phi}\sqrt{\mathcal{C}_1}}{2\tau}e^{-\frac{\upsilon T}{2N_1}\Psi\left(\lfloor \eta N_1 \rfloor; N_1  \right)}, \quad N_1\gg 1
\end{array}
\end{equation*}
where Proposition~\ref{Prop:VFrakAsymp} guarantees that $\frac{\upsilon T}{2N_1}  \Psi(\lfloor\eta N_1\rfloor; N_1) \propto N_1^2 $ for $N_1\gg 1$.
\end{itemize}
Therefore, for any $n\in [\lfloor \eta N_1 \rfloor]$, in all the regimes in \eqref{Eq:Regimes}, the term $N_1 \max_{k\neq n}\left|\left[ \hat{V}^{-1}V \right]_{n,k} \right|$ exhibits the same asymptotic behavior as $\max_{n\in [\lfloor \eta N_1 \rfloor]}\left|\mathcal{K}_y(n)\right|$, obtained in Theorem~\ref{Thm:FirstOrdCond}.

It remains to consider the diagonal term  $N_1\left|1 -\left[\hat{V}^{-1}V \right]_{n,n} \right|$ with $n\in [\lfloor \eta N_1 \rfloor]$.  Recall that 
\begin{equation*}
\begin{array}{lll}
\left[\hat{V}^{-1} V\right]_{n, n}=\hat{L}_n\left(\phi_n\right)=\prod_{s \neq n} \frac{\phi_n-\hat{\phi}_s}{\hat{\phi}_n-\hat{\phi}_s}.
\end{array}
\end{equation*}
To simplify the derivations, we introduce the following notations 
\begin{equation*}
\varkappa_n = \phi_n - \hat{\phi}_n,\quad \Lambda_{n,N_1} = \left\{\frac{1}{\hat{\phi}_n - \hat{\phi}_s} \right\}_{s\in [N_1]\setminus \left\{n \right\}},\quad \left|\Lambda_{n,N_1} \right| = \left\{|q|\ ; \ q\in \Lambda_{n,N_1} \right\},
\end{equation*}
where $\varkappa_n$, $n\in [N_1]$, satisfies Theorem~\ref{thm:unifbdd11}. Then,
\begin{equation*}
\begin{array}{lll}
\left[\hat{V}^{-1} V\right]_{n, n}&= \prod_{s \neq n} \frac{\phi_n-\hat{\phi}_s}{\hat{\phi}_n-\hat{\phi}_s} = \prod_{s\neq n}\frac{\varkappa_n+ \hat{\phi}_n-\hat{\phi}_s}{\hat{\phi}_n - \hat{\phi}_s} = \prod_{s\neq n}\left(1+\frac{\varkappa_n}{\hat{\phi}_n - \hat{\phi}_s} \right)= 1+ \sum_{r=1}^{N_1-1}\varkappa_n^r\mathfrak{S}_r\left(\Lambda_{n,N_1} \right),
\end{array}
\end{equation*}
where the elementary symmetric polynomial $\mathfrak{S}_r$ is defined in \eqref{eq:SymmPolDef}. Hence,
\begin{equation*}
\begin{array}{lll}
N_1 \left|1-\left[\hat{V}^{-1} V\right]_{n, n} \right|\leq N_1 \sum_{r=1}^{N_1-1}\left|\varkappa_n \right|^r \mathfrak{S}_r\left(\left|\Lambda_{n,N_1} \right| \right).
\end{array}
\end{equation*}
Applying the MacLaurin inequality of Lemma~\ref{Lem:MacIneq}, we get
\begin{equation*}
\begin{array}{lll}
\sum_{r=1}^{N_1-1}\left|\varkappa_n \right|^r \mathfrak{S}_r\left(\left|\Lambda_{n,N_1} \right| \right) &= \sum_{r=1}^{N_1-1}\binom{N_1-1}{r}\left|\varkappa_n \right|^r\mathcal{M}_r\left( \left|\Lambda_{n,N_1} \right|\right)\leq \sum_{r=1}^{N_1-1}\binom{N_1-1}{r}\left|\varkappa_n \right|^r \mathcal{M}_1\left(\left|\Lambda_{n,N_1} \right| \right)^r\\
&=\left(1+\left|\varkappa_n \right|\mathcal{M}_1\left(\left|\Lambda_{n,N_1} \right| \right)  \right)^{N_1-1} -1,
\end{array}
\end{equation*}
where the last equality follows form the binomial formula.

Since $\mathcal{M}_1\left(\left|\Lambda_{n,N_1} \right| \right) = \frac{1}{N_1} \sum_{m\neq n}   \frac{1}{\left| \hat{\phi}_n - \hat{\phi}_m\right|}$, we can invoke Theorem~\ref{thm:unifbdd11} and bound $ \left|\varkappa_n \right|\mathcal{M}_1\left(\left|\Lambda_{n,N_1} \right| \right)$ as follows:

\begin{equation*}
\begin{array}{lll}
\left|\varkappa_n \right|\mathcal{M}_1\left(\left|\Lambda_{n,N_1} \right| \right) &\leq  \varepsilon \mathcal{D}_{\phi} \frac{1}{N_1} \sum_{m\neq n}   \frac{\phi_n \left( \prod_{s\neq n}\phi_s^2\right)}{\left| \hat{\phi}_n - \hat{\phi}_m\right|\prod_{s\neq n}|\phi_{s}-\phi_{n}|^2 }\leq 2\varepsilon \mathcal{D}_{\phi} \frac{1}{N_1} \sum_{m\neq n}   \frac{\phi_n \left( \prod_{s\neq n}\phi_s^2\right)}{\left| \phi_n - \phi_m\right|\prod_{s\neq n}|\phi_{s}-\phi_{n}|^2 }\\
&\overset{}{\leq }\varepsilon \frac{\mathcal{D}_{\phi}}{\tau} \frac{N_1-1}{N_1}\frac{ \prod_{s\neq n}\phi_s^2}{\prod_{s\neq n}|\phi_{s}-\phi_{n}|^2 }\leq \varepsilon \frac{\mathcal{D}_{\phi}}{\tau} \frac{ \prod_{s\neq n}\phi_s^2}{\prod_{s\neq n}|\phi_{s}-\phi_{n}|^2 }\\
&\overset{\eqref{eq:fracAsymp}}{\leq} \varepsilon \frac{\mathcal{D}_{\phi}}{\tau}e^{-2\upsilon \Delta \Psi\left( \lfloor \eta N_1 \rfloor; N_1 \right)}\cdot\begin{cases}
    e^{2\theta^3_{1,\Delta,N_1}}, & n=1\\
    e^{2\theta^2_{n,\Delta}+2\theta^3_{n,\Delta,N_1}}, & n\neq 1
\end{cases},
\end{array}
\end{equation*}
where the second inequality relies on \eqref{eq:SwitchEstimates}, which guarantees that $\left| \hat{\phi}_n - \hat{\phi}_m\right| \geq \frac{1}{2} \left| \phi_n - \phi_m\right|$, while the third inequality follows from \eqref{eq:TauSeparationDelta}, which guarantees that $\frac{|\phi_n-\phi_m|}{\phi_n} \geq 2 \tau$, and from the fact that the summation can be then replaced by $N_1-1$ identical terms. The fourth inequality follows from $\frac{N_1-1}{N_1} < 1$. Finally, the last step uses the fact that $n\in [\lfloor \eta N_1 \rfloor ]$.

We obtain the upper bound
\begin{equation}\label{eq:rho}
\begin{array}{lll}
N_1 \left|1-\left[\hat{V}^{-1} V\right]_{n, n} \right| \leq N_1 \left[\left(1+\varrho_{n,\Delta,N_1} \right)^{N_1-1}-1\right],
\end{array}
\end{equation}
where
\begin{equation*}
\begin{array}{lll}
\varrho_{n,\Delta,N_1} := 
\begin{cases}
\varepsilon \frac{\mathcal{D}_{\phi}}{\tau}e^{-2\upsilon \Delta \Psi\left( \lfloor \eta N_1 \rfloor; N_1 \right)+2\theta^3_{n,\Delta,N_1}}, & n=1,\\
\varepsilon \frac{\mathcal{D}_{\phi}}{\tau}e^{-2\upsilon \Delta \Psi\left( \lfloor \eta N_1 \rfloor; N_1 \right)+2\theta^2_{n,\Delta}+2\theta^3_{n,\Delta,N_1}}, & n \neq 1.
\end{cases}
\end{array}
\end{equation*}
In both cases, and in all the regimes in \eqref{Eq:Regimes}, the expression $\left(1+\varrho_{n,\Delta,N_1} \right)^{N_1-1}$ is bounded uniformly in its parameters, as a consequence of Corollary~\ref{Cor:ThetaAssymp} and of Proposition~\ref{Prop:VFrakAsymp}. Then, 
\begin{equation}\label{eq:DiagTermBound}
\begin{array}{lll}
N_1 \left|1-\left[\hat{V}^{-1} V\right]_{n, n} \right| &\leq N_1 \left[\left(1+\varrho_{n,\Delta,N_1} \right)^{N_1-1}-1\right]= N_1 \left[e^{(N_1-1)\ln\left(1+\varrho_{n,\Delta,N_1} \right)}-1\right]\\
&\leq \left(1+\varrho_{n,\Delta,N_1} \right)^{N_1-1}N_1(N_1-1)\ln\left(1+\varrho_{n,\Delta,N_1} \right) \\
&\leq \left(1+\varrho_{n,\Delta,N_1} \right)^{N_1-1}N_1(N_1-1)\varrho_{n,\Delta,N_1},
\end{array}
\end{equation}
where the second inequality follows from applying Lagrange's theorem to function $e^x$ on the interval from $0$ to $(N_1-1)\ln\left(1+\varrho_{n,\Delta,N_1}\right)$, with the derivative of the function computed at the maximum of the interval in view of the function's monotonicity, while the second inequality follows from the fact that $\ln(1+x)\leq x$ for all $x\geq 0$.
Since $\left(1+\varrho_{n,\Delta,N_1} \right)^{N_1-1}$ is bounded in all regimes \eqref{Eq:Regimes}, the analysis of the asymptotic behavior of \eqref{eq:DiagTermBound} boils down to that of $\varrho_{n,\Delta,N_1}$ in \eqref{eq:rho}, which follows arguments similar to those used for $N_1 \max_{k \neq n}\left| \left[\hat{V}^{-1}V \right]_{n,k} \right|$, $k\neq n$, to show that also the term $N_1\max_{n\in [\lfloor \eta N_1 \rfloor]} \left|1-\left[\hat{V}^{-1} V\right]_{n, n} \right|$ exhibits the same asymptotic behavior as $\max_{n\in [\lfloor \eta N_1 \rfloor]}\left|\mathcal{K}_y(n)\right|$, obtained in Theorem~\ref{Thm:FirstOrdCond}, in all the regimes in \eqref{Eq:Regimes}.
Therefore, the same asymptotic behavior is exhibited by $\max_{n\in [\lfloor \eta N_1 \rfloor]}\mathcal{I}_n^{(2)}$.

The result of Theorem~\ref{thm:PronyAmplitudeCondition} then follows by combining the asymptotic behavior of $\max_{n\in [\lfloor \eta N_1 \rfloor ]}\mathcal{I}_n^{(1)}$ and of $\max_{n\in [\lfloor \eta N_1 \rfloor ]}\mathcal{I}_n^{(2)}$, both of which are the same as $\max_{n\in [\lfloor \eta N_1 \rfloor]}\left|\mathcal{K}_y(n)\right|$, obtained in Theorem~\ref{Thm:FirstOrdCond}, in all the regimes in \eqref{Eq:Regimes}.

\end{document}